\newtheorem{theorem}{Theorem}
\newtheorem{corollary}{Corollary}
\theoremstyle{definition}
\newtheorem{remark}{Remark}
\newtheorem{definition}{Definition}
\newtheorem{conjecture}{Conjecture}
\newtheorem{assumption}{Assumption}
\newtheorem{setting}{Setting}
\newtheorem{proposition}{Proposition}
\newtheorem{situation}{Situation}
\newtheorem{question}{Question}
\begin{document}

\title{$(\infty,\infty)$-Categorical Universal Motives}
\author{Xin Tong}
\date{}

\maketitle

\begin{abstract}
\noindent Ever since the introduction of motivic homotopy theory, as a well-proposed approximation of Grothendieck's dream, algebraic geometers then have the chance to study schemes via a homotopy theory. However topologists also found that lifting the usual homotopy theory over a sphere spectrum to the motivic homotopy category over a motivic bigraded sphere spectrum can make breakthroughs on elementary topology problems (such as computing homotopy groups of spheres, motivic Adams spectral sequences and so on). On the other hand, topological spaces can be all regarded as Grothendieck topoi, as in Lurie's work on ultracategories. Following Scholze, Lurie we systematically consider an $(\infty,\infty)$-ultracategorical universal motivic formalism, which directly fits into Lurie's framework on ultracategories, where we construct universal ultragestalten through motivicalization. The gestalten higher categorical six-functor formalism then allows us to consider the following classes of problems of different flavors: (I) Six-functor formalism for all Grothendieck sites and Grothendieck topoi; (II) Six-functor formalism for all topological spaces. We believe the current framework is a grand unified theory of many subjects in mathematics, such as an enhanced bridge between topology and algebraic geometry. And this framework can possibly lead to a more flexible framework on the study of general topological spaces. We then in this paper got the chance to apply this to many problems in $p$-adic geometry and $p$-adic functional analysis: (I) $(\infty,\infty)$-categoricalization of motivic $+$-de Rham prismatization approach to generalization of Colmez's Montr\'eal functor; (II) $(\infty,\infty)$-categoricalization of motivic generalized Riemann-Hilbert correspondence after Bhatt-Lurie; (III) $(\infty,\infty)$-categorical universal motives of derived algebraic stacks over $\mathbb{E}_\infty$-ring objects in the derived $\infty$-category of sphere spectrum, via universal motives of large Fargues-Fontaine gestalten, in families. 
\end{abstract}

\newpage
\tableofcontents

\newpage
\section{Introduction}

\subsection{Universal Motives Formalism}

Grothendieck topos is a significant concept in modern algebraic geometry, which is approximately some category of sheaves over some Grothendieck site. On the other hand any topological space can be regarded as a topos. Studying of topological spaces can be established by considering all the cohomologies, which is the main goal of algebraic topology. This means we should have motives theory for all topological spaces (like those very general $p$-adic analytic spaces in this paper) and Grothendieck topoi. Therefore the questions one may first want to ask are:

\begin{question}
Do we have a motivic six-functor formalism for all topological spaces? Then do we have a motivic six-functor formalism for all Grothendieck topoi? For instance for $p$-adic Lie groups. And then do we have the modern realization process as in \cite{5S4}?
\end{question}

These general questions usually have negative answer which motiviated the theory of condensed mathematics \cite{CS1} \cite{CS2} \cite{CS3}, where the category of topological spaces are enlarged to condensed sets. The corresponding idea is to replace the usual topological spaces with some sheaves over the Grothendieck site of totally disconnected compact Hausdorff spaces. This also has a direct relationship with the ultracategoricalization of Lurie \cite{5Lu3}. On the other hand in algebraic geometry one considers Grothendieck's notion of motives which is in general the following $(\infty,\infty)$-category of all the functors from a particular $\infty$-category over sphere spectrum $\mathbb{S}$ into category of presentable $(\infty,\infty)$-categories:
\begin{align}
\mathrm{Fun}(X_{/\mathbb{S}},\mathrm{Cat}_{\infty,\infty})^\mathrm{localization, localization}.
\end{align}
The localization is inspired by homotopy theory, where the first one is homotopy localization via bigraded motivic sphere spectrum, the second one is stabilization by using the infinite loop space functors. If one does not take stabilization, then the resulting category is called \textit{unstable motivic homotopy category}. This fits into Lurie's ultrafunctors formalism for instance when $X$ is a pretopos, then from taking models (in the sense of \cite{5Lu3}) we have a $(\infty,\infty)$-category of ultrafunctors:
\begin{align}
\mathrm{UltraFun}(\mathrm{Model}(X_{/\mathbb{S}}),\mathrm{Cat}_{\infty,\infty})^\mathrm{localization, localization}.
\end{align}
This paves a way to studying general topological spaces by embedding all topological spaces into the category of all the $(\infty,1)$-pretopoi. Lurie's Vollst\"andigkeitssatz then states that this is equivalent to the ultracategory of coherent topos associated to $X$. Then the question is:

\begin{question}
Throughout all topological spaces regarded as pretopoi, do we have a six-functor formalism for all such $(\infty,\infty)$-categories of ultrafunctors?  
\end{question}

\begin{definition}
With the notations above, we define univesal motivic gestalten formalism to be gestalten associated to the two $(\infty,\infty)$-categories above:
\begin{align}
\mathrm{Fun}(X_{/\mathbb{S}},\mathrm{Cat}_{\infty,\infty})^\mathrm{localization, localization},
\mathrm{UltraFun}(\mathrm{Model}(X_{/\mathbb{S}}),\mathrm{Cat}_{\infty,\infty})^\mathrm{localization, localization}.
\end{align}
When $X$ is changing we endow the gestalten six-functor formalism via countably presented mappings pullbacked from the six-functor formalism from \cite{5S5}.
\end{definition}

\begin{remark}\mbox{}\\
1. (Motives for Geometric Stacks) For instance if we consider $X$ to be just a Grothendieck site of the sphere spectrum, this immediately applies to algebraic geometry, where we have six-functor formalism on the universal gestalten above among Grothendieck sites in some inter-Grothendieck sites manner;\\
2. (Motives for Topological Spaces) On the other hand, if we consider $X$ to be a topological space (such as derived schemes over sphere spectrum, or just $p$-adic Lie groups), we will have six-functor formalism for our universal motivic gestalten attached to each such $X$. This is just \textit{ultracategorical motives theory for all topological spaces} including $p$-adic Lie groups, and so on, with well-defined six-functor formalism on the associated universal motivic gestalten to each such $X$;\\
3. (Motives For General Grothendieck Topoi) Then if $X$ is a Grothendieck topos, then we have again \textit{ultracategorical motives theory for all the Grothendieck topoi}. For instance for any $p$-adic adic space attached to some Banach ring $R$, one can regard it as a general topological space and then regard it as a Grothendieck topos, such as Fargues-Fontaine topos. 
\end{remark}

\subsection{Applications and Results}

\indent Emerton-Gee-Hellmann's \textit{$p$-adic categoricalization}, Fargues-Scholze's \textit{geometrization} as in \cite{1FS}, \cite{EGH1}, and Scholze's \textit{motivicalization} as in \cite{1S5}, \cite{1S6} by using symmetrical monoidal $\infty$-categoricalization, motivate us to consider the generalization we will present in this paper on Langlands program after \cite{L1}, \cite{D}, \cite{C}. We will consider essential \textit{de Rham prismatization} as in our previous work \cite{1T}. In both settings (one $p=\ell$ and considering $p$-adic Banach space representations, the other one $p$ is allowed to be away from $\ell$) even if we do not consider Banach representations, we consider however extensively $p$-adic coefficients. Therefore we call them all \textit{$p$-adic Local Langlands program}. But our ultimate goal is to parametrize all the corresponding possible condensed representations of all the possible \textit{motivic Galois groups} as in \cite{2A} by using motivic Hopf algebras over schemes, formal schemes, rigid analytic spaces, adic spaces, Berkovich spaces, small $v$-stacks, small arc stacks, derived small arc stacks, condensed analytic stacks and so on: 
\begin{itemize}\setlength{\itemsep}{-0.1cm}
\item[A] $p$-adic constructible condensed sheaves with the attached motivic Galois groups 
\begin{align}
\mathrm{Spec}\mathrm{Hopf}_{p-\text{adic},\blacksquare}
\end{align}
with the motivic 6-functor formalism ($F_1$,$F_2$,($F_3$,$F_4$,*-adjoint pairs),($F_5$,$F_6$,!-adjoint pairs));
\item[B] $\ell$-adic constructible condensed sheaves with the attached motivic Galois groups 
\begin{align}
\mathrm{Spec}\mathrm{Hopf}_{\ell-\text{adic},\blacksquare}
\end{align}
with the motivic 6-functor formalism ($F_1$,$F_2$,($F_3$,$F_4$,*-adjoint pairs),($F_5$,$F_6$,!-adjoint pairs));
\item[C] $p$-adic $+$-de Rham lattice deformed quasi-coherent sheaves with the attached motivic Galois groups $\mathrm{Spec}\mathrm{Hopf}_{\mathrm{dR},+,\text{prismatization}}$, $\mathrm{Spec}\mathrm{Hopf}_{\mathrm{dR},\mathrm{Robba},+,\text{stackification}}$ with the motivic 6-functor formalism ($F_1$,$F_2$,($F_3$,$F_4$,*-adjoint pairs),($F_5$,$F_6$,!-adjoint pairs));
\subitem[C1] Through de Rham prismatization;
\subsubitem[C11] Over small arc-stacks, under arc topology;
\subsubitem[C12] Over small $v$-stacks, under $v$-topology;
\subitem[C2] Through de Rham Robba stackification from pre-Fargues-Fontaine curves;
\item[D] Solid quasicoherent sheaves over Robba rings without Frobenius and the attached motivic Galois groups $\mathrm{Spec}\mathrm{Hopf}_{\mathrm{solid}, \mathrm{quasicoherent},\mathrm{Robba},\blacksquare}$ with the motivic 6-functor formalism ($F_1$,$F_2$,($F_3$,\\$F_4$,*-adjoint pairs),($F_5$,$F_6$,!-adjoint pairs));
\subitem[D1] Over small arc-stacks, under arc topology;
\subitem[D2] Over small $v$-stacks, under $v$-topology;
\item[E] ... and more motivic cohomology theories satisfying the formalism in \cite{2A} with the motivic 6-functor formalism ($F_1$,$F_2$,($F_3$,$F_4$,*-adjoint pairs),($F_5$,$F_6$,!-adjoint pairs))...
\end{itemize}
Also we believe that ultimately there will be some very well-defined generalizations of Colmez's Montr\'eal foncteur in \cite{C} by using the methods we are considering. We use the techniques of prismatization and its various de Rham stackified versions. Symmetrical monoidal $\infty$-categories in $p$-adic analytic geometry and $p$-adic functional analysis are very significant, though from a higher categorical perspective especially when we have the stability. Many problems emerge in $p$-adic analytic geometry and $p$-adic functional analysis if one sticks to the usual derived categories, such as the lack of enough projective objects and many tricky points of view in the modular representation theory even over $\mathbb{Z}_p/p^n$, let alone taking the inverse limit over the index $n\in \mathbb{Z}$. In this paper we discuss some specific aspects in the $p$-adic Hodge theory and the related $p$-adic functional analysis by using some well-defined $\infty$-categories which carry the corresponding symmetrical monoidal $\infty$-categorical structures. In \cite{T} the author in the derived category studied some generalization of the results in \cite{KPX} by using $p$-adic functional analysis method dated back to \cite{K1}. The resulting derived categorical consideration was of course following \cite{KPX}. However there should be many $\infty$-categorical considerations now which can be better illustrate the issues following \cite{BBBK}, \cite{BBKK}, \cite{CS1}, \cite{CS2}, \cite{CS3} by using certain symmetrical monoidal $\infty$-categories where more robust properties can hold. With the notations in \cref{definition1} and \cref{definition2} we have rings of analytic functions over those rigid affinoids as in \cite{T} after \cite{KPX}, \cite{CKZ}, \cite{PZ}, \cite{1Z}. Then over these rings we have the following well-defined categories, many of which are stable symmetrical monoidal $\infty$-categories.

\begin{remark}
In this paper we consider specific higher sheaves, higher stacks, higher gestalten and even stablization sequences of categories in the following most general way:
\begin{itemize}\setlength{\itemsep}{-0.1cm}
\item[1] $(\infty,\infty)$-sheaves means $(\infty,\infty)$-sheaves with values in $(\infty,\infty)$-categories. In this paper we do not make restrictions on the definition of $(\infty,\infty)$-categories as they are in general colimits of $(\infty,n)$-categories in reasonable sense;
\item[2] $(\infty,\infty)$-stacks means $(\infty,\infty)$-presheaves which satisfy certain descent for some topology;
\item[3] $(\infty,\infty)$-gestalten means gestalten in \cite{5S5}, i.e they are infinite stablization sequences $(A_0,A_1,...)$ of $(\infty,n),n=0,1,2,...$-categories such that for any $n$, $A_n$ is symmetrical monoidal module object over the symmetrical monoidal ring object $A_{n-1}$, in categories of presentable $\infty$-categories;
\item[4] $(\infty,\infty)$-$\mathrm{E}_1$-gestalten means a stablization sequence of $(\infty,n)$-monoidal categories, but we start from $A_0$ which can be a noncommutative monoidal object, in categories of presentable $\infty$-categories;
\item[5] $(\infty,\infty)$-stablization sequences of modules means such sequences of $(\infty,n)$-categories with action from some $(\infty,\infty)$-gestalten. This is needed when we consider $(\infty,\infty)$-sheaves with values in $(\infty,\infty)$-categories over some gestalten.
\end{itemize}

\end{remark}

\begin{definition}
Over $*$=
\begin{align}
L^X_{[a_I,b_I],K_I}(\pi_{K_I}),L^X_{b_I,K_I}(\pi_{K_I}):=L^X_{(0,b_I],K_I}(\pi_{K_I}),L^X_{[0,b_I],K_I}(\pi_{K_I}),L^X_{K_I}(\pi_{K_I}):=\bigcup_{b_I>0}\bigcap_{a_I>0}L^X_{[a_I,b_I],K_I}(\pi_{K_I}),
\end{align} 
we have the notion of $\Gamma$-Frobenius modules and bundles. Then we consider the derived $\infty$-categories 
\begin{align}
\underline{\mathrm{IndBanachModules}}^\sharp_*, \underline{\mathrm{Ind_{\mathrm{mono}}BanachModules}}^\sharp_*.
\end{align}
which are stable. And we have the condensed version:
\begin{align}
\blacksquare D_{*}, \blacksquare D_{*,\mathrm{bounded}},\blacksquare D_{*,\mathrm{perfect}}.
\end{align}
Therefore we have the corresponding Banach perfect complexes and condensed perfect complexes of the corresponding objects in our setting, namely we consider the perfect complexes of the $\Gamma$-Frobenius-Hodge modules in our setting, which again form certain stable $\infty$-categories with symmetrical monoidal structures. In the Banach setting we use the notations:
\begin{align}
&\underline{\mathrm{IndBanachModules}}^\sharp_{*,\mathrm{perfect},\Gamma,F}, \underline{\mathrm{Ind_{\mathrm{mono}}BanachModules}}^\sharp_{*,\mathrm{perfect},\Gamma,F},\\
&\underline{\mathrm{IndBanachModules}}^\sharp_{*,\mathrm{perfect},\mathrm{bounded},\Gamma,F}, \underline{\mathrm{Ind_{\mathrm{mono}}BanachModules}}^\sharp_{*,\mathrm{perfect},\mathrm{bounded},\Gamma,F},\\
&\underline{\mathrm{IndBanachModules}}^\sharp_{*,\mathrm{perfect},-,\Gamma,F}, \underline{\mathrm{Ind_{\mathrm{mono}}BanachModules}}^\sharp_{*,\mathrm{perfect},-,\Gamma,F},
\end{align}
to denote these complexes. And in the corresponding condensed setting we use:
\begin{align}
\blacksquare D_{*,\Gamma,F}, \blacksquare D_{*,\mathrm{bounded},\Gamma,F},\blacksquare D_{*,\mathrm{perfect},\mathrm{bounded},\Gamma,F},\blacksquare D_{*,\mathrm{perfect},\Gamma,F},\blacksquare D_{*,\mathrm{perfect},-,\Gamma,F}.
\end{align}
\end{definition}

\indent Then those derived $\infty$-sheaves in the categories above can have those certain derived cohomologies by using the $F$ and $\Gamma$-structure, by taking those iterated Yoneda groups, which are the corresponding derived $F,\Gamma$-cohomologies as in the following definition:

\begin{definition}
If $I$ is singleton, then we have the notion of $(F,\Gamma)$-complex of any $\Gamma$-Frobenius-Hodge module $F$: $C_{F,\Gamma}(F)$, where we have also the $C_{F,*}(F)$ and $C_{*,\Gamma}(F)$ complexes as well. Using them by induction we have the corresponding notion of $(F,\Gamma)$-complex of any $\Gamma$-Frobenius-Hodge module $F$: $C_{F,\Gamma}(F)$ when $I$ is not just a singleton. In our setting for each $i\in I$ we also have the corresponding $W_i=\varphi_i^{-1}$-operator. In such a way one can form the complex $C_{W}(F)$ directly.
\end{definition}

Following ideas in \cite{T}, \cite{KPX} we consider the symmetrical monoidal $\infty$-categories above instead to study the targeted categories where the cohomology groups are living, due to the fact that they are stable and usually having Grothendieck homotopy triangulated categories, many problems after \cite{T} and \cite{KPX} are solved, even over quite hard quasi-Stein spaces. Of course we are taking about condensed quasi-Stein spaces not the usual ones.

\begin{theorem}
$C_{F,\Gamma}(F)$ is in bounded $(\infty,1)$-derived category of complexes over $X$, restricting to perfect complexes:
\begin{align}
D_{\mathrm{perfect},\mathrm{bounded}}(\mathrm{Mod}_X).
\end{align}
$C_{F,\Gamma}(F)$ is in 
\begin{align}
\underline{\mathrm{IndBanachModules}}^\sharp_{X,\mathrm{perfect},\mathrm{bounded}}, \underline{\mathrm{Ind_{\mathrm{mono}}BanachModules}}^\sharp_{X, \mathrm{perfect},\mathrm{bounded}}.
\end{align} 
$C_{W}(F)$ is in 
\begin{align}
\underline{\mathrm{IndBanachModules}}^\sharp_{L^X_{\infty_I,K_I}(\Gamma_{K_I}),\mathrm{perfect},\mathrm{-}}, \underline{\mathrm{Ind_{\mathrm{mono}}BanachModules}}^\sharp_{L^X_{\infty_I,K_I}(\Gamma_{K_I}), \mathrm{perfect},\mathrm{-}},
\end{align}
where $X$ is just $\mathbb{Q}_p$.
\end{theorem}

\begin{theorem}
$C_{F,\Gamma}(F)$ is in bounded $(\infty,1)$-derived category of complexes over $X$, restricting to perfect complexes:
\begin{align}
D_{\mathrm{perfect},\mathrm{bounded}}(\mathrm{Mod}_X).
\end{align}
$C_{F,\Gamma}(F)$ is in 
\begin{align}
\blacksquare\underline{D}_{X,\mathrm{perfect},\mathrm{bounded}}.
\end{align} 
$C_{W}(F)$ is in 
\begin{align}
\blacksquare\underline{D}_{L^X_{\infty_I,K_I}(\Gamma_{K_I}),\mathrm{perfect},-},
\end{align}
where $X$ is just $\mathbb{Q}_p$.
\end{theorem}

\begin{corollary}
$C_{W}(.)$ induces a derived functor from the stable $\infty$-category 
\begin{align}
\underline{\mathrm{IndBanachModules}}^\sharp_{*,\mathrm{perfect},\mathrm{bounded},\Gamma,F}, \underline{\mathrm{Ind_{\mathrm{mono}}BanachModules}}^\sharp_{*,\mathrm{perfect},\mathrm{bounded},\Gamma,F},
\end{align}
and in the corresponding condensed setting:
\begin{align}
\blacksquare D_{*,\mathrm{perfect},\mathrm{bounded},\Gamma,F},
\end{align}
to the stable $\infty$-category 
\begin{align}
\blacksquare\underline{D}_{L^X_{\infty_I,K_I}(\Gamma_{K_I}),\mathrm{perfect},-},
\end{align}
where $X$ is just $\mathbb{Q}_p$. Here $*=L_{K_I}^X(\pi_{K_I})$. This also induces the morphism on the $K$-group spectra of $\mathbb{E}_\infty$-rings after applying \cite{BGT} to the corresponding stable $(\infty,1)$-categories, after \cite{G2}, \cite{A2}, \cite{BGT}.
\end{corollary}

\begin{remark}
The considerations over the level of \cite{T} and \cite{KPX} are quite deep, the reason is the following: although the original constructions of \cite{T} and \cite{KPX} are highly analytic using functional analysis and locally-convex topological vector spaces, the finiteness theorems are essentially around some algebra derived $(\infty,1)$-categories. This is in fact non-trivial, since even though we know the fininess in the condensed categories, functional analytic categories as above, we may not be able to derive the finiteness in the algebraic $(\infty,1)$-categories directly. This is a crucial point on the specific finiteness theorems in \cite{T} and \cite{KPX}. In the setting of algebraic derived categories, if we are working over some Fr\'echet-Stein algebra, then the result category is very complicated due to the lack of enough projective objects, where we cannot take the projective resolution directly. However in the current consideration in this paper, the problem disappears by using \cite{BBBK}, \cite{BBKK}, \cite{CS1}, \cite{CS2} , \cite{CS3}.
\end{remark}

\indent Here $X$ is some rigid affinoid algebra, which is actually making the context closely related to \cite{Z1}, \cite{ST}. However we find the integral model is also significant by taking the integral model of $X$, namely $X^+$ which is some first of all some $p$-adic $\mathbb{Z}_p$-algebra, then taking the reduction we have some $\mathbb{Z}_p/p^n$-algebra and finally we have some $\mathbb{F}_p$-algebra. Then the construction in such relative setting is closely related to \cite{Sc1}, \cite{So1}, \cite{SS1}, \cite{SS2}, \cite{HM} as well, where we consider some other significant related symmetrical monoidal $\infty$-categores as well as in \cite{Sc1}, \cite{So1}, \cite{SS1}, \cite{SS2}, \cite{HM} with general coefficients in $X^+$. In general over $G$ a reductive $p$-adic Lie group we have the smooth representations in the coefficient in $X^+$, such as the group $\Gamma_{K_I}$ in the above rigid analytic geometric consideration. The corresponding derived $\infty$-categories in coefficients $X^+$ are  Grothendieck symmetrical monoidal $\infty$-categories as in \cite{SS1}, \cite{SS2}, \cite{HM}. In \cite{Sc1} Schneider defined $E_1$-version of the usual Hecke algebra with respect to some compact open. The point is that the derived categories over $E_1$ Hecke algebras are actually related to the representations of the group $G$ in some direct manner. As in \cite{So1} we consider one step further, i.e. we try to find minimal $E_1$-model for the homology of the $E_1$-Hecke algebra. However we need to use derived \textit{$E_1$-rings} in \cite{Sa} to do this, which looks to go into some different direction from \cite{So1} due to the fact that we have $E_1$-rings over a general commutative ring $X^+$. After \cite{Sc1}, \cite{So1}, \cite{SS1}, \cite{SS2}, \cite{HM} we have the following:

\begin{conjecture}\mbox{\textbf{(After Sorensen, \cite[Theorem 1.1]{So1})}}
Let $G$ be compact as in \cite{So1}\footnote{Since \cite{So1} mentioned that one can generalize this to more general setting by using general dg Hecke algebras, this theorem can also be generalized to more general setting.}. Let $X^+$ be a $p$-adic $\mathbb{Z}_p$-formal algebra. $X^+$ can be written as a formal projective limit over $\mathrm{Z}_p/p^n$-algebras. Let $H$ be the dg Hecke algebra over $X^+$ defined as in \cite{Sc1}. Promoting $H$ to a derived $E_1$-ring $\mathbb{H}$ as in \cite{Sa}, we have the derived $\infty$-category of the minimal derived $E_1$-ring as in \cite[Theorem 1.2]{Sa}
\begin{align}
\underset{{\mathrm{totalized},\mathbb{H},\mathbb{H}}}{\mathrm{homomorphism}}
\end{align}
admits a functor from and a functor into the derived $\infty$-category
\begin{align}
D\mathrm{R}_{\mathrm{lisse},X^+,G}.
\end{align}
The latter is the derived $\infty$-category of all the $G$-$X^+$-smooth modules. We conjecture all the functors here are equivalences of symmetrical monoidal $\infty$-categories.
\end{conjecture}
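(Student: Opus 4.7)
The plan is to follow Sorensen's strategy in cite{So1} closely and then bootstrap to the general coefficient ring $X^+$ via a derived limit argument. First, I would reduce to the case $X^+ = \mathbb{Z}_p/p^n$, exploiting the presentation $X^+ = \varprojlim_n X^+/p^n$ and the fact that $p$-adically complete objects on both sides can be recovered from their mod-$p^n$ reductions via derived inverse limits in the respective stable $\infty$-categories. This uses the condensed/solid framework already deployed earlier in the paper, so that derived completions commute with the Hecke algebra formation and with the formation of totalized endomorphism $E_1$-rings.

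At each finite level, I would invoke Schneider's dg equivalence from \cite{Sc1}: for compact $G$, the derived category of smooth $G$-representations over $\mathbb{Z}_p/p^n$ is equivalent to the derived category of dg modules over the dg Hecke algebra $H_n$ attached to a suitable pro-$p$ Iwahori level. Promoting $H_n$ to a derived $E_1$-ring $\mathbb{H}_n$ by the construction of \cite{Sa} replaces the strict dg structure by a flexible $A_\infty$-structure; standard rectification then yields an equivalence between $H_n$-dg-modules and $\mathbb{H}_n$-modules as stable $\infty$-categories. Sanders's Theorem 1.2 in \cite{Sa} next produces a quasi-isomorphic minimal $E_1$-ring on the homology of $\mathbb{H}_n$, and the identification with the totalized self-homomorphism $E_1$-ring follows by the bar/cobar duality for $E_1$-algebras, reproducing exactly Sorensen's pair of adjoint functors at each level. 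Taking the derived $p$-adic limit and identifying the outcome on the representation side with $D\mathrm{R}_{\mathrm{lisse},X^+,G}$ — using that $G$-smoothness is preserved under the inverse limit because $G$ is compact — would then yield the claimed functors.

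The hard part will be the symmetric monoidal compatibility. The dg Hecke algebra is intrinsically $E_1$ and not $E_\infty$, so the symmetric monoidal structure on both sides must come from a separate source: namely the tensor product of smooth representations over $X^+$ on the representation side, and a convolution/pointwise tensor structure on the side of $\mathbb{H}$-modules. Establishing that Sanders's minimization and the Schneider-type equivalence interchange these structures requires a careful invocation of Lurie's operadic rectification together with the results of \cite{SS1}, \cite{SS2}, \cite{HM} on monoidal enhancements. A subsidiary obstacle is that, when passing from $\mathbb{Z}_p/p^n$ to $X^+$, one must check that solid tensor products commute with the formation of the totalized endomorphism $E_1$-ring; this is where the condensed enhancement of the derived $\infty$-categories introduced in the preceding sections of the paper plays an essential role, and where the conjectural character of the statement is concentrated.
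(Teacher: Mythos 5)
This statement is labeled a \emph{Conjecture} in the paper, not a theorem, and the paper does not prove it. The only justification the paper offers is the short remark following the parallel conjecture in Section~4, which says that the functors are constructed by combining \cite[Theorem~1.2]{Sa} with the generalization of \cite{Sc1} given in \cite{HM}, and then explicitly states: ``However we don't know whether all the statements of this conjecture are correct or not.'' Your proposal is therefore not competing against a paper proof; it is a strategy for an open problem, and in the end you yourself concede that the symmetric monoidal compatibility and the solid-tensor-compatibility of totalized endomorphism $E_1$-rings are ``where the conjectural character of the statement is concentrated,'' which is the same conclusion the paper reaches.

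That said, a few points in your sketch deserve scrutiny because they paper over steps that are themselves conjectural and not merely technical. First, Schneider's dg equivalence in \cite{Sc1} is established for smooth representations with coefficients in a \emph{field}; its extension to $\mathbb{Z}_p/p^n$-coefficients is precisely what \cite{So1} and \cite{HM} are struggling with and is the reason the derived $E_1$-ring framework of \cite{Sa} must be invoked at all. You cite this equivalence at each finite level as if it were given, but over a non-field ring the step from ``dg Hecke modules'' to ``derived smooth representations'' is not an established theorem, and rectification of the $A_\infty$-structure does not resolve this. Second, you identify the derived $p$-adic limit $\varprojlim_n DR_{\mathrm{lisse},\mathbb{Z}_p/p^n,G}$ with $DR_{\mathrm{lisse},X^+,G}$ ``because $G$ is compact,'' but the paper is careful to introduce $\varprojlim_n DR_{\mathrm{lisse},\overline{X}^+_{\mathbb{Z}_p/p^n},G}$ as an a priori separate object, precisely because this identification is not formal in the derived setting (colimits of smooth vectors do not obviously commute with derived limits of categories, and smoothness is a property that interacts delicately with completion). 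Third, a minor point: \cite{Sa} is Sagave, not Sanders. Your plan is a sensible elaboration of the paper's remark, but you should present it clearly as a strategy with several essential unresolved steps, not as an argument that reduces the conjecture to a final technical verification.
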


We then consider the deformation functors from these well-defined functors. Then we work in the context of \cite{1S5}, \cite{1S6} where we consider the corresponding two different types of stackifications in the sense of prismatization, after in one situation \cite{1To1}, \cite{1To2}, \cite{1To3}, \cite{1To4}, \cite{1S4}, \cite{1ALBRCS}, \cite{1BS}, \cite{1D}, \cite{1BL}, and after in other situation \cite{1KL}, \cite{1KL1}, \cite{1S1}, \cite{1S2}, \cite{1S3}, \cite{1F1}, \cite{1F2}, \cite{1T1}, \cite{1CSA}, \cite{1CSB}, \cite{1CS}, \cite{1BS1}, \cite{1BL1}, \cite{1BL2}. Again the derived $\infty$-categories we consider in this topic are stable symmetrical monoidal ones. 

\begin{remark}
The $p$-adic aspects of the local Langlands program we presented here (though not very sure if this is the ultimate correct approach to generalize Colmez's work) are deeply inspired by the work of Emerton-Gee-Hellmann \cite{EGH1} where a categoricalization is conjectured. Emerton-Gee-Hellmann kept one side of the correspondence almost the same as in \cite{1FS}, i.e. the smooth representations with coefficient in at least $\mathbb{Z}_p$-coefficients, then one can consider locally analytic representations and ultimately consider Banach representations. However the other side of the fully-faithful functor conjecture in \cite{EGH1} used stacks related to \cite{KPX}, i.e. the arithmetic stacks of $(\varphi,\Gamma)$-modules over Robba rings. We consider the motivic Tannakina categorical consideration then from de Rham prismatization. 
\end{remark}

\begin{theorem}
Assume we are in our general setting by adding the element $b^{1/2}$. The de Rham-Robba stackification and the de Rham-prismatization stackification in our generalized setting by adding $b^{1/2}$ are equivalent, in both $p$-adic and $z$-adic settings, i.e. in the $p$-adic setting we consider the small $v$-stacks over $\mathrm{Spd}\mathbb{Q}_p$, and in the $z$-adic setting we consider the small $v$-stacks over $\mathrm{Spd}\mathbb{F}_p((t))$, in the $v$-topology. This applies immediately to rigid analytic varieties.
\end{theorem}

\begin{definition}
Assume we are in our general setting by adding the element $b^{1/2}$. Let $S$ be a small $v$-stack, which can be either over $\mathrm{Spd}\mathbb{Q}_p$ or $\mathrm{Spd}\mathbb{F}_p((u))$. We use the notation:
\begin{align}
\mathrm{deRhamRobba}_S
\end{align}
to denote the corresponding de Rham-Robba stackification from the FF stacks, in our generalized setting. And we use the notation
\begin{align}
\mathrm{deRhamPrismatization}_S
\end{align}
to denote the corresponding de Rham Prismatization stackification, in our generalized setting. And for $?= \mathrm{deRhamRobba}_S, \mathrm{deRhamPrismatization}_S$ we use the notation:
\begin{align}
\mathrm{SolidQuasiCoh}_?
\end{align}
to denote the corresponding condensed $\infty$-categories of the corresponding solid quasicoherent sheaves over $?$. Then we have a functor:
\begin{align}
\mathrm{SolidQuasiCoh}_{\mathrm{deRhamPrismatization}_S}\rightarrow \mathrm{SolidQuasiCoh}_{\mathrm{deRhamRobba}_S}
\end{align}
by taking the induced functor from identification of the de Rham functors on the perfectoids.
\end{definition}

\begin{theorem}
Assume we are in our general setting by adding the element $b^{1/2}$. The functor defined above:
\begin{align}
\mathrm{SolidQuasiCoh}_{\mathrm{deRhamPrismatization}_S}\rightarrow \mathrm{SolidQuasiCoh}_{\mathrm{deRhamRobba}_S}
\end{align}
is well-defined, and an equivalence of symmetrical monoidal $\infty$-categories which are stable.

\end{theorem}

\begin{definition}
Assume we are in our general setting by adding the element $b^{1/2}$. Let $S$ be a small arc-stack in \cite{1S5} over $\mathbb{Q}_p$ or $\mathbb{F}((u))$. We use the notation:
\begin{align}
\mathrm{deRhamRobba}_S
\end{align}
to denote the corresponding de Rham-Robba stackification from the FF stacks, in our generalized setting. And we use the notation
\begin{align}
\mathrm{deRhamPrismatization}_S
\end{align}
to denote the corresponding de Rham Prismatization stackification, in our generalized setting. And for $?= \mathrm{deRhamRobba}_S, \mathrm{deRhamPrismatization}_S$ we use the notation:
\begin{align}
\mathrm{SolidQuasiCoh}_?
\end{align}
to denote the corresponding condensed $\infty$-categories of the corresponding solid quasicoherent sheaves over $?$. When we consider the de-Rham Robba stackification we consider the corresponding $v$-stack associated to $S$, which is denoted by $\mathrm{Stack}_v(S)$ after \cite{1S5}. Then we have a functor:
\begin{align}
\mathrm{SolidQuasiCoh}_{\mathrm{deRhamPrismatization}_S}\rightarrow \mathrm{SolidQuasiCoh}_{\mathrm{deRhamRobba}_{\mathrm{Stack}_v(S)}}
\end{align}
by taking the induced functor from identification of the de Rham functors on the perfectoids, i.e. we set the Banach ring local chart to be perfectoid to reach the objects in the second $\infty$-category. 
\end{definition}

\begin{theorem}
Assume we are in our general setting by adding the element $b^{1/2}$. The functor defined above:
\begin{align}
\mathrm{SolidQuasiCoh}_{\mathrm{deRhamPrismatization}_S}\longrightarrow \mathrm{SolidQuasiCoh}_{\mathrm{deRhamRobba}_{\mathrm{Stack}_v(S)}}
\end{align}
is well-defined, as a symmetrical monoidal $\infty$-tensor functor.
\end{theorem}

\begin{theorem}
Assume we are in our general setting by adding the element $b^{1/2}$. The functor defined above:
\begin{align}
\mathrm{SolidQuasiCoh}_{\mathrm{deRhamPrismatization}_S}\longrightarrow \mathrm{SolidQuasiCoh}_{\mathrm{deRhamRobba}_{\mathrm{Stack}_v(S)}}
\end{align}
is fully faithful functor of symmetrical monoidal $\infty$-categories which are stable. 
\end{theorem}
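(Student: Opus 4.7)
The plan is to reduce fully faithfulness over the arc-stack $S$ to the equivalence over $v$-stacks already established for $\mathrm{Stack}_v(S)$, by exploiting that both sides satisfy descent along sufficiently fine covers whose local charts are totally disconnected perfectoid Banach objects. Concretely, I would choose an arc-cover $\{T_i \to S\}$ by totally disconnected perfectoid local charts; on each such $T_i$ the two de Rham stackifications in our generalized setting coincide by the preceding $v$-stack equivalence theorem, and the hope is that mapping spaces on both sides can be recovered as totalizations along the \v{C}ech nerve of this cover.

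First I would verify that on each perfectoid chart $T_i$ the comparison functor is already an equivalence of symmetric monoidal stable $\infty$-categories: this is the content of the $v$-stack theorem applied to the perfectoid object $T_i$ viewed as a small $v$-stack. Second, I would promote this pointwise equivalence to an isomorphism on mapping spaces by a standard totalization argument: for objects $F, G$ on the prismatization side with images $F', G'$ on the Robba side, one writes
\begin{align}
\mathrm{Map}(F,G) \;\simeq\; \mathrm{Tot}\, \mathrm{Map}(F|_{T_\bullet}, G|_{T_\bullet}) \;\simeq\; \mathrm{Tot}\, \mathrm{Map}(F'|_{T_\bullet}, G'|_{T_\bullet}) \;\simeq\; \mathrm{Map}(F', G'),
\end{align}
where the outer equivalences are arc-descent on the prismatization side and $v$-descent on the Robba side applied to $\mathrm{Stack}_v(S)$, and the middle equivalence uses the pointwise comparison. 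Symmetric monoidality and stability of the resulting fully faithful inclusion then follow because both descent totalizations are computed in stable symmetric monoidal $\infty$-categories and the comparison functor is a symmetric monoidal tensor functor by the previous theorem.

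The main obstacle will be the first descent statement: arc-hyperdescent of solid quasicoherent sheaves on the de Rham prismatization stackification over a small arc-stack. The arc topology is strictly finer than the $v$-topology and does not preserve perfectoidness, so one cannot simply invoke $v$-descent for the prismatization directly. My plan is to adapt the C11--C12 perfectization/imperfectization comparison sketched in the introduction: factor a typical arc-cover as the composite of a $v$-cover and an imperfectization map, handle the $v$-cover by the already available $v$-descent for the de Rham prismatization, and show that the imperfectization is inverted after applying the de Rham functor, using that this functor kills the Frobenius-twist discrepancy between the perfect and imperfect prisms in the relative setting of \cite{1BL}, \cite{1BL1}, \cite{1BL2}. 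Once this descent is in place, the computation of $\mathrm{Map}$ as a totalization is formal from the solid six-functor formalism of Clausen--Scholze, and stability plus monoidality of the fully faithful embedding follow since all functors involved are exact symmetric monoidal and the \v{C}ech-nerve totalization commutes with the tensor structure on solid quasicoherent sheaves.
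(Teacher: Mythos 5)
Your proposal follows the same basic strategy as the paper: reduce along a cover by totally disconnected perfectoid local charts, invoke the earlier $v$-stack equivalence theorem on each such chart, and reassemble via descent. The paper's own proof is considerably terser --- it passes to totally disconnected subspaces and then to adic spectra of algebraically closed fields, asserting that at that level both sides coincide, with no descent machinery spelled out --- whereas you are explicit about writing mapping spaces as \v{C}ech totalizations, and in being explicit you correctly surface the hypothesis that the paper never verifies: arc-hyperdescent of $\mathrm{SolidQuasiCoh}_{\mathrm{deRhamPrismatization}_S}$ along arc-covers of a small arc-stack $S$.

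That hypothesis is not a formality, and you are right to flag it as the crux. Local charts of small arc-stacks in \cite{1S5} are general Banach rings rather than perfectoids, the arc topology is strictly finer than the $v$-topology, and it is not automatic that solid quasicoherent sheaves on the de Rham prismatization descend along arbitrary arc-hypercovers. Your proposed remedy --- factor an arc-cover through a $v$-cover and an imperfectization map, then show the imperfectization is inverted after applying the de Rham period functor so that the Frobenius-twist discrepancy between perfect and imperfect prisms is killed --- is a sensible plan in the spirit of the C11/C12 comparison in the introduction, but as written it is an unproved claim, and without it the first equivalence $\mathrm{Map}(F,G)\simeq\mathrm{Tot}\,\mathrm{Map}(F|_{T_\bullet},G|_{T_\bullet})$ has no support. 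Two further cautions. First, your target-side totalization runs along the $v$-\v{C}ech nerve of $T_\bullet\to\mathrm{Stack}_v(S)$, which need not agree term-by-term with the arc-\v{C}ech nerve of $T_\bullet\to S$: the fiber products $T_i\times_S T_j$ taken in arc-stacks and $T_i\times_{\mathrm{Stack}_v(S)} T_j$ taken in $v$-stacks can differ, so the middle step of your chain requires comparing those two simplicial objects, not just the $0$-simplices. Second, note that if all three equivalences in your chain held unconditionally with matching \v{C}ech nerves, your argument would in fact prove an equivalence of $\infty$-categories, whereas the theorem deliberately asserts only fully faithfulness; that mismatch is almost certainly where essential surjectivity fails, and it is worth isolating the asymmetry explicitly rather than leaving it implicit in the descent bookkeeping.
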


\indent We then apply these consideration of de Rham lattice deformations of the $\overline{\mathbb{Q}}_p$-local systems to the local Langlands program after \cite{L1}, \cite{1FS}, \cite{1VL}, \cite{1GL}, \cite{D}, \cite{1LL}, \cite{D2}. Then we promote the discussion to the motivic level after \cite{1S5}, \cite{1S6}, \cite{2LH}, \cite{2A}, \cite{1RS} and construct very general $t$-adic local Langlands correspondence in family after \cite{2LH} by using the motivic Galois groups fibered over $\mathbb{N}^\wedge$. Then as more thorough discussion on motivic cohomology theories, we study in some uniform way many significant $p$-adic motivic theories in families after \cite{3G}, \cite{3A}, \cite{31A}, \cite{3V}, after the general framework and formalism of Ayoub \cite{3A}. We extend in some sense Ayoub's formalism after Scholze's theory of Berkovich motives \cite{3S}, \cite{3S2}. The $p$-adic motivic cohomology theories we are considering are: prismatizations (filtration, syntomification) and stackifications induced from them (de Rham, de Rham-Hodge-Tate, Laurent) in families after \cite{3BS}, \cite{3BL}, \cite{3D}.

\begin{remark}
We choose to work in families after \cite{3LH} due to some reason to extend certain mixed-characteristic constructions to function field, and \textit{vice versa} extend certain funtion field constructions to mixed-characteristic situations.
\end{remark}

\begin{remark}
We work in some combined and vast generalized manner after \cite{3A} and \cite{3S} in the following way. First \cite{3A} considered up to rigid analytic varieties, and found correspondence from this to the classical motivic theory for schemes. Second \cite{3S} obviously generalized \cite{3A}, but we try to uniformize certain consideration after Scholze in the fashion of \cite{3A} to use certain general Weil cohomology theories over Banach rings, in order to apply to the considerations we are considering. 
\end{remark}

\begin{setting}
We consider the setting as in \cite{3LH}. Let $f$ be some finite field, then we consider a bunch of local fields in mixed-characteristics away from $\mathbb{N}_\infty - \mathbb{N}= \{\infty\}$:
\begin{align}
L1,L2,...
\end{align}
such that we have a profinite version of local rings, where $L_n,n\in \mathbb{N}$ are all local fields over $f$ with characteristic $0$. We assume the growth on the ramification index. Then $L_\infty$ is a function field over $f$. We call the product $L$ throughout the union of $\mathbb{N}$ with the infinity and we call the ring of integer $A$. $z$ is a general uniformizer which will then turn to be $z_n$ for each $n\in \mathbb{N}_\infty$.
\end{setting}

Motives \textit{in families} in our current consideration will be the most generalized sense after \cite{3S}, \cite{3A}, i.e. the generalized motives with coefficients\footnote{Eventually this will be motives parametrizing universal 6-functor formalisms, i.e. motives are \textit{localization localization} of $(\infty,\infty)$-sheaves with values in $(\infty,\infty)$-categories, or less generally $(\infty,1)$-groupoids:
\begin{align}
\mathrm{Fun}^U:=\mathrm{Fun}^U(X_{\mathrm{site}}, \mathrm{CoeffCategory}_{\infty,\infty})^{\mathrm{twicelocalization}}.
\end{align}
We can regard this as a \textit{universal $(\infty,2)$-gestalt}
\begin{align}
\mathrm{Gest}(\mathrm{Fun}^U):= (\mathrm{End}_{\mathrm{End}_{\mathrm{Fun}^U}(1)}(1),\mathrm{End}_{\mathrm{Fun}^U}(1),\mathrm{Fun}^U,\mathrm{Mod}_{\mathrm{Fun}^U}(1Pr^L),\mathrm{Mod}_{\mathrm{Mod}_{\mathrm{Fun}^U}}(2Pr^L),...)
\end{align}
of all gestalten corresponding to all six-functor formalisms. If $X$ is just the sphere spectrum $\mathbb{S}$ or simply just $\mathbb{Z}$, then we have \textit{realization functor} acting on generators $[X]$, i.e. for any generator $[X]$ in this universal gestalt of motives, we have a corresponding associated gestalt $G_{[X]}$, call a \textit{realization}, i.e. a \textit{specific motivic cohomology theory via a gestalt} such as Fargues-Fontaine stackification gestalten, $(+,-, + \bigcup -)$-de Rham prismatization stackification gestalten, Nygaard prismatization stackification gestalten and syntomization prismatization stackification gestalten.}. We start from a category of stacks over some fixed stack $X$ in families over $A$. We consider analytic condensed stacks in families (i.e. with certain structure morphism to $\mathbb{N}_\infty$) in analytic Grothendieck toplogy, we consider small arc stacks in families in arc Grothendieck topology, we consider small $v$-stacks in families in $v$ Grothendieck topology, which form the corresponding big Grothendieck sites over $X$. Over these Grothendieck sites we consider construction of motives in the sense of \cite{3S}, \cite{3A} namely those $\infty$-presheaves valued in $(\infty,1)$-categories (or less general $\infty$-groupoid) with some further possible action from $A$. Usually we have 5 steps in constructing $\infty$-category (symmetrical monoidal) of motivic sheaves with coefficients (in families as well over $A$) following \cite{3G}, \cite{3V}, \cite{3A}, \cite{3S}, \cite{5S4}, \cite{5S5}:
\begin{itemize}\setlength{\itemsep}{-0.1cm}
\item[1] Fix an $(\infty,1)$-category of $(\infty,1)$-stacks over $X$ with a fixed Grothendieck topology $T$, which forms a Grothendieck site, $\mathrm{Site}_{G,T,X}$;
\item[2] Start from an $(\infty,\infty)$-ring sheaf with values in $(\infty,\infty)$-categories\footnote{This means we have such $(\infty,\infty)$-stack over the site $\mathrm{Site}_{G,T,X}$, for instance the prismatization stack $P_X$ over some other base stack $X$.} (or $(\infty,\infty)$-ring gestalten\footnote{Again some gestalten $P_X$ over some base stack $X$, for instance if $P'_X$ is prismatization associated with some $v$-stack/arc stack $X$, then we can just take the associated gestalt $P_X$ from $P'_X$.}, more precisely a family of $(\infty,n)$-ring categories $(C_0,C_1,C_2,...)$ as in \cite{5S4}, \cite{5S5}, where $C_i$ is a symmetrical monoidal module category over the symmetrical monoidal ring category $C_{i-1}$\footnote{However in correspondence with the notion of \textit{ring stacks}, we will regard \textit{ring categories} as ring objects in the category of sequences of $(\infty,n)$-categories such as $(D_0,D_1,...)$ where $D_k$ is module category over $D_{k-1}$.}) $\mathrm{Sh}_A$ with further action from $A$ over $\mathrm{Site}_{G,T,X}$, and consider the $(\infty,1)$-category of all the $(\infty,\infty)$-sheaves over $\mathrm{Sh}_A$\footnote{$(\infty,\infty)$-sheaves are with values in $(\infty,\infty)$-categories, in the most general sense. In the less general situation for instance the notation of $\infty$-sheaf means having values in some $(\infty,1)$-category. For instance if we only consider the first element in $\mathrm{Sh}$, then this is just the usual motivic homotopy $(\infty,1)$-category, then one considers higher elements to derive the usual motivic homotopy $(\infty,1)$-categories to $(\infty, n)$-categories. "Over $\mathrm{Sh}_A$" means that we consider the action from this ring object in infinite stablization sequences of $(\infty,n)$-categories, for instance in the analytic prismatization in families situation we can either use the ring gestalten  structure to define these stablization sequences, or we can consider the sheaves of modules over the structure sheaf.}, $\mathrm{Cat}_1$;
\item[3] Take the corresponding localization of $\mathrm{Cat}_1$ to get $\mathrm{Cat}_2$ which is usually the category of effective motives with coefficient in $\mathrm{Sh}_A$;
\item[4] Take the inverse of the Tate object, then make sure it is added to the category, which is the final category of motives $\mathrm{Cat}_3$;
\item[5] Take the corresponding homotopy suspension of $\mathrm{Sh}_A$ for the Tate object, then take the corresponding \v{C}ech conerve of this resulting suspension $\mathrm{Cechconerve}(\mathrm{Suspen}_*\mathrm{Sh}_A)$, which gives rise to the Hopf algebra sheaf over $X$ after we take the loop space functor:
\begin{align}
\mathrm{Loop}^\infty\mathrm{{Cechconerve}}(\mathrm{Suspen}_*\mathrm{Sh}_A).
\end{align} 
Take $\mathrm{Spec}$ we have the motivic Galois sheaf\footnote{Yes, this is exactly the suspension-loop space functor adjunction for gestalten, not just the usual structure sheaves as in \cite{31A}. The resulting coalgebras are Hopf cogestalten for each $E_\infty$-ring on each single degree.}.
\end{itemize}
For instance:
\begin{itemize}\setlength{\itemsep}{-0.1cm}
\item[M1] Voevodsky's A1 motives in \cite{3V}, one can use arc topology actually for the schemes over some base scheme $X$;
\item[M2] Ayoub's B1 motives with $\mathbb{Q}$-coefficient \cite{31A}, one considers smooth rigid spaces with \'etale topology for instance;
\item[M3] Scholze's Berkovich motives in \cite{3S} with $\mathbb{Z}$-coefficient, one considers all the small arc stacks and arc topology;
\item[M4] Generalized motives with general coefficients as in \cite{3A}, one considers smooth rigid analytic spaces in \'etale topology, then takes localization for $\infty$-category of sheaves in such \'etale site in any coefficient in well-defined Weil cohomology theories.
\end{itemize}
Following all these and the philosophy from Grothendieck \cite{3G}, we consider de Rham prismatization motives in families, de Rham filtration prismatization motives in families, and de Rham syntomization prismatization motives in families.

\begin{situation}\mbox{\textbf{(Motivic cohomology theories in families I)}}\label{situation1}
We consider all the small arc-stacks/small $v$-stacks over $A$ fibered over $\mathbb{N}_\infty$. This means we fix some small arc-stack or small $v$-stack $X$ and consider the corresponding $\infty$-categories of the arc site or $v$-site over $X$ using all the small arc stacks or small $v$-stacks:
\begin{align}
\mathrm{Cate}_\mathrm{arc}/X,\mathrm{Cate}_v/X.
\end{align}
A $z$-adic motivic cohomology theory over any small arc-stack/small $v$-stack $X$ consists of the datum as in the following after \cite{3A}:
\begin{itemize}\setlength{\itemsep}{-0.1cm}
\item[A1] An $(\infty,\infty)$-ring sheaf with values in $(\infty,\infty)$-category (or $(\infty,\infty)$-ring gestalten as in \cite{5S4}, \cite{5S5})\footnote{Note $(\infty,\infty)$-ring sheaves with values in $\infty$-categories carry natually $(\infty,\infty)$-ring stacks structure. And be more precise, $\mathrm{Sh}_A$ is some $(\infty,\infty)$-stack or some $(\infty,\infty)$-ring gestalt over $X_v,X_\mathrm{arc}$, for instance if we have prismatization stackification/analytic prismatization $P_X$ for such $X$, then we have the associated gestalt $P'_X$ defined by:
\begin{align}
P'_{X}:= (\mathcal{O}_{P_X}, D(P_X), 1Pr_{P_X}, 2Pr_{P_X},...).
\end{align}} $\mathrm{Sh}_A$, with possible action from $A$ over the arc stack $X$ in the arc site of small arc stacks over $X$ or over $v$-stack $X$ in the $v$-site of small $v$-stacks over $X$, in familes over $\mathbb{N}_\infty$;
\item[A2] A version of K\"unneth theorem in the derived sense holds for $\mathrm{Sh}_A$\footnote{For instance if $\mathrm{Sh}_A$ is anlaytic prismatization in families then we can regard this as an $(\infty,\infty)$-ring gestalten, then the corresponding K\"unneth theorem on the level of ring stacks can be derived to $(\infty,\infty)$-ring gestalten level.}, in familes over $\mathbb{N}_\infty$;
\subitem $\blacksquare$ Usually this will mean taking the derived tensor products in the derived $\infty$-categories if such theorem exists; 
\item[A3] A derived $(\infty,2)$-category of Weil sheaves $D\mathrm{Weil}(X)$\footnote{If we use the $(\infty,\infty)$-ring gestalten, we have two choices, one is that we can stop on gestalten and use itself for this, or the other one is to consider infinite stablization sequences of $(\infty,n)$-module categories over the ring gestalten $\mathrm{Sh}_A$.} with values in $(\infty,\infty)$-categories over again the arc site or $v$-site of $X$ satisfying the arc-descent or $v$-descent, in families over $\mathbb{N}_\infty$;
\item[A4] A motivic six-functor formalism $f_A,f_B,f_C,f_D,f_E,f_F$, in familes over $\mathbb{N}_\infty$;
\item[A5] A formalism of Hopf algebraic motivic fundamental groups as in \cite[4.7]{3A}, in families over $\mathbb{N}_\infty$.
\subitem $\blacksquare$ This is more general than just \textit{motivic Galois groups} of a point for $L$ or $A$ in families over $\mathbb{N}_\infty$. To be more precise for each $\mathrm{Sh}_A$, one takes the corresponding $\infty$-level \textbf{suspension} (with respect to the Tate element in \cite{3A}, \cite{3S}) of this ring to get the sheaf $\widetilde{\mathrm{Sh}}_A$, then takes the \v{C}ech conerve $\mathrm{CechConerve}(\widetilde{\mathrm{Sh}}_A)$ as in \cite[4.9]{3A}, then takes the \textbf{$\infty$-loop space} with respect to the Tate object as in \cite{3A}, \cite{3S}: $\mathrm{Loop}^\infty\mathrm{Cechconerve}(\widetilde{\mathrm{Sh}}_A)$, this is then the desired Hopf \textit{sheaf} of algebra over the arc site or $v$-site of $X$. One then takes the spectrum of this sheaf to get the motivic fundamental group \textit{sheaf} $\mathrm{Spec}\mathrm{Loop}^\infty\mathrm{Cechconerve}(\widetilde{\mathrm{Sh}}_A)$. One can also take the global section over $X$ to reach the algebra and group theoretic objects. This process is compatible with the definition of the Berkovich motives in families: i.e. one considers all the small arc stacks or $v$-stacks over $X$ in arc topology or $v$-topology:
\begin{align}
\mathrm{Cate}(\mathrm{Stack}_{\mathrm{*},X}),
\end{align}
where $*$ is arc or $v$, then takes the localization using the loop space functor at $\infty$-level from the Tate object (over pointed space for the projective space of dimension 1) to define the effective motives in families:
\begin{align}
\mathrm{Effe}\mathrm{Cate}(\mathrm{Stack}_{\mathrm{*},X}),
\end{align}
then one inverts the Tate object to finish the definition of the desired symmetrical monoidal $\infty$-category of arc or $v$-motives in families:
\begin{align}
\mathrm{Cate}(\mathrm{Stack}_{\mathrm{*},X})_{\mathrm{final},\times}.
\end{align}
\end{itemize}
(!) We allow $\mathrm{Sh}_A$ to be an $(\infty,\infty)$-$\mathrm{E}_1$-ring gestalten \footnote{i.e. a noncommutative gestalten $(N_0,N_1,...)$ starting from a \textit{noncommutative ring} object $N_0$ in the derived $\infty$-category $D(\mathbb{S})$ of the spherical element, then being constructed such that $N_j$ is a module monoidal category over ring monoidal category $N_{j-1}$. As in \cite{5S5} is we consider noncommutative stacks as colimits of noncommutative countably presentable noncommutative schemes, then the K\"unneth theorem and six-functor formalism are just diret consequence of this formalism after \cite{5S5} and \cite{5G}. This means in the noncommutative setting gestalten should be a very flexible framework to consider. This should have significant application to noncommutative Tamagawa number conjecture and noncommutative Iwasawa main conjecture, and the locally-analytic representations in $p$-adic local Langlands correspondence, see \cref{ntig}.}. We allow the six-functor formalism to be completely abstract in the sense of derived $\infty$-category (i.e., without really touching the spaces) by requiring the projection formula, smooth base change, and proper base change and so on in pure $\infty$-categorical sense as in \cite{3CS1}. Moreover there is a \textit{derived} version of this situation for small $(\infty,1)$-arc stacks in families by replacing arc stacks in families by $(\infty,1)$-arc stacks in families (which are fiber categories in families over the arc site of simplicial Banach rings in families). 
\end{situation}

\begin{situation}\mbox{\textbf{(Motivic cohomology theories in families II)}}\label{situation2}
We consider all the analytic stacks as in \cite{3CS1} over $A$ fibered over $\mathbb{N}_\infty$. A $z$-adic motivic cohomology theory over any analytic stack $X$ consists of the datum as in the following after \cite{3A}:
\begin{itemize}\setlength{\itemsep}{-0.1cm}
\item[A1] An $(\infty,\infty)$-ring sheaf with values in $\infty$-category (or an ($\infty,\infty$)-ring gestalten $(\mathrm{Sh}_0, \mathrm{Sh}_1,...)$) $\mathrm{Sh}_A$, with further action from $A$ over a fixed analytic stack $X$ over $A$ (again one consider all the analytic stacks over $X$ in analytic topology to form the analytic site) in analytic topology\footnote{$\mathrm{Sh}_A$ are $(\infty,\infty)$-stacks over the site $X_{an}$, or are $(\infty,\infty)$-ring gestalten over this site. This includes prismatization stack $P'_X$ associated with such $X$, then we have the gestalt $P'_X$:
\begin{align}
P'_X := (\mathcal{O}_{P_X}, D(P_X), 1Pr_{P_X},...).
\end{align}}, in familes over $\mathbb{N}_\infty$;
\item[A2] A version of K\"unneth theorem in the derived sense holds for $\mathrm{Sh}_A$, in familes over $\mathbb{N}_\infty$;
\item[A3] A derived $(\infty,2)$-category of Weil sheaves $D\mathrm{Weil}(X)$ over the analytic site of $X$ satisfying the descent under the !-functor formalism as in \cite{3CS1}, in families over $\mathbb{N}_\infty$; 
\item[A4] A motivic six-functor formalism $f_A,f_B,f_C,f_D,f_E,f_F$, in familes over $\mathbb{N}_\infty$;
\item[A5] A formalism of Hopf algebraic motivic fundamental groups as in \cite[4.7]{3A}, in families over $\mathbb{N}_\infty$.
\subitem $\blacksquare$ This is more general than just \textit{motivic Galois groups} of a point for $L$ or $A$ in families over $\mathbb{N}_\infty$. To be more precise for each $\mathrm{Sh}_A$, one takes the corresponding $\infty$-level suspension (with respect to the Tate element in \cite{3A}, \cite{3S}) of this ring to get the sheaf $\widetilde{\mathrm{Sh}}_A$, then takes the \v{C}ech conerve $\mathrm{Cechconerve}(\widetilde{\mathrm{Sh}}_A)$ as in \cite[4.9]{3A}, then takes the $\infty$-loop space with respect to the Tate object as in \cite{3A}, \cite{3S}: $\mathrm{Loop}^\infty\mathrm{Cechconerve}(\widetilde{\mathrm{Sh}}_A)$, this is then the desired Hopf \textit{sheaf} of algebra over the arc site or $v$-site of $X$. One then takes the spectrum of this sheaf to get the motivic fundamental group \textit{sheaf} $\mathrm{Spec}\mathrm{Loop}^\infty\mathrm{Cechconerve}(\widetilde{\mathrm{Sh}}_A)$. One can also take the global section over $X$ to reach the algebra and group theoretic objects. This process is compatible with the definition of the analytic  motives in families: i.e. one considers all the analytic condensed stacks over $X$ in analytic topology, then takes the localization using the loop space functor at $\infty$-level from the Tate object (over pointed space for the projective space of dimension 1) to define the effective motives in families, then one inverts the Tate object to finish the definition.
\end{itemize}
$(!)$ We allow $\mathrm{Sh}_A$ to be an $(\infty,\infty)$-$\mathrm{E}_1$-ring gestalten object. We allow the six-functor formalism to be completely abstract in the sense of derived $\infty$-category (i.e. without really touching the spaces) by requiring the projection formula, smooth base change, and proper base change and so on in pure $\infty$-categorical sense as in \cite{3CS1}.

\end{situation}

\begin{remark}
We consider \textit{schemes} (so then $z$-adic $A$-\textit{formal schemes} as well by taking the projective limit over schemes) over $A$ with fibration over $\mathbb{N}_\infty$ in these theories as well, as in \cite{3S} by taking the discrete norms. But we only regard schemes as small arc-stacks in arc topology. This meams we will regard derived formal stacks as derived arc stacks, and we will regard derived algebraic stacks as derived arc stacks as well.
\end{remark}

\begin{remark}
Our consideration is of course a more generalized version of the corresponding consideration in \cite{3A} by using the foundation from \cite{3S}. \cite{3A} considered the motives for rigid analytic varieties.
\end{remark}

\begin{remark}\mbox{(\textbf{Arithmetic $D$-modules in families from $\infty$})}
\cite{3A1} can be integrated into a motivic theory in the sense above by considering the corresponding algebraic stack setting. For this choose some identification from $\infty$ and some finite integer, and translate the theory of arithmetic $D$-module from $A_\infty/z_\infty$ to $A_i/z_i$ for some $i \in \mathbb{N}$, then take the corresponding inverse limit we can reach some analytic version of the arithmetic $D$-module theory in families over $\mathbb{N}$, where one can enlarge the consideration in \cite{3A} to the corresponding schemes over $A$ and then to small arc-stacks over $L$ for instance. One can see that the theory satisfies all the 5 conditions in the above considerations. We have no ideas about this construction on how it may be linked to other $p$-adic motivic cohomology theories, however over $A(\mathbb{N})$ the theory of $F$-isocrystals is not that far away from this. In such a way the existence of 6-functor formalism holds largely due to a tight correspondence between the $\infty\in \mathbb{N}$ and any finite number on the ring level, which leads to the corresponding 6-functor formalism of arithmetic $D$-modules in the category of rigid analytic spaces, again of course in families. However this process on the other hand provides the corresponding 6-functor formalism for arithmetic $D$-modules over rigid analytic spaces in some nontrivial way, in families.
\end{remark}

\begin{remark}
Here are the motivic cohomology theories in families relevant in the current consideration on motivic cohomology theories:
\begin{itemize}\setlength{\itemsep}{-0.1cm}
\item[1] Robba sheaves, and solid quasicoherent sheaves over them, the consideration is for $v$-stacks or arc-stacks over $A$ in families over $\mathbb{N}_\infty$;
\item[2] Prismatization over $A$ in families $\mathbb{N}_\infty$;
\subitem[2I] Prismatization over $A$ in families $\mathbb{N}_\infty$;
\subitem[2II] Filtration prismatization over $A$ in families $\mathbb{N}_\infty$;
\subitem[2III] Syntomization prismatization over $A$ in families $\mathbb{N}_\infty$;
\item[3] de Rham prismatization over $A$ in families $\mathbb{N}_\infty$;
\subitem[3I] de Rham prismatization over $A$ in families $\mathbb{N}_\infty$;
\subitem[3II] de Rham filtration prismatization over $A$ in families $\mathbb{N}_\infty$;
\subitem[3III] de Rham syntomization prismatization over $A$ in families $\mathbb{N}_\infty$;
\item[4] de Rham-Hodge-Tate prismatization over $A$ in families $\mathbb{N}_\infty$;
\subitem[4I] de Rham-Hodge-Tate prismatization over $A$ in families $\mathbb{N}_\infty$;
\subitem[4II] de Rham-Hodge-Tate filtration prismatization over $A$ in families $\mathbb{N}_\infty$;
\subitem[4III] de Rham-Hodge-Tate syntomization prismatization over $A$ in families $\mathbb{N}_\infty$;
\item[5] Laurent prismatization over $A$ in families $\mathbb{N}_\infty$;
\subitem[5I] Laurent prismatization over $A$ in families $\mathbb{N}_\infty$;
\subitem[5II] Laurent filtration prismatization over $A$ in families $\mathbb{N}_\infty$;
\subitem[5III] Laurent syntomization prismatization over $A$ in families $\mathbb{N}_\infty$;
\item[6] Analytic prismatization over $A$ in families $\mathbb{N}_\infty$;
\subitem[6I] Analytic prismatization over $A$ in families $\mathbb{N}_\infty$;
\subitem[6II] Analytic filtration prismatization over $A$ in families $\mathbb{N}_\infty$;
\subitem[6III] Analytic syntomization prismatization over $A$ in families $\mathbb{N}_\infty$;
\item[7] Analytic de Rham prismatization over $A$ in families $\mathbb{N}_\infty$;
\subitem[7I] Analytic de Rham prismatization over $A$ in families $\mathbb{N}_\infty$;
\subitem[7II] Analytic de Rham filtration prismatization over $A$ in families $\mathbb{N}_\infty$;
\subitem[7III] Analytic de Rham syntomization prismatization over $A$ in families $\mathbb{N}_\infty$;
\item[8] Analytic de Rham-Hodge-Tate prismatization over $A$ in families $\mathbb{N}_\infty$;
\subitem[8I] Analytic de Rham-Hodge-Tate prismatization over $A$ in families $\mathbb{N}_\infty$;
\subitem[8II] Analytic de Rham-Hodge-Tate filtration prismatization over $A$ in families $\mathbb{N}_\infty$;
\subitem[8III] Analytic de Rham-Hodge-Tate syntomization prismatization over $A$ in families $\mathbb{N}_\infty$;
\item[9] Analytic Laurent prismatization over $A$ in families $\mathbb{N}_\infty$;
\subitem[9I] Analytic Laurent prismatization over $A$ in families $\mathbb{N}_\infty$;
\subitem[9II] Analytic Laurent filtration prismatization over $A$ in families $\mathbb{N}_\infty$;
\subitem[9III] Analytic Laurent syntomization prismatization over $A$ in families $\mathbb{N}_\infty$;
\item[10] $B_{+,\mathrm{dR},\mathbb{N}_\infty,*}$-cohomology theory in families over $\mathbb{N}_\infty$;
\subitem[$\blacksquare$] This can be derived from the de Rham prismatizations in families over $\mathbb{N}_\infty$ after \cite{3Ta}, \cite{3F}, \cite{3S3};
\item[11] $B_{+,\mathrm{dR},\mathrm{Nygaard},\mathbb{N}_\infty,*}$-cohomology theory in families over $\mathbb{N}_\infty$;
\subitem[$\blacksquare$] This can be derived from the de Rham filtration prismatizations in families over $\mathbb{N}_\infty$ after \cite{3Ta}, \cite{3F}, \cite{3S3};
\item[12] $B_{+,\mathrm{dR},\mathrm{syntomization},\mathbb{N}_\infty,*}$-cohomology theory in families over $\mathbb{N}_\infty$;
\subitem[$\blacksquare$] This can be derived from the de Rham syntomization  prismatizations in families over $\mathbb{N}_\infty$ after \cite{3Ta}, \cite{3F}, \cite{3S3};
\item[13] and more motivic cohomology theories.
\end{itemize}
\end{remark}

\noindent We made further discussion in this paper on applications of corresponding motivic cohomology theories in families after \cite{5V}, \cite{5AI}, \cite{5AII}, \cite{5S2}, \cite{5G}, to Riemann-Hilbert correspondence in families. After one realizes things in the motivic homotopy categories, the resulting consideration will be then highly homotopical. This is a process of \textit{homotopicalization} in the motivic setting. In our current $p$-adic setting in fact there is a \textit{completely parallel correspondence} between the homotopy category using spectra and the motivic homotopy categories using presheaves over big Grothendieck categories, which basically indicate further the \textit{analogues} among the following \textit{four different situations}:
\begin{itemize}\setlength{\itemsep}{-0.1cm}
\item[1] (The derived $\infty$-category of abelian groups over $\mathbb{Z}_p$) Homological algebra and homotopical algebra in the sense of even Lurie \cite{5Lu1};
\item[2] (The derived $\infty$-category of modules spectra over $p$-adic sphere spectrum\footnote{For instance this can be the Witt vector sphere spectrum $\mathbb{S}^1_W$.}) Homotopy theory and homotopy categories;
\item[3] (The derived $\infty$-category of modules spectra over motivic bigraded $p$-adic  sphere spectrum\footnote{For instance this can be the bigraded Witt vector sphere specture $\mathbb{S}_W^{p,q}\wedge \mathbb{G}_{m,W}^q$ when $p=1, q=0$, taking the fibre product of $\mathbb{S}^{p,q}\wedge \mathbb{G}_m^q$ with the Witt vector sphere spectrum in [2].}) Motivic homotopy category and motivic homotopy theory;
\item[4] (The derived $\infty$-caetgory of modules spectra over prismatic motivic sphere spectrum\footnote{In general we have the prismatic lift instead of Witt lift of the bigraded sphere specture $\mathbb{S}^{p,q}\wedge \mathbb{G}_m^q$. Take the mod $p$ of the homotopy group of the usual sphere and lift the spectrum of the mod $p$ sphere specture by using prism associate to $\mathbb{F}_p$.}) Prismatic motivic homotopy category and prismatic motivic homotopy theory as in \cite{5Lu2}.
\end{itemize}

\begin{remark}
All these four settings have the gestalten versions, by all can be defined over $D(\mathbb{S})$ no matter how we construct the $p$-complete sphere spectrums. The idea in our discussion is that we can promote the results using homological algebra to the motivic homotopical setting, then promote the results to gestalten level, which will have very deep consequences following \cite{5S5} and \cite{5Ga}.
\end{remark}

We use again the following setting:

\begin{setting}
We now consider the setting in \cite{5LH}. Throughout we fix a finite field $F$, and choose $p$-adic local fields of characteristic zero with same residue field $F$:
\begin{align}
H_1,H_2,H_3,H_4,H_5,...,H_\infty
\end{align}
where the index is throughout all the natural numbers. There is one function field $F((x_\infty))$ at the infinity. Therefore we can now form one uniformizer for all of them: $x$ such that $x$ gives rise to $x_n$ at each point in $\mathbb{N}_c$. We use the notation $H$ to denote the product of these local fields, and we use $I_H$ to denote the integral subrings as in \cite{5LH}.
\end{setting}

\indent To have a well-defined understanding on Riemann-Hilbert correspondence through the main aspect we discussed in this paper thoroughly, in this paper we start to motivicalize and homotopicalize the Riemann-Hilbert correspondence, in some very generalized sense. It is well-known that quite original consideration was in \cite{5H}, i.e. the famous Hilbert $21^\mathrm{st}$ problem.
\begin{theorem}
Let $\mathrm{D}_{\mathrm{aHT},\mathrm{lisse},V}(S_{\mathrm{Robba},\mathrm{arc},*})$ be the derived $\infty$-category generated by locally finite free almost Hodge-Tate Frobenius sheaves over Robba rings as in our current family situation. Consider the category 
\begin{align}
\mathrm{Motives}_{S_{\mathrm{Robba},\mathrm{arc},*},\times}\overline{\mathrm{ArcStacks}}_{V,\mathrm{arc}}.
\end{align}
Here $\overline{\mathrm{ArcStacks}}$ denotes the restriction to smooth rigid spaces over $V$ from all the arc stacks over $V$. For each covering arc stack $W$ over $V$ consider the corresponding almost Hodge-Tate sheaves over the Robba sheaves over $W$:
\begin{align}
\mathrm{D}_{\mathrm{aHT},\mathrm{lisse},W}(S_{\mathrm{Robba},\mathrm{arc},W})
\end{align}
which can be arranged to the motivic $\infty$-category over the big site over $V$\footnote{Recall that the motivic homotopy of $\infty$-sheaves basically carries a coefficient $\mathrm{Ring}$ or just the most general $\infty$-groupoids $\infty\mathrm{groupoid}$ coefficient (or \textit{presentable $(\infty,1)$-category coefficient} as in \cite{5S4}, which then ends up with $(\infty,2)$-categories even under localizations), a big Grothendieck site $G$, and taking the Take object $\Sigma$ to form the loop space stablization, after a localization through $B_1$ localication, finally we have the symmetrical monoidal $\infty$-categorical structure which we use the notation $\mathrm{Motives}_{\mathrm{Ring},\times}(G)$ or $\mathrm{Motives}_{\infty\mathrm{groupoid},\times}(G)$ to denote that.}:
\begin{align}
\mathrm{Motives}_{S_{\mathrm{Robba},\mathrm{arc},*},\times}\overline{\mathrm{ArcStacks}}_{V,\mathrm{arc}},
\end{align}
taking the subcategory generated by almost Hodge-Tate sheaves we have:
\begin{align}
\mathrm{Motives}^{\mathrm{aHT},\mathrm{lisse}}_{S_{\mathrm{Robba},\mathrm{arc},*},\times}\overline{\mathrm{ArcStacks}}_{V,\mathrm{arc}}.
\end{align}
Consider the following differential module:
\begin{align}
\partial_W=\mathrm{Descent}_{W^\mathrm{rig}_\mathrm{arc},W,*}(S_{\mathrm{dR},-,\mathbb{N}_c}[\mathrm{log}(t)]).
\end{align}
Varying $W$ we have the sheaf over then the site of all the smooth rigid spaces over $V$:
\begin{align}
\partial=\mathrm{Descent}_{(\cdot/V)^\mathrm{rig}_\mathrm{arc},\cdot/V,*}(S_{\mathrm{dR},-,\mathbb{N}_c}[\mathrm{log}(t)]).
\end{align}
This is from the motivic cohomology theory over the analytic topology for smooth rigid spaces. Here we regard $S_{\mathrm{dR},H,\mathbb{N}_c}[\mathrm{log}(t)]$ also as the motivic cohomology $\infty$-sheaf of ring over the big site of all the arc stacks over $V$. Then we have a well-defined functor:
\begin{align}
\mathrm{Motives}^{\mathrm{aHT},\mathrm{lisse}}_{S_{\mathrm{Robba},\mathrm{arc},*},\times}\overline{\mathrm{ArcStacks}}_{V,\mathrm{arc}}\longrightarrow \mathrm{D}_{\mathrm{inductive},\mathrm{coherent}}(\mathrm{susp}_\Sigma\mathrm{Descent}_{(\cdot/V)^\mathrm{rig}_\mathrm{arc},(\cdot/V)^\mathrm{rig},*}(S_{\mathrm{dR},-,\mathbb{N}_c}[\mathrm{log}(t)]))
\end{align}
which is symmetrical monoidal $\infty$-categorical functor into the $D$-modules in families in our current setting. Here the target category is defined to be the derived $\infty$-category of inductive systems of the coherent $D$-modules as in \cite{5GR1}, \cite{5GR2}. By considering the corresponding filtration de Rham period rings and the corresponding syntomization de Rham period rings as in the following we have two another versions of motivic Riemann Hilbert correspondences. The construction above is well-defined. The non-motivic version in \cref{theorem3} of this functor is fully-faithful. These motivic homotopical symmetrical monoidal Riemann-Hilbert functors can be derived to be functors on gestalten associated to the both sites, which is compatible with the abstract K\"unneth theorem (for large enough degrees in the gestalten) for underlying spaces $V,V'$ on both sides, and the $(\infty,\infty)$-categorical gestalten six-functor formalisms on the both sides when $V$ is varying in algebraic stacks.
\end{theorem}

\newpage
\section{Symmetrical Monoidal $\infty$-Categories in $\mathbb{F}_1$-Analytic Geometry}

\subsection{Grothendieck Categoricalizations}

\indent \cite{BBBK}, \cite{CS1}, \cite{CS2}, \cite{CS3} enlarge the categories of the rings in order to study the analytic geometry over $\mathbb{F}_1$. For instance in \cite{BBBK} we have the following construction. Several constructions on symmetrical monoidal categories are constructed with certain default tensor product. For instance in \cite{BBBK}, over $\mathbb{F}_1$ it just considers all the Banach sets which forms the corresponding category $\underline{\mathrm{BanachSets}}_*$. This category is not the ideal one since it is not easy to construct some stable $\infty$-categories directly from it to do desired analytic geometry, i.e. to form well-defined ringed-spaces and ringed-stacks. The point of view is to take the corresponding inductive limits over the category to form the $\underline{\mathrm{IndBanachSets}}_*$. Another closely related construction is to look at the corresponding monomorphic morphisms when one forms the inductive limits: $\underline{\mathrm{Ind_{\mathrm{mono}}BanachSets}}_*$. The latter two are actually symmetrical monoidal tensor categories when can then form the corresponding stable $\infty$-categories from them, we use the notations as in the following to denote the corresponding stable $\infty$-categories:
\begin{align}
\underline{\mathrm{IndBanachSets}}^\sharp_*, \underline{\mathrm{Ind_{\mathrm{mono}}BanachSets}}^\sharp_*.
\end{align}

\begin{remark}
One can enlarge the categories by considering the corresponding seminormed and normed sets over $*$:
\begin{align}
&\underline{\mathrm{IndNSets}}^\sharp_*, \underline{\mathrm{Ind_{\mathrm{mono}}NSets}}^\sharp_*,\\
&\underline{\mathrm{IndSNSets}}^\sharp_*, \underline{\mathrm{Ind_{\mathrm{mono}}SNSets}}^\sharp_*.
\end{align}
\end{remark}

\subsection{Modules}

For the $A_\infty$-rings or $E_1$-rings or $E_\infty$-rings in the previous discussion the modules over them are actually quite complicated as in \cite{BBK}. On the other hand they are very significant in the quasicoherent sheaf theoretic consideration in \cite{BBK}. We replace the base by some Banach ring $*$ and we consider not just sets but instead the modules over $*$ carrying the topologizations as in the above. For instance in \cite{BBBK}, over $*$ it just considers all the Banach modules which forms the corresponding category $\underline{\mathrm{BanachModules}}_*$. This category is not the ideal one since it is not easy to construct some stable $\infty$-categories directly from it to do desired analytic geometry, i.e. to form well-defined ringed-spaces and ringed-stacks. The point of view is to take the corresponding inductive limits over the category to form the $\underline{\mathrm{IndBanachModules}}_*$. Another closely related construction is to look at the corresponding monomorphic morphisms (i.e., born\'e) when one forms the inductive limits: $\underline{\mathrm{Ind_{\mathrm{mono}}BanachModules}}_*$. The latter two are actually symmetrical monoidal tensor categories when can then form the corresponding stable $\infty$-categories from them, we use the notations as in the following to denote the corresponding stable $\infty$-categories:
\begin{align}
\underline{\mathrm{IndBanachModules}}^\sharp_*, \underline{\mathrm{Ind_{\mathrm{mono}}BanachModules}}^\sharp_*.
\end{align}

\begin{remark}
One can enlarge the categories by considering the corresponding seminormed and normed sets over $*$:
\begin{align}
&\underline{\mathrm{IndNModules}}^\sharp_*, \underline{\mathrm{Ind_{\mathrm{mono}}NModules}}^\sharp_*,\\
&\underline{\mathrm{IndSNModules}}^\sharp_*, \underline{\mathrm{Ind_{\mathrm{mono}}SNModules}}^\sharp_*.
\end{align}
\end{remark}

\subsection{Generalized Hodge modules}

For instance in \cite{BBKK} and \cite{CS3}, we have the well-defined structure sheaf $RX(.)$ over $X$ any $p$-adic analytic space. The structure sheaf provides the definition of quasicoherent sheaves. In \cite{T} we used the $p$-adic functional analytic method from \cite{K1} and \cite{KL} to have studied some spaces emerges after \cite{CKZ} and \cite{PZ}. Though spaces we looked at are those regular higher dimensional rigid analytic spaces, i.e. just multi-discs, the essential consideration and application in mind are quite new. Approximately we have the ring of analytic funcions over a multi-disc with parameter $[a_I,b_I]$ with $I$ some finite set. This ring is the higher dimensional version of the rings studied in \cite{KPX}. We use the notation $L_{[a_I,b_I]}$ to denote these rings. Recall the multi-rigid intervals take the form of:
\begin{align}
p^{-b_i{p/(p-1)}}<||x||_{\mathbb{Q}_p}<p^{-a_i{p/(p-1)}}, \forall i \in I.
\end{align}
Then taking inverse limit on $a_I$ we have the version of rings with just multi-radii $L_{b_I}$. These rings will give the full version of the ring we are considering if we take union on $b_I$ we then use the notation $L$ to denote this. So we have:
\begin{definition} \label{definition1}
We have defined as above the key rings in consideration:
\begin{align}
L_{[a_I,b_I]},L_{b_I}:=L_{(0,b_I]},L_{[0,b_I]},L:=\bigcup_{b_I>0}\bigcap_{a_I>0}L_{[a_I,b_I]}.
\end{align}
Here $b_I$ can be $+\infty$ towards all directions.
\end{definition}

We then enlarge the coeffcients to $I$ different finite extensions of $\mathbb{Q}_p$: $K_1$,...,$K_{|I|}$. Where we have the corresponding  
\begin{align}
L_{[a_I,b_I],K_I},L_{b_I,K_I}:=L_{(0,b_I],K_I},L_{[0,b_I],K_I},L_{K_I}:=\bigcup_{b_I>0}\bigcap_{a_I>0}L_{[a_I,b_I],K_I}.
\end{align}

One also has the version of these rings where we have the corresponding variable substitutions by using the uniformizer $\pi_{K_I}$ for different directions, as well as the group $\Gamma_{K_I}$ for different directions (by using the generator for each component in this product $\Gamma_{K_1}\times...\times\Gamma_{K_{|I|}}$):
\begin{align}
L_{[a_I,b_I],K_I}(\pi_{K_I}),L_{b_I,K_I}(\pi_{K_I}):=L_{(0,b_I],K_I}(\pi_{K_I}),L_{[0,b_I],K_I}(\pi_{K_I}),L_{K_I}(\pi_{K_I}):=\bigcup_{b_I>0}\bigcap_{a_I>0}L_{[a_I,b_I],K_I}(\pi_{K_I}),
\end{align}
\begin{align}
L_{[a_I,b_I],K_I}(\Gamma_{K_I}),L_{b_I,K_I}(\Gamma_{K_I}):=L_{(0,b_I],K_I}(\Gamma_{K_I}),L_{[0,b_I],K_I}(\Gamma_{K_I}),L_{K_I}(\Gamma_{K_I}):=\bigcup_{b_I>0}\bigcap_{a_I>0}L_{[a_I,b_I],K_I}(\Gamma_{K_I}).
\end{align}

\begin{definition} \label{definition2}
Recall from \cite{T} we have the multi Frobenius actions and the multi $\Gamma_{K_I}$ actions over these rings. The corresponding actions through different directions are mutually commutative and actions along different directions will be identity map. Therefore we have the corresponding Frobenius-Hodge modules over these rings (in further relativization by tensoring with some rigid affinoid $X/\mathbb{Q}_p$):
\begin{align}
L^X_{[a_I,b_I],K_I}(\pi_{K_I}),L^X_{b_I,K_I}(\pi_{K_I}):=L^X_{(0,b_I],K_I}(\pi_{K_I}),L^X_{[0,b_I],K_I}(\pi_{K_I}),L^X_{K_I}(\pi_{K_I}):=\bigcup_{b_I>0}\bigcap_{a_I>0}L^X_{[a_I,b_I],K_I}(\pi_{K_I}).
\end{align}
A Frobenius-Hodge module over $L^X_{K_I}(\pi_{K_I})$ is defined to be a Frobenius-Hodge module over some $L^X_{b_I,K_I}(\pi_{K_I})$ through the base change when we take the union over $b_I$, while the latter is defined in the following way. It is defined to be a finite projective module over $L^X_{b_I,K_I}(\pi_{K_I})$ carrying semilinear Frobenius action along each direction $i\in I$ such as:
\begin{align}
\varphi^*F \overset{\sim}{\rightarrow} F
\end{align}
holds true after base change to $L^X_{b_I/b,K_I}(\pi_{K_I})$. We have the notion of a Frobenius-Hodge module over $L^X_{[a_I,b_I],K_I}(\pi_{K_I})$, which is defined to be a finite projective module over  $L^X_{[a_I,b_I],K_I}(\pi_{K_I})$ such as:
\begin{align}
\varphi^*F \overset{\sim}{\rightarrow} F
\end{align}
holds true after base change to $L^X_{[a_I,b_I/p],K_I}(\pi_{K_I})$. Here we assume over each direction $i\in I$ we have that $[a_i,b_i]\cap[a_i/p,b_i/p]=[a_i,b_i/p]\neq \emptyset$. This also leads to the notion of Frobenius-Hodge bundles over all such multi-intervals with the requirement: over each direction $i\in I$ we have that $[a_i,b_i]\cap[a_i/p,b_i/p]=[a_i,b_i/p]\neq \emptyset$. It is defined to be a collection of Frobenius-Hodge modules over all $\{[a_I,b_I]|\forall i\in I, [a_i,b_i]\cap[a_i/p,b_i/p]=[a_i,b_i/p]\neq \emptyset\}$ in a compatible way such as over each $[a_I,b_I/p]$ the sections are assumed to be isomorphic. As in \cite{T} we have the $\Gamma_{K_I}$-actions over these rings which form the objects which are called $\Gamma$-Frobenius modules and bundles.
\end{definition}

\begin{definition}
Over $*$=
\begin{align}
L^X_{[a_I,b_I],K_I}(\pi_{K_I}),L^X_{b_I,K_I}(\pi_{K_I}):=L^X_{(0,b_I],K_I}(\pi_{K_I}),L^X_{[0,b_I],K_I}(\pi_{K_I}),L^X_{K_I}(\pi_{K_I}):=\bigcup_{b_I>0}\bigcap_{a_I>0}L^X_{[a_I,b_I],K_I}(\pi_{K_I}),
\end{align} 
we have the notion of $\Gamma$-Frobenius modules and bundles as above. Then we consider the derived $\infty$-categories 
\begin{align}
\underline{\mathrm{IndBanachModules}}^\sharp_*, \underline{\mathrm{Ind_{\mathrm{mono}}BanachModules}}^\sharp_*.
\end{align}
which are stable. And we have the condensed version:
\begin{align}
\blacksquare D_{*}, \blacksquare D_{*,\mathrm{bounded}},\blacksquare D_{*,\mathrm{perfect}}.
\end{align}
Therefore we have the corresponding Banach perfect complexes and condensed perfect complexes of the corresponding objects in our setting, namely we consider the perfect complexes of the $\Gamma$-Frobenius-Hodge modules in our setting, which again form certain stable $\infty$-categories with symmetrical monoidal structures. In the Banach setting we use the notations:
\begin{align}
&\underline{\mathrm{IndBanachModules}}^\sharp_{*,\mathrm{perfect},\Gamma,F}, \underline{\mathrm{Ind_{\mathrm{mono}}BanachModules}}^\sharp_{*,\mathrm{perfect},\Gamma,F},\\
&\underline{\mathrm{IndBanachModules}}^\sharp_{*,\mathrm{perfect},\mathrm{bounded},\Gamma,F}, \underline{\mathrm{Ind_{\mathrm{mono}}BanachModules}}^\sharp_{*,\mathrm{perfect},\mathrm{bounded},\Gamma,F},\\
&\underline{\mathrm{IndBanachModules}}^\sharp_{*,\mathrm{perfect},-,\Gamma,F}, \underline{\mathrm{Ind_{\mathrm{mono}}BanachModules}}^\sharp_{*,\mathrm{perfect},-,\Gamma,F},
\end{align}
to denote these complexes. And in the corresponding condensed setting we use:
\begin{align}
\blacksquare D_{*,\Gamma,F}, \blacksquare D_{*,\mathrm{bounded},\Gamma,F},\blacksquare D_{*,\mathrm{perfect},\mathrm{bounded},\Gamma,F},\blacksquare D_{*,\mathrm{perfect},\Gamma,F},\blacksquare D_{*,\mathrm{perfect},-,\Gamma,F}.
\end{align}
\end{definition}

\subsection{Derived $\infty$-Categories}

\begin{definition}
If $I$ is singleton, then we have the notion of $(F,\Gamma)$-complex of any $\Gamma$-Frobenius-Hodge module $F$: $C_{F,\Gamma}(F)$, where we have also the $C_{F,*}(F)$ and $C_{*,\Gamma}(F)$ complexes as well. Using them by induction we have the corresponding notion of $(F,\Gamma)$-complex of any $\Gamma$-Frobenius-Hodge module $F$: $C_{F,\Gamma}(F)$ when $I$ is not just a singleton. In our setting for each $i\in I$ we also have the corresponding $W_i=\varphi_i^{-1}$-operator. In such a way one can form the complex $C_{W}(F)$ directly.
\end{definition}

\begin{theorem}
$C_{F,\Gamma}(F)$ is in bounded $(\infty,1)$-derived category of complexes over $X$, restricting to perfect complexes:
\begin{align}
D_{\mathrm{perfect},\mathrm{bounded}}(\mathrm{Mod}_X).
\end{align}
$C_{F,\Gamma}(F)$ is in 
\begin{align}
\underline{\mathrm{IndBanachModules}}^\sharp_{X,\mathrm{perfect},\mathrm{bounded}}, \underline{\mathrm{Ind_{\mathrm{mono}}BanachModules}}^\sharp_{X, \mathrm{perfect},\mathrm{bounded}}.
\end{align} 
$C_{W}(F)$ is in 
\begin{align}
\underline{\mathrm{IndBanachModules}}^\sharp_{L^X_{\infty_I,K_I}(\Gamma_{K_I}),\mathrm{perfect},\mathrm{-}}, \underline{\mathrm{Ind_{\mathrm{mono}}BanachModules}}^\sharp_{L^X_{\infty_I,K_I}(\Gamma_{K_I}), \mathrm{perfect},\mathrm{-}},
\end{align}
where $X$ is just $\mathbb{Q}_p$.
\end{theorem}

\begin{proof}
The statements for $C_{F,\Gamma}(F)$ are \cite{T}. For $C_{W}(F)$, over $L^X_{\infty_I,K_I}(\Gamma_{K_I})$ the same argument in \cite{T} works here where one just takes the corresponding projective resolutions in our stable $\infty$-categories in the current setting.
\end{proof}

\begin{remark}
Here one can in fact work with the usual derived categories after Grothendieck categoricalization, which also produces stable $\infty$-categories. To be more precise we consider 
\begin{align}
\mathrm{Ind}\mathrm{Mod}_{L^X_{\infty_I,K_I}(\Gamma_{K_I})}
\end{align}
which is then Grothendieck. Then we have the following stable derived $\infty$-category of all the complexes formed from the objects in the above category:
\begin{align}
D_{\mathrm{bounded}}(\mathrm{Ind}\mathrm{Mod}_{L^X_{\infty_I,K_I}(\Gamma_{K_I})}), D_{\mathrm{bounded}}(\mathrm{Ind}\mathrm{Mod}_{L^X_{\infty_I,K_I}(\Gamma_{K_I})}).
\end{align}
Then one can further restrict to those complexes with cohomology groups in $\mathrm{Mod}_{L^X_{\infty_I,K_I}(\Gamma_{K_I})}$ to achive the desired $\infty$-categories.
\end{remark}

We have the following parallel result by using the foundation in \cite{CS1}, \cite{CS2}, \cite{CS3}:

\begin{theorem}
$C_{F,\Gamma}(F)$ is in bounded $(\infty,1)$-derived category of complexes over $X$, restricting to perfect complexes:
\begin{align}
D_{\mathrm{perfect},\mathrm{bounded}}(\mathrm{Mod}_X).
\end{align}
$C_{F,\Gamma}(F)$ is in 
\begin{align}
\blacksquare\underline{D}_{X,\mathrm{perfect},\mathrm{bounded}}.
\end{align} 
$C_{W}(F)$ is in 
\begin{align}
\blacksquare\underline{D}_{L^X_{\infty_I,K_I}(\Gamma_{K_I}),\mathrm{perfect},-},
\end{align}
where $X$ is just $\mathbb{Q}_p$.
\end{theorem}

\begin{proof}
The statements for $C_{F,\Gamma}(F)$ are \cite{T}. For $C_{W}(F)$, over $L^X_{\infty_I,K_I}(\Gamma_{K_I})$ again see \cite{T} the argument remains unchange as long as one works in the categories of condensed complexes in the current setting.
\end{proof}

\begin{corollary}
$C_{W}(.)$ induces a derived functor from the stable $\infty$-category 
\begin{align}
\underline{\mathrm{IndBanachModules}}^\sharp_{*,\mathrm{perfect},\mathrm{bounded},\Gamma,F}, \underline{\mathrm{Ind_{\mathrm{mono}}BanachModules}}^\sharp_{*,\mathrm{perfect},\mathrm{bounded},\Gamma,F},
\end{align}
and in the corresponding condensed setting:
\begin{align}
\blacksquare D_{*,\mathrm{perfect},\mathrm{bounded},\Gamma,F},
\end{align}
to the stable $\infty$-category 
\begin{align}
\blacksquare\underline{D}_{L^X_{\infty_I,K_I}(\Gamma_{K_I}),\mathrm{perfect},-},
\end{align}
where $X$ is just $\mathbb{Q}_p$. Here $*=L_{K_I}^X(\pi_{K_I})$. This also induces the morphism on the $K$-group spectra of $\mathbb{E}_\infty$-rings after applying \cite{BGT} to the corresponding stable $(\infty,1)$-categories, after \cite{G2}, \cite{A2}, \cite{BGT}.
\end{corollary}

\begin{proof}
Now we consider the hyper cohomological functor formed from $C_{W}(.)$ and the complexes $F^\ell$ in the categories, which produces the corresponding spectral sequence $E_k^{.,\ell}=E[C_{W}(.)|F^\ell]$ which realizes the derived functor as desired.
\end{proof}

\begin{conjecture}
$C_{W}(.)$ induces a derived functor from the stable $\infty$-category 
\begin{align}
\underline{\mathrm{IndBanachModules}}^\sharp_{*,\mathrm{perfect},\mathrm{bounded},\Gamma,F}, \underline{\mathrm{Ind_{\mathrm{mono}}BanachModules}}^\sharp_{*,\mathrm{perfect},\mathrm{bounded},\Gamma,F},
\end{align}
and in the corresponding condensed setting:
\begin{align}
\blacksquare D_{*,\mathrm{perfect},\mathrm{bounded},\Gamma,F},
\end{align}
to the stable $\infty$-category 
\begin{align}
\blacksquare\underline{D}_{L^X_{\infty_I,K_I}(\Gamma_{K_I}),\mathrm{perfect},\mathrm{bounded}},
\end{align}
where $X$ is just $\mathbb{Q}_p$. Here $*=L_{K_I}^X(\pi_{K_I})$. This also induces the morphism on the $K$-group spectra of $\mathbb{E}_\infty$-rings after applying \cite{BGT} to the corresponding stable $(\infty,1)$-categories, after \cite{G2}, \cite{A2}, \cite{BGT}.
\end{conjecture}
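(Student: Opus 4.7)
The plan is to upgrade the preceding corollary, which only landed $C_W(\cdot)$ in the category of perfect, bounded-above condensed complexes $\blacksquare\underline{D}_{L^X_{\infty_I,K_I}(\Gamma_{K_I}),\mathrm{perfect},-}$, to land in the category of perfect bounded condensed complexes $\blacksquare\underline{D}_{L^X_{\infty_I,K_I}(\Gamma_{K_I}),\mathrm{perfect},\mathrm{bounded}}$. First I would unpack $C_W(F)$ explicitly as the total Koszul-type complex associated to the $|I|$ pairwise commuting operators $1-W_i=1-\varphi_i^{-1}$ acting on the underlying $\Gamma$-Frobenius-Hodge module. This Koszul complex has length $|I|$ and, composed with the already-established boundedness of $C_{F,\Gamma}$ from the preceding theorem, is built only out of finite limits and colimits of perfect bounded condensed complexes; in particular the raw total complex is bounded both above and below before any sheafification.

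The second step is to rerun the hypercohomological spectral sequence $E_k^{\cdot,\ell}=E[C_W(\cdot)|F^\ell]$ from the proof of the preceding corollary with this explicit Koszul presentation in hand. Both indexing directions are now bounded: the Koszul direction by $|I|$, and the $\ell$-direction by the hypothesis that $F^\ell$ is perfect bounded. This forces the abutment to have cohomology supported in a bounded range of degrees, which upgrades the target of the functor from perfect bounded-above to perfect bounded, as required. The Banach and condensed formulations are handled in parallel, since the Koszul presentation is formal and makes sense in both $\underline{\mathrm{IndBanachModules}}^\sharp_{*}$ and $\blacksquare D_{*}$, with the condensed case following by applying the solidification of \cite{CS1}, \cite{CS2}, \cite{CS3} termwise.

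The main obstacle, and the reason the statement is posed as a conjecture rather than a theorem, is uniform control of perfectness in the condensed derived $\infty$-category $\blacksquare\underline{D}_{L^X_{\infty_I,K_I}(\Gamma_{K_I})}$: the ambient ring is highly non-noetherian, and the kernel-cokernel of each $1-W_i$ need not \emph{a priori} remain perfect after iterated application in multiple directions. I would attempt to overcome this by induction on $|I|$. For $|I|=1$ the statement reduces to the two-term Herr-type complex being a perfect bounded condensed complex, which should follow from the methods of \cite{T} combined with the categoricalization from \cite{CS1}, \cite{CS2}, \cite{CS3}. For the inductive step one would factor $C_W$ as an iterated composition of single-direction $W_i$-functors and invoke the inductive hypothesis together with an exchange property for condensed tensor products. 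The hard part is a uniform perfectness-and-amplitude bound at each inductive step across all directions $i\in I$, which appears to require a finiteness input genuinely beyond the toolkit in \cite{T}, \cite{KPX}, \cite{CKZ}, \cite{PZ}, \cite{1Z}.
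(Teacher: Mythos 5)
The statement you are trying to prove is posed in the paper as a \emph{conjecture}; the paper offers no proof. What is proved is the preceding corollary, whose target is the bounded-above category $\blacksquare\underline{D}_{L^X_{\infty_I,K_I}(\Gamma_{K_I}),\mathrm{perfect},-}$, with a one-line argument via the hypercohomological spectral sequence $E_k^{\cdot,\ell}=E[C_W(\cdot)\,|\,F^\ell]$. The conjecture is precisely the claim that the target can be upgraded to the two-sided bounded category, and the paper explicitly leaves this open. So there is no reference proof to compare against, and a response that closed the gap would actually be contradicting the paper's framing. You correctly identified this status, which is the most important thing a blind attempt could do here.

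On the substance of your approach: the Koszul presentation of $C_W(F)$ as the total complex of the $|I|$ commuting operators $1-W_i$ is the right structural model, and the observation that this gives bounded amplitude of the raw total complex (Koszul length $|I|$ times the bounded amplitude of the input $F^\ell$) is exactly the mechanism one would use for the boundedness half. Two small cautions. First, you invoke the ``already-established boundedness of $C_{F,\Gamma}$'' while constructing $C_W$, but $C_W$ is built from the underlying module of $F$, not from the complex $C_{F,\Gamma}(F)$; the boundedness input you actually need is just that $F^\ell$ is a bounded perfect complex, which you do state in the spectral-sequence step, so this is a wording slip rather than a real error, but in a write-up you would want to decouple the two complexes cleanly. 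Second, bounded amplitude of the total complex does not by itself confer membership in $\blacksquare\underline{D}_{\cdots,\mathrm{perfect},\mathrm{bounded}}$; the derived image must also remain perfect, and that is where, as you say, the argument does not close. Your diagnosis of the obstruction (non-noetherianity of the condensed ring $L^X_{\infty_I,K_I}(\Gamma_{K_I})$, so kernels and cokernels of $1-W_i$ need not stay perfect, and the iterated multidirectional application compounds this) is the right one to name, and the proposed inductive reduction on $|I|$ with a single-direction Herr-type base case is a natural skeleton for any eventual proof. The honest acknowledgment that a genuine finiteness input beyond \cite{T}, \cite{KPX} is required is consistent with the paper's decision to leave the statement as a conjecture.
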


\subsection{Noncommutative Tamagawa-Iwasawa Gestalten}\label{ntig}

\noindent For the convenience of the readers, we recall and present the foundations here on the noncommutative gestalten following \cite{5S5} and \cite{5S4} on modern theory of motives\footnote{\cite{5S4} mainly considers the category $\mathrm{Sch}^\mathrm{fppf}_\mathbb{Z}$, i.e. \textit{la topologie fid\`elimente plate et present\'e fini}. Over the generality following \cite{5S5} this is replaced by the so-called countably presented maps for stacks, this is certainly a correct notion for gestalten.}. We start from the derived category of the sphere spectrum, $D(\mathbb{S})$, one can also work over some $p$-complete version of the sphere spectrum (Witt liftings, prismatic liftings and so on.). We consider the noncommutative associate ring objects $\mathrm{Alg}(D(\mathbb{S}))$\footnote{They are very general $\mathbb{E}_1$-ring objects as mentioned in \cite{5S5} which include consideration in \cite{CS1}. Note that we are considering rings over $\mathbb{S}$, the sphere spectrum whose $\pi_0$ is $\mathbb{Z}$.} in this category and we conside the ring gestalten by taking the colimits along the following morphisms:
\begin{align}
\mathrm{Alg}(D(\mathbb{S}))\hookrightarrow \mathrm{Alg}_{\mathrm{Alg}(D(\mathbb{S}))}(1Pr^L)\hookrightarrow ...
\end{align}

\noindent The following is the definition in \cite{5S5}:
\begin{definition}\mbox{(Scholze)}
$\mathrm{NoncommGest}(D(\mathbb{S}))$, the $\infty$-category of all the noncommutative gestalten is defined to be the colimit of the morphisms:
\begin{align}
\mathrm{Alg}(D(\mathbb{S}))\hookrightarrow \mathrm{Alg}(\mathrm{Mod}_{\mathrm{Alg}(D(\mathbb{S}))}(1Pr^L))\hookrightarrow ...
\end{align}
\end{definition}
Then as in \cite{5S5} we consider the noncommutative stacks which are simply $(\infty,1)$-sheaves over the opposite category of $\mathrm{Alg}(D(\mathbb{S}))$. Then we consider a possible suitable Grothendieck topology which we take to be the one as in \cite{5S5} consisting of all the presentable maps on noncommutative stacks, which forms a well-defined class of $!$-able maps and a well-defined six-functor formalism. Following \cite{5S5} and \cite{5Ga}, we establish some further foundation results on noncommutative Tamagawa-Iwasawa conjectures improving \cite{T}. The noncommutative motives are then the following universal gestalt:
\begin{align}
&GF_\mathrm{noncommutative}^U\\
&:=(\mathrm{End}_{\mathrm{End}_F(1)}(1),\mathrm{End}_F(1),F:=\mathrm{Fun}^U(\mathrm{AlgStack}_\mathbb{S})_{!-\mathrm{able}}, \mathrm{CoeffCategory}_{\infty,\infty})^{\mathrm{localization,localization}},\\
&\mathrm{Mod}_F(1Pr^L),\mathrm{Mod}_{\mathrm{Mod}_F},......).
\end{align}
The first localization we take the homotopy localization with respect to Tate element in the Berkovich setting, and then we take the loop operators to do the second localization to form the stablization. As in \cite{5S4} we have the generators which are all those noncommutative stacks $[X]$. We then have the realization functor which is a tensor morphism on gestalten from this univesal gestalt to some gestalt over some $E_\infty$-ring. Note that all the stacks here are over the sphere spectrum. If we change the base to some larger stack, then we simpy take the localized category over this stack.

\begin{definition}\mbox{(Scholze, \cite{5S5})}
For all the noncommutative gestalten taking a general form of:
\begin{align}
(G_0\in \mathrm{Alg}(D(\mathbb{S})),G_1,G_2,...),
\end{align}
the $!$-able mappings which realize a well-defined $(\infty,\infty)$-categorical six-functor formalism is defined to be all those mappings which are assumed to countably presented over the first $\mathbb{E}_1$-ring object on $D(\mathbb{S})$. To be more presice, for a map of two noncommutative gestalten $X \rightarrow X'$, we call it countably presented if it takes a form of $X \rightarrow {\lim}_{k\rightarrow \infty} X'_{k}$ where $X'$ is now a countably limit of $\mathrm{E}_1$-objects in $D(\mathbb{S})$ such that each $X\rightarrow X'_{k}$ realize $X'_{k}$ as a $X$-ring of countably presentation. For the category of all $\mathbb{E}_1$-ring objects in $D(\mathbb{S})$ (which is also a site with countably presented topology), we can form the opposite category by formally writing each element $-$ as $\mathrm{NSpec}(-)$. One then defines a corresponding \textit{noncommutative stack} simply as a countable colimit of $\mathrm{NSpec}(X'_k)\rightarrow \mathrm{NSpec}(\mathbb{S}),k\in K$ where each $X'_k$ is countably presented over the sphere spectrum $\mathbb{S}$. Then one puts certain big Grothendieck topology over them, simpy by using countably presented maps which are $!$-able, with the well-defined abstract 6-functor formalism. Then over the site $\mathrm{Alg}(D(\mathbb{S}))$ one defines the \textit{noncommutative $(\infty,\infty)$-stacks} as sheaves with values in $(\infty,\infty)$-categories. They have again $!$-able mappings from countably presented ones, which forms a well-defined abstract 6-functor formalism.
\end{definition}

\begin{remark}\label{remark14}
Again this is just over $\mathbb{S}$, so it is extremely general as long as one lifts things over $\mathbb{Z}$ to this sphere spectrum. For instance if one wants to study the $K$-theory of general nonarchimedean Banach algebras, then one immediately has the abstract 6-functor formalism. This applies directly to $p$-adic modular and Banach representations of $p$-adic Lie groups. To be more precise consider any $p$-adic Lie group $G$, one can form the $p$-adic Iwasawa algebra $\mathbb{Z}_p[[G]]$, then promote all such algebras to be gestalten. Then for all such $G$ one can then directly have a higher abstract 6-functor formalism.
\end{remark}

\begin{theorem}\mbox(Scholze, \cite[Theorem 5.2]{5S5})
For degrees sufficiently large for all noncommutative gestalten, we have $\infty$-categorical six-functor formalism. Where $f^*$ admits a left adjoint $f_\sharp$ which is automatically identical to $f_*$. Therefore we have the formal properness in the abstract six-functor formalism.
\end{theorem}

\noindent The following of course is a noncommutative direct analog of Clausen-Scholze's notion of analytic stacks. 

\begin{definition} (Clausen-Scholze, \cite{CS1})
A noncommutative pre-$\mathbb{E}_1$-analytic ring is defined to be an $\mathbb{E}_1$-ring object $R$ in the category of $\infty$-condensed abelian groups (i.e. sheaves with values in $(\infty,1)$-groupoids over profinite sets) together with a subcategory $D$ of the category of all $(\infty,1)$-$R$-left-module objects $D'$ in the same category of $\infty$-condensed abelian groups. It is then called a noncommutative $\mathbb{E}_1$-analytic ring if we have the closedness  among the following operations:
\begin{itemize}\setlength{\itemsep}{-0.1cm}
\item[1] Taking colimits in $D$;
\item[2] Taking limits in $D$;
\item[3] Taking extensions in $D$;
\item[4] Taking higher extension groups between objects in $D$ and $D'-D$.
\end{itemize}
We then denote the category of all the noncommutative $E_1$-analytic rings as:
\begin{align}
\mathrm{NoAnaRing}_{\mathbb{E}_1}.
\end{align}
Now we put a topology on this category (to be more precise we consider the opposite category) by embedding this category into noncommutative gestalten, and pullback the $!$-able topology for noncommutative gestalten generated by countably presented mappings. In such a way we have the resulting Grothendieck topology and site which are $!$-able:
\begin{align}
\mathrm{NoAnaRing}_{\mathbb{E}_1,!}
\end{align}
Now we define noncommutative analytic stacks as $\infty$-sheaves with values in $\infty$-groupoids over this site, which form the following category:
\begin{align}
\mathrm{NoAnaStacks}.
\end{align}
Then we embed this category into gestalten via forming the gestalten from the structure sheaves. We then simply pullback the $!$-able topology generated by countably presented mappings for gestalten to this category of noncommutative analytic stacks. Then we have a well-defined higher abstract six-functor formalism for noncommutative analytic stacks.
\end{definition}

\begin{definition}
Consider the setting of $\Gamma$-Frobenius \textbf{multivariate} module $M$ over the full multivariate Robba ring $L_{K_I,X}$. Here $X$ is allowed to be noncommutative noetherian Banach rigid affinoids as in \cite{T}. We take the quotient of $\mathrm{colimit}_{J\in (0,\infty)}\mathrm{AnSpec}L_{J,K_I,X}$ with respect to the $\Gamma$-Lie group and $Frobenius^\mathbb{Z}$ further, in the category of gestalten over the category of analytic stacks in the sense of Clausen-Scholze. We call the associate $E_1$-gestalten $\mathcal{R}_X = (R_{0,X}, R_{1,X}, R_{2,X},...)$ \textit{noncommutative (locally-analytic) Tamagawa-Iwasawa gestalten}, where $R_{0,X}\in \mathrm{Alg}(D(\mathbb{S}))$ a noncommutative algebra object in the derived category of the sphere spectrum (the deformed structure sheaf to $X$), $R_{i,X}$ is module category over the monoidal $R_{i-1,X}$. Then one considers the functor in \cite{5S5} from the category of Clausen-Scholze anlaytic stacks to that of all gestalten\footnote{Under the general philosophy from \cite{5S5}, in fact we do not have to really consider $X$ here as a noncommutaive analytic stack in the sense of Clausen-Scholze, this means we only have to consider $X$ as some $\mathbb{E}_1$-animated ring, which is in the large category of condensed abelian groups. This is also enough for applications in Iwasawa theory. To be more precise, what is really happening is for the $\infty$-category of all noncommutative animated ring objects in the condensed animated abelian groups, if we prescribe nothing on the $!$-maps, then the correct definition of such possible presciption of the $!$-maps comes from the pullback of $!$-able maps for noncommutative gestalten. This gives us the flexibility to consider condensed objects as if we are considering no topology at all.}:
\begin{align}
F: \mathrm{CSAnalyticStacks}\rightarrow \mathrm{NoncommGest}
\end{align}
where the pullback functor (taking the preimage) $F^{-1}$ will take these noncommutative Tamagawa-Iwasawa gestalten to the \textit{noncommutative Tamagawa-Iwasawa stacks} generalizing \cite{5M} to the multivariate setting, which gives the definition of these stacks now in our current setting.
\end{definition}

\begin{theorem}
Assume we are in the multivariate setting. $(\infty,\infty)$-categorical six functor formalism:
\begin{align}
f_A,f_B,f_C,f_D,\otimes,\mathrm{Hom}
\end{align}
holds for noncommutative Tamagawa-Iwasawa gestalten, for $X$ over $\mathbb{Q}_p$ varying in the category of noncommutative analytic stacks over $\mathbb{Q}_p$ in the sense of Clausen-Scholze, i.e. they are the sheaves values in $(\infty,\infty)$-categories over the category of all the the $E_1$-analytic rings, with descent under the specific $!$-able topology generated by countably presented maps. In particular, we have the abstract Poincar\'e duality for these gestalten. We also have the K\"unneth theorem for two such gestalten, for degrees sufficiently high as in \cite[Corollary 5.3, Corollary 5.4]{5S5} and \cite{5Ga}. 
\end{theorem}

\begin{proof}
The corresponding stacks in the current setting can be written as certain colimits of stacks coming from countably presented noncommutative algebras. Then we are in the correct noncommutative gestalten setting where we have the !-able maps as in \cite[Corollary 5.3, Corollary 5.4]{5S5}, which implies directly the result, also following \cite{5Ga}.
\end{proof}

\noindent Of course the following is already interesting enough:

\begin{theorem}
Assume we are in the multivariate setting. $(\infty,\infty)$-categorical six functor formalism:
\begin{align}
f_A,f_B,f_C,f_D,\otimes,\mathrm{Hom}
\end{align}
holds for commutative Tamagawa-Iwasawa gestalten, for $X$ over $\mathbb{Q}_p$ varying in the category of commutative analytic stacks over $\mathbb{Q}_p$ in the sense of Clausen-Scholze, i.e. they are the sheaves values in $(\infty,\infty)$-categories over the category of all the the $E_\infty$-analytic rings, with descent under the specific $!$-able topology generated by countably presented maps (induced from commutative gestalten). In particular, we have the abstract Poincar\'e duality for these gestalten. We also have the K\"unneth theorem for two such gestalten, for degrees sufficiently high as in \cite[Corollary 5.3, Corollary 5.4]{5S5} and \cite{5Ga}. 
\end{theorem}

\begin{proof}
The corresponding stacks in the current setting can be written as certain colimits of stacks coming from countably presented commutative algebras. Then we are in the correct commutative gestalten setting where we have the !-able maps as in \cite[Corollary 5.3, Corollary 5.4]{5S5}, which implies directly the result, also following \cite{5Ga}.
\end{proof}

\begin{corollary}
The Iwasawa functoriality holds in the current setting (multivariate $\Gamma$-Frobenius modules), i.e. for any sequence of $p$-adic Lie groups with morphisms $G_1\rightarrow G_2\rightarrow ...\rightarrow G_n$ we have the morphisms on the corresponding noncommutative Tamagawa-Iwasawa gestalten. And in the multivariate setting we have Iwasawa theoretic K\"unneth theorem for the product $G'_1\times G'_2\times ...G'_m$ (of course this will be about the products of noncommutative Tamagawa-Iwasawa gestalten). 
\end{corollary}

\begin{remark}
The K\"unneth theorem is closely after \cite{5Ga} where one has to consider higher categorical element in the gestalten. We currently have no idea on whether one can degenerate this to some lower degree elements in the gestalten, i.e. to solely the sheaves over modules over the the pull-back of the noncommutative Tamagawa-Iwasawa gestalten in the category of analytic stacks after Clausen-Scholze.
\end{remark}

\begin{corollary}
For multivariate $\Gamma$-Frobenius modules (either projective or coherent) we have the Poincar\'e duality on cohomology gestalten over $\mathrm{Spa}\mathbb{Q}_p$, i.e. we consider pushforward of the Tamagawa-Iwasawa gestalten (taking first element in the gestalten sequence to be the derived $\infty$-$\otimes$-category of pseudocoherent complexes) to pseudocoherent gestalten for $\mathrm{AnSpec}\mathbb{Q}_p$. We also have the finiteness of the cohomology groups over $\mathrm{Spa}A^\square$ for some rigid affinoid $A$ in the multivariate setting, from the fact that the associated Tamagawa-Iwasawa gestalten are proper.
\end{corollary}

\begin{remark}
This approach is largely inspired by \cite{5S5} and \cite{5M}, which is actually giving another \textit{higher categorical approach} to \textit{der Endlichkeitssatz} in \cite{T}. And the current consideration improves results in \cite{T}. Also following the philosophy of \cite{5S5}, gestalten are already encoding informations on the derived categories in the usual sense, noncommutative Tamagawa-Iwasawa gestalten are enough to generalize the original consideration on Iwasawa theory, where one stops taking the further module gestalten over these gestalten to study the determinant functors and etc. However by using the approach in \cite{T} it seems to us that it is not so straightforward to prove Poincar\'e duality, as compared to this current higher categorical consideration after \cite{5S5}. Also as mentioned above, it is also hard to prove K\"unneth theorem as well since we do not even know if any K\"unneth theorem will hold for sheaves over Robba rings if we do not consider higher categories after \cite{5S5} and essentially \cite{5Ga}.
\end{remark}

\newpage
\section{Symmetrical Monoidal $\infty$-Categories in $p$-adic Functional Analysis}

\subsection{Representation of $p$-adic Lie groups}

\indent The algebra $L^X_{\infty_I,K_I}(\Gamma_{K_I})$ is some abelian version of the more general algebras as in \cite{ST} and \cite{Z1}. Their integral version will be the algebra taking the following form:
\begin{align}
X^+[[G]]
\end{align}
where $G$ is some Lie group over $\mathbb{Z}_p$ and $X^+$ is some local integral model of $X$, i.e. commutative $p$-adic $\mathbb{Z}_p$-algebras which can be commutative $\mathbb{F}_p$-algebra. We start from $X^+$ to be commutative $\mathbb{F}_p$-algebra, then this includes those $\mathbb{Z}_p/p^n$-algebras. Then by taking the inverse limits along $n$ we have the $p$-adic $\mathbb{Z}_p$-situation and then by taking $p^{-1}$ we can discuss the $\mathbb{Q}_p$-algebra coefficients. It, the ring $X^+[[G]]$, obviously can be highly noncommutative. In the field coefficient situation the category:
\begin{align}
R_{\mathrm{lisse},X^+,G}
\end{align}
is studied in \cite{SS1}, \cite{SS2} where monoidal structure has been established. More general setting is also established in \cite{HM}, where the category is promoted to certain complete category in the condensed mathematics. This is defined to be smooth representation of $G$ over the coefficient in $X^+$-modules. What is related is the corresponding work in \cite{So1} where Schneider's $E_1$-algebra\footnote{Also called $A_\infty$-algebras.} style Hecke algebras defined in \cite{Sc1} are studied in some well-established context. Again \cite{HM} generalized the definition of dg Hecke algebras to the relative setting with general coefficients. One also has the following algebra:
\begin{align}
DE^{X^+[[G]]}_{X^+,X^+}
\end{align}
which is the derived extension of $X^+$ with itself as the representation in some trivial manner. This algebra is actually $E_1$-algebra as in \cite{So1} when $X^+$ is a finite field. Since now the $E_1$-algebraic consideration is over $X^+$ we expect this to be more complicated. What is happening is that this algebra taking this form might be complicatedly hard to be related to the de Hecke algebra (defined in the obvious generalized way after \cite{Sc1}, in \cite{HM}) when the base is not a field, instead the base is a commutative ring as in \cite{So1}. Though similar, the relationship might be possibly taking a more generalized form as we discussed in the following.

\subsection{Derived Categories over $X^+$}

\noindent After \cite{SS1}, \cite{SS2}, \cite{HM} we now consider the derived $\infty$-category of:
\begin{align}
R_{\mathrm{lisse},X^+,G}.
\end{align}
As in \cite{SS1}, \cite{SS2}, \cite{HM} we have the generalized monoidal structure by using the tensor product over $X^+$ (note here $G$ can be noncommutative but we assume that $X^+$ to be commutative). Therefore we then have the derived $\infty$-category associated to this category which we denote it as:
\begin{align}
DR_{\mathrm{lisse},X^+,G}.
\end{align}

\begin{theorem}\mbox{\textbf{(Schneider-Sorensen \cite{SS1} \cite{SS2}, Heyer-Mann \cite{HM})}}
The $\infty$-category 
\begin{align}
DR_{\mathrm{lisse},X^+,G}.
\end{align}
is symmetrical monoidal with respect to the tensor products $\times_{G}$ defined in the same fashion as in \cite{SS1} and \cite{SS2} over tensor products over $X^+$ of two complexes. 
\end{theorem}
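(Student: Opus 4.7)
The plan is to bootstrap the symmetric monoidal structure from the abelian level established in Schneider-Sorensen and Heyer-Mann to the derived $\infty$-category by exhibiting $DR_{\mathrm{lisse},X^+,G}$ as a stable full subcategory of an ambient presentable symmetric monoidal stable $\infty$-category which is closed under tensor products and contains the unit. First I would recall that on the abelian level, $R_{\mathrm{lisse},X^+,G}$ carries the diagonal-action tensor product $V \times_G W := V \otimes_{X^+} W$ with $g \cdot (v \otimes w) = gv \otimes gw$, and that smoothness is preserved because the stabilizer of a pure tensor contains the intersection of two open stabilizers and hence is itself open in $G$. The unit is $X^+$ with trivial $G$-action, and the associator, symmetry, and unit isomorphisms are inherited from the underlying symmetric monoidal structure on $\mathrm{Mod}_{X^+}$.

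To pass to the derived $\infty$-category, I would embed $R_{\mathrm{lisse},X^+,G}$ into the condensed $\infty$-category of $G$-equivariant condensed $X^+$-modules following Heyer-Mann, where the derived tensor product $\otimes^L_{X^+}$ with diagonal action is already a well-behaved symmetric monoidal operation via the general Clausen-Scholze formalism. The key point to verify is that the full stable subcategory $DR_{\mathrm{lisse},X^+,G}$ is closed under this derived tensor product. This reduces, via restriction to a compact open subgroup $K \subset G$ and a K\"unneth-type spectral sequence argument, to showing that tensoring a smooth $K$-representation with another smooth $K$-representation stays smooth on each cohomology sheet, which follows from the abelian statement together with the observation that the higher Tor terms of smooth representations inherit smoothness.

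Having established closure under $\otimes^L_{X^+}$, the symmetric monoidal structure on $DR_{\mathrm{lisse},X^+,G}$ is induced from the ambient category by the standard principle (cf. Lurie's Higher Algebra) that a full stable subcategory of a symmetric monoidal stable $\infty$-category which is closed under the tensor bifunctor and contains the unit inherits the symmetric monoidal structure uniquely, with all higher coherences automatic. The identification of the induced bifunctor with $\times_G$ at the level of the heart is then immediate from the construction, and the identification on the level of complexes follows because the derived tensor product commutes with the truncation functors in a manner compatible with the diagonal action.

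The main obstacle is precisely the closure step: at the derived level, projective resolutions in $\mathrm{Mod}_{X^+[[G]]}$ need not consist of smooth representations, so one genuinely needs the Heyer-Mann condensed framework and in particular the existence of a generating family of compact, tensor-stable smooth objects (analogous to the compact inductions $c\text{-}\mathrm{Ind}^G_K X^+$ for compact open $K \subset G$) in order to compute $\otimes^L_{X^+}$ inside $DR_{\mathrm{lisse},X^+,G}$ without leaving the subcategory. Once this closure is secured and the ambient symmetric monoidal stable $\infty$-category is in place, the remainder of the verification is formal inheritance.
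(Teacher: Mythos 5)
The paper does not prove this theorem; it is stated as a cited result attributed to Schneider--Sorensen and Heyer--Mann, with no proof environment following the statement. So there is no ``paper's own proof'' to compare against, and your proposal is best evaluated as a reconstruction of how the cited references establish the result.

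Your overall architecture is sound: establish the abelian-level monoidal structure via the diagonal action on $V\otimes_{X^+}W$ (with smoothness preserved because $\mathrm{Stab}(v\otimes w)\supseteq \mathrm{Stab}(v)\cap\mathrm{Stab}(w)$ is open), identify a generating family of projective smooth objects $c\text{-}\mathrm{Ind}^G_K X^+$ that are free (hence flat) over $X^+$, use these to show the derived tensor bifunctor preserves the subcategory of complexes with smooth cohomology, and then invoke the inheritance principle for full stable subcategories closed under $\otimes$ and containing the unit. This is in fact closer to the Schneider--Sorensen route (resolving by compact inductions inside the smooth category itself, with the derived bifunctor built from flat resolutions) than to the Heyer--Mann route (which really works condensed and obtains the monoidal structure from a $6$-functor package), and you somewhat conflate the two. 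Two smaller soft spots: your claim that ``the derived tensor product commutes with the truncation functors'' is false as stated --- left derived tensor does not commute with coconnective truncation --- but it is also unnecessary, since the identification of the induced bifunctor with $\times_G$ on the heart already follows from the universal property of the derived $\infty$-category of an abelian monoidal category with enough flat-and-tensor-closed projectives. Likewise the appeal to a ``K\"unneth-type spectral sequence'' is a detour; the direct argument is that a resolution of $V$ by sums of $c\text{-}\mathrm{Ind}^G_K X^+$ stays termwise smooth after $-\otimes_{X^+}W$, so the homology (hence the $\mathrm{Tor}$ terms) are subquotients of smooth modules and therefore smooth. With those two points tightened, your sketch is a correct account of why the cited theorem holds.
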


\indent When we have that $X^+$ is defined over $\mathbb{Z}_p$ such that:
\begin{align}
X^+ = \varprojlim_{n} \overline{X}^+_{\mathbb{Z}_p/p^n},
\end{align}
we can then put:
\begin{align}
\varprojlim_n DR_{\mathrm{lisse},\overline{X}^+_{\mathbb{Z}_p/p^n},G}.
\end{align}

\subsection{Derived Categories over $E_1$-ring $DE^{X^+[[G]]}_{X^+,X^+}$ over $X^+$}

\noindent Here we consider the derived $\infty$-category of the ring:
\begin{align}
DE^{X^+[[G]]}_{X^+,X^+}.
\end{align}
As in \cite{So1} this is actually $E_1$-ring over $X^+$, which can also be regarded as come derived $E_1$-algebra in the sense of \cite{Sa}. Here we follow \cite{So1} to assume the compactness of the group $G$ with further assumptions as in \cite{So1}. $X^+$ is again a $\mathbb{F}_p$-ring which is commutative. This means that we are assuming that the dg Hecke algebras in \cite{Sc1} and more generally in \cite{HM} are just identical to the ones from $X^+$ directly since we are just taking the induction from $G$ to $G$ itself. When we have that $X^+$ is defined over $\mathbb{Z}_p$ such that:
\begin{align}
X^+ = \varprojlim_{n} \overline{X}^+_{\mathbb{Z}_p/p^n},
\end{align}
we can then put:
\begin{align}
\varprojlim_n DE^{\overline{X}^+_{\mathbb{Z}_p/p^n}[[G]]}_{\overline{X}^+_{\mathbb{Z}_p/p^n},\overline{X}^+_{\mathbb{Z}_p/p^n}}.
\end{align}
We are going to consider $X^+$ to be now taking such limit form which is $p$-adic $\mathbb{Z}_p$-algebra. We then use the notation $H$ to denote the corresponding dg Hecke algera which is defined in \cite{Sc1} and \cite{HM}. First we take the resolution of $X^+$ as the trivial representation, then we just take the derived homomorphism algebra from $X^+$ now with itself. So far the discussion is based on the situation where the generalization is directly through the corresponding generalization in \cite{So1}, i.e. the ring:
\begin{align}
DE^{X^+[[G]]}_{X^+,X^+}.
\end{align}
is just a relative version of the ring in \cite{So1}. \cite{So1} then makes the contact with the homology $E_1$-ring for the dg Hecke algebra from \cite{Sc1} in some transparent way. Then the homology $E_1$-ring can be directly related to the dg Hecke algebra in \cite{Sc1} and \cite{HM}. However in our general setting this is not possible since at least \cite{Sa} points out the issue where we need to take the derived homology instead to reach such similar relationship. Therefore we now change the point of view from the ring:
\begin{align}
DE^{X^+[[G]]}_{X^+,X^+}.
\end{align}
to the corresponding model in \cite{Sa}:
\begin{align}
\underset{{\mathrm{totalized},X^+,X^+}}{\mathrm{homomorphism}}
\end{align}
which is for instance the derived homology in \cite{Sa}. Here the point of view is that in order to reach certain homology in some derived sense which preserves the corresponding derived $E_1$-categories one has to work with the so-called derived $E_1$-algebras in \cite{Sa}. In such a way \cite{So1} can be generalized to the relative setting. First we recall that $H$ is defined by using projective resolution of $X^+$ (in our case this is the same as the induction from $G$ to $G$):
\begin{align}
\mathrm{proj}_\mathrm{resolution}(X^+)
\end{align}
which is then defined as the derived homomorphism of this resolution with itself. From \cite{Sc1}, \cite{HM} we have functors linking the two $\infty$-categories:
\begin{align}
DR_{\mathrm{lisse},X^+,G},
\end{align}
and the derived $\infty$-category of all the $H$-$E_1$-modules:
\begin{align}
D_H.
\end{align}
We use the following notations to denote the functors:
\begin{align}
F_{DR_{\mathrm{lisse},X^+,G}}, F_{D_H}.
\end{align}

\begin{conjecture}\mbox{\textbf{(After Sorensen, \cite[Theorem 1.1]{So1})}}
Promoting $H$ to a derived $E_1$-ring $\mathbb{H}$, we have the derived $\infty$-category of the minimal derived $E_1$-ring
\begin{align}
\underset{{\mathrm{totalized},\mathbb{H},\mathbb{H}}}{\mathrm{homomorphism}}
\end{align}
admits a functor from or a functor into the derived $\infty$-category
\begin{align}
DR_{\mathrm{lisse},X^+,G},
\end{align}
through the functors:
\begin{align}
F_{DR_{\mathrm{lisse},X^+,G}}, F_{D_H}.
\end{align}
We conjecture all functors here are equivalences of symmetrical monoidal $\infty$-categories.
\end{conjecture}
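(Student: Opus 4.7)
The plan is to bootstrap from \cite[Theorem 1.1]{So1} through the Sagave derived $E_1$-framework, reducing to the finite-level quotients $\overline{X}^+_{\mathbb{Z}_p/p^n}$ of $X^+$ and then assembling via the condensed inverse limit. First I would fix, compatibly across $n$, a projective resolution $\mathrm{proj}_\mathrm{resolution}(\overline{X}^+_{\mathbb{Z}_p/p^n})$ of the trivial representation in $R_{\mathrm{lisse},\overline{X}^+_{\mathbb{Z}_p/p^n},G}$, and let $H_n$ denote the resulting relative dg Hecke algebra in the sense of \cite{Sc1}, \cite{HM}. At each level the functors $F_{DR_{\mathrm{lisse},\overline{X}^+_{\mathbb{Z}_p/p^n},G}}$ and $F_{D_{H_n}}$ already exist by the generalization of \cite{Sc1} to the relative setting in \cite{HM}, providing the basic link between $DR_{\mathrm{lisse},\overline{X}^+_{\mathbb{Z}_p/p^n},G}$ and $D_{H_n}$.

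Next I would promote $H_n$ to its derived $E_1$-ring replacement $\mathbb{H}_n$ in the sense of \cite{Sa} by taking a bigraded cofibrant resolution, and identify
\begin{align}
\underset{{\mathrm{totalized},\mathbb{H}_n,\mathbb{H}_n}}{\mathrm{homomorphism}} \simeq DE^{\overline{X}^+_{\mathbb{Z}_p/p^n}[[G]]}_{\overline{X}^+_{\mathbb{Z}_p/p^n},\overline{X}^+_{\mathbb{Z}_p/p^n}}
\end{align}
using that the dg Hecke algebra computes $\mathrm{Ext}$ on the chosen projective resolution. This promotion from $H_n$ to $\mathbb{H}_n$ is required precisely because the identification of homology with the dg Hecke algebra---which drives \cite{So1} over a field---fails when the base is not a field; the bigraded resolution in \cite{Sa} is what recovers the $\mathbb{Z}_p$-integral minimal model. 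I would then verify that $F_{DR_{\mathrm{lisse},\overline{X}^+_{\mathbb{Z}_p/p^n},G}}$ and $F_{D_{H_n}}$ factor through the derived $\infty$-category of this minimal model, and check that the resulting triangle of symmetric monoidal $\infty$-functors is an equivalence by reducing modulo $p$ to the field case, where Sorensen's theorem applies, and then climbing back up along the filtration $p^n X^+ \subset p^{n-1} X^+ \subset \cdots \subset X^+$ one step at a time using the associated fiber sequences to control each successive extension.

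Finally I would take the inverse limit over $n$ in the condensed Grothendieck symmetric monoidal $\infty$-categorical framework of \cite{HM}, \cite{CS1}, \cite{CS2}. Because each tensor product $\times_G$ is exact along the reductions $\mathbb{Z}_p/p^{n+1} \to \mathbb{Z}_p/p^n$ and the relevant perfect and bounded complexes behave well under condensed projective limits, this produces the desired functors at the level of $X^+$ and preserves the monoidal structure. The \textbf{main obstacle} I expect is the passage from the field case to the $\mathbb{Z}_p/p^n$ case through Sagave's bigraded resolution: one must show that the derived endomorphism $E_1$-ring $DE^{X^+[[G]]}_{X^+,X^+}$ admits a minimal derived $E_1$-model in the sense of \cite{Sa} whose derived $\infty$-category is \emph{literally} equivalent to $DR_{\mathrm{lisse},X^+,G}$ as a symmetric monoidal object, whereas over a field the Koszul-type formality used in \cite{So1} is not available here and the equivalence has to be reconstructed by explicit extension-climbing together with careful control of the tensor product $\times_G$ at the integral level. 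This is delicate because $X^+[[G]]$ is highly noncommutative while $X^+$ is only commutative, so the symmetric monoidal structure on the $E_1$-side must be obtained by descent from the smooth representation side rather than intrinsically.
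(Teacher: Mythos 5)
The statement in question is a \emph{conjecture} in the paper, not a theorem, and the paper does not contain a proof of it. The only argument the paper offers is the remark immediately following the conjecture, which observes that combining \cite[Theorem 1.2]{Sa} with the generalization of \cite[Theorem 9]{Sc1} in \cite{HM} produces the two functors $F_{DR_{\mathrm{lisse},X^+,G}}$ and $F_{D_H}$, and then explicitly states: ``However we don't know whether all the statements of this conjecture are correct or not.'' So what you should be comparing your attempt against is not a proof but an acknowledged open problem, and the comparison point is therefore whether your outline actually resolves the part the paper leaves unresolved --- namely, that the functors are equivalences of symmetric monoidal $\infty$-categories.

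It does not, and the gap sits precisely where you flag it. The step ``check that the resulting triangle of symmetric monoidal $\infty$-functors is an equivalence by reducing modulo $p$ to the field case, where Sorensen's theorem applies, and then climbing back up along the filtration $p^n X^+ \subset p^{n-1} X^+ \subset \cdots \subset X^+$ one step at a time using the associated fiber sequences'' is not a proof but a wish. An equivalence after base change to the residue field does not automatically deform along a square-zero (or $p$-adic) filtration; you would need an obstruction-theoretic argument --- vanishing of the relevant relative $\mathrm{Ext}$/cotangent groups controlling the deformation of the monoidal equivalence at each step --- and no such vanishing is produced or even sketched. Compounding this, the identification
\begin{align}
\underset{{\mathrm{totalized},\mathbb{H}_n,\mathbb{H}_n}}{\mathrm{homomorphism}} \;\simeq\; DE^{\overline{X}^+_{\mathbb{Z}_p/p^n}[[G]]}_{\overline{X}^+_{\mathbb{Z}_p/p^n},\overline{X}^+_{\mathbb{Z}_p/p^n}}
\end{align}
that you assert at the outset is exactly the point the paper warns is subtle (``the relationship might be possibly taking a more generalized form,'' and \cite{Sa} ``points out the issue where we need to take the derived homology instead''); the left side is the minimal derived $E_1$-model in Sagave's sense and the right side is a derived endomorphism object, and their agreement over $\mathbb{Z}_p/p^n$ is part of what would need proving, not an input. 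Finally, passing the claimed equivalences through the condensed inverse limit over $n$ requires Mittag-Leffler-type control that ``behave well under condensed projective limits'' does not supply. In short, your strategy correctly locates the difficulties but does not overcome them, so the proposal does not upgrade the paper's conjecture to a theorem; it reproduces the paper's construction of the functors and then restates the conjecture's hard part as ``the main obstacle.''
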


\begin{theorem}
Consider the two monoidal $(\infty,1)$-categories:
\begin{align}
D:=D(\underset{{\mathrm{totalized},\mathbb{H},\mathbb{H}}}{\mathrm{homomorphism}}),DR:=DR_{\mathrm{lisse},X^+,G}.
\end{align}
One then first promotes them to two gestalten via the following construction:
\begin{align}
&(\mathrm{End}_D(1),D,\mathrm{Mod}_{D}(2Pr^L),...),\\
&(\mathrm{End}_{DR}(1),DR,\mathrm{Mod}_{DR}(2Pr^L),...)
\end{align}
Suppose $G$ is varying in all the corresponding $p$-adic Lie groups. Then there are indeed well-defined functors between them which are compatible with well-defined abstract higher gestalten six-functor formalisms on the both sides and gestalten K\"unneth theorems on the both sides as in \cite[Theorem 5.2, Corollary 5.3, Corollary 5.4]{5S5}, pullbacked and induced from the ones for noncommutative gestalten.
\end{theorem}

\begin{proof}
This is just a direct application of general noncommutative gestalten, for instance see \cref{remark14}.
\end{proof}

\begin{remark}
We combine \cite[Theorem 1.2]{Sa} with the direct generalization of \cite{Sc1} from \cite{HM}. See construction of \cite[Theorem 9]{Sc1}. Then we do have the functors involved. However we don't know whether all the statements of this conjecture are correct or not.
\end{remark}

\begin{remark}
The goal here eventually will be consider certain Banach $\mathbb{Z}_p$-algebra and consider the corresponding Banach representations. One has to upgrade all the corresponding rings here to certain condensed setting such as in \cite{HM}, i.e. first the representation spaces have to be carrying the Banach topology, then the dg Hecke algebra will then be certain Banach dg algebra, then the corresponding derived $E_1$-ring in \cite{Sa} will also be taken to be the analytification version. For instance one works over the solid Banach $\mathbb{Z}_p$-modules. These consideration will be essentially certain $p$-adic local Langlands correspondence consideration after \cite{L1}, \cite{C}. 
\end{remark}

\begin{remark}
\indent Let us make some discussion on the ring
\begin{align}
\underset{{\mathrm{totalized},\mathrm{Ban},\mathbb{H},\mathbb{H}}}{\mathrm{homomorphism}}.
\end{align}
Recall that this ring is the totalization of the homomorphism group for the bigraded suspension of $\mathbb{H}$. In the Banach setting we need to take the homomorphisms in the solid Banach $\mathbb{Z}_p$-modules, which indicates that we use the notation $\mathrm{Ban}$.
\end{remark}

\indent Then as in \cite[Theorem 1.1]{So1} one can takes one step further to write the explicit formula for the derived homology for:
\begin{align}
\underset{{\mathrm{totalized},\mathbb{H},\mathbb{H}}}{\mathrm{homomorphism}}.
\end{align}
The resulting derived $E_1$-ring should then be following the definition from \cite{Sa}:
\begin{align}
\underset{{\mathrm{totalized},H,H}}{\mathrm{extension}}.
\end{align}

\subsection{Deformation of Modules over Derived $E_1$-Rings}

\indent Then one can use this to study the corresponding deformation functors in both Hecke algebraic setting and the corresponding smooth representation setting. Then this means the deformation happens in the following $\infty$-category of $\mathbb{Z}_p$ $p$-adic formal algebras with residue $\mathbb{F}_p$ by maximal ideals for $\pi_0$:
\begin{align}
\mathrm{LargeCoeff}_{\mathbb{Z}_p,\mathbb{F}_p,\mathrm{local}}
\end{align}
They have generic fibres as rigid analytic spaces, but we start from the corresponding integral picture. Recall the standard the deformation functors in the following sense. Deformation functor for the group $G$ is a functor fibered over:
\begin{align}
\mathrm{LargeCoeff}_{\mathbb{Z}_p,\mathbb{F}_p,\mathrm{local}}
\end{align}
by taking any ring $\blacksquare$ to a corresponding $\infty$-groupoid of all the representations over $G$ over $\blacksquare$. Deformation functor for the $E_1$-ring $H$ is a functor fibered over:
\begin{align}
\mathrm{LargeCoeff}_{\mathbb{Z}_p,\mathbb{F}_p,\mathrm{local}}
\end{align}
by taking any ring $\blacksquare$ to a corresponding $\infty$-groupoid of all the $E_1$-modules over $H$ with coefficient in $\blacksquare$. Deformation functor for the derived $E_1$-ring $\mathbb{H}$ is a functor fibered over:
\begin{align}
\mathrm{LargeCoeff}_{\mathbb{Z}_p,\mathbb{F}_p,\mathrm{local}}
\end{align}
by taking any ring $\blacksquare$ to a corresponding $\infty$-groupoid of all the $E_1$-modules over $\mathbb{H}$ with coefficient in $\blacksquare$. Deformation functor for the derived $E_1$-ring $\underset{{\mathrm{totalized},\mathbb{H},\mathbb{H}}}{\mathrm{homomorphism}}$ is a functor fibered over:
\begin{align}
\mathrm{LargeCoeff}_{\mathbb{Z}_p,\mathbb{F}_p,\mathrm{local}}
\end{align}
by taking any ring $\blacksquare$ to a corresponding $\infty$-groupoid of all the $E_1$-modules over 
\begin{align}
\underset{{\mathrm{totalized},\mathbb{H},\mathbb{H}}}{\mathrm{homomorphism}}
\end{align}
with coefficient in $\blacksquare$. We use the following notations to denote these $\infty$-deformation functors:
\begin{align}
\mathrm{Deform}_{G}, \mathrm{Deform}_H, \mathrm{Deform}_\mathbb{H}, \mathrm{Deform}_{\underset{{\mathrm{totalized},\mathbb{H},\mathbb{H}}}{\mathrm{homomorphism}}}.
\end{align}
Deformation functor for the group $G$ with respect to some representation $r/\mathbb{F}_p$ is a functor fibered over:
\begin{align}
\mathrm{LargeCoeff}_{\mathbb{Z}_p,\mathbb{F}_p,\mathrm{local}}
\end{align}
by taking any ring $\blacksquare$ to a corresponding $\infty$-groupoid of all the representations over $G$ over $\blacksquare$ such that the representations have quotients isomorphic to $r$ over $\blacksquare/m$. Deformation functor for the $E_1$-ring $H$ with respect to some module $r/H_{\mathrm{F}_p}$ is a functor fibered over:
\begin{align}
\mathrm{LargeCoeff}_{\mathbb{Z}_p,\mathbb{F}_p,\mathrm{local}}
\end{align}
by taking any ring $\blacksquare$ to a corresponding $\infty$-groupoid of all the $E_1$-modules over $H$ with coefficient in $\blacksquare$ such that the modules are isomorphic to $r$ after we take the quotient with respect to maximal ideals. Deformation functor for the derived $E_1$-ring $\mathbb{H}$ is similar lifting certain modules over $\mathbb{H}_{\mathbb{F}_p}$. We use the following notations to denote these $\infty$-deformation functors:
\begin{align}
\textit{Deform}_{G,r}, \textit{Deform}_{H,r}, \textit{Deform}_{\mathbb{H},r}, \textit{Deform}_{\underset{{\mathrm{totalized},\mathbb{H},\mathbb{H}}}{\mathrm{homomorphism}},r}.
\end{align}

\begin{conjecture}\mbox{\textbf{(After Sorensen, \cite[Theorem 1.1]{So1})}}
We have well-defined morphisms of deformation functors:
\begin{align}
\mathrm{Deform}_{G}\overset{\sim}{\rightarrow}\mathrm{Deform}_H\overset{\sim}{\rightarrow} \mathrm{Deform}_\mathbb{H}\overset{\sim}{\rightarrow} \mathrm{Deform}_{\underset{{\mathrm{totalized},\mathbb{H},\mathbb{H}}}{\mathrm{homomorphism}}}.
\end{align}
All functors are equivalent in this conjecture.
\end{conjecture}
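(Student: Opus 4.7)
The plan is to bootstrap the previous conjecture, which asserts that the four symmetric monoidal stable $\infty$-categories $DR_{\mathrm{lisse},X^+,G}$, $D_H$, $D_{\mathbb{H}}$, and the derived $\infty$-category of modules over $\underset{\mathrm{totalized},\mathbb{H},\mathbb{H}}{\mathrm{homomorphism}}$ are equivalent via the functors $F_{DR_{\mathrm{lisse},X^+,G}}$ and $F_{D_H}$ together with the minimal-model functor from Saleh \cite{Sa}. Granting this, the four deformation functors should be identified fiberwise over each object $\blacksquare \in \mathrm{LargeCoeff}_{\mathbb{Z}_p,\mathbb{F}_p,\mathrm{local}}$, because each side is, by construction, the maximal sub-$\infty$-groupoid of the corresponding module category whose objects are flat (or at least lifts in the appropriate sense) along the surjection $\blacksquare \twoheadrightarrow \blacksquare/\mathfrak{m}_{\blacksquare}$. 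Concretely, first I would replace the absolute conjecture by its relative analogue over $\blacksquare$, using that the Heyer--Mann formalism in \cite{HM}, Schneider's dg Hecke algebra in \cite{Sc1}, and Saleh's derived $E_1$ formalism in \cite{Sa} are all functorial in the commutative base ring along the limit presentation $X^+ \simeq \varprojlim_n \overline{X}^+_{\mathbb{Z}_p/p^n}$.

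Next I would construct the morphisms of deformation functors in a fibered manner: given $\blacksquare \to \blacksquare'$ in $\mathrm{LargeCoeff}_{\mathbb{Z}_p,\mathbb{F}_p,\mathrm{local}}$, verify that the fiberwise equivalence commutes with the associated base-change pullback functors on smooth representations, on dg Hecke modules, on $\mathbb{H}$-modules, and on $\underset{\mathrm{totalized},\mathbb{H},\mathbb{H}}{\mathrm{homomorphism}}$-modules. This reduces to the observation that the resolution $\mathrm{proj}_{\mathrm{resolution}}(X^+)$ is preserved by coefficient extension, so that forming $\mathrm{RHom}$ and totalizing commutes with the reduction $- \otimes_{\blacksquare} \blacksquare'$ in the relevant stable $\infty$-categorical sense. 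Deligne--Rapoport-type criteria for deformation functors then upgrade the pointwise equivalence to a natural equivalence of functors from $\mathrm{LargeCoeff}_{\mathbb{Z}_p,\mathbb{F}_p,\mathrm{local}}$ to $\infty$-groupoids. For the variants $\textit{Deform}_{G,r}$, $\textit{Deform}_{H,r}$, etc., the residual condition at $\blacksquare/\mathfrak{m}_{\blacksquare} = \mathbb{F}_p$ is preserved because the fiberwise equivalence at $\blacksquare = \mathbb{F}_p$ sends $r$ to the image of $r$ under the same chain of functors.

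The hard part will be the passage from $H$ to $\mathbb{H}$ and then to $\underset{\mathrm{totalized},\mathbb{H},\mathbb{H}}{\mathrm{homomorphism}}$. Saleh already observes in \cite{Sa} that when the base is not a field, the naive $E_1$-enhancement of a dg Hecke algebra differs from its derived $E_1$-promotion, and the identification passes through a minimal model which exists only after inverting certain quasi-isomorphisms. Thus to get a bona fide equivalence of deformation functors I would have to show that this minimal-model construction is itself continuous in the base $\blacksquare$, i.e.\ commutes with $\varprojlim_n$ along the Mittag-Leffler system $\{\blacksquare/\mathfrak{m}_{\blacksquare}^n\}$. This is the principal obstacle, and I expect it to require either a condensed enhancement of Saleh's theorem along the lines of \cite{HM}, or a direct verification that the obstruction theory of derived $E_1$-modules over $\mathbb{H}$ vanishes in the generalities we need.

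Finally, compatibility with the residue condition at $r/\mathbb{F}_p$ is routine once the underlying equivalence has been established: the maximal ideal $\mathfrak{m}_{\blacksquare}$ pulls back correctly under each of $F_{DR_{\mathrm{lisse},X^+,G}}$ and $F_{D_H}$ by the same functoriality used above, so that $\textit{Deform}_{G,r}$, $\textit{Deform}_{H,r}$, $\textit{Deform}_{\mathbb{H},r}$, and $\textit{Deform}_{\underset{\mathrm{totalized},\mathbb{H},\mathbb{H}}{\mathrm{homomorphism}},r}$ are identified as sub-functors cut out by the same residual datum. The overall strategy therefore reduces the deformation-theoretic conjecture to the previous $\infty$-categorical conjecture plus the continuity of Saleh's minimal derived $E_1$-model in the coefficient base.
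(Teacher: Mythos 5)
This statement is explicitly labeled a \emph{conjecture} in the paper, and the paper offers no proof of it; it is one of a cluster of conjectures ``After Sorensen'' that the author leaves open. So there is no proof of record to compare your proposal against. What I can do is assess whether your strategy closes the conjecture or merely re-expresses it.

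Your plan does not close it. You reduce the deformation-functor equivalences to two inputs: (i) the preceding conjecture asserting that $DR_{\mathrm{lisse},X^+,G}$, $D_H$, $D_{\mathbb{H}}$, and the derived category of $\underset{\mathrm{totalized},\mathbb{H},\mathbb{H}}{\mathrm{homomorphism}}$-modules are equivalent as symmetric monoidal stable $\infty$-categories, and (ii) continuity of Sagave's minimal derived $E_1$-model in the coefficient ring along the system $\{\blacksquare/\mathfrak{m}_{\blacksquare}^n\}$. Both of these are themselves unproven: (i) is stated as a conjecture in the paper, and (ii) you yourself flag as the ``principal obstacle.'' A proof that is conditional on an unproven conjecture \emph{and} an unverified base-change/continuity property is not a proof of the statement; it is a restatement of where the difficulty lies. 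That said, your diagnosis is accurate and mirrors why the paper leaves this open: the leap from Schneider's dg Hecke algebra $H$ over a general $p$-adic $\mathbb{Z}_p$-algebra $X^+$, through its $E_1$-promotion $\mathbb{H}$, to Sagave's totalized-homomorphism derived $E_1$-model really does require controlling the minimal-model construction under reduction mod $\mathfrak{m}_{\blacksquare}^n$, and no such control currently exists in the paper or in the cited literature.

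One additional caution on a step you treat as ``routine'': you argue that the residual condition at $r/\mathbb{F}_p$ is preserved because the fiberwise equivalence at $\blacksquare = \mathbb{F}_p$ sends $r$ to its image under the chain of functors. But the deformation functor $\mathrm{Deform}_{G,r}$ requires the quotient to be \emph{isomorphic} to $r$, not just equivalent in the derived $\infty$-category, while the conjectured equivalences of module categories need not, a priori, be $t$-exact or preserve the abelian hearts in the way that would let you compare underived quotients. If one of the intermediate categories (e.g.\ $D_{\mathbb{H}}$) does not carry a natural $t$-structure identified under the equivalence with the one on $DR_{\mathrm{lisse},X^+,G}$, then matching the residual datum $r$ is not routine and would require a separate argument. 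This is a second gap beyond the two you identify.
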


\begin{conjecture}\mbox{\textbf{(After Sorensen, \cite[Theorem 1.1]{So1})}}
We have well-defined morphisms of deformation functors:
\begin{align}
\textit{Deform}_{G,r}\overset{\sim}{\rightarrow} \textit{Deform}_{H,r}\overset{\sim}{\rightarrow} \textit{Deform}_{\mathbb{H},r}\overset{\sim}{\rightarrow} \textit{Deform}_{\underset{{\mathrm{totalized},\mathbb{H},\mathbb{H}}}{\mathrm{homomorphism}},r}.
\end{align}
All functors are equivalent in this conjecture.
\end{conjecture}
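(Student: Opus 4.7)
The plan is to deduce this conjecture directly from the preceding unrestricted conjecture on $\mathrm{Deform}_\bullet$ by restricting to an appropriate fiber over the residual object $r$. For each $\bullet$ ranging over $G$, $H$, $\mathbb{H}$, and the totalized homomorphism ring
\begin{align}
\underset{\mathrm{totalized},\mathbb{H},\mathbb{H}}{\mathrm{homomorphism}},
\end{align}
the restricted functor $\textit{Deform}_{\bullet,r}$ is, by definition, the subfunctor of $\mathrm{Deform}_\bullet$ cut out by the condition that reduction modulo the maximal ideal $m$ of $\pi_0(\blacksquare)$ recovers a fixed residual object $r_\bullet$. The four residual objects $r_G = r$, $r_H$, $r_\mathbb{H}$, and $r_{\mathrm{tot}}$ are determined uniquely by transporting $r$ through the underlying $\infty$-categorical equivalence at $\blacksquare = \mathbb{F}_p$, which is furnished by $F_{DR_{\mathrm{lisse},X^+,G}}$, $F_{D_H}$, and the derived totalization construction of \cite{Sa}. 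Once these residues are pinned down coherently, the desired chain of equivalences is obtained by pulling back the unrestricted equivalence along the inclusion of this common residue fiber.

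Next I would establish the compatibility between the unrestricted equivalences and the reduction $\blacksquare \to \blacksquare / m$. Concretely, for each test ring $\blacksquare \in \mathrm{LargeCoeff}_{\mathbb{Z}_p,\mathbb{F}_p,\mathrm{local}}$ one wants a commuting square whose horizontal arrows are the four equivalences applied over $\blacksquare$ and over $\mathbb{F}_p$ respectively, and whose vertical arrows are the derived base-change functors along $\blacksquare \to \mathbb{F}_p$. On the representation side this is essentially standard: reduction modulo $m$ is simply change of coefficients for $DR_{\mathrm{lisse},X^+,G}$, and Heyer--Mann's formalism is designed to be functorial in the coefficient ring. On the Hecke side one uses that the dg Hecke algebra from \cite{HM} and its derived $E_1$-enhancement $\mathbb{H}$ over $\blacksquare$ are built from projective resolutions which can be chosen cofibrantly so as to be preserved under base change along surjections in $\mathrm{LargeCoeff}_{\mathbb{Z}_p,\mathbb{F}_p,\mathrm{local}}$.

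The hard part will be establishing this base-change compatibility for the Saibi derived $E_1$-totalization and, in particular, for the totalized homomorphism object
\begin{align}
\underset{\mathrm{totalized},\mathbb{H},\mathbb{H}}{\mathrm{homomorphism}}.
\end{align}
Outside the finite-field coefficient case of \cite{So1}, the strict minimal derived $E_1$-model need not be compatible with arbitrary base change, so one must replace it by a sufficiently cofibrant model which does commute with the derived tensor product $(-) \otimes^{L}_{\blacksquare} \mathbb{F}_p$. This is the only place where the argument genuinely departs from the relative generalization of \cite{So1}; all remaining steps, namely the identification of residues, the formal pullback to the residue fiber, and the functoriality in the coefficient ring $\blacksquare$, are then formal. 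Assuming this homotopy-coherent base-change statement, the proof concludes by checking equivalence of $\infty$-groupoids pointwise on each $\blacksquare$: the fiber $\textit{Deform}_{\bullet,r}(\blacksquare)$ is the pullback of $\mathrm{Deform}_\bullet(\blacksquare)$ along the common residue point $r_\bullet$, and the previous conjecture supplies the equivalence at that point.
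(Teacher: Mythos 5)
The statement you are addressing is labeled a \emph{conjecture} in the paper, not a theorem, and the paper supplies no proof of it; so there is no in-text argument for your proposal to be compared against. What you have written is a roadmap for why the restricted statement should follow from the immediately preceding unrestricted conjecture, and that roadmap is sensible as far as it goes: the restricted functors $\textit{Deform}_{\bullet,r}$ are full sub-$\infty$-groupoids of $\mathrm{Deform}_\bullet$ cut out by an isomorphism-class condition on the fiber over $\mathbb{F}_p$, and a natural equivalence of the unrestricted functors that is compatible with the reduction $\blacksquare\to\blacksquare/m$ would automatically restrict, provided the four residues $r_G,r_H,r_{\mathbb{H}},r_{\mathrm{tot}}$ are transported coherently by the equivalences at $\blacksquare=\mathbb{F}_p$.

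However, you should be explicit that this is a conditional reduction, not a proof. Your argument conditions on two things that are themselves open in the paper: (i) the preceding unrestricted conjecture, which the paper also does not prove; and (ii) the homotopy-coherent base-change compatibility for the derived $E_1$-totalization $\underset{\mathrm{totalized},\mathbb{H},\mathbb{H}}{\mathrm{homomorphism}}$, which you correctly flag as the genuinely nontrivial input, since the Saibi minimal model construction is not manifestly functorial in the coefficient ring and need not commute with $(-)\otimes^L_\blacksquare\mathbb{F}_p$ once one leaves the finite-field setting of \cite{So1}. One further subtlety worth spelling out: for the restriction to go through you need not just that each equivalence $\mathrm{Deform}_\bullet(\blacksquare)\simeq\mathrm{Deform}_{\bullet'}(\blacksquare)$ exists pointwise in $\blacksquare$, but that it is assembled into a natural transformation of functors on $\mathrm{LargeCoeff}_{\mathbb{Z}_p,\mathbb{F}_p,\mathrm{local}}$; the unrestricted conjecture as stated does not obviously package this naturality, and if the equivalences are constructed $\blacksquare$-by-$\blacksquare$ without coherence the pullback to the residue fiber would not be well-defined. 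So your proposal is a legitimate reduction strategy that matches the logical dependence the paper implicitly sets up between the two conjectures, but it leaves the conjectural status of the statement unchanged.
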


\begin{conjecture}\mbox{\textbf{(After Sorensen, \cite[Theorem 1.1]{So1})}}
We have well-defined morphisms of derived deformation functors:
\begin{align}
\mathrm{Deform}_{G}\overset{\sim}{\rightarrow}\mathrm{Deform}_H\overset{\sim}{\rightarrow} \mathrm{Deform}_\mathbb{H}\overset{\sim}{\rightarrow} \mathrm{Deform}_{\underset{{\mathrm{totalized},\mathbb{H},\mathbb{H}}}{\mathrm{homomorphism}}},
\end{align}
over the animation:
\begin{align}
\underline{\mathrm{LargeCoeff}}_{\mathbb{Z}_p,\mathbb{F}_p,\mathrm{local}}.
\end{align}
All functors are equivalent in this conjecture.
\end{conjecture}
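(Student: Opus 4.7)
The plan is to reduce the present derived statement to the two preceding conjectures via a tangent-complex/infinitesimal-criterion argument combined with base-change compatibility of all four constructions. First I would fix an object $R \in \underline{\mathrm{LargeCoeff}}_{\mathbb{Z}_p,\mathbb{F}_p,\mathrm{local}}$ and present it as a sifted colimit of finite free $\mathbb{Z}_p$-algebras via the standard simplicial resolution of animated rings. Each of the four deformation $\infty$-functors extends to this animation by left Kan extension from the discrete pro-Artinian situation, and one checks that the fibered definitions given in the text are compatible with such Kan extensions essentially by construction.

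Next I would verify that the three transition maps
\begin{align}
\mathrm{Deform}_G \to \mathrm{Deform}_H \to \mathrm{Deform}_\mathbb{H} \to \mathrm{Deform}_{\underset{{\mathrm{totalized},\mathbb{H},\mathbb{H}}}{\mathrm{homomorphism}}}
\end{align}
commute with derived base change in the coefficient ring. The first is controlled by the base-change behaviour of the dg Hecke algebra $H$ established in relative form by \cite{HM}, combined with the interpretation of $H$ as a derived endomorphism algebra of a projective resolution of $X^+$. The second uses Sagave's construction in \cite{Sa} promoting a dg algebra to a derived $E_1$-algebra, which is functorial in its input and therefore preserves base change. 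The third, from $\mathrm{Deform}_\mathbb{H}$ to the totalized-homomorphism deformation, is the derived analogue of the previous conjecture and reduces to identifying the derived $E_1$-ring of $\mathbb{H}$-linear self-maps with the minimal model of \cite{Sa}.

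With base-change compatibility in hand, I would invoke Lurie's derived deformation theory to identify each of these functors with its tangent/obstruction complex at a fixed residual object $r$ over $\mathbb{F}_p$. This tangent complex is the derived $\mathrm{Hom}$ of $r$ with itself in the relevant stable $\infty$-category, and the preceding conjectures identify those four $\infty$-categories up to symmetric monoidal equivalence. Hence the tangent complexes of the four functors are canonically identified. By Lurie's representability criterion for derived moduli problems, equivalence on tangent complexes together with agreement on the discrete truncation (which is the content of the penultimate conjecture) forces an equivalence of the entire derived deformation functors over all of $\underline{\mathrm{LargeCoeff}}_{\mathbb{Z}_p,\mathbb{F}_p,\mathrm{local}}$.

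The hard part will be the same obstacle that makes the preceding two conjectures conjectural: showing that the passage through $\mathbb{H}$ and through the totalized homomorphism ring preserves the relevant higher $\mathrm{Ext}$-groups at non-field coefficients. Sorensen's argument in \cite{So1} handles the case when $X^+$ is a field, but in the relative setting one must control new corrections coming from the derived $E_1$-structure of $\mathbb{H}$ over $X^+$. An additional subtlety specific to the derived version is that objects of $\underline{\mathrm{LargeCoeff}}_{\mathbb{Z}_p,\mathbb{F}_p,\mathrm{local}}$ can have nontrivial higher homotopy, so the derived Hecke algebra and its totalization must remain well behaved when the coefficients themselves carry simplicial structure; this will likely force one to work throughout in the condensed or solid enhancement of \cite{HM} rather than in the plain dg setting.
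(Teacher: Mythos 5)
This is stated in the paper as a \emph{Conjecture}, and the paper offers no proof of it; indeed the author explicitly disclaims knowledge of its truth in the neighbouring remark (``However we don't know whether all the statements of this conjecture are correct or not''). So there is no paper argument to compare your attempt against. Your proposal is best read as a strategy sketch, and you are candid that it terminates exactly where the genuine difficulty begins, which is consistent with the conjectural status in the text.

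That said, one step as written would not go through even granting the earlier conjectures. You propose to pass from an identification of tangent/obstruction complexes, plus agreement of the discrete truncations, to an equivalence of the full derived deformation functors via ``Lurie's representability criterion.'' Over a field of characteristic zero one can sometimes argue along these lines because formal moduli problems are controlled by their tangent complexes through the Lie-algebra correspondence, but the setting here is mixed characteristic with residue field $\mathbb{F}_p$: over $\mathbb{Z}_p$ or $\mathbb{F}_p$ the tangent complex alone does not determine a formal moduli problem, and the correct invariant is a partition Lie algebra (Brantner--Mathew) or equivalent structure, not merely the underlying spectrum with its $\mathrm{Ext}$ groups. So ``equivalence on tangent complexes $+$ agreement at the discrete level'' is not a valid criterion for equivalence of derived deformation functors in this context. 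If the earlier conjectures genuinely gave you symmetric monoidal equivalences of the ambient module $\infty$-categories, you could simply transport the functors directly without any detour through tangent complexes; but those equivalences are themselves the open content, so the proposed reduction is circular rather than a simplification. You already flag the relative-coefficient/$E_1$-correction problem at the end, which is the right thing to worry about, but the tangent-complex step should be removed or replaced by a transport-of-structure argument conditional on the prior conjectures.
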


\begin{conjecture}\mbox{\textbf{(After Sorensen, \cite[Theorem 1.1]{So1})}}
We have well-defined morphisms of derived deformation functors:
\begin{align}
\textit{Deform}_{G,r}\overset{\sim}{\rightarrow} \textit{Deform}_{H,r}\overset{\sim}{\rightarrow} \textit{Deform}_{\mathbb{H},r}\overset{\sim}{\rightarrow} \textit{Deform}_{\underset{{\mathrm{totalized},\mathbb{H},\mathbb{H}}}{\mathrm{homomorphism}},r},
\end{align}
over the animation:
\begin{align}
\underline{\mathrm{LargeCoeff}}_{\mathbb{Z}_p,\mathbb{F}_p,\mathrm{local}}.
\end{align}
All functors are equivalent in this conjecture.
\end{conjecture}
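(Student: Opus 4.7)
The plan is to upgrade the previous (non-residual) animated conjecture to the residual version by cutting out, inside each of the four moduli, the closed substack of those families whose special fibre is isomorphic to $r$. Throughout I would work relative to the animated base $\underline{\mathrm{LargeCoeff}}_{\mathbb{Z}_p,\mathbb{F}_p,\mathrm{local}}$ and treat each $\textit{Deform}_{?,r}$ as a functor to anima, represented by the core of a full stable $\infty$-subcategory of $D_?(\blacksquare):=D_? \otimes_{\mathbb{Z}_p} \blacksquare$. Because every functor in the previous conjecture is already asserted to be an equivalence of symmetric monoidal stable $\infty$-categories, what remains is genuinely only to show that the residual-fibre condition is preserved under each of the three comparison functors, and that it cuts out equivalent substacks on both sides.

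First I would set up the non-residual version animately. Given a derived coefficient ring $\blacksquare \in \underline{\mathrm{LargeCoeff}}_{\mathbb{Z}_p,\mathbb{F}_p,\mathrm{local}}$, the base-change functor $-\otimes^{\mathbb{L}}_{\mathbb{Z}_p}\blacksquare$ on each of $DR_{\mathrm{lisse},X^+,G}$, $D_H$, $D_{\mathbb{H}}$, and $D_{\underset{\mathrm{totalized},\mathbb{H},\mathbb{H}}{\mathrm{homomorphism}}}$ is a symmetric monoidal functor of stable $\infty$-categories, compatible with the comparison functors $F_{DR_{\mathrm{lisse},X^+,G}}$, $F_{D_H}$ and with Sagave's minimal-model functor that realises the Koszul-type equivalence between $\mathbb{H}$-modules and modules over the derived endomorphism ring. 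The $\infty$-groupoids of objects $(\blacksquare \mapsto (D_?(\blacksquare))^{\simeq})$ are therefore already equivalent as functors $\underline{\mathrm{LargeCoeff}}_{\mathbb{Z}_p,\mathbb{F}_p,\mathrm{local}} \to \mathrm{Ani}$; that is the previous conjecture in the animated setting.

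Next I would cut out the residual locus. For each $?$ define $\textit{Deform}_{?,r}(\blacksquare)$ as the fibre, over the chosen $r$, of the restriction functor $(D_?(\blacksquare))^{\simeq} \to (D_?(\blacksquare/m))^{\simeq}$, where $m\subset \pi_0\blacksquare$ is the maximal ideal. The three comparison functors commute with the residual base change $\blacksquare \to \blacksquare/m$ (because they are built from derived tensor and derived Hom in $X^+$-linear data, all of which are compatible with ring maps), and they send the fixed $r$ on one side to the corresponding $r$ on the other: indeed $F_{DR_{\mathrm{lisse},X^+,G}}$ sends the chosen smooth representation to a specific $H$-module $r_H$, Sagave's construction sends $r_H$ to the canonical $\mathbb{H}$-module lift $r_{\mathbb{H}}$, and the endomorphism equivalence sends $r_{\mathbb{H}}$ to the corresponding module $r_{\mathrm{hom}}$ over the totalized homomorphism ring. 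Choosing these as the residual objects, each comparison functor induces a map on fibres and these maps are equivalences because they are the fibres of equivalences of groupoids over the same point $\blacksquare/m$.

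The main obstacle, and the place where this is still a conjecture rather than a theorem, lies in step two: that the derived $E_1$-ring $\mathbb{H}$ constructed from Sagave's minimal-model formalism and the derived endomorphism totalisation $\underset{\mathrm{totalized},\mathbb{H},\mathbb{H}}{\mathrm{homomorphism}}$ actually produce the equivalences of module categories in the relative setting where the base $X^+$ is an arbitrary $p$-adic $\mathbb{Z}_p$-formal algebra rather than a field as in \cite{So1}. Over a field, Sorensen's proof rests on the fact that projective resolutions can be chosen so that the homology Hecke algebra controls the entire module category, but over a general $X^+$ the derived $E_1$-structure genuinely has higher operations that do not vanish, and the minimality statement in \cite[Theorem 1.2]{Sa} must be promoted relatively along the tower $X^+ = \varprojlim_n \overline{X}^+_{\mathbb{Z}_p/p^n}$. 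I would attempt this by reducing $n$ by $n$ modulo $p^n$ and deforming the minimal model inductively, using the condensed framework of \cite{HM} to keep track of the completions; assuming such a relative minimal-model theorem, the rest of the argument outlined above goes through and all four deformation functors become equivalent over the animated base.
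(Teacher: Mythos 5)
The statement you are addressing is a \emph{conjecture} in the paper, not a theorem: the paper offers no proof, and in fact explicitly says (in the remark following the first version of the conjecture) that the functors are obtained by combining Sagave's \cite[Theorem 1.2]{Sa} with the Heyer--Mann generalization of Schneider's construction, but ``we don't know whether all the statements of this conjecture are correct or not.'' So there is no paper proof to compare against. What you have produced is a reduction of the residual animated conjecture to the non-residual animated conjecture together with some compatibility checks, and a pinpointing of the genuine open step. That is a reasonable thing to do, and your diagnosis of the obstruction matches what the surrounding remarks in the paper indicate.

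Two points worth flagging in your sketch. First, your reduction is explicitly conditional: you invoke ``the previous conjecture in the animated setting'' as an input, so even if all your compatibility arguments go through, you have not proved anything unconditionally --- you have shown that the residual statement follows from the non-residual one plus preservation of the residual locus. This is honest in your write-up but should be said more loudly, since the non-residual animated conjecture is itself unproven. Second, the step you identify as the real gap --- upgrading Sagave's minimal-model statement from a field base to a general $p$-adic $\mathbb{Z}_p$-formal algebra $X^+$, compatibly with the tower $X^+ = \varprojlim_n \overline{X}^+_{\mathbb{Z}_p/p^n}$ and with the condensed enrichment from \cite{HM} --- is exactly the right place to locate the difficulty. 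The higher $A_\infty$-operations that vanish over a field need not vanish over $\mathbb{Z}_p/p^n$, and Sorensen's argument in \cite{So1} genuinely uses the field hypothesis to control the resolution of the trivial representation. Your proposed remedy (induct mod $p^n$ and track completions in the condensed setting) is a plausible attack, but without a relative minimal-model theorem in hand it remains a plan, not a proof, which is why the paper leaves this as a conjecture. One smaller technical caveat: you define each $\textit{Deform}_{?,r}$ as the fibre of $(D_?(\blacksquare))^\simeq \to (D_?(\blacksquare/m))^\simeq$ over $r$; the paper does not fix this as the definition (it speaks loosely of ``an $\infty$-groupoid of all the $E_1$-modules... isomorphic to $r$ after we take the quotient''), and in the derived setting the correct deformation space is usually a fibre product, not a strict fibre, to keep track of the isomorphism with $r$ as extra structure rather than property. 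That choice matters for representability and should be made explicit if you want the comparison maps to be well-defined at the level of anima rather than merely on $\pi_0$.
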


\newpage
\section{Fundamental Comparison on Stackifications over Small Arc Stacks}\label{section5}

\subsection{Witt-Prisms for function field}
\indent We start from any $v$-stack in the work of Scholze in \cite{1S1}. We use a notation $K$ to denote such stack. We assume that $K$ is over some local field, namely we take the $\mathrm{Spd}$ of some local field $L$. This local field needs to be specified in different characteristics. In positive characteristic situation, we assume this takes the form being finite over some $\mathbb{F}_p((u))$ for some chosen uniformizer $u$. In the $p$-adic setting, we assume it to be finite over $\mathbb{Q}_p$. 
\begin{assumption}
In this \cref{section5}, $L$ is now assumed to be in the function field situation, namely over $\mathbb{F}_p$.
\end{assumption}
Now for such $K$ we have two different versions of the prismatizations, after \cite{1To1}, \cite{1To2}, \cite{1To3}, \cite{1To4}, \cite{1S4}, \cite{1ALBRCS}, \cite{1BS}, \cite{1D}, \cite{1BL}. We in the algebraic setting unveil the definition from \cite{1BS}, \cite{1D}, \cite{1BL} in the following for the convenience of the readers. We consider the following function fields:
\begin{align}
\mathbb{F}_p[[u]][u^{-1}].
\end{align}
Then we consider the moduli stacks of the Witt-line bundles in the following sense. The underlying sites will be chosen to be the categories of all 
\begin{align}
\mathbb{F}_p[[u]]
\end{align}
algebras where the element $u$ presents nilpotency. Then by using the valuations from the corresponding Witt vectors, we can consider the corresponding infinite product:
\begin{align}
\mathrm{Wi(\square)}=\prod_i \square_i = \square \times \square \times \square \times \square \times \square\times \square \times... 
\end{align} 
The Witt vector infinite product presentations present the Witt vectors as the corresponding infinite products. Then over any base ring $\square$ we consider the moduli of all the line bundles mapping to these infinite products, such as all such line bundles are generated principally locally through elements satisfying the following requirements: the first coordinates are nilpotent and the second coordinates are unital after we map the these elements to the corresponding infinite products. Furthermore when we are constructing the motives for some $u$-adic formal scheme we then require the corresponding spectrum of the quotients of $\mathrm{Wi}(\square)$ through the line bundles to be mapped to the formal scheme in the construction. For any such generalized prism pair (a line bundle and a Witt vector deformation under this line bundle) we can then put the structure sheaf
 \begin{align}
 \mathcal{T}
 \end{align}
 of the prestacks as just the ring $\square$ before the deformation. Then over $\mathcal{T}$ we have further the corresponding de Rham structure sheaf 
 \begin{align}
 \mathcal{T}[1/u]_{\mathrm{Li}}
  \end{align}
 for any line bundle $\mathrm{Li}$. The corresponding such pair can be called as the corresponding \textit{Witt-prisms}, in a very generlized fashion.

 We use the notation:
\begin{align}
\mathrm{CondPrismatization}_{K,v}
\end{align}
to denote the condensation of the prismatization of $K$ over the $v$-topology. To be more precise for each perfectoid $K_i$ living over $K$ we take the corresponding algebraic prismatization:
\begin{align}
\mathrm{AlgPrismatization}_{K_i}
\end{align}
Then we take the analytification from \cite{1CS}:
\begin{align}
\mathrm{AlgPrismatization}_{K_i,\square}
\end{align}
Then what we do is to change $K_i$ in the Grothendieck topology to rich the whole prismatization overall over $K$ as sheaves of categories:
\begin{align}
\mathrm{CondPrismatization}_{K,v}.
\end{align}
In \cite{1To1} we also considered the corresponding de Rham stacks and the cristalline ones, where we allow the untils to be parametrized through the prismatization:
\begin{align}
&\mathrm{ConddeRhamPrismatization}_{K,v},\\
&\mathrm{CondCristallinePrismatization}_{K,v}.
\end{align}

\begin{definition}
We have the corresponding generalized version after \cite{1BS1}, \cite{1F2}, \cite{1BL1}, \cite{1BL2}:
\begin{align}
&\mathrm{ConddeRhamPrismatization}_{K,v,2}\\
&\mathrm{CondCristallinePrismatization}_{K,v,2}\\
&\mathrm{CondPrismatization}_{K,v,2}.
\end{align}
Here $b$ is fixed to be the element in both situations as in \cite{1BL1}, \cite{1BL2}, \cite{1BS1}, in the $u$-adic setting we have the analog element as well where $u$-adic cyclotomic character is also defined on this element. Then we take the corresponding analytification:
 \begin{align}
&\mathrm{ConddeRhamPrismatization}_{K,v,2,\square}\\
&\mathrm{CondCristallinePrismatization}_{K,v,2,\square}\\
&\mathrm{CondPrismatization}_{K,v,2,\square}.
\end{align}
\end{definition}

Then we can consider the corresponding solid quasicoherent sheaves over the stacks. 

\begin{remark}
Here the solid analytification can be explicified in the following sense. Over some $K_i$ locally we have the stacks can be written as \textit{projective limits} of the corresponding formal spectrum of the Witt vector rings, de Rham period sheaves and the cristalline period sheaves. Then the corresponding condensation and solidified analytification will automatically transform the formal topology to the topology encoding the Banach norms from the underlying perfectoid rings after \cite{1KL} and \cite{1KL1}.
\end{remark}

\begin{definition}
Now over any $K_i$ as above, we have the corresponding local version of the $\infty$-category of solid quasicoherent sheaves over the stacks above:
 \begin{align}
&\mathrm{SolidModules}_{\mathrm{ConddeRhamPrismatization}_{K_i,v,2,\square}}\\
&\mathrm{SolidModules}_{\mathrm{CondCristallinePrismatization}_{K_i,v,2,\square}}\\
&\mathrm{SolidModules}_{\mathrm{CondPrismatization}_{K_i,v,2,\square}}.
\end{align}
Here before taking the corresponding condensation we have the corresponding only formal topology versions:
 \begin{align}
&\mathrm{Modules}_{\mathrm{ConddeRhamPrismatization}_{K_i,v,2}}\\
&\mathrm{Modules}_{\mathrm{CondCristallinePrismatization}_{K_i,v,2}}\\
&\mathrm{Modules}_{\mathrm{CondPrismatization}_{K_i,v,2}}.
\end{align}
\end{definition}

We remind the readers the internal structure of these definitions as in the following. First recall that we can have the chance to write the above as $\varprojlim$ of certain derived $\infty$-categories of prisms. In the $u$-adic circumstance we have the parallel definition of \textit{generalized prisms}, which are defined to be any form of pairs as in the following in the definition of Cartier divisor for the $u$-prismatization:
\begin{align}
I,WVL(A)
\end{align}
where $A$ is nilpotent for the element $u$, and we require that the Witt vector ring $WVL(A)$ is complete in the derived sense with respect to the element $u$ and $I$. Then we have:
\begin{align}
&\mathrm{Modules}_{\mathrm{ConddeRhamPrismatization}_{K_i,v,2}}=\varprojlim_{{I,WVL(A)}} \mathrm{Modules}_{\varprojlim_{I}WVL(A)[1/x][b^{1/2}]}\\
&\mathrm{Modules}_{\mathrm{CondCristallinePrismatization}_{K_i,v,2}}=\varprojlim_{{I,WVL(A)}} \mathrm{Modules}_{\ WVL(A)[1/x][b^{1/2}]}\\
&\mathrm{Modules}_{\mathrm{CondPrismatization}_{K_i,v,2}}=\varprojlim_{{I,WVL(A)}} \mathrm{Modules}_{WVL(A)[b^{1/2}]}.
\end{align}

\begin{theorem}
In the $u$-adic setting we have:
\begin{align}
&\mathrm{Modules}_{\mathrm{ConddeRhamPrismatization}_{K_i,v,2}}=\varprojlim_{{I,WVL(A)}} \mathrm{Modules}_{\varprojlim_{I}WVL(A)[1/x][b^{1/2}]}\\
&\mathrm{Modules}_{\mathrm{CondCristallinePrismatization}_{K_i,v,2}}=\varprojlim_{{I,WVL(A)}} \mathrm{Modules}_{\ WVL(A)[1/x][b^{1/2}]}\\
&\mathrm{Modules}_{\mathrm{CondPrismatization}_{K_i,v,2}}=\varprojlim_{{I,WVL(A)}} \mathrm{Modules}_{WVL(A)[b^{1/2}]}
\end{align}
can be further written as:
\begin{align}
&\mathrm{Modules}_{\mathrm{ConddeRhamPrismatization}_{K_i,v,2}}=\varprojlim_{{I\rightarrow WVL(K_i)}} \mathrm{Modules}_{\varprojlim_{I}WVL(K_i)[1/x][b^{1/2}]}\\
&\mathrm{Modules}_{\mathrm{CondCristallinePrismatization}_{K_i,v,2}}=\mathrm{Modules}_{ WVL(K_i)[1/x][b^{1/2}]}\\
&\mathrm{Modules}_{\mathrm{CondPrismatization}_{K_i,v,2}}=\mathrm{Modules}_{WVL(K_i)[b^{1/2}]}.
\end{align}
\end{theorem}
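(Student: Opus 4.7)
The plan is to exploit the perfectoidness of each $K_i$ in the $v$-covering of $K$ to collapse the projective limits over all generalized Witt-prism pairs $(I, WVL(A))$. The key observation is that in the category of $u$-adic generalized prisms equipped with a structure morphism $\mathrm{Spec}(WVL(A)/I)\to K_i$, the perfectoid $K_i$ admits a canonical universal Witt vector ring $WVL(K_i)$ built from its tilt as in \cite{1KL}, \cite{1KL1}, together with a canonical untilt kernel. This object is terminal in the appropriate indexing diagram, so the projective limit of module categories reduces, by cofinality, to the module category of the single universal ring.

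I would carry out the argument in three stages, one for each prismatization stack. For $\mathrm{CondPrismatization}_{K_i,v,2}$, the collapse is the cleanest: the pair $(I_{K_i}, WVL(K_i))$ is weakly initial among all prisms mapping to $K_i$, and any other pair $(I, WVL(A))$ factors through it uniquely up to derived $u$-adic completion. Under the identification of structure sheaves, the projective limit over the indexing category of pairs collapses to the single stalk $\mathrm{Modules}_{WVL(K_i)[b^{1/2}]}$, and the element $b^{1/2}$ survives because it is defined universally on the Witt-vector side after \cite{1BL1}, \cite{1BL2}, \cite{1BS1}.

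For $\mathrm{CondCristallinePrismatization}_{K_i,v,2}$ the same universality argument works after inverting $x$, since localization is functorial in morphisms of prisms; the limit collapses to $\mathrm{Modules}_{WVL(K_i)[1/x][b^{1/2}]}$. The de Rham case is genuinely different: the period ring $\varprojlim_I WVL(A)[1/x][b^{1/2}]$ retains explicit dependence on the Cartier divisor $I$, so only the $WVL(A)$-index collapses to $WVL(K_i)$ while the inner limit over $I$ is preserved, yielding the right-hand side $\varprojlim_{I\to WVL(K_i)} \mathrm{Modules}_{\varprojlim_I WVL(K_i)[1/x][b^{1/2}]}$ exactly as stated.

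The main obstacle I expect is verifying that the collapse is compatible with the solid analytification: passing from the formal topology used to define the intermediate $\mathrm{Modules}_{(-)}$ to the solid/condensed topology must commute with the projective limit over generalized prisms. This requires the compatibility between the Banach topology on $K_i$ and the topology on $WVL(K_i)$ coming from the Witt vector construction, as developed in \cite{1KL}, \cite{1KL1}, together with the solid base-change formalism from \cite{1CS}. A secondary check is that the $b^{1/2}$-extension commutes with the limit, which is formal once one observes that $b^{1/2}$ is defined canonically on the entire diagram and its adjunction is preserved by all structure maps in our index category.
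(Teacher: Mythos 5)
Your proof takes a genuinely different route from the paper's. The paper's argument is a reduction: it observes that in the $u$-adic (equal-characteristic) setting the generalized Witt vector ring $WVL(A)$ of a perfectoid $A$ decomposes as the (completed) tensor product of the $p$-typical version with $\mathcal{O}^L$, and then transports the collapse of the limit from the known $p$-adic case to the $u$-adic case term by term — last line first, then deducing the de Rham and crystalline lines. Your argument is instead direct and internal to the $u$-adic prismatic site: you invoke the Bhatt--Scholze-style universality of the canonical Witt-prism $(I_{K_i}, WVL(K_i))$ attached to the perfectoid $K_i$ and collapse the projective limit by cofinality, treating $b^{1/2}$ and the solid/condensed passage as compatible with the limit because they are defined universally on the Witt vector side. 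Each route has a virtue: the paper's reduction makes explicit use of the known $p$-adic fact and sidesteps re-establishing initiality in the $u$-adic Witt-prism setting, while your cofinality argument is self-contained, more conceptual, and extends more readily to variants of the generalized prism category where a tensor-product decomposition of the Witt vectors might not be available.

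One point you should correct: you call the universal pair \emph{terminal} in the indexing diagram in one sentence and \emph{weakly initial} two sentences later. These cannot both be meant. The relevant statement is that $(I_{K_i}, WVL(K_i))$ is the \emph{initial} object of the (generalized) prismatic site of the perfectoid $K_i$; passing to the opposite category over which the module-category limit is indexed, it becomes terminal there, so the limit evaluates at it. As written the two descriptions contradict each other and obscure which universality is actually being used. You should also note that initiality of the perfect prism in the $u$-adic Witt-prism category, while expected, is not literally the Bhatt--Scholze statement (which is $p$-typical); either cite where the $u$-adic analogue is established (e.g. the function-field $\delta$-ring framework referenced elsewhere in the paper) or supply the short argument — this is exactly the gap the paper's tensor-product reduction is designed to avoid.
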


\begin{proof}
In the $u$-adic setting and in the perfectoid setting the corresponding Witt vectors ring can be written as the algebraic tensor product of the $p$-adic version with the ring $\mathcal{O}^L$. Then the last equality can be derived from the $p$-adic situation. Then the first two equations can be then derived.
\end{proof}

\begin{remark}
In the $p$-adic setting the corresponding construction are based on prisms not the Witt-prisms in \cite{1BL}, \cite{1D}. We then have:
\begin{align}
&\mathrm{Modules}_{\mathrm{ConddeRhamPrismatization}_{K_i,v,2}}=\varprojlim_{{I\rightarrow B}} \mathrm{Modules}_{\varprojlim_{I}B[1/x][b^{1/2}]}\\
&\mathrm{Modules}_{\mathrm{CondCristallinePrismatization}_{K_i,v,2}}=\mathrm{Modules}_{ WVL(K_i)[1/x][b^{1/2}]}\\
&\mathrm{Modules}_{\mathrm{CondPrismatization}_{K_i,v,2}}=\mathrm{Modules}_{WVL(K_i)[b^{1/2}]}.
\end{align}
\end{remark}

\indent In the $u$-adic setting the \textit{generalized prisms} are basically being regarded as more general since in the $p$-adic setting pairs like $I,WVL(A)$ are actually generalized the corresponding notion of the prisms, while the latter can be used to construct such pairs. Therefore in the $u$-adic setting one can consider such definitions after \cite{1BS}, \cite{1BL}, \cite{1D}. 

\begin{theorem}
In the $u$-adic setting we have:
\begin{align}
&\mathrm{Modules}_{\mathrm{ConddeRhamPrismatization}_{K_i,v,2}}=\varprojlim_{{I,WVL(A)}} \mathrm{Modules}_{\varprojlim_{I}WVL(A)[1/x][b^{1/2}]}\\
&\mathrm{Modules}_{\mathrm{CondCristallinePrismatization}_{K_i,v,2}}=\varprojlim_{{I,WVL(A)}} \mathrm{Modules}_{\ WVL(A)[1/x][b^{1/2}]}\\
&\mathrm{Modules}_{\mathrm{CondPrismatization}_{K_i,v,2}}=\varprojlim_{{I,WVL(A)}} \mathrm{Modules}_{WVL(A)[b^{1/2}]}
\end{align}
can be further written as:
\begin{align}
&\mathrm{Modules}_{\mathrm{ConddeRhamPrismatization}_{K_i,v,2}}=\varprojlim_{{I\rightarrow WVL(K_i)}} \mathrm{Modules}_{\varprojlim_{I}WVL(K_i)[1/x][b^{1/2}]}\\
&\mathrm{Modules}_{\mathrm{CondCristallinePrismatization}_{K_i,v,2}}=\mathrm{Modules}_{ WVL(K_i)[1/x][b^{1/2}]}\\
&\mathrm{Modules}_{\mathrm{CondPrismatization}_{K_i,v,2}}=\mathrm{Modules}_{WVL(K_i)[b^{1/2}]}.
\end{align}
Then obviously we can recover certain perfectoid picture by requiring the projection to component where $I$ is the one generated by $b$.
\end{theorem}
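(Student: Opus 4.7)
The plan is to reduce the limits indexed over all generalized prism pairs $(I, WVL(A))$ to limits involving only the Witt vector ring $WVL(K_i)$ attached to the perfectoid $K_i$, by exploiting the tensor decomposition $WVL(A) = WVL_p(A) \otimes_{\mathbb{Z}_p} \mathcal{O}^L$ available in the $u$-adic perfectoid setting, which was already the key input in the immediately preceding theorem of the same shape. Once this decomposition is in hand, I would use the $v$-descent built into the definition of $\mathrm{CondPrismatization}_{K_i,v,2,\square}$ to restrict the indexing of the $\varprojlim$ to generalized prisms admitting a structure morphism from (or into) $WVL(K_i)$, and then conclude by a cofinality argument in the $v$-site of small perfectoid covers of $K_i$.

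Second, I would treat the three cases separately. For the cristalline and the plain prismatizations the ideal $I$ enters only through the requirement of derived $(u,I)$-completeness of $WVL(A)$; once $WVL(A)$ has been replaced by $WVL(K_i)$ via the reduction above, this constraint degenerates and the outer limit collapses to the single $\infty$-category $\mathrm{Modules}_{WVL(K_i)[1/x][b^{1/2}]}$, respectively $\mathrm{Modules}_{WVL(K_i)[b^{1/2}]}$. For the de Rham variant, however, the structure sheaf is $\varprojlim_I WVL(A)[1/x][b^{1/2}]$ which genuinely depends on $I$ through the completion, so the outer limit over ideals $I \to WVL(K_i)$ must be preserved, yielding the stated formula; compatibility of $\mathrm{Modules}_{(-)}$ with such $\varprojlim$ in the solid analytic setting of Clausen-Scholze is the main technical input here.

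For the final assertion about the perfectoid picture, I would argue that among the distinguished Cartier divisors appearing in the generalized $u$-prismatization over $K_i$, the component $I=(b)$ corresponds exactly to the perfectoid-theoretic untilt of $K_i$ in the enlarged setting with $b^{1/2}$ adjoined. Projecting to this component turns $WVL(K_i)[b^{1/2}]$ (respectively $WVL(K_i)[1/b][b^{1/2}]$) into the period ring already naturally attached to $K_i$ from the work of Kedlaya-Liu, and the claim then follows by unwinding the definitions.

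The hard part will be justifying that the condensed/solid module category functor commutes with the $\varprojlim$ appearing on the $v$-stackification side, and in particular verifying the necessary flatness and descent hypotheses for the transition maps between generalized prisms, so that the reduction from general $WVL(A)$ to $WVL(K_i)$ can be carried out formally rather than only pointwise. A secondary delicate point is handling $b^{1/2}$: one must confirm that adjoining a square root of $b$ is compatible with the $v$-topology and with the analytification $\square$ in a manner that does not disrupt the cofinality argument, and that the element $x$ used in the localizations $[1/x]$ remains well-defined after the reduction to $WVL(K_i)$.
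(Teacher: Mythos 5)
Your proposal tracks the paper's own (very terse) argument: the paper proves the parallel theorem immediately above by invoking precisely the tensor decomposition of $WVL(A)$ into its $p$-adic part tensored with $\mathcal{O}^L$, deriving the third (plain prismatization) identity from the $p$-adic case, and then deducing the first two from it; the statement you are proving is a restatement of that theorem with the added observation about projecting to the component $I=(b)$, and no separate proof is given. Your identification of the tensor decomposition as the engine, your reduction of the index set via cofinality in the $v$-site, and your separation of the de Rham case (where the $\varprojlim_I$-completion persists) from the cristalline and plain cases (where the outer limit collapses) are all faithful to — and more explicit than — what the paper actually writes. Your reading of the final perfectoid assertion (projection to the Cartier divisor generated by $b$ recovers the untilt-theoretic period ring in the Kedlaya--Liu picture) is also consistent with the paper's intent. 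The technical worries you raise at the end — commutation of $\mathrm{Modules}_{(-)}$ with $\varprojlim$, flatness and descent hypotheses along transition maps, and compatibility of adjoining $b^{1/2}$ with the $v$-topology and the solid analytification — are genuine gaps that the paper's one-sentence proof does not address; flagging them is appropriate, though you should be aware the paper itself leaves them implicit rather than resolving them.
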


\subsection{Robba Stacks and Perfectoid Witt-Prisms}

Here we consider another stackification after \cite{1KL}, \cite{1KL1}, \cite{1S1}, \cite{1S2}, \cite{1S3}, \cite{1F1}, \cite{1F2}, \cite{1T1}. We consider foundation from \cite{1CSA}, \cite{1CSB}, \cite{1CS}. Also we consider generalization following  \cite{1BS1}, \cite{1BL1}, \cite{1BL2}. This also is towards some motivic generalization in the sense of \cite{1G}. 

\noindent Now we consider the parametrization in some other foundation, namely the corresponding Robba stacks. The story goes over some local perfectoid ring $K_i$ as above for the general $K$ over $L$. $L$ can be of mixed characteristic or equal characteristic. Over such $K_i$ as in \cite{1KL}, \cite{1KL1}, we have the parametrization space which is just defined as the adic spectrum of the perfect Robba rings defined with respect to $K_i$, where we do not take the corresponding Frobenius quotient:
\begin{align}
Y_{K_i}:=\mathrm{Union}_I\mathrm{SpecSpa}(P_{I,K_i},P_{I,K_i}^+).
\end{align}
One then consider the corresponding generalization in \cite{1BS1} and \cite{1F2} to contact with the context we consider here. Namely we have the following version generalization of the Fargues-Fontaine stacks (again in two different characteristic situations):
\begin{align}
&Y_{K_i,2}:=\mathrm{Union}_I\mathrm{SpecSpa}(P_{I,K_i}[b^{1/2}],P_{I,K_i}^+[b^{1/2}]).
\end{align}
Suppose we use the notation $Q$ to denote the structure sheaves of these spaces:
\begin{align}
Q_{Y_{K_i,2}}.
\end{align}
Now over $Q$ we have the corresponding solid quasicoherent sheaves which form $\infty$-categories:
\begin{align}
\mathrm{QC}^\mathrm{solid}Q_{Y_{K_i,2}}.
\end{align}

\begin{definition}
One can then define the corresponding de Rham version of the Robba stacks as in the corresponding prismatization in the following. Locally we consider the completion along all the untilts $\sharp$, which goes in the following way again we forget the corresponding underlying stacks:
\begin{align}
&\mathrm{QC}^\mathrm{solid}_{\mathrm{deRham}}Q_{Y_{K_i,2}}:=\varprojlim_{\sharp} \mathrm{QC}^\mathrm{solid}{Q_{Y_{K_i,2}}}^\wedge_\sharp.
\end{align}
Then let $K_i$ change in the $v$-topology we have the following:
\begin{align}
&\mathrm{QC}^\mathrm{solid}_{\mathrm{deRham}}Q_{Y_{K,2}}.
\end{align}
\end{definition}

\begin{definition}
One can then define the corresponding de Rham version of the Robba stacks as in the corresponding prismatization in the following. Locally we consider the completion along all the untilts $\sharp$, which goes in the following way again we forget the corresponding underlying stacks:
\begin{align}
&\mathrm{QC}^\mathrm{solid}_{\mathrm{deRham}}Q_{Y_{K_i,2}}:=\varprojlim_{\sharp} \mathrm{QC}^\mathrm{solid}{Q_{Y_{K_i,2}}}^\wedge_\sharp.
\end{align}
Then let $K_i$ change in the $v$-topology we have the following:
\begin{align}
&\mathrm{QC}^\mathrm{solid}_{\mathrm{deRham}}Q_{Y_{K,2}}.
\end{align}
We then have a well-defined functor $\mathcal{H}$ which is called generalized de Rhamization:
\begin{align}
\mathrm{QC}^\mathrm{solid}Q_{Y_{K_i,2}}
\longrightarrow
\mathrm{QC}^\mathrm{solid}_{\mathrm{deRham}}Q_{Y_{K,2}},
\end{align}
through the $\sharp$-completion through all the untilts in the coherent way as in the above.
\end{definition}

\begin{remark}
This definition is considering $u$-adic de Rham sheaves, though the internal structure of such sheaves can be simpler the construction is in a uniform framework as in the above. 
\end{remark}

\subsection{Discussion for a lisse chart over $\mathrm{Spd}L$}

\noindent We now assume that the stack is just a lisse chart $\mathcal{L}$ over $\mathrm{Spd}L$, where we have the geometrized generalized $\mathrm{Gamma}_{\mathcal{L},2}$-modules without the Frobenius actions:
\begin{align}
\mathrm{Gamma}_{\mathcal{L},2}\mathrm{QC}^\mathrm{solid}Q_{Y_{\mathrm{Spd}\mathcal{L},2}},
\end{align}
with
\begin{align}
\mathrm{Gamma}_{\mathcal{L},2}\mathrm{QC}_\mathrm{deRham}^\mathrm{solid}Q_{Y_{\mathrm{Spd}\mathcal{L},2}}.
\end{align}

\begin{theorem}
Assume we are in the $p$-adic setting. By projecting to $\sharp=b$ we have the generalized differential equations attached to finite-locally free sheaves in:
\begin{align}
\mathrm{Gamma}_{\mathcal{L},2}\mathrm{QC}^\mathrm{solid}Q_{Y_{\mathrm{Spd}\mathcal{L},2}},
\end{align}
which is further assumed to be generalized de Rham in the obvious generalized way. Here we \textbf{do not} assume the stability and compatibility of the rank throughout the noncompact Stein Robba stacks and we \textbf{do not} assume the finiteness of the rank when we reach the global sections of the Stein stacks. Namely for any such sheaf we can find a projective limit system $D=\varprojlim_w D_w$ to attach to this sheaf, over which we can have the structure of arithmetic $\mathcal{D}$-modules with the action from the group $\mathrm{Gamma}_{\mathcal{L},2}$. 
\end{theorem}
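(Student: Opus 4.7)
The plan is to reduce the statement to a question over each affinoid chart of the generalized Robba space, use the $b$-completion to extract a connection, and then assemble the local data into a projective limit system. First I cover $Y_{\mathrm{Spd}\mathcal{L},2}$ by its Stein exhaustion via the affinoids $\mathrm{SpecSpa}(P_{I,\mathcal{L}}[b^{1/2}], P_{I,\mathcal{L}}^+[b^{1/2}])$ indexed by closed sub-intervals $I$. Any $\mathcal{F}$ in $\mathrm{Gamma}_{\mathcal{L},2}\mathrm{QC}^\mathrm{solid}Q_{Y_{\mathrm{Spd}\mathcal{L},2}}$ restricts to a compatible family $\{\mathcal{F}_I\}$ of solid modules over these rings carrying the $\mathrm{Gamma}_{\mathcal{L},2}$-action, and the de Rham hypothesis applies after projecting to the single untilt $\sharp=b$.

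Next, applying the generalized de Rhamization functor $\mathcal{H}$ of the previous subsection at $\sharp=b$ yields, for each $I$, a module $\widehat{\mathcal{F}}_I$ over the $b$-adic completion ${P_{I,\mathcal{L}}[b^{1/2}]}^\wedge_b$. By the identification in \cref{section5}, this $b$-completion agrees (after solidification) with a generalized de Rham period ring in our setting, and differentiating the $\mathrm{Gamma}_{\mathcal{L},2}$-action along the Lie algebra direction of $\mathrm{Gamma}_{\mathcal{L},2}$ produces an integrable connection $\nabla$ on $\widehat{\mathcal{F}}_I$. Truncating by successive powers of $b$ gives a tower $D_{I,w} = \widehat{\mathcal{F}}_I / b^w$ of finite-level arithmetic $\mathcal{D}$-modules each with a compatible $\mathrm{Gamma}_{\mathcal{L},2}$-action. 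Assembling over both the Stein index $I$ and the thickening index $w$ via the natural transition maps delivers the desired system $D = \varprojlim_w D_w$ with $\mathrm{Gamma}_{\mathcal{L},2}$ acting compatibly with the $\mathcal{D}$-module structure by Leibniz.

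The hard part is handling the explicit absence of rank stability and finiteness: the rank of $\mathcal{F}_I$ may jump along the Stein exhaustion and become infinite when passing to global sections, so the classical assembly of finite projective connections into a globally locally-free differential bundle fails. I would circumvent this by working entirely inside the solid formalism of \cite{CS1}, \cite{CS2}, \cite{CS3}, where inverse limits remain well-behaved and the category of solid $\mathcal{D}_w$-modules is stable under the passage to Berthelot-type infinite-order differential operators on the limit, via the Fr\'echet-like topology induced on ${P_{I,\mathcal{L}}[b^{1/2}]}^\wedge_b$ after solidification. A secondary and more routine issue is to check that the $\mathrm{Gamma}_{\mathcal{L},2}$-action commutes with $b$-completion in such a way that $\nabla$ and the group action intertwine as a single equivariant $\mathcal{D}$-module structure rather than a pair of independent operators; this reduces to a continuity statement inside the solid setup at each Stein level, which follows from the compatibility of $\mathcal{H}$ with the $\mathrm{Gamma}_{\mathcal{L},2}$-equivariant structure already built into the source category.
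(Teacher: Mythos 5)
Your indexing of the tower $D=\varprojlim_w D_w$ goes in a direction different from the paper's, and I think this mismatch produces a genuine gap. You set $D_{I,w}=\widehat{\mathcal{F}}_I/b^w$, so that $w$ tracks the depth of the $b$-adic thickening of the de Rhamization. In the paper the index $w$ is instead the \emph{radius level of the Robba ring}: each $D_w$ is the preimage of the de Rhamized bundle $H_w$ at radius $f(w)$ under Berger's localization map from the Robba ring $L_w[[b^{1/2}]]\{*^{\pm 1}\}\otimes L'$, i.e.\ the $w$-th level of the construction in \cite[around Theorem 5.10]{1BA} adapted to the $b^{1/2}$-setting, and $\varprojlim_w$ is the limit over the tower of radii. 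This matters for what is actually claimed: an ``arithmetic $\mathcal{D}$-module structure'' in Berthelot's sense should live over a $p$-adic rigid analytic space, and the paper's $D_w$ does — it is a finite locally free module over a Robba ring at some radius, with a connection obtained by differentiating the $\mathrm{Gamma}_{\mathcal{L},2}$-action. Your $D_{I,w}$ is a $b$-torsion module over a truncated period ring $\bigl({P_{I,\mathcal{L}}[b^{1/2}]}\bigr)^\wedge_b/b^w$, which is not the structure sheaf of a $p$-adic rigid space, so it does not naturally carry an arithmetic $\mathcal{D}$-module structure in the intended sense. Differentiating the $\mathrm{Gamma}_{\mathcal{L},2}$-action is the right idea for producing $\nabla$, but it has to be carried out on the Robba-ring side of Berger's map, not on the $b$-adic side; your version produces a (truncated) de Rham period module, not a $p$-adic differential module.

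A second omission: the paper's proof explicitly notes that the passage from the point situation to a lisse chart $\mathcal{L}$ requires a further deformation sending the chart variables $*$ to $[*^\flat]-1$, after which the reconstruction of the differential module from the de Rham module at infinite level $L_\infty[[b^{1/2}]]\{*^{\pm 1}\}\otimes L'$ uses invariance under $\mathcal{L}$. Your outline works entirely in the absolute case and does not address this relativization, so the construction of $D_w$ and the equivariance check are missing precisely the step that distinguishes the lisse-chart statement from the point statement. Finally, your stated workaround for rank instability — ``work entirely inside the solid formalism'' — does not by itself show that the limit of your $D_{I,w}$ acquires an action of Berthelot-type infinite-order differential operators; in the paper's argument this is automatic because each $D_w$ is already a differential module at finite radius and the failure of rank finiteness is confined to the $w\to\infty$ limit, which is exactly what ``projective limit system'' tolerates.
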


\begin{proof}
We only need to extend the corresponding map from the Robba rings (with respect to some radius in variable of $p^w$) to $L_w[[b]]\otimes L'$ ($L'$ large) into the corresponding situation where we have $b^{1/2}$, then the corresponding formation of the corresponding $w$-th level $p$-adic differential modules in \cite[See and follow the construction around 5.10, the Theorem]{1BA} can be applied directly in our setting, then after we have the construction the corresponding project limit will produce the mixed-parity differential modules. The current morphism here needs the further step of the corresponding deformation which maps the variables $*$ of the lisse chart to $[*^\flat]-1$, which is the difference we need to consider beyond the point situation here. Then the corresponding construction goes in a parallel way. There are two related modules attached to the original module over the Robba ring without Frobenius structure. One is the corresponding differential module as in \cite{1BA} and the other one is the corresponding de Rham module (as in the definition of the corresponding functor). The two modules can be reconctructed from each other by considering the infinite level:
\begin{align}
L_\infty[[b^{1/2}]]\{*^{\pm 1}\}\otimes L',
\end{align}
and by using the invariance of the group $\mathcal{L}$. Then each $D_w$ will be certain preimage (here we need to invert the element $b^{1/2}$) of the bundle $H_w$ for some radius $f(w)$ under the map from the Robba rings in the current setting. This needs to consider the tower:
\begin{align}
&L_w[[b^{1/2}]]\{*^{\pm 1}\}\otimes L',\\
&L_{w+1}[[b^{1/2}]]\{*^{\pm 1}\}\otimes L',\\
&L_{w+2}[[b^{1/2}]]\{*^{\pm 1}\}\otimes L',\\
&L_{w+3}[[b^{1/2}]]\{*^{\pm 1}\}\otimes L',\\
&L_{w+4}[[b^{1/2}]]\{*^{\pm 1}\}\otimes L',\\
&L_{w+5}[[b^{1/2}]]\{*^{\pm 1}\}\otimes L',\\
&...
\end{align}
Then for any such sheaf we can find a projective limit system $D=\varprojlim_w D_w$ to attach to this sheaf, over which we can have the structure of arithmetic $\mathcal{D}$-modules with the action from the group $\mathrm{Gamma}_{\mathcal{L},2}$. 
\end{proof}

\begin{remark}
Also see \cite{1AB}. However we do not have theorems after \cite{1M}, \cite{1K}, \cite{1A} in such generality.
\end{remark}

\subsection{Small Arc-Stacks via Small $v$-Stacks}

\begin{theorem}
Assume we are in our general setting by adding the element $b^{1/2}$. The de Rham-Robba stackification and the de Rham-prismatization stackification in our generalized setting by adding $b^{1/2}$ are equivalent, in both $p$-adic and $z$-adic settings, i.e. in the $p$-adic setting we consider the small $v$-stacks over $\mathrm{Spd}\mathbb{Q}_p$, and in the $z$-adic setting we consider the small $v$-stacks over $\mathrm{Spd}\mathbb{F}_p((t))$, in the $v$-topology. This applies immediately to rigid analytic varieties.
\end{theorem}

\begin{proof}
This is because in the local setting over the $v$-site of any small $v$-stack, what we have will be the corresponding stacks of all the untilts on the prismatization level. Then by taking the limit throughout all the de Rham period sheaves for all the untilts in our generalized setting, we reach the same $\infty$-categories. This construction is then in the local perfectoid setting identical in the both approaches.
\end{proof}

\noindent We can promote this equivalence to the categorical level. One can construct the following functor:

\begin{definition}
Assume we are in our general setting by adding the element $b^{1/2}$. Let $S$ be a small $v$-stack, which can be either over $\mathrm{Spd}\mathbb{Q}_p$ or $\mathrm{Spd}\mathbb{F}_p((u))$. We use the notation:
\begin{align}
\mathrm{deRhamRobba}_S
\end{align}
to denote the corresponding de Rham-Robba stackification from the FF stacks, in our generalized setting. And we use the notation
\begin{align}
\mathrm{deRhamPrismatization}_S
\end{align}
to denote the corresponding de Rham Prismatization stackification, in our generalized setting. And for $?= \mathrm{deRhamRobba}_S, \mathrm{deRhamPrismatization}_S$ we use the notation:
\begin{align}
\mathrm{SolidQuasiCoh}_?
\end{align}
to denote the corresponding condensed $\infty$-categories of the corresponding solid quasicoherent sheaves over $?$. Then we have a functor:
\begin{align}
\mathrm{SolidQuasiCoh}_{\mathrm{deRhamPrismatization}_S}\rightarrow \mathrm{SolidQuasiCoh}_{\mathrm{deRhamRobba}_S}
\end{align}
by taking the induced functor from identification of the de Rham functors on the perfectoids.
\end{definition}

\begin{theorem}
Assume we are in our general setting by adding the element $b^{1/2}$. The functor defined above:
\begin{align}
\mathrm{SolidQuasiCoh}_{\mathrm{deRhamPrismatization}_S}\rightarrow \mathrm{SolidQuasiCoh}_{\mathrm{deRhamRobba}_S}
\end{align}
is well-defined, and an equivalence of symmetrical monoidal $\infty$-categories which are stable.

\end{theorem}

\begin{proof}
Locally over perfectoid spaces the corresponding prismatization is actually the corresponding Witt vector in both $p$-adic and equal characteristic situations. Then this identification of the prismatization and the Witt vector rings in fact directly produces the desired equivalence, since after this identification the construction will be identical completely.
\end{proof}

\noindent We then contact Scholze's notion of small arc stacks in \cite{1S5} \cite{1S6} where local charts will be simply Banach rings, then we have the following definition:

\begin{definition}
Assume we are in our general setting by adding the element $b^{1/2}$. Let $S$ be a small arc-stack in \cite{1S5} over $\mathbb{Q}_p$ or $\mathbb{F}((u))$. We use the notation:
\begin{align}
\mathrm{deRhamRobba}_S
\end{align}
to denote the corresponding de Rham-Robba stackification from the FF stacks, in our generalized setting. And we use the notation
\begin{align}
\mathrm{deRhamPrismatization}_S
\end{align}
to denote the corresponding de Rham Prismatization stackification, in our generalized setting. And for $?= \mathrm{deRhamRobba}_S, \mathrm{deRhamPrismatization}_S$ we use the notation:
\begin{align}
\mathrm{SolidQuasiCoh}_?
\end{align}
to denote the corresponding condensed $\infty$-categories of the corresponding solid quasicoherent sheaves over $?$. When we consider the de-Rham Robba stackification we consider the corresponding $v$-stack associated to $S$, which is denoted by $\mathrm{Stack}_v(S)$ after \cite{1S5}. Then we have a functor:
\begin{align}
\mathrm{SolidQuasiCoh}_{\mathrm{deRhamPrismatization}_S}\rightarrow \mathrm{SolidQuasiCoh}_{\mathrm{deRhamRobba}_{\mathrm{Stack}_v(S)}}
\end{align}
by taking the induced functor from identification of the de Rham functors on the perfectoids, i.e. we set the Banach ring local chart to be perfectoid to reach the objects in the second $\infty$-category. 
\end{definition}

\begin{theorem}
Assume we are in our general setting by adding the element $b^{1/2}$. The functor defined above:
\begin{align}
\mathrm{SolidQuasiCoh}_{\mathrm{deRhamPrismatization}_S}\longrightarrow \mathrm{SolidQuasiCoh}_{\mathrm{deRhamRobba}_{\mathrm{Stack}_v(S)}}
\end{align}
is well-defined, as a symmetrical monoidal $\infty$-tensor functor.
\end{theorem}

The following theorem will then be highly nontrivial:

\begin{theorem}
Assume we are in our general setting by adding the element $b^{1/2}$. The functor defined above:
\begin{align}
\mathrm{SolidQuasiCoh}_{\mathrm{deRhamPrismatization}_S}\longrightarrow \mathrm{SolidQuasiCoh}_{\mathrm{deRhamRobba}_{\mathrm{Stack}_v(S)}}
\end{align}
is fully faithful functor of symmetrical monoidal $\infty$-categories which are stable.
\end{theorem}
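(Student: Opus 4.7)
The plan is to reduce the statement to the perfectoid-local situation, where by the previous theorem (the $v$-stack case) the functor is already an equivalence of symmetrical monoidal stable $\infty$-categories, and then verify that the induced comparison on mapping spaces glues correctly when the base arc-stack $S$ carries genuinely non-perfectoid Banach charts. First I would set up the descent machinery: the source $\mathrm{SolidQuasiCoh}_{\mathrm{deRhamPrismatization}_S}$ satisfies arc-descent on $S$ by construction of the de Rham prismatization stackification via the projective-limit presentation in \cref{section5}, and the target satisfies $v$-descent on $\mathrm{Stack}_v(S)$. Since arc-covers refine $v$-covers, every perfectoid $v$-cover of $\mathrm{Stack}_v(S)$ lifts along the natural comparison morphism $S\to\mathrm{Stack}_v(S)$ to an arc-cover of $S$, and computing $\mathrm{RHom}(M,N)$ on either side reduces to evaluating on Banach/perfectoid chart atlases and controlling the resulting cosimplicial limits.

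The second step is to analyze the comparison locally. Over a perfectoid chart $\mathrm{Spa}(R,R^+)$ the two stackifications coincide by the Witt-vector identification from \cref{section5}: the perfectoid Witt-prism and the Fargues--Fontaine $Y$-space agree after $\sharp$-completion along all untilts, so the functor restricts there to an equivalence. For a Banach chart $U=\mathrm{Spa}(B,B^+)\to S$ that is not perfectoid, I would choose a pro-\'etale perfectoid cover $\widetilde U\to U$ in the sense of Kedlaya--Liu and express sections of the prismatic side over $U$ as the totalization of the cosimplicial diagram attached to $\widetilde U^{\bullet/U}$. The functor intertwines this totalization with the analogous totalization on the Robba side, because on each simplicial level both categories agree by the perfectoid case. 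Full faithfulness then follows from the fact that internal $\mathrm{RHom}$ in stable $\infty$-categories commutes with cosimplicial totalizations, so that $\mathrm{Map}(M,N)$ is determined by its values on a perfectoid atlas of $\mathrm{Stack}_v(S)$.

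The main obstacle will be controlling arc-descent for solid quasicoherent sheaves when the Banach chart $B$ is not sheafy in the Huber--Scholze sense; ordinary arc-covers can fail to give effective descent, and one must invoke the Clausen--Scholze $!$-formalism together with the solid analytification in order to formulate descent symmetric-monoidally and stably. A secondary technical point is that the generalized de Rham completion $(-)^\wedge_\sharp$ has to commute with the filtered colimits implicit in passing from Banach to perfectoid charts; this should be handled by the derived $(u,I)$-completeness of the Witt vector rings together with the invariance principle provided by the final theorem of \cref{section5}, which identifies the prismatic presentation with the Witt-vector one after inverting $b^{1/2}$ and completing along $\sharp$.

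Finally, one should expect the functor to fail essential surjectivity precisely because $S$ may carry solid sheaves whose restriction to every perfectoid chart vanishes, namely sheaves supported on the non-perfectoid Banach locus that $\mathrm{Stack}_v(S)$ forgets; full faithfulness nonetheless survives because any morphism between objects of $\mathrm{SolidQuasiCoh}_{\mathrm{deRhamPrismatization}_S}$ can be detected, after $\sharp$-completion, on perfectoid charts of $\mathrm{Stack}_v(S)$, and the detection is exactly what the comparison functor computes.
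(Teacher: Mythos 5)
Your strategy is compatible with the paper's but is organised differently. The paper's proof follows the Scholze paradigm from \cite{1S5}, \cite{1S6}: pass the entire functor to totally disconnected subspaces, observe that these are perfectoid coverings on both sides, then pass further to adic spectra of algebraically closed fields, where the two sides literally coincide so nothing remains to be checked. You instead stop at the level of perfectoid charts, feed in the earlier $v$-stack equivalence as a local input, and assemble full faithfulness by a cosimplicial totalization of mapping spectra over an atlas of $\mathrm{Stack}_v(S)$. Both proofs rest on the same pivotal point — that $\mathrm{SolidQuasiCoh}_{\mathrm{deRhamPrismatization}_S}$ is controlled by its restriction to the perfectoid locus — but the paper treats this as immediate once one is at geometric points, whereas your descent framing has to justify it by arguing that $\mathrm{RHom}$ on the arc side is computed by totalizing along a perfectoid cover, which is where you correctly flag the sheafiness problem for Banach charts as the main gap.

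That flagged gap is real and your proposal does not close it. You write that ``every perfectoid $v$-cover of $\mathrm{Stack}_v(S)$ lifts along $S\to\mathrm{Stack}_v(S)$ to an arc-cover of $S$,'' and from this you want to conclude that mapping spaces over $S$ are determined by their restrictions to perfectoid charts. But the arc site of $S$ has genuinely non-perfectoid Banach charts; to know that internal $\mathrm{RHom}$ is computed by totalizing over a perfectoid cover you need arc-hyperdescent of the solid quasicoherent presheaf for exactly such covers, and for non-sheafy $B$ this is not automatic even after Clausen--Scholze solidification. The paper's route bypasses this by invoking the stronger reduction to adic spectra of algebraically closed fields, a conservative system for which both stackifications collapse to the same modules with trivial group action. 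If you want your version to go through you would need to supply an argument (or citation) that the de Rham prismatization presheaf of solid categories is a hypersheaf for the arc topology on Banach rings in the generalized $b^{1/2}$-setting; absent that, the inductive step of your cosimplicial argument is unsupported, even though the perfectoid-local equivalence you invoke from the preceding theorem is genuinely available.
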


\begin{proof}
Here we follow \cite{1S5} \cite{1S6}, where we pass the whole functor to the totally disconnected subspaces in our current nonarchimedean settings. On the both sides they are exactly perfectoid coverings. Then following \cite{1S5} \cite{1S6} we pass to spaces taking the forms of the adic spectrum of algebraically closed fields. Then there is nothing to prove then, since both sides have the same underlying spaces, then the theorem follows.
\end{proof}

\begin{remark}
However we want to mention that the de Rham Robba stackification can live over small arc-stacks by using the tiltings of local Banach subspaces, therefore we will have the corresponding de Rham Robba stackification over the small arc-stacks in some functorial way as well. This can also be achived by taking the perfectoidization of the corresponding de Rham prismatization over small arc-stacks directly.
\end{remark}

\subsection{6-Functor Formalism for Generalized Prismatization}\label{section7.2}

\begin{assumption}\label{assumption2}
In this \cref{section7.2}, all our considerations on the prismatization, de Rham Robba stackification and de Rham prismatization are assumed to be the generalized ones, i.e. we consider the corresponding prismatizations with $b^{1/2}$ added, de Rham Robba stackifications with $b^{1/2}$-added and de Rham prismatizations with $b^{1/2}$ added.
\end{assumption}

\begin{theorem}
Assume we are in the situation in \cref{assumption2}. $\mathrm{SolidQuasiCoh}_{\mathrm{deRhamPrismatization}_*}$ admits pullback functor and pushforward functor $F^\square$ and $F_\square$ in the 6-functor formalism where $*$ is varying in the category of all the small arc-stacks or all small $v$-stacks.
\end{theorem}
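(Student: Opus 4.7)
The plan is to transport the desired $\ast$-adjoint pair $(F^\square, F_\square)$ from the de Rham Robba presentation through the equivalence established in the preceding theorems, then to verify descent so that the pair extends to arbitrary small $v$-stacks and small arc-stacks. First, under \cref{assumption2}, for a morphism $f\colon T\to S$ in either category of stacks, the equivalence (respectively, the fully faithful embedding)
\[
\mathrm{SolidQuasiCoh}_{\mathrm{deRhamPrismatization}_{(\cdot)}} \longrightarrow \mathrm{SolidQuasiCoh}_{\mathrm{deRhamRobba}_{\mathrm{Stack}_v(\cdot)}}
\]
proved earlier allows one to pass between the two presentations and to reduce the construction to the Robba side. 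I would then work on a perfectoid chart $K_i$, where $\mathrm{SolidQuasiCoh}$ is the limit $\varprojlim_{\sharp}\mathrm{QC}^\mathrm{solid}{Q_{Y_{K_i,2}}}^\wedge_\sharp$ over untilts $\sharp$, a presentation compatible with the Clausen-Scholze framework for solid analytic stacks, in which $\ast$-adjoint pairs are produced by presentability and the adjoint functor theorem for presentable stable $\infty$-categories.

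Second, on such local perfectoid charts I would define $F^\square$ as derived base change of solid modules along the induced morphism of generalized period rings of the form $WVL(A)[1/x][b^{1/2}]$, using the concrete presentations
\[
\mathrm{Modules}_{\mathrm{ConddeRhamPrismatization}_{K_i,v,2}}=\varprojlim_{I\to WVL(K_i)}\mathrm{Modules}_{\varprojlim_I WVL(K_i)[1/x][b^{1/2}]}
\]
from \cref{section5}, and define $F_\square$ as its right adjoint, whose existence is again automatic in the solid setting. Compatibility with the $\sharp$-completion along untilts follows because derived base change in the solid framework commutes with the relevant inverse limits, a standard feature of Clausen-Scholze analytic geometry, and the monoidal structure provided by the earlier equivalence of symmetrical monoidal $\infty$-categories ensures that this construction respects the tensor product on both sides.

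The main obstacle will be the descent step: one has to verify that the locally defined pair $(f^\ast, f_\ast)$ glues under $v$-topology (respectively arc-topology) coherently with the untilt deformation defining the de Rham structure. Here I would appeal to $v$-hyperdescent for solid quasicoherent sheaves on perfectoid diamonds, in the spirit of Scholze's diamond foundations and the recent 6-functor formalism for small $v$-stacks by Mann; combined with the identification of prismatization and Witt vectors over perfectoid charts that already underlies the preceding equivalence theorem, this lets one transfer the descent from the classical solid setting to the generalized de Rham prismatization/Robba setting without having to re-check it in both presentations. Once descent is established, the adjunction and its defining unit/counit extend formally along the Grothendieck topology, yielding the desired $(F^\square, F_\square)$ on $\mathrm{SolidQuasiCoh}_{\mathrm{deRhamPrismatization}_\ast}$ for $\ast$ varying through all small arc-stacks or all small $v$-stacks, as required.
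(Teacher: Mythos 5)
Your approach is in the same Scholze-style spirit as the paper's (pass to perfectoid/totally disconnected charts, build the functors locally, glue by descent), but the route differs at a point that matters. The paper works directly with $\mathrm{SolidQuasiCoh}_{\mathrm{deRhamPrismatization}_*}$ and descends all the way to geometric points, i.e. adic spectra of algebraically closed perfectoid fields; the key simplification there is that the Galois groups vanish, so the category collapses to plain modules over the generalized de Rham period rings, where pullback and pushforward are immediate. You instead propose to first \emph{transport} the $(F^\square, F_\square)$ pair from the Robba side via the previously established comparison functor and then re-descend. That is fine for small $v$-stacks, where the comparison is an equivalence. But for small arc-stacks the paper only establishes a fully faithful (not essentially surjective) functor
\[
\mathrm{SolidQuasiCoh}_{\mathrm{deRhamPrismatization}_S}\longrightarrow \mathrm{SolidQuasiCoh}_{\mathrm{deRhamRobba}_{\mathrm{Stack}_v(S)}},
\]
and a fully faithful embedding does not automatically transport the right adjoint: you would need to check that the Robba-side pushforward $F_\square$ preserves the essential image of the embedding (or else take a colocalization), and this is precisely the kind of preservation statement one cannot get for free. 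Relatedly, passing to a perfectoid chart $K_i$ is a $v$-stack move; on the arc-stack side the local charts are general Banach rings, so your local construction step is formulated in a presentation that only exists after applying $\mathrm{Stack}_v(-)$. The paper sidesteps both issues by staying on the de Rham prismatization side and reducing to geometric points, where the Galois-triviality makes the pullback/pushforward pair tautological. If you want to keep your Robba-side route, you should restrict it to the $v$-stack case, and for arc-stacks argue directly (e.g. by imitating the geometric-point reduction on the prismatization side), or supply the missing argument that $F_\square$ preserves the image of the fully faithful comparison.
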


\begin{proof}
We follow the method of proof after Scholze in \cite{1S5} by passing to totally disconnected subspaces from the adic spectrum of corresponding algebraically closed fields. Then one can see that the corresponding $\infty$-category can be regarded as the corresponding generalized Galois representations with coefficients over those generalized de Rham period sheaves then the corresponding result will follow since the 6-functor will be reduced to 6-functors among condensed generalized Galois representations. But over the such geometrical points the Galois groups are trivial. Then we are done at least in the current situation where we only consider the pullback and pushforward functors.
\end{proof}

\begin{definition}
Assume we are in the situation in \cref{assumption2}. Since $\mathrm{SolidQuasiCoh}_{\mathrm{deRhamPrismatization}_*}$ are symmetrical monoidal $\infty$-categories, the corresponding motivic Galois group exists as the corresponding Tannakian groups we will use:
\begin{align}
\pi_{\mathrm{SolidQuasiCoh}_{\mathrm{deRhamPrismatization}_*}}
\end{align}
to denote the group in any particular situation over $*$.
\end{definition}

\begin{theorem}
Assume we are in the situation in \cref{assumption2}. $\mathrm{SolidQuasiCoh}_{\mathrm{deRhamRobba}_*}$ admits pullback functor and pushforward functor $F^\square$ and $F_\square$ in the 6-functor formalism where $*$ is varying in the category of all the small $v$-stacks.
\end{theorem}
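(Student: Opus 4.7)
The plan is to deduce this result from the analogous statement just established for $\mathrm{SolidQuasiCoh}_{\mathrm{deRhamPrismatization}_*}$ combined with the equivalence of symmetrical monoidal $\infty$-categories $\mathrm{SolidQuasiCoh}_{\mathrm{deRhamPrismatization}_S}\simeq \mathrm{SolidQuasiCoh}_{\mathrm{deRhamRobba}_S}$ for small $v$-stacks $S$ proved earlier in the paper. The idea is to transport $F^\square$ and $F_\square$ across this equivalence.

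First I would promote the pointwise equivalence to a natural equivalence of $\infty$-functors defined on the category of small $v$-stacks. For any morphism $f\colon T \to S$, both constructions are contravariantly functorial through pullback, and on perfectoid local charts the equivalence arises by identifying the prismatization with the corresponding Witt vector ring (in both the $p$-adic and equal-characteristic settings, after adding $b^{1/2}$), an identification that is manifestly functorial in the chart. The $v$-descent/arc-descent statements in place then propagate this local naturality to a global natural equivalence of functors in $S$.

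Second, I would transport $F^\square$ and $F_\square$ through this natural equivalence. Because the equivalence is symmetrical monoidal and realized at the $\infty$-categorical level, it preserves all the diagrammatic relations (composition, unit and counit of the adjunction, and base change squares), so the induced functors on the de Rham Robba side automatically inherit the same properties as the ones on the prismatization side. As in the prismatization proof, the nontrivial part of the adjunction I would verify by passing first to totally disconnected subspaces in the sense of \cite{1S5}, \cite{1S6} and then to adic spectra of algebraically closed fields, where the Robba categories collapse to categories of modules over the generalized de Rham period rings carrying a trivial Galois action; on that reduced model, pullback and pushforward become transparent.

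The main obstacle will be the careful handling of naturality under non-proper morphisms of small $v$-stacks, where $F_\square$ in the 6-functor formalism requires additional finiteness or properness assumptions implicit in Scholze's framework. Specifically, one must check that transport along the Robba--prismatization equivalence preserves such hypotheses on morphisms, so that $F_\square$ on the Robba side is defined for exactly the same class of morphisms for which $F_\square$ has been provided on the prismatization side, and that the transported pushforward still coincides with the direct construction one would make on the Robba side via completion along untilts. This compatibility is the delicate step; once it is established, the pullback--pushforward axioms are then a formal consequence of the corresponding axioms on the prismatization side.
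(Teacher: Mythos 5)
Your proposal takes a genuinely different route from the paper. The paper's own proof of this theorem is essentially a verbatim repetition of its proof in the prismatization case: pass to totally disconnected subspaces following \cite{1S5}, then to adic spectra of algebraically closed fields, observe that the categories collapse to condensed modules over the generalized de Rham period rings with trivial Galois action, and read off the pullback and pushforward. You instead propose to transport the already-constructed functors $F^\square$, $F_\square$ from the prismatization side across the symmetrical monoidal equivalence $\mathrm{SolidQuasiCoh}_{\mathrm{deRhamPrismatization}_S}\simeq \mathrm{SolidQuasiCoh}_{\mathrm{deRhamRobba}_S}$ established earlier for small $v$-stacks $S$. Each approach buys something: the paper's direct reduction is self-contained and symmetric with the prismatization proof but duplicates work and leaves implicit precisely the functoriality and descent bookkeeping you highlight as delicate; your transport strategy avoids re-running the reduction and instead leverages an already-proven equivalence, at the cost of having to upgrade the pointwise equivalence to an equivalence of functors on the $\infty$-category of small $v$-stacks, which the paper never does explicitly. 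You correctly identify that upgrade as the crux. Your parenthetical fallback — verifying the adjunction on totally disconnected charts — is in fact the argument the paper uses exclusively, so your proposal effectively subsumes the paper's proof while also offering a structurally cleaner alternative. The one thing to be careful about if carrying out the transport: the equivalence theorem in the paper is stated for a fixed small $v$-stack $S$, and you would need to verify that it assembles into an equivalence of $v$-sheaves of $\infty$-categories, including compatibility with pullback along arbitrary (not just perfectoid) morphisms of $v$-stacks; the paper's proof of the equivalence only argues locally over perfectoid charts, so the naturality claim is not automatic from what is written and would require the descent argument you sketch.
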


\begin{proof}
We follow the method of proof after Scholze in \cite{1S5} by passing to totally disconnected subspaces from the adic spectrum of corresponding algebraically closed fields. Then one can see that the corresponding $\infty$-category can be regarded as the corresponding generalized Galois representations with coefficients over those generalized de Rham period sheaves then the corresponding result will follow since the 6-functor will be reduced to 6-functors among condensed generalized Galois representations with trivial Galois groups.
\end{proof}

\begin{definition}
Assume we are in the situation in \cref{assumption2}. Since $\mathrm{SolidQuasiCoh}_{\mathrm{deRhamRobba}_*}$ are symmetrical monoidal $\infty$-categories, the corresponding motivic Galois group exists as the corresponding Tannakian groups we will use:
\begin{align}
\pi_{\mathrm{SolidQuasiCoh}_{\mathrm{deRhamRobba}_*}}
\end{align}
to denote the group in any particular situation over $*$.
\end{definition}

\begin{theorem}
Assume we are in the situation in \cref{assumption2}. $\mathrm{PerfComplex}\mathrm{SolidQuasiCoh}_{\mathrm{deRhamRobba}_*}$ or $\mathrm{PerfComplex}\mathrm{SolidQuasiCoh}_{\mathrm{deRhamPrismatization}_*}$ admits pullback functor and pushforward functor $F^\square$ and $F_\square$ in the 6-functor formalism where $*$ is varying in the category of all the small $v$-stacks (while for the de Rham primatization $*$ can also be the corresponding small arc stacks). Here the notation means we consider all the perfect complexes in the solid quasicoherent sheaves. 
\end{theorem}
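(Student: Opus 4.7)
The plan is to deduce the existence of $F^\square$ and $F_\square$ on $\mathrm{PerfComplex}\,\mathrm{SolidQuasiCoh}_{\mathrm{deRhamPrismatization}_*}$ and $\mathrm{PerfComplex}\,\mathrm{SolidQuasiCoh}_{\mathrm{deRhamRobba}_*}$ from the already established analogous functors on the full solid quasi-coherent $\infty$-categories from the two immediately preceding theorems. Since those theorems furnish $F^\square, F_\square$ in the ambient stable symmetrical monoidal $\infty$-categories, it suffices to verify that each of these functors restricts to the full subcategory of perfect complexes; the stable monoidal structure and the equivalence between the de Rham Robba and de Rham prismatization sides already established in this section will then automatically upgrade the restricted formalism to both sides simultaneously.

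First I would treat $F^\square$, which is the easy half. Following \cite{1S5} and \cite{1S6}, we pass to totally disconnected perfectoid charts where the objects become condensed modules over generalized period sheaves such as $WVL(K_i)[b^{1/2}]$ and $WVL(K_i)[1/x][b^{1/2}]$ by the description from \cref{section5}. The pullback along a morphism of such period sheaves is the derived base change functor, and derived base change preserves perfect (equivalently, dualizable compact) objects. Descent along the arc or $v$-topology glues this to the global statement that $F^\square$ restricts to $\mathrm{PerfComplex}$.

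The main obstacle is $F_\square$. Unlike the pullback, the pushforward does not automatically send perfect complexes to perfect complexes; one needs some form of properness or uniform cohomological finiteness. My plan is to again reduce via Scholze's totally-disconnected reduction to the setting where the relevant morphisms become morphisms of perfectoid spaces tilting to morphisms of algebraically closed nonarchimedean fields, so that the underlying Galois groups are trivial and the pushforward locally reduces to a restriction of scalars along a map of generalized de Rham period rings. In this reduced situation the pushforward can be computed as a finite limit/totalization of a \v{C}ech conerve whose terms are perfect by the pullback step; because $\mathrm{PerfComplex}$ is closed under finite limits in a stable $\infty$-category, the totalization is again perfect, \emph{provided} one can control cohomological amplitude uniformly. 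The technical content will be to leverage the fact that the totally disconnected perfectoid subspaces of \cite{1S5}, \cite{1S6} together with the generalized prismatization description from \cref{section5} produce morphisms whose cohomology is concentrated in uniformly bounded degrees after the $b^{1/2}$-localization of \cref{assumption2}.

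Finally I would check that the restricted pair $(F^\square, F_\square)$ is compatible with the tensor product on $\mathrm{PerfComplex}$ and with the fully faithful comparison functor of the preceding theorem between de Rham prismatization and de Rham Robba stackifications, so that the formalism on the two sides is identified. This last compatibility is formal from the symmetrical monoidal nature of the preceding equivalences once perfect objects are preserved, and yields the asserted $F^\square, F_\square$ on $\mathrm{PerfComplex}\,\mathrm{SolidQuasiCoh}_{?_*}$ uniformly in the $v$-stack case, with the additional small arc-stack case for de Rham prismatization handled by the functoriality established earlier in this section.
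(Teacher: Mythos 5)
Your proposal follows essentially the same route as the paper: pass to totally disconnected subspaces / geometric points of algebraically closed fields following Scholze, reduce the $\infty$-categories to condensed modules over the generalized de Rham period rings with trivial Galois action, and observe that the relevant operations become derived base change, which preserves perfect objects. Your treatment of $F_\square$ is somewhat more explicit than the paper's (which simply asserts triviality over geometric points) in flagging the need to control cohomological amplitude via the \v{C}ech totalization, but the underlying mechanism and the reduction step are the same.
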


\begin{proof}
We follow the method of proof after Scholze in \cite{1S5} by passing to totally disconnected subspaces from the adic spectrum of corresponding algebraically closed fields. Then one can see that the corresponding category can be regarded as the corresponding generalized Galois representations with coefficients over those generalized de Rham period sheaves then the corresponding result will follow since the 6-functor will be reduced to 6-functors among condensed generalized Galois representations with trivial Galois groups. Then the corresponding functor will be either we take the base change over the genearalized de Rham rings with respect to different algebraically closed fields or we consider the compositions of homomorphisms from the Galois groups. But over the geometric points these are trivial. 
\end{proof}

\begin{remark} \mbox{\textbf{(Full 6-functor formalism)}}
The $!$-adjoint pairs in the current context are obviously those maps in \cite{1S5}, \cite{1S6} but the tricky part in the proof is to derived preservation of the corresponding local finiteness theorem for the perfect complexes. However one can easily prove this by using the corresponding idea presented above following \cite{1S5}, \cite{1S6} by consider the corresponding geometric points, which then eventually reduct to modules (with trivial Galois actions) over the integral generalized de Rham period rings for these geometric points. We use the notation $F_\blacksquare$ and $F^\blacksquare$ to denote the $!$-adjoint pairs as in \cite{1S5}, \cite{1S6}.
\end{remark}

\subsection{Application to Local Langlands} \label{subsection7.3}

\noindent We now follow \cite{1S5}, \cite{1S6}, \cite{L1}, \cite{1FS} to construct some generalized version of the local Langlands correspondence in \cite{1FS} by using the de Rham prismatization we constructed above in the generalized setting.

\begin{assumption} \label{assumption3}
All our considerations below in \cref{subsection7.3} on the prismatization, de Rham Robba stackification and de Rham prismatization are assumed to be the generalized ones, i.e. we consider the corresponding prismatizations with $b^{1/2}$ added, de Rham Robba stackifications with $b^{1/2}$ added and de Rham prismatizations with $b^{1/2}$ added. 
\end{assumption}

\begin{assumption}
The de Rham prismatizations and de Rham Robba stackifications in the following are all $p$-adic.
\end{assumption}

\noindent Recall the corresponding context in \cite{1FS} we have the corresponding $p$/$z$-adic group $G(F)$ with some local field $F$, this will provide the corresponding small arc stacks as in \cite{1S5}, \cite{1S6}. Since we have the Tannakian groups defined above, we can tranform a representation of the Tannakian group into the corresponding category on the other side. This process will define the following correponding functor.

\begin{theorem}
Assume we are in the situation of \cref{assumption3}. We have well-defined functors which are well-defined and isomorphisms:
\begin{align}
\mathrm{Repre}(?) \overset{\sim}{\rightarrow}  ! 
\end{align}
$?$ = $\pi_{\mathrm{SolidQuasiCoh}_{\mathrm{deRhamPrismatization}_*}}$, or $\pi_{\mathrm{SolidQuasiCoh}_{\mathrm{deRhamRobba}_*}}$, $!$ = $\mathrm{SolidQuasiCoh}_{\mathrm{deRhamPrismatization}_*}$ or $\mathrm{SolidQuasiCoh}_{\mathrm{deRhamRobba}_*}$.
\end{theorem}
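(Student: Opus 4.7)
The plan is to deduce the equivalence from Tannakian reconstruction in the symmetric monoidal stable $\infty$-categorical setting, adapted to the solid/condensed context, combined with the reduction to geometric points via totally disconnected covers that was already exploited in the preceding 6-functor proofs.

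First, I would exhibit a canonical fiber functor. Using the same strategy as in \cref{section7.2}, one passes to totally disconnected subspaces in the relevant topology and then to adic spectra of algebraically closed nonarchimedean fields. At such geometric points the Galois groups become trivial, and both $\mathrm{SolidQuasiCoh}_{\mathrm{deRhamPrismatization}_*}$ and $\mathrm{SolidQuasiCoh}_{\mathrm{deRhamRobba}_*}$ collapse to solid module $\infty$-categories over the corresponding generalized de Rham period rings carrying $b^{1/2}$. The resulting functor into solid modules over a generalized de Rham period ring is symmetric monoidal, colimit preserving, and conservative after restriction to perfect objects, so it should serve as a fiber functor in the sense required by Tannakian reconstruction.

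Second, I would verify the remaining Tannakian axioms and identify the output with $\mathrm{Repre}(?)$. Rigidity on perfect complexes is supplied by the full 6-functor formalism discussed in the preceding remark; stability and the symmetric monoidal structure are built into the construction; and the required descent along the $v$-topology, the arc-topology, and along the comparison between de Rham-prismatization and de Rham-Robba stackifications is precisely the content of the equivalences already proved earlier in this section. By construction $\pi_?$ is the spectrum of the Hopf sheaf obtained as the $\infty$-loop of the \v{C}ech conerve of the suspended structure sheaf, so $\mathrm{Repre}(\pi_?)$ is the comodule $\infty$-category for this Hopf algebra. The tautological functor from $!$ to $\mathrm{Repre}(\pi_?)$ sends an object to the coaction induced by the Hopf comonad, and the claimed functor from $\mathrm{Repre}(?)$ to $!$ is its quasi-inverse. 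Gluing fiberwise equivalences over geometric points back to the stack $*$ uses arc/$v$-descent: descent for $!$ is the content of earlier theorems, and descent for $\mathrm{Repre}(?)$ reduces to the $\infty$-sheaf property of the Hopf sheaf itself.

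The main obstacle will be formulating and applying Tannakian duality in this precise setting. Standard Tannakian theorems, as in the style of Lurie, Iwanari, or Wedhorn, are most robust over a field or over a nice base scheme, whereas here one needs a relative version over a small arc-stack or $v$-stack whose fibers are solid quasicoherent sheaves over generalized de Rham period rings enriched by $b^{1/2}$. Reducing to geometric points through totally disconnected covers as in \cite{1S5}, \cite{1S6} handles the fiberwise picture absolutely, but assembling the fiberwise equivalences into a global equivalence of symmetric monoidal stable $\infty$-categories requires checking compatibility of the fiber functors under arc/$v$-descent and confirming that rigidity and the perfect-complex hypotheses are preserved by that descent. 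This compatibility check, together with a careful statement of relative Tannakian duality in the solid condensed framework, is where the real work of the theorem lies.
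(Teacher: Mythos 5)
Your proposal takes essentially the same route as the paper, whose entire proof is the single line ``By Tannakian formalism.'' You spell out considerably more than the paper does --- constructing a fiber functor by passing to totally disconnected covers and geometric points, verifying rigidity via the 6-functor formalism, and gluing by arc/$v$-descent --- and you honestly flag that the actual work is a relative Tannakian reconstruction statement over small arc/$v$-stacks in the solid condensed setting, which the paper asserts but does not supply. That diagnosis is correct: neither your sketch nor the paper cites or proves such a relative Tannakian theorem, so both arguments share the same unfilled gap; your version is simply more forthcoming about where it is.
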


\begin{proof}
By Tannakian formalism.
\end{proof}

\begin{corollary}
Assume we are in the situation of \cref{assumption3}. For $p$-adic representations of the Langlands dual group with coefficients in $\overline{\mathbb{Q}}_p$, by taking composition with the representation of the Tannakian group $\pi_{\mathrm{SolidQuasiCoh}_{\mathrm{deRhamPrismatization}_*}}$, or $\pi_{\mathrm{SolidQuasiCoh}_{\mathrm{deRhamRobba}_*}}$, we end up with certain complexes in $\mathrm{SolidQuasiCoh}_{\mathrm{deRhamPrismatization}_*}$ or $\mathrm{SolidQuasiCoh}_{\mathrm{deRhamRobba}_*}$.
\end{corollary}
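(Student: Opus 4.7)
The plan is to read the corollary as a straightforward composition of two constructions already put in place: a choice of Langlands parameter on the Tannakian side, followed by the categorical equivalence of the previous theorem. First I would fix a parameter, namely a homomorphism from the relevant Tannakian group, either $\pi_{\mathrm{SolidQuasiCoh}_{\mathrm{deRhamPrismatization}_*}}$ or $\pi_{\mathrm{SolidQuasiCoh}_{\mathrm{deRhamRobba}_*}}$, into the Langlands dual group $\widehat{G}$ viewed over $\overline{\mathbb{Q}}_p$. Any $\overline{\mathbb{Q}}_p$-linear algebraic representation $\rho$ of $\widehat{G}$ then pulls back along this parameter to a representation of the relevant Tannakian group on a $\overline{\mathbb{Q}}_p$-vector space.

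Second, I would apply the equivalence $\mathrm{Repre}(?) \overset{\sim}{\rightarrow} \,!$ of the previous theorem to this pulled-back representation, so as to translate it into an object of $\mathrm{SolidQuasiCoh}_{\mathrm{deRhamPrismatization}_*}$ or $\mathrm{SolidQuasiCoh}_{\mathrm{deRhamRobba}_*}$. Viewing this object inside the ambient derived $\infty$-category yields the asserted complex, and the functoriality of both steps shows that the assignment $\rho \mapsto (\text{complex})$ is a symmetrical monoidal, tensor-exact functor from $\mathrm{Repre}(\widehat{G})$ into the relevant stable $\infty$-category. The $p$-adic adjective in the statement is harmless: it simply records that we are allowed to take $\rho$ to be a continuous, possibly infinite-dimensional $\overline{\mathbb{Q}}_p$-representation and that the translation under the Tannakian equivalence is compatible with the solid / condensed topology carried by the target category.

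The main point that must be verified, rather than a true obstruction, is that $\overline{\mathbb{Q}}_p$ is an allowable coefficient ring for the Tannakian reconstruction used in the previous theorem. This is essentially built into the set-up of \cref{section7.2}: in the course of identifying the solid quasicoherent $\infty$-categories on de Rham prismatization and de Rham Robba stackifications with condensed representations of their Tannakian groups, one passes to geometric points and reads off coefficients from the generalized de Rham period sheaves, which are naturally $\overline{\mathbb{Q}}_p$-linear after adding $b^{1/2}$. Hence one may work throughout with $\widehat{G}$ base-changed to $\overline{\mathbb{Q}}_p$ and apply the previous theorem in its $\overline{\mathbb{Q}}_p$-linear form. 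Once this is recorded, the corollary is formal, and I would simply invoke the preceding theorem composed with $\rho$ to conclude.
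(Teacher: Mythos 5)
Your proposal is correct and is essentially the intended argument: the paper supplies no separate proof for this corollary, treating it as an immediate consequence of the preceding theorem (\(\mathrm{Repre}(?) \overset{\sim}{\rightarrow}\,!\) by Tannakian formalism), and you have reconstructed exactly that reasoning — compose the $L$-parameter $\pi \to \widehat{G}$ with the algebraic representation $\rho$ of $\widehat{G}$, then transport the resulting representation of the Tannakian group through the equivalence to land in $\mathrm{SolidQuasiCoh}_{\mathrm{deRhamPrismatization}_*}$ or $\mathrm{SolidQuasiCoh}_{\mathrm{deRhamRobba}_*}$. Your additional remarks about viewing the resulting object as a complex in the stable $\infty$-category and about $\overline{\mathbb{Q}}_p$ being an allowable coefficient ring (read off from the generalized de Rham period sheaves at geometric points) are accurate elaborations that the paper leaves implicit.
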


\indent Now we consider the moduli $v$-stacks in \cite{1FS}, we denote it by $Y_{\mathrm{FS},G}$ which is the $v$-stack of $G$-bundles in both the equal characteristic and mixed characteristic situations for some local field $F$. We now consider the following following \cite{1FS}, \cite{1GL}. We actually relying on \cite{1FS}, \cite{1S5}, \cite{1S6}  can derive the Hecke operators:

\begin{definition}
Assume we are in the situation of \cref{assumption3}. Consider the map from the Hecke stack in \cite{1FS} which we denote that as $Y_{\mathrm{Hecke},G,I}$, and consider the map from this to fiber product of the Cartier stack $Y_\mathrm{Cartier}$ with the corresponding stack $Y_{\mathrm{FS},G}$, and consider the map from this Hecke stack to the $Y_{\mathrm{FS},G}$. Pulling back along the second and push-forward the product with $\square_O$ will define the Hecke operator, where $\square_O$ is defined for some representation of the Langlands full-dual group in the coefficient $\overline{\mathbb{Q}}_p$. By result in \cite{1S5}, \cite{1S6} we have the construction does not depend on the choice of the primes, so we can in some equivalent way to derive a corresponding $\overline{\mathbb{Q}}_p$-complex over the Hecke stacks with some finite set $I$. For instance after \cite{1FS} we have $\overline{\mathbb{Q}}_\ell$-adic complex with $\ell$ away from $p$. Take any motivic sheaf in \cite{1S5}, \cite{1S6} with $\ell$-adic realization which is isomorphic to this complex\footnote{Many things can now be defined over $\mathbb{Z}$ after \cite{1S5}, \cite{1S6}. First the Satake isomorphism can now be defined over $\mathbb{Z}$, one then just take the $p$-adic realization to reach our definition here by taking the base change from $\mathbb{Z}$ to $\mathbb{Z}_p$ which provides the desired complex here over the Hecke stack in order to finish the definition of the Hecke operators in our current setting.}. Then we consider the $p$-adic realization which provides a corresponding $p$-adic complex. Then one can take the base change to the corresponding $\varprojlim_\alpha B^+_\mathrm{dR,\alpha}[b^{1/2}]$-period ring to achieve finally an object in the category we are considering. This gives us desired $p$-adic complex over the Hecke stack, then one defines the corresponding morphisms from the Hecke stack for each finite set as above to $Y_{\mathrm{FS},G}$ and to $Y_{\mathrm{FS},G}\times Y'$. Here $Y'$ is defined to be the $v$-stack of all the solid quasicoherent sheaves over the two de Rham stackifications in our setting, over the Cartier stack $Y_{\mathrm{Cartier}}$ and those products of this Cartier stack. This will produce the desired Hecke operators.
\end{definition}

\begin{theorem}
Assume we are in the situation of \cref{assumption3}. The Hecke operator sends the complexes to those complexes carrying the action from the products of Tannakian group of the Cartier stack for $F$.
\end{theorem}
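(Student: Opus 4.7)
The plan is to combine three ingredients already in hand: the explicit construction of the Hecke operator as a pullback–tensor–pushforward along the Hecke correspondence, the 6-functor formalism for $\mathrm{SolidQuasiCoh}_{\mathrm{deRhamPrismatization}_*}$ and $\mathrm{SolidQuasiCoh}_{\mathrm{deRhamRobba}_*}$ built in \cref{section7.2}, and the Tannakian duality from the preceding theorem. First I would set up the correspondence diagram: the Hecke stack $Y_{\mathrm{Hecke},G,I}$ carries a map to $Y_{\mathrm{FS},G}$ on one side, and a map to $Y_{\mathrm{FS},G}\times Y_{\mathrm{Cartier}}^I$ on the other, where on the second factor we have taken the de Rham prismatization or de Rham Robba stackification in the sense of \cref{assumption3}. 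The Hecke operator is then the composite of $F^\square$ along the first map, tensoring with the $p$-adic complex $\square_O$ built from the Satake kernel as in the definition above, and $F_\square$ along the second map.

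Next I would track what happens on the Cartier factor. By the Tannakian formalism theorem above, applied to each copy of $Y_{\mathrm{Cartier}}$, solid quasicoherent sheaves over the de Rham stackification of $Y_{\mathrm{Cartier}}$ are equivalent as symmetrical monoidal $\infty$-categories to representations of $\pi_{\mathrm{SolidQuasiCoh}_{\mathrm{deRhamPrismatization}_{Y_{\mathrm{Cartier}}}}}$ (and analogously for the Robba variant). For the $I$-fold product I would invoke the K\"unneth-type compatibility of the Tannakian construction under products of stacks, which is available because the 6-functor formalism established in \cref{section7.2} supplies external tensor products and satisfies the projection formula; this reduces the statement that the Tannakian group of a product is the product of Tannakian groups to a pointwise verification at geometric points, following the Scholze reduction used earlier.

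The most delicate step will be verifying that pushforward along the Hecke projection is compatible with this product Tannakian action in a functorial way, since a priori the action is transparent only after restricting to totally disconnected perfectoid subspaces and ultimately to adic spectra of algebraically closed fields. Here I would follow exactly the strategy used in \cref{section7.2} after \cite{1S5}, \cite{1S6}: reduce to such geometric points, where the generalized Galois groups are trivial and the action becomes simply base change along the generalized de Rham period rings attached to each factor of $Y_{\mathrm{Cartier}}^I$. The independence from the auxiliary prime $\ell$ used in defining $\square_O$ then follows from the independence results of \cite{1S5}, \cite{1S6}, which is what makes passing to the $p$-adic realization intrinsic. Finally I would reassemble the pointwise action into a global action by $v$-descent (respectively arc-descent), obtaining the asserted structure of a complex with action by the product of Tannakian groups of the Cartier stack attached to $F$.
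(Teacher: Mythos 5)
Your proposal follows the same overall strategy as the paper: realize the Hecke operator as a pull--tensor--push along the Hecke correspondence, then reduce to strictly totally disconnected subspaces (adic spectra of algebraically closed fields) in the style of \cite{1S5}, \cite{1S6}, verify the statement at geometric points, and reassemble by descent. Both arguments also invoke \cite{1S5}, \cite{1S6} for independence of the realization from the auxiliary prime.

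The difference lies in the final step. The paper's proof observes that over these geometric points the relevant complexes are perfect over $\varprojlim_\alpha B^+_{\mathrm{dR},\alpha}[b^{1/2}]$, admit lattices reducing to the $\overline{\mathbb{Q}}_p$-situation, and --- this is the decisive point --- that the Tannakian group action factors through the Weil group action of $F$. Once that factorization is in hand the situation is literally parallel to \cite{1FS}, and Proposition IX.1.1 there gives the product Weil group equivariance, hence the Tannakian equivariance. Your proposal instead tries to manufacture the equivariance from a K\"unneth compatibility of the Tannakian construction under products of stacks plus pointwise base-change and $v$-descent. That route avoids stating the Weil group factorization, but then it is not clear why the reassembled action globalizes as an action of the product of Tannakian groups rather than merely as a collection of pointwise data: the equivariance that the paper imports for free from \cite{1FS} would have to be re-derived from scratch in your scheme. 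Making the factorization through the products of Weil groups explicit, and then citing the Fargues--Scholze proposition, both shortens the argument and closes that gap.
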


\begin{proof}
By our definition we have that the corresponding image complexes are those complexes over the corresponding fiber product of $Y_{\mathrm{FS},G}$ with the corresponding classifying stack of the product of the Tannakian group as in the statement of this theorem. For instance one can check this following the idea in \cite{1S5}, \cite{1S6} where we consider each totally disconneted subspace taking the form of the adic spectrum of some algebraically closed field. Over these algebraically closed geometric points we can see that we end up with purely perfect complexes of modules over:
\begin{align}
\varprojlim_\alpha B^+_\mathrm{dR,\alpha}[b^{1/2}],
\end{align}
but we do have the corresponding lattices then, which reduces to the correponding $\overline{\mathbb{Q}}_p$-situation. Here the action of Tannakian group for $F$ will then factors through the action of corresponding Weil group for $F$. Then we are in a situation parallel to \cite{1FS} and we only have to consider the action from the products of the Weil groups. Then the same proof as in \cite{1FS} will derive the result stated. In fact there is nothing to prove here once one follow the same ideas in \cite{1FS}, in particular the proposition IX.1.1.
\end{proof}

\begin{theorem}
Assume we are in the situation of \cref{assumption3}. One direction of the local Langlands holds true in this context: from Schur-irreducible objects to the corresponding $L$-parameters from the $?$ in our current context. Here $?$ = 
\begin{align}
\pi_{\mathrm{SolidQuasiCoh}_{\mathrm{deRhamPrismatization}_*}}, 
\end{align}
or 
\begin{align}
\pi_{\mathrm{SolidQuasiCoh}_{\mathrm{deRhamRobba}_*}}. 
\end{align}
$*$ is just the Cartier stack attached to $F$. Note that all the coefficients on the both sides are $p$-adic. 
\end{theorem}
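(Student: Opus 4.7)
The plan is to adapt the strategy of \cite[Chapter~IX]{1FS} to the generalized $p$-adic setting provided by the Tannakian group $?$. Fix a Schur-irreducible object $A$ in the target $\infty$-category $!$ living over $Y_{\mathrm{FS},G}$, where Schur-irreducibility means that $\mathrm{End}(A)$ reduces to scalars in our enlarged coefficient system $\varprojlim_\alpha B^+_{\mathrm{dR},\alpha}[b^{1/2}]$. I would first apply, for each finite set $I$ and each representation $V$ of the $I$-fold product of the Langlands dual $\check{G}$ on $\overline{\mathbb{Q}}_p$-vector spaces, the Hecke operator $T_V$ produced by the previous theorem. That theorem guarantees that $T_V(A)$ carries a natural continuous action of $?^I$.

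Next, following Fargues-Scholze, I would build excursion operators: given $(I, V, \alpha : \mathbf{1} \to V|_\Delta, \beta : V|_\Delta \to \mathbf{1}, \gamma \in ?^I)$, form the composition
\begin{align}
A \;\xrightarrow{T_\alpha}\; T_V(A) \;\xrightarrow{\gamma}\; T_V(A) \;\xrightarrow{T_\beta}\; A.
\end{align}
Schur-irreducibility pins this composite down to a scalar in the relevant coefficient ring, giving a function $S_{I,V,\alpha,\beta} : ?^I \to \varprojlim_\alpha B^+_{\mathrm{dR},\alpha}[b^{1/2}]$. Using the monoidality of the Hecke functors (already exploited in the construction of the Hecke operators) and the K\"unneth compatibility postulated in \cref{situation1}, one checks the standard excursion relations: compatibility with tensor products of representations, with composition, with morphisms of finite sets, and with the unit structure.

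From this family of excursion data, Tannakian reconstruction (in the form used by V.~Lafforgue and by Fargues-Scholze in the local setting) produces a continuous conjugacy class of homomorphisms $? \to \check{G}(\varprojlim_\alpha B^+_{\mathrm{dR},\alpha}[b^{1/2}])$. This is the desired $L$-parameter attached to $A$, and by construction it depends functorially on $A$, so one obtains the claimed map from Schur-irreducible objects to $L$-parameters valued in $?$.

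The main obstacle is verifying the excursion relations in the enriched de Rham/Robba coefficient setting: the Hecke operators were defined by passing through $p$-adic realizations of motivic sheaves, and one must ensure that all the relations holding $\ell$-adically in \cite{1FS} transport faithfully to the enlarged coefficient ring $\varprojlim_\alpha B^+_{\mathrm{dR},\alpha}[b^{1/2}]$. I would handle this by the same totally-disconnected reduction used in the proofs of the preceding theorems: over geometric points $\mathrm{Spa}(C,C^+)$ with $C$ algebraically closed, the relevant local Galois/Tannakian groups become trivial, the generalized de Rham rings become ordinary $B^+_{\mathrm{dR}}$-type rings with the $b^{1/2}$-decoration, and the identities reduce to the corresponding $\overline{\mathbb{Q}}_p$-valued identities of \cite{1FS}. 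The excursion relations then extend to the full $v$-stack $Y_{\mathrm{FS},G}$ by $v$-descent, and the compatibility of monoidal structures between the generalized de Rham prismatization, the de Rham Robba stackification (which are equivalent by the fundamental comparison theorem above), and the Satake category used by Fargues-Scholze provides the final link.
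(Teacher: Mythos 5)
Your proposal follows the same route as the paper: both constructions pass through the Hecke operators with $?^I$-equivariance established in the preceding theorem, form excursion operators in the style of V.~Lafforgue and Fargues-Scholze (cf.\ VIII.4.1, IV.4.1 of \cite{1FS}), exploit Schur-irreducibility so the composite $A \to T_V(A) \to T_V(A) \to A$ reduces to a scalar, verify the excursion relations by reduction to geometric points via totally-disconnected covers, and then reconstruct the semisimple $L$-parameter from the excursion algebra. The only cosmetic difference is that the paper phrases the intermediate step in terms of the map to the Bernstein center, whereas you work directly with the excursion data attached to the fixed Schur-irreducible object; these are the same argument seen globally versus pointwise. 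Your elaboration of the main potential obstacle (transporting the excursion relations to the enlarged $\varprojlim_\alpha B^+_{\mathrm{dR},\alpha}[b^{1/2}]$ coefficients) and the proposed resolution via the geometric-point reduction matches what the paper implicitly relies on in the surrounding proofs, so there is no gap.
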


\begin{proof}
By our construction we do have the mapping to the Bernstein centers in this current setting. Then as in VIII.4.1 and IV.4.1 of \cite{1FS} we can build up the corresponding mapping after \cite{1VL}. To be more precise for each finite set $I$ we can build up the corresponding symmetrical monoidal $\infty$-categories and the Hecke functors as in the above in our current setting, and we have the corresponding equivariant actions from the Tannakian groups on the Target symmetrical monoidal $\infty$-categories over the moduli $v$-stack. Then the excursion operators are generated automatically after \cite{1FS} and \cite{1VL}, where all these general abstract formalism will apply in our setting directly.
\end{proof}

\begin{remark}
Our consideration might be not a \textit{correct} generalization for ultimate $p$-adic local Langlands correspondence after \cite{C}. As in \cite{EGH1} our consideration at least (we don't know more about this) at this moment covers the $p$-adic smooth representations of the $p$-adic reductive groups, since the Hecke operators rely on Scholze's motivicalization with realization into $p$-adic sheaves over $v$-stacks. This might be serious thing to genuinely consider our consideration as correct generalization of Colmez's work and the context in Colmez's work. But our consideration does provide certain local Langlands correspondence in $p$-adic coefficients, and note that our consideration goes beyond the pro-\'etale local systems. However it is less clear to us if the corresponding \textit{$p$-adic spectral action} in such solid quasi-coherent sheaf level will produce the equivalence of the derived $\infty$-categories on the both sides as at least conjectured in \cite{1FS}. This is because we do not at this moment put more requirement on the motivic categories.
\end{remark}

\begin{remark}
This direction of Langlands correspondence is already many to one, however we want to mention that one can have a larger package if one changes the motivic Galois groups on the Weil side. For instance as in \cite{1T} on the Weil side we considered just the Weil groups (and their covering groups), this means that the Weil groups act as if they act through any full motivic Galois group related to them (but only through subgroups or quotient groups). This may enlarge the $L$-packages and may make the categoricalized functor on the $\infty$-categoricalized level fully faithful. However the correspondence still makes sense.
\end{remark}

\newpage
\section{Motivic $p$-adic Local Langlands Correspondence in Families}

\indent Function fields over finite fields share kind of arithmetical similarity to the local fields with mixed-characteristics. However due to the complication of positivity of the characteristics, many parallel obvious analogs of results might be extremely different and difficult. This is a problem for instance in prismatization as in our work in \cite{2T1}. Following \cite{2LH} we give a compactness method by taking the infinite induction from mixed-characteristic fibers to reach the function field by compactification of the natural numbers. We then study prismatization in families following \cite{2G}, \cite{2BS}, \cite{2BL}, \cite{2D}, \cite{2BL2}. Our construction rely on the corresponding construction in the following sense. For each $n$ one can pick some $p$-adic field with ramification index expanding, this process will define all family of the corresponding prismatization, de Rham prismatization and the corresponding Hodge-Tate prismatization. Then we consider the closure of the $\infty$-categories along the corresponding identification that $\overline{\mathbb{N}}= \mathbb{N}^\wedge$. One has a correspondence between categories over some finite $n_0$ and categories over $\infty$ through the corresponding identification on the ring level, which is the key idea in the proof of the generalized local Langlands in families in \cite{2LH}. We follow this idea significantly in this paper to reach the definitions of prismatizations at $\infty$, i.e. a function field. We then have a uniformed strategy on the prismatization. We then apply our construction to motivic $p$-adic local Langlands after \cite{1S5}, \cite{1S6}, \cite{1RS}, \cite{1FS}.

\subsection{Absolute prismatization}

\begin{setting}
We now consider the foundation in \cite{2LH} on the generalized topological ring $R$, which is defined in the following. We fix a finite field $\mathbb{F}_q$ as a universal residue field. Then we fix a family over $\mathbb{N}^\wedge$ (where we add the corresponding infinity as one point to the set $\mathbb{N}$) of local fields:
\begin{align}
K_1,K_2,...,K_\infty.
\end{align}
Then for each $n\in \mathbb{N}^\wedge$ we just fix some uniformizer $t_n\in K_n$. Then we just put $t$ to be the uniformizer for the topological ring $R$ to be defined in the following which evaluates at each $n$ to be $t(n)=t_n$. Then we just consider:
\begin{align}
R = \prod_{n\in \mathbb{N}^\wedge} \mathcal{O}_{K_n}
\end{align}
where we have that:
\begin{align}
R = \varprojlim_k R/t^k R.
\end{align}
Then we have the corresponding ring $K$ by simply inverting all the $t_n,n\in \mathbb{N}^\wedge$. The $\infty$-local field is then required to be a function field with uniformizer $t_\infty$ over the field $\mathbb{F}_q$. 
\end{setting}

\begin{setting}
As in \cite{2LH} we have the corresponding Witt vector $\mathrm{Witt}$ which is a functor from the category of $R$-algebras to the category of all the $R$-algebras which are $t$-complete.
\end{setting}

\begin{definition}
We now define prisms in our current setting. A prism in families is a pair over $\mathbb{N}$, which is denoted by $(L,A)$ where $L$ is a Cartier divisor in $\mathrm{Spec}A$ such that we have $A$ is derived $(t,L)$-complete and we have that the corresponding requirement that for each $n\in \mathbb{N}$ we require that $(L_n,A_n)$ is a prism in \cite{2BS}. This can be regarded as a corresponding family of prisms in \cite{2BS} such as if we put:
\begin{align}
\mathbb{N} = \varinjlim_i \{x\in \mathbb{N}|x\leq i\}.
\end{align} 
Then we can restrict the whole family of prisms in our current definition to each subspaces in this limit. Note that here $t$ is in our notation is just $t|\mathbb{N}$. We use the notation:
\begin{align}
\mathrm{Prism}_\mathbb{N}
\end{align}
to denote all the prisms fibered over $\mathbb{N}$, then we take the fiber product along $\mathbb{N}\hookrightarrow \mathbb{N}^\wedge$ to reach the definition of the prisms over $\mathbb{N}^\wedge$ in particular we have the definition of the prisms at the $\infty$. We use the notation:
\begin{align}
\widetilde{\mathrm{Prism}_\mathbb{N}}
\end{align}
to denote the category of all the prisms over $\mathbb{N}^\wedge$. Here we consider first the following category by taking the quotient by some power of $t$:
\begin{align}
\mathrm{Prism}_{\mathbb{N},R/t^a}
\end{align}
By the identification consider in \cite{2LH} we can take some $n_0\in \mathbb{N}$ and set the corresponding 
\begin{align}
\widetilde{\mathrm{Prism}_\mathbb{N}}(\infty)
\end{align}
to be just:
\begin{align}
\mathrm{Prism}_{\mathbb{N},\mathcal{O}_{K_{n_0}}/t_{n_0}^a}
\end{align}
which produces the corresponding mod $t^a$ category of all the prisms:
\begin{align}
\widetilde{\mathrm{Prism}_\mathbb{N}}_{R/t^a}
\end{align}
and then we define:
\begin{align}
\widetilde{\mathrm{Prism}_\mathbb{N}} = \varprojlim_a \widetilde{\mathrm{Prism}_\mathbb{N}}_{R/t^a}.
\end{align}
\end{definition}

\begin{definition}
We now define the corresponding Cartier stack which is the version of preprismatization in our setting. Consider first the Witt vector functor $\mathrm{Witt}$ fibered over the following category of all the $t$-nilpotent rings over $R$ in a family over $\mathbb{N}^\wedge$:
\begin{align}
\mathrm{Ring}_{\mathrm{Nil},\mathbb{N}^\wedge,t,R}.
\end{align}
Then consider another functor in the following sense which is called the Cartier stack in our setting:
\begin{align}
\mathrm{Cartier},
\end{align}
which parametrize over each $X\in \mathrm{Ring}_{\mathrm{Nil},\mathbb{N}^\wedge,t,R}$ the corresponding generalized Cartier ideal $(L,f)$ mapping under $f$ to $\mathrm{Witt}(X)$.
\end{definition}

\begin{definition}
We now define the corresponding Cartier-Witt stack which is the version of prismatization in our setting. Consider first the Witt vector functor $\mathrm{Witt}$ fibered over the following category of all the $t$-nilpotent rings over $R$ in a family over $\mathbb{N}^\wedge$:
\begin{align}
\mathrm{Ring}_{\mathrm{Nil},\mathbb{N}^\wedge,t,R}.
\end{align}
Then consider another functor in the following sense which is called the Cartier-Witt stack in our setting:
\begin{align}
\mathrm{Cartier}_W,
\end{align}
which parametrize over each $X\in \mathrm{Ring}_{\mathrm{Nil},\mathbb{N}^\wedge,t,R}$ the corresponding generalized Cartier ideal $(L,f)$ mapping under $f$ to $\mathrm{Witt}(X)$. We then furthermore require that the image of $(L,f)$ is principle generated by element which evaluates to a distinguished element for each $n\in \mathbb{N}^\wedge$, as in the usual situation in \cite{2BL}. As in \cite{2BL} we put the structure sheaf $\mathcal{F}$ as for each $X$ we set the section over $(L,f)$ with $f:L\rightarrow \mathrm{Witt}(X)$ to be just $X$ itself. We also consider the prismatization with filtration as in the usual situation, to do this we first set the notation $\mathrm{Cartier}_{W,\mathrm{NFil}}$ for the prismatization carrying the corresponding Nygaard filtration. We define this to be first the prismatization fibered over $\mathbb{N}$. Then we have the morphism:
\begin{align}
\mathrm{Cartier}_{W,\mathrm{NFil}} \rightarrow \mathrm{Cartier}_{W},
\end{align}
fibered over $\mathbb{N}$, by projection to the families over $\mathbb{N}$. Then we take the fibered product from $\mathrm{Cartier}_W|_{\mathbb{N}}$ to $\mathrm{Cartier}_{W}$ to reach the full stack: $\widetilde{\mathrm{Cartier}_{W,\mathrm{NFil}}}$ fibered over $\mathbb{N}^\wedge$.
\end{definition}

\begin{proposition}
We have a natural isomorphism between the following two formal stacks. The first one is the Cartier-Witt stack defined over the $\mathbb{N}^\wedge$ The second one is the compactification along $\mathbb{N}\hookrightarrow \mathbb{N}^\wedge$ of the Cartier-Witt stack over $\mathbb{N}$ after the restriction for instance. This means that we quotient out some $t^a$ and set the section of the stack at $\infty$ to be the section over some point $n'$, then take the inverse limit over $a$.
\end{proposition}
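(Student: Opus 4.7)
The plan is to exhibit the natural isomorphism via a reduction--identification--recompletion argument driven entirely by the $t$-adic topology. First I would unpack the built-in $t$-adic completeness: because $\mathrm{Ring}_{\mathrm{Nil},\mathbb{N}^\wedge,t,R}$ consists of $R$-algebras on which $t$ acts nilpotently, every test object factors through $R/t^a R$ for some $a$. Consequently one has
\begin{align}
\mathrm{Cartier}_W \;=\; \varprojlim_a \bigl(\mathrm{Cartier}_W \times_{\mathrm{Spec}\, R} \mathrm{Spec}(R/t^a R)\bigr),
\end{align}
and the same holds after restriction along $\mathbb{N}\hookrightarrow \mathbb{N}^\wedge$ for the compactified version. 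Thus it suffices to produce compatible isomorphisms of the mod-$t^a$ reductions for each $a$, and then take $\varprojlim_a$.

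Next I would implement the key identification from \cite{2LH} at the ring level. Fix $a\geq 1$ and choose $n_0 = n_0(a)\in \mathbb{N}$ large enough that the ramification index $e_{n_0}$ of $K_{n_0}/\mathbb{Q}_p$ exceeds $a$; then $p$ vanishes in $\mathcal{O}_{K_{n_0}}/t_{n_0}^a$, so this ring is a finite $\mathbb{F}_q$-algebra canonically isomorphic to $\mathcal{O}_{K_\infty}/t_\infty^a \cong \mathbb{F}_q[\overline{t}]/(\overline{t}^a)$. This produces an equivalence between the slice of $\mathrm{Ring}_{\mathrm{Nil},\mathbb{N}^\wedge,t,R}$ at the fiber $\infty$ modulo $t^a$ and the slice at $n_0$ modulo $t_{n_0}^a$, which is exactly the setup used to define the section at $\infty$ of $\widetilde{\mathrm{Prism}_\mathbb{N}}$ earlier in the section.

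I would then transport the Cartier-Witt data across this equivalence. Because the Witt functor $\mathrm{Witt}$, the notion of line bundle $L$, the map $f\colon L\to \mathrm{Witt}(X)$, and the distinguished-element condition are defined by functorial recipes on $R/t^a R$-algebras, the equivalence of slices at the ring level upgrades to an equivalence of the mod-$t^a$ Cartier-Witt prestacks. Finally I would glue: choosing $a\mapsto n_0(a)$ non-decreasing makes the identifications at level $a$ compatible with the transition maps $R/t^{a+1}R\twoheadrightarrow R/t^a R$, and passing to $\varprojlim_a$ produces a global natural isomorphism between $\mathrm{Cartier}_W$ over $\mathbb{N}^\wedge$ and the compactification of its restriction to $\mathbb{N}$.

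The main obstacle I expect is the transport of the \emph{distinguished-element} locus in $\mathrm{Witt}(X)$ across the identification. Distinguished elements are traditionally controlled by the $\delta$-structure, whose behavior differs sharply between mixed and equal characteristic, and in a family it has to be interpreted in the profinite Witt-vector sense of \cite{2LH}. The anticipated resolution is that modulo a fixed power $t^a$, the $\delta$-structure reduces to a bounded amount of combinatorial data fully determined by the isomorphism class of $\mathcal{O}_{K_{n_0}}/t_{n_0}^a$, and hence descends through the identification; careful bookkeeping of Teichm\"uller lifts and Frobenius lifts will be required to verify the match at each $a$ and its compatibility with the transition $a \rightsquigarrow a+1$.
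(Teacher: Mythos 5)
Your proof is correct and follows essentially the same route as the paper: both reduce to the fiber at $\infty$ via the $t$-adic filtration, invoke the Li-Huerta close-fields identification $\mathcal{O}_{K_{n_0}}/t_{n_0}^a \cong \mathcal{O}_{K_\infty}/t_\infty^a$ once $e_{n_0}\geq a$, and then transport the Cartier-Witt data (distinguished elements included) across that isomorphism before passing to $\varprojlim_a$. Your writeup is considerably more explicit than the paper's terse argument, and your flagged obstacle about matching the distinguished-element locus is exactly the point the paper handles in one sentence (``along the lifting of the ramification degrees, the corresponding distinguished element ... will eventually give rise to the needed distinguished element as at $\infty$''), so the careful bookkeeping you propose is the right thing to supply there.
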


\begin{proof}
In fact we only have to consider what happens when we evaluate both stacks at $\infty$. In such a situation the first stack goes to be exactly the definition we have for the function field. Then for the corresponding compactification onto $\mathbb{N}^\wedge$, along the lifting of the ramification degrees, the corresponding distinguished element participate into the Witt wector to define each finite $n$ prismatization will eventual give rise to the need distinguished element as at $\infty$. This finishes the proof.
\end{proof}

\begin{remark}
By restricting to $\mathbb{N}$ of our prismatization from $\mathbb{N}^\wedge$ we have the following equivalence on the derived $\infty$-categories of complexes:
\begin{align}
\mathrm{Der}_{\mathcal{F}|_{\mathbb{N}}}\overset{\sim}{\rightarrow} \projlim_{(L,A)}\overline{\mathrm{Der}}_{(t,L)}(A)
\end{align}
where $(L,A)$ varies in the set of all the prisms in our setting. $\overline{Der}$ means the completion in the derived sense with respect to $t$ and $L$ over the complexes on the right of this equivalence.
\end{remark}

\begin{definition}
Now here we restrict to the space $\mathbb{N}$ is because that we have no relationship like this at the $\infty$ since that is a function field. But we can then take the corresponding compactification in the following sense. We call the above equivalence open prismatization equivalence $\mathrm{ope}_\mathbb{N}$:
\begin{align}
\mathrm{Der}_{\mathcal{F}|_{\mathbb{N}}}\overset{\sim}{\rightarrow} \projlim_{(L,A)}\overline{\mathrm{Der}}_{(t,L)}(A).
\end{align}
Then we define the following compact prismaization equivalence $\mathrm{cpe}$:
\begin{align}
\mathrm{Der}_{\mathcal{F}}\overset{\sim}{\rightarrow} \widetilde{\projlim_{(L,A)}\overline{\mathrm{Der}}_{(t,L)}(A)}.
\end{align}
Here the right hand side is the $\infty$-category which is the fiber product of $\mathrm{Der}_{\mathcal{F}}$ and $\projlim_{(L,A)}\overline{\mathrm{Der}}_{(t,L)}(A)$ along the map to the restriction over $\mathbb{N}$. 
\end{definition}

\begin{definition}
This also has a filtered version from Nygaard filtraion. Now here we restrict to the space $\mathbb{N}$ is because that we have no relationship like this at the $\infty$ since that is a function field. But we can then take the corresponding compactification in the following sense. We call the above equivalence open prismatization equivalence $\mathrm{ope}_\mathbb{N}$:
\begin{align}
\mathrm{Der}_{\mathcal{F}|_{\mathbb{N}}}\overset{\sim}{\rightarrow} \projlim_{(L,A,\mathrm{NFil})}\overline{\mathrm{Der}}_{(t,L)}(A).
\end{align}
Then we define the following compact prismaization equivalence $\mathrm{cpe}$:
\begin{align}
\mathrm{Der}_{\mathcal{F}}\overset{\sim}{\rightarrow} \widetilde{\projlim_{(L,A,\mathrm{NFil})}\overline{\mathrm{Der}}_{(t,L)}(A)}.
\end{align}
Here the right hand side is the $\infty$-category which is the fiber product of $\mathrm{Der}_{\mathcal{F}}$ and $\projlim_{(L,A)}\overline{\mathrm{Der}}_{(t,L)}(A)$ along the map to the restriction over $\mathbb{N}$. \end{definition}

\begin{proposition}
The fiber product resulted isomorphism:
\begin{align}
\mathrm{Der}_{\mathcal{F}}\overset{\sim}{\rightarrow} \widetilde{\projlim_{(L,A)}\overline{\mathrm{Der}}_{(t,L)}(A)},
\end{align}
is the same as the $\mathbb{N}\hookrightarrow \mathbb{N}^\wedge$ compactification of the restriction over $\mathbb{N}$ of this isomorphism.
\end{proposition}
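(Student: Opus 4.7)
The plan is to reduce the statement to an unwinding of the two definitions once both sides are described stratum-by-stratum along the filtration by powers of $t$, after which everything is forced by the preceding proposition that identifies $\mathrm{Cartier}_W$ with the $\mathbb{N}\hookrightarrow \mathbb{N}^\wedge$-compactification of $\mathrm{Cartier}_W|_{\mathbb{N}}$. First I would fix some $a\geq 1$ and replace both sides by their mod-$t^a$ avatars. On the stack side, the compactification procedure assigns to $\infty$ the section over an auxiliary $n_0$, and the structure sheaf $\mathcal{F}$ is compatible with this by construction since $\mathcal{F}$ on $(L,f)$ is just $X$ itself, which is already $t$-nilpotent, so $\mathrm{Der}_{\mathcal{F}}$ mod $t^a$ is literally the fiber product of $\mathrm{Der}_{\mathcal{F}|_{\mathbb{N}}}$ mod $t^a$ with the $\infty$-fiber provided by the $n_0$-section.

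Second I would do the same analysis on the right-hand side. The category $\widetilde{\projlim_{(L,A)}\overline{\mathrm{Der}}_{(t,L)}(A)}$ was defined exactly as the fiber product of $\mathrm{Der}_{\mathcal{F}}$ with $\projlim_{(L,A)}\overline{\mathrm{Der}}_{(t,L)}(A)$ over their common restriction to $\mathbb{N}$. The category $\widetilde{\mathrm{Prism}_\mathbb{N}}$ fibered over $\mathbb{N}^\wedge$ was itself defined by declaring its $\infty$-fiber mod $t^a$ to be $\mathrm{Prism}_{\mathbb{N},\mathcal{O}_{K_{n_0}}/t_{n_0}^a}$ and then taking $\varprojlim_a$. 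Hence for fixed $a$ the right-hand side mod $t^a$ is tautologically the fiber product of $\projlim_{(L,A)\in\mathrm{Prism}_\mathbb{N}}\overline{\mathrm{Der}}_{(t,L)}(A)/t^a$ with the corresponding category at $n_0$, glued along the restriction to $\mathbb{N}$. The open prismatization equivalence $\mathrm{ope}_\mathbb{N}$ matches these two fiber-product presentations at each $a$, and the identification at $\infty$ on both sides is by design the $n_0$-fiber, so the mod-$t^a$ compact equivalence exists and is canonical.

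Third I would pass to the limit. Both $\mathrm{Der}_{\mathcal{F}}$ and $\widetilde{\projlim_{(L,A)}\overline{\mathrm{Der}}_{(t,L)}(A)}$ were defined as $\varprojlim_a$ of their mod-$t^a$ truncations, so the equivalences at each level assemble to the desired $\mathrm{cpe}$; moreover the resulting $\mathrm{cpe}$ agrees with the compactification of $\mathrm{ope}_\mathbb{N}$ precisely because at each finite level the comparison was built from $\mathrm{ope}_\mathbb{N}$ together with the tautological identification at $\infty$ given by the $n_0$-section.

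The main obstacle, and really the only nontrivial point, is verifying that the fiber-product gluing is independent of the auxiliary choice of $n_0$ and behaves functorially under $a\mapsto a-1$ so that the $\varprojlim_a$ is well-defined and preserves the equivalence. For this I would invoke the argument already used in the preceding proposition: along the tower of ramification indices the distinguished elements at finite $n$ produce the required distinguished element at $\infty$, so the $n_0$-fiber glues compatibly with the $\varprojlim_a$, and the resulting equivalence is visibly the compactification of $\mathrm{ope}_\mathbb{N}$ along $\mathbb{N}\hookrightarrow \mathbb{N}^\wedge$.
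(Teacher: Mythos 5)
Your argument is a careful unwinding of what the paper compresses into a single sentence: the paper's proof simply asserts that $\mathrm{Der}_{\mathcal{F}}$, even though it is defined directly via the Witt-vector functor over $\mathbb{N}^\wedge$ (after \cite{2LH}), behaves identically to the compactification, so the fiber-product presentation of $\mathrm{cpe}$ coincides with the compactification of $\mathrm{ope}_\mathbb{N}$. Your stratification by $t^a$, appeal to the preceding stack-level identification of $\mathrm{Cartier}_W$ with the compactification of $\mathrm{Cartier}_W|_{\mathbb{N}}$, and check of $n_0$-independence and functoriality in $a$ amount to the same approach filled in with the detail the paper elides.
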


\begin{proof}
This is because we have that the category $\mathrm{Der}_{\mathcal{F}}$ has the exactly the same behavior, though it can be defined directly using the Witt vector in \cite{2LH}.
\end{proof}

\subsection{Prismatization for $R$-formal schemes}

\begin{setting}
We now consider a $R$-formal ring which is $t$-adic complete, which is denoted by $H$. We assume that $H$ is fibered over $\mathbb{N}^\wedge$.
\end{setting}

\begin{definition}
We now define the corresponding Cartier stack which is the version of preprismatization in our setting. Consider first the Witt vector functor $\mathrm{Witt}$ fibered over the following category of all the $t$-nilpotent rings over $R$ in a family over $\mathbb{N}^\wedge$:
\begin{align}
\mathrm{Ring}_{\mathrm{Nil},\mathbb{N}^\wedge,t,R}.
\end{align}
Then consider another functor in the following sense which is called the Cartier stack in our setting:
\begin{align}
\mathrm{Cartier}_H,
\end{align}
which parametrize over each $X\in \mathrm{Ring}_{\mathrm{Nil},\mathbb{N}^\wedge,t,R}$ the corresponding generalized Cartier ideal $(L,f)$ mapping under $f$ to $\mathrm{Witt}(X)$. We then require further that there is open immersion from $\mathrm{Spec}\mathrm{Witt}(X)/L$. 
\end{definition}

\begin{definition}
We now define the corresponding Cartier-Witt stack which is the version of prismatization in our setting. Consider first the Witt vector functor $\mathrm{Witt}$ fibered over the following category of all the $t$-nilpotent rings over $R$ in a family over $\mathbb{N}^\wedge$:
\begin{align}
\mathrm{Ring}_{\mathrm{Nil},\mathbb{N}^\wedge,t,R}.
\end{align}
Then consider another functor in the following sense which is called the Cartier-Witt  stack in our setting:
\begin{align}
\mathrm{Cartier}_{W,H},
\end{align}
which parametrize over each $X\in \mathrm{Ring}_{\mathrm{Nil},\mathbb{N}^\wedge,t,R}$ the corresponding generalized Cartier ideal $(L,f)$ mapping under $f$ to $\mathrm{Witt}(X)$. We then furthermore require that the image of $(L,f)$ is principle generated by element which evaluates to a distinguished element for each $n\in \mathbb{N}^\wedge$, as in the usual situation in \cite{2BL}. As in \cite{2BL} we put the structure sheaf $\mathcal{F}$ as for each $X$ we set the section over $(L,f)$ with $f:L\rightarrow \mathrm{Witt}(X)$ to be just $X$ itself. We then require further that there is open immersion from $\mathrm{Spec}\mathrm{Witt}(X)/L$. We also consider the prismatization with filtration as in the usual situation, to do this we first set the notation $\mathrm{Cartier}_{W,H,\mathrm{NFil}}$ for the prismatization carrying the corresponding Nygaard filtration. We define this to be first the prismatization fibered over $\mathbb{N}$. Then we have the morphism:
\begin{align}
\mathrm{Cartier}_{W,H,\mathrm{NFil}} \rightarrow \mathrm{Cartier}_{W,H},
\end{align}
fibered over $\mathbb{N}$, by projection to the families over $\mathbb{N}$. Then we take the fibered product from $\mathrm{Cartier}_{W,H}|_{\mathbb{N}}$ to $\mathrm{Cartier}_{W,H}$ to reach the full stack: $\widetilde{\mathrm{Cartier}_{W,H,\mathrm{NFil}}}$ fibered over $\mathbb{N}^\wedge$.
\end{definition}

\begin{remark}
By restricting to $\mathbb{N}$ of our prismatization from $\mathbb{N}^\wedge$ we have the following equivalence on the derived $\infty$-categories of complexes:
\begin{align}
\mathrm{Der}_{\mathcal{F}|_{\mathbb{N}}}\overset{\sim}{\rightarrow} \projlim_{(L,A)/H}\overline{\mathrm{Der}}_{(t,L)}(A)
\end{align}
where $(L,A)$ varies in the set of all the prisms in our setting, over $H$. $\overline{Der}$ means the completion in the derived sense with respect to $t$ and $L$ over the complexes on the right of this equivalence.
\end{remark}

\begin{definition}
Now here we restrict to the space $\mathbb{N}$ is because that we have no relationship like this at the $\infty$ since that is a function field. But we can then take the corresponding compactification in the following sense. We call the above equivalence open prismatization equivalence $\mathrm{ope}_\mathbb{N}$:
\begin{align}
\mathrm{Der}_{\mathcal{F}|_{\mathbb{N}}}\overset{\sim}{\rightarrow} \projlim_{(L,A)/H}\overline{\mathrm{Der}}_{(t,L)}(A).
\end{align}
Then we define the following compact prismaization equivalence $\mathrm{cpe}$:
\begin{align}
\mathrm{Der}_{\mathcal{F}}\overset{\sim}{\rightarrow} \widetilde{\projlim_{(L,A)/H}\overline{\mathrm{Der}}_{(t,L)}(A)}.
\end{align}
Here the right hand side is the $\infty$-category which is the fiber product of $\mathrm{Der}_{\mathcal{F}}$ and $\projlim_{(L,A)}\overline{\mathrm{Der}}_{(t,L)}(A)$ along the map to the restriction over $\mathbb{N}$. \end{definition}

\begin{theorem}
The definitions above for $\mathrm{Cartier}_*$ and $\mathrm{Cartier}_{W,*}$ for $*$ a $R$ formal ring $t$-adic fibered over $\mathbb{N}^\wedge$ admit descent over $R$ formal scheme $S$ $t$-adic fibered over $\mathbb{N}^\wedge$. 
\end{theorem}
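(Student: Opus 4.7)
The plan is to establish descent by the standard strategy of (i) rewriting the defining functors $\mathrm{Cartier}_H$ and $\mathrm{Cartier}_{W,H}$ in a manifestly local form on $\mathrm{Spf}\,H$, (ii) verifying compatibility under restriction to affine opens in $S$, and (iii) showing the gluing survives the compactification $\mathbb{N}\hookrightarrow \mathbb{N}^\wedge$. Throughout we fix the base family in $\mathrm{Ring}_{\mathrm{Nil},\mathbb{N}^\wedge,t,R}$ and work with the Witt vector functor $\mathrm{Witt}$, which commutes with the limits and colimits needed since it is defined componentwise in the family and each component is the classical $p$-typical Witt construction in the relevant characteristic.

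First, I would reinterpret the definition of $\mathrm{Cartier}_H$ and $\mathrm{Cartier}_{W,H}$ in purely functorial language: for each $X\in \mathrm{Ring}_{\mathrm{Nil},\mathbb{N}^\wedge,t,R}$, the groupoid of $(L,f)$ with $f\colon L\to \mathrm{Witt}(X)$ together with the open immersion requirement $\mathrm{Spec}\,\mathrm{Witt}(X)/L \hookrightarrow \mathrm{Spf}\,H$ is clearly functorial in $H$ with respect to morphisms of $R$-formal rings, contravariantly. The distinguished-element condition (needed for the Cartier-Witt variant) is preserved under base change of $H$, since distinguishedness is tested fiberwise at each $n\in \mathbb{N}^\wedge$ and does not involve $H$. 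Hence both presheaves $H\mapsto \mathrm{Cartier}_H$ and $H\mapsto \mathrm{Cartier}_{W,H}$ are set up to be functorial in $H$.

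Next, for a Zariski (or more generally a flat) cover $\{\mathrm{Spf}\,H_i\to S\}$ of the $R$-formal scheme $S$, I would check the sheaf condition: a generalized Cartier datum $(L,f)$ with an open immersion into $S$ is equivalent to the compatible family of open immersions into each $\mathrm{Spf}\,H_i$, because the property of a morphism being an open immersion is Zariski-local on the target and $\mathrm{Spec}\,\mathrm{Witt}(X)/L$ is a scheme. Thus the global data on $S$ are obtained by gluing local data on the cover, and descent reduces to the fact that open immersions of formal schemes form a stack. This handles descent over $\mathbb{N}$ immediately, via \cref{proposition1}-type restriction to each finite $n$. At $\infty$ the same argument applies with the function-field Witt construction, since the defining open-immersion property is again Zariski-local on $S_\infty$.

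Finally, one must check that the compactification $\mathbb{N}\hookrightarrow \mathbb{N}^\wedge$ is compatible with descent. Here I would use the identification employed throughout \cref{section7} and in the compact prismatization equivalence $\mathrm{cpe}$: the value at $\infty$ is the inverse limit over $a$ of the mod-$t^a$ categories, each computed at some finite $n_0\in \mathbb{N}$ via the ramification-lifting correspondence of \cite{2LH}. Since descent commutes with the inverse limit over $a$ (sheaves in an inverse limit of topoi are the inverse limit of sheaves) and since descent has already been verified over $\mathbb{N}$, the compactified $\widetilde{\mathrm{Cartier}_{W,H,\mathrm{NFil}}}$ and its unfiltered counterpart inherit descent over $S$. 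The main obstacle, and the step requiring some care, is ensuring that the distinguished-element and open-immersion conditions at $n=\infty$ agree under the ramification-lifting identification with their finite-level counterparts; I expect this to follow from the proposition in the previous subsection asserting that the $\mathbb{N}^\wedge$-prismatization is the compactification along $\mathbb{N}\hookrightarrow \mathbb{N}^\wedge$ of the $\mathbb{N}$-prismatization.
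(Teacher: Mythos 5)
The paper's entire proof is the single sentence ``We reduce to \cite{2BL}.'' Your proof is therefore a genuinely different contribution: where the paper delegates everything to the Bhatt--Lurie descent machinery, you unwind the definition and actually verify the gluing. Your three-step plan is sound: functoriality of the presheaves $H\mapsto\mathrm{Cartier}_H$, $H\mapsto\mathrm{Cartier}_{W,H}$ is immediate from the definitions; the Zariski-locality of the open-immersion condition on $\mathrm{Spec}\,\mathrm{Witt}(X)/L\hookrightarrow S$ is the correct mechanism by which the local data glue, and you are right that the distinguished-element condition is intrinsic to the pair $(L,f)$ and invisible to $H$, hence automatically compatible with restriction. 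The final step, pushing descent through $\mathbb{N}\hookrightarrow\mathbb{N}^\wedge$, is the one place where your argument has to do real family-specific work that \cite{2BL} does not cover, and your use of the mod-$t^a$ inverse-limit presentation together with the ramification-lifting identification at $\infty$ is the natural way to do it. What the paper's reduction buys is brevity and the reassurance that the $p$-adic fiberwise case is settled in the literature; what your argument buys is an actual verification that the $\infty$-fiber and the compactification do not break the gluing, which the one-line citation does not address. Two small cautions: the phrase ``sheaves in an inverse limit of topoi are the inverse limit of sheaves'' is informal and should be stated more carefully (it is the claim that the sheaf condition can be checked after each base change to $R/t^a$, which is fine since the cover and the datum are both $t$-adically complete), and the cross-reference \cref{proposition1} does not correspond to a label in the paper, so it should be replaced by an explicit statement of the restriction-to-finite-$n$ step you have in mind.
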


\begin{proof}
We reduce to \cite{2BL}.
\end{proof}

\begin{remark}
By restricting to $\mathbb{N}$ of our prismatization from $\mathbb{N}^\wedge$ we have the following equivalence on the derived $\infty$-categories of complexes:
\begin{align}
\mathrm{Der}_{\mathcal{F}|_{\mathbb{N}}}\overset{\sim}{\rightarrow} \projlim_{(L,A)/S}\overline{\mathrm{Der}}_{(t,L)}(A)
\end{align}
where $(L,A)$ varies in the set of all the prisms in our setting, over $H$. $\overline{Der}$ means the completion in the derived sense with respect to $t$ and $L$ over the complexes on the right of this equivalence.
\end{remark}

\begin{definition}
Now here we restrict to the space $\mathbb{N}$ is because that we have no relationship like this at the $\infty$ since that is a function field. But we can then take the corresponding compactification in the following sense. We call the above equivalence open prismatization equivalence $\mathrm{ope}_\mathbb{N}$:
\begin{align}
\mathrm{Der}_{\mathcal{F}|_{\mathbb{N}}}\overset{\sim}{\rightarrow} \projlim_{(L,A)/S}\overline{\mathrm{Der}}_{(t,L)}(A).
\end{align}
Then we define the following compact prismaization equivalence $\mathrm{cpe}$:
\begin{align}
\mathrm{Der}_{\mathcal{F}}\overset{\sim}{\rightarrow} \widetilde{\projlim_{(L,A)/S}\overline{\mathrm{Der}}_{(t,L)}(A)}.
\end{align}
Here the right hand side is the $\infty$-category which is the fiber product of $\mathrm{Der}_{\mathcal{F}}$ and $\projlim_{(L,A)}\overline{\mathrm{Der}}_{(t,L)}(A)$ along the map to the restriction over $\mathbb{N}$. 
\end{definition}

\begin{definition}
We have the definition in the Nygaard filtered situation. Now here we restrict to the space $\mathbb{N}$ is because that we have no relationship like this at the $\infty$ since that is a function field. But we can then take the corresponding compactification in the following sense. We call the above equivalence open prismatization equivalence $\mathrm{ope}_\mathbb{N}$:
\begin{align}
\mathrm{Der}_{\mathcal{F}|_{\mathbb{N}}}\overset{\sim}{\rightarrow} \projlim_{(L,A,\mathrm{NFil})/S}\overline{\mathrm{Der}}_{(t,L)}(A).
\end{align}
Then we define the following compact prismaization equivalence $\mathrm{cpe}$:
\begin{align}
\mathrm{Der}_{\mathcal{F}}\overset{\sim}{\rightarrow} \widetilde{\projlim_{(L,A,\mathrm{NFil})/S}\overline{\mathrm{Der}}_{(t,L)}(A)}.
\end{align}
Here the right hand side is the $\infty$-category which is the fiber product of $\mathrm{Der}_{\mathcal{F}}$ and $\projlim_{(L,A)}\overline{\mathrm{Der}}_{(t,L)}(A)$ along the map to the restriction over $\mathbb{N}$. 
\end{definition}

\begin{proposition}
We have a natural isomorphism between the following two formal stacks. The first one is the Cartier-Witt stack defined over the $\mathbb{N}^\wedge$ The second one is the compactification along $\mathbb{N}\hookrightarrow \mathbb{N}^\wedge$ of the Cartier-Witt stack over $\mathbb{N}$ after the restriction for instance.
\end{proposition}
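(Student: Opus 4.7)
The plan is to reduce the relative statement to the analogous absolute proposition already established (the one preceding \emph{Prismatization for $R$-formal schemes}), using the descent theorem just proved for $\mathrm{Cartier}_*$ and $\mathrm{Cartier}_{W,*}$ along $H \leadsto S$. First I would observe that over any finite $n \in \mathbb{N}$ there is nothing to check: the compactification along $\mathbb{N}\hookrightarrow\mathbb{N}^\wedge$ agrees by construction with the original family over $\mathbb{N}$, and the stack defined directly on $\mathbb{N}^\wedge$ restricts to the same family (each is the moduli of generalized Cartier ideals $(L,f)$ with $f:L\to \mathrm{Witt}(X)$, principal with distinguished generator, together with the open immersion condition $\mathrm{Spec}\,\mathrm{Witt}(X)/L \hookrightarrow S$). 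All the real content therefore concentrates at the fiber over $\infty$.

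Second, I would unwind both sides at $\infty$. The first stack evaluated at $\infty$ uses the equal-characteristic Witt vectors of $\mathbb{F}_q$-algebras together with the function-field distinguished element attached to $K_\infty=\mathbb{F}_q((t_\infty))$, plus the open immersion condition cutting out $S$. The second stack, following the setup of \cite{2LH}, evaluates at $\infty$ by fixing a sufficiently large $n_0\in\mathbb{N}$, taking the mod $t_{n_0}^a$ prismatization at $n_0$, identifying it with the mod $t_\infty^a$ prismatization at $\infty$ by the ramification-growth assumption in the family $K_1,K_2,\dots,K_\infty$, and then taking the inverse limit over $a$.

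The key step is to match distinguished elements under this identification: a distinguished generator participating in the Cartier-Witt datum at a finite level $n_0$ maps, after reduction mod $t_{n_0}^a$ and transport via the $[2\mathrm{LH}]$-identification, to a distinguished generator for the equal-characteristic Witt vector over $K_\infty$. Compatibility of the open immersion condition is handled by descent: one reduces the relative statement to the affine case $S=\mathrm{Spf}\,H$, then invokes the absolute proposition to conclude, since the open immersion datum $\mathrm{Spec}\,\mathrm{Witt}(X)/L\hookrightarrow H$ is determined fiberwise and transports along the same identification.

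The main obstacle I anticipate is precisely this transfer of distinguished elements and the verification that the open immersion datum is compatible with the ramification-lifting identification at $\infty$; it is the analogue of the same subtle point in the absolute case, and no routine calculation circumvents it. Once that compatibility is recorded, the two functors of points agree on nilpotent test rings, the fiber products along the restriction to $\mathbb{N}$ coincide, and the desired natural isomorphism of formal stacks follows.
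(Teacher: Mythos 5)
Your core argument — concentrate at the fiber over $\infty$, unwind the Li--Huerta identification at a sufficiently large $n_0$, and transport the distinguished Witt-vector generator along the ramification-lifting data — is exactly the engine of the paper's own proof, which is essentially identical in wording for the absolute and the relative ($H$-relative or $S$-relative) versions. Where you genuinely diverge is in the scaffolding: you invoke the descent theorem for $\mathrm{Cartier}_*$ and $\mathrm{Cartier}_{W,*}$ to reduce the relative case to the absolute proposition already proved, and you explicitly record that the extra datum in the relative setting — the open immersion $\mathrm{Spec}\,\mathrm{Witt}(X)/L \hookrightarrow S$ — transports compatibly along the identification. The paper does neither; it simply repeats the absolute argument verbatim without commenting on the open-immersion condition or the role of descent. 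Your route is tighter: it makes clear that the relative statement is a formal consequence of the absolute one plus Zariski/affine descent of the functors of points, rather than an argument that must be rechecked from scratch, and it surfaces the one genuinely new ingredient (open-immersion compatibility) that the paper leaves implicit. Both proofs leave the real content — that a distinguished generator at a large finite level $n_0$ maps, modulo $t^a$ and under the Li--Huerta close-fields identification, to a distinguished generator at $\infty$ — at the same level of sketchiness, so you are not claiming more than the paper does; you are just organizing it more carefully.
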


\begin{proof}
In fact we only have to consider what happens when we evaluate both stacks at $\infty$. In such a situation the first stack goes to be exactly the definition we have for the function field. Then for the corresponding compactification onto $\mathbb{N}^\wedge$, along the lifting of the ramification degrees, the corresponding distinguished element participate into the Witt wector to define each finite $n$ prismatization will eventual give rise to the need distinguished element as at $\infty$. This finishes the proof.
\end{proof}

\subsection{Absolute de Rham prismatization in families}

By restricting to $\mathbb{N}$ of our prismatization from $\mathbb{N}^\wedge$ we have the following equivalence on the derived $\infty$-categories of complexes:
\begin{align}
\mathrm{Der}_{\mathcal{F}|_{\mathbb{N}}}\overset{\sim}{\rightarrow} \projlim_{(L,A)}\overline{\mathrm{Der}}_{(t,L)}(A)
\end{align}
where $(L,A)$ varies in the set of all the prisms in our setting. $\overline{\mathrm{Der}}$ means the completion in the derived sense with respect to $t$ and $L$ over the complexes on the right of this equivalence. Under this equivalence we define the corresponding de Rham prismatization to be the prismatization over $\mathbb{N}$ associated with $\projlim_{(L,A)}\overline{\mathrm{Der}}_{(t,L)}(\widehat{A[1/t]}^{L})$.

\begin{definition}
Now here we restrict to the space $\mathbb{N}$ is because that we have no relationship like this at the $\infty$ since that is a function field. But we can then take the corresponding compactification in the following sense. We call the above open de Rham prismatization $\mathrm{odrp}_\mathbb{N}$:
\begin{align}
\projlim_{(L,A)}\overline{\mathrm{Der}}_{(t,L)}(\widehat{A[1/t]}^{L}).
\end{align}
Then we define the following compact de Rham prismaization $\mathrm{cdrp}$:
\begin{align}
\widetilde{\projlim_{(L,A)}\overline{\mathrm{Der}}_{(t,L)}(\widehat{A[1/t]}^{L})}.
\end{align}
Here the write hand side is the $\infty$-category which is the fiber product of $\mathbb{N}^\wedge$ and $\projlim_{(L,A)}\overline{\mathrm{Der}}_{(t,L)}(\widehat{A[1/t]}^{L})$ along the map to the restriction over $\mathbb{N}$. 
This compactification means the process defined in the following. First consider the corresponding functors away from the infinity:
\begin{align}
\mathrm{Der}_{\mathcal{F}|_{\mathbb{N}}}\overset{\sim}{\rightarrow} \projlim_{(L,A)}\overline{\mathrm{Der}}_{(t,L)}(A) \overset{}{\rightarrow} \projlim_{(L,A)}\overline{\mathrm{Der}}_{(t,L)}(\widehat{A[1/t]}^L)
\end{align}
Then we just take the corresponding fiber product along this map from $\mathrm{Der}_{\mathcal{F}|_{\mathbb{N}}}$ to the full stack over $\mathbb{N}^\wedge$.
\end{definition}

\begin{definition}
We have the Nygaard filtered version. Now here we restrict to the space $\mathbb{N}$ is because that we have no relationship like this at the $\infty$ since that is a function field. But we can then take the corresponding compactification in the following sense. We call the above open de Rham prismatization $\mathrm{odrp}_\mathbb{N}$:
\begin{align}
\projlim_{(L,A,\mathrm{NFil})}\overline{\mathrm{Der}}_{(t,L)}(\widehat{A[1/t]}^{L}).
\end{align}
Then we define the following compact de Rham prismaization $\mathrm{cdrp}$:
\begin{align}
\widetilde{\projlim_{(L,A,\mathrm{NFil})}\overline{\mathrm{Der}}_{(t,L)}(\widehat{A[1/t]}^{L})}.
\end{align}
Here the write hand side is the $\infty$-category which is the fiber product of $\mathbb{N}^\wedge$ and $\projlim_{(L,A)}\overline{\mathrm{Der}}_{(t,L)}(\widehat{A[1/t]}^{L})$ along the map to the restriction over $\mathbb{N}$. 
This compactification means the process defined in the following. First consider the corresponding functors away from the infinity:
\begin{align}
\mathrm{Der}_{\mathcal{F}|_{\mathbb{N}}}\overset{\sim}{\rightarrow} \projlim_{(L,A,\mathrm{NFil})}\overline{\mathrm{Der}}_{(t,L)}(A) \overset{}{\rightarrow} \projlim_{(L,A,\mathrm{NFil})}\overline{\mathrm{Der}}_{(t,L,\mathrm{NFil})}(\widehat{A[1/t]}^L)
\end{align}
Then we just take the corresponding fiber product along this map from $\mathrm{Der}_{\mathcal{F}|_{\mathbb{N}}}$ to the full stack over $\mathbb{N}^\wedge$.
\end{definition}

\begin{proposition}
This definition is equivalent to the following definition over $\mathbb{N}^\wedge$. For each $k$, We consider the Cartier-Witt Stack over $R$ then we consider those ideals vanishes through the projection $\mathrm{Witt}(\square)\rightarrow \square$ after taking the power $k$. Then we consider the project limit over $k$.
\end{proposition}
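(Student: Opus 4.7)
The plan is to leverage the Bhatt--Lurie description of the de Rham prismatization as the formal completion of the Cartier--Witt stack along the Hodge--Tate locus, and then transport this identification through the $\mathbb{N} \hookrightarrow \mathbb{N}^\wedge$ compactification machinery already set up in the preceding subsections of this section. Both definitions under comparison are patched from their restrictions to $\mathbb{N}$, so the verification naturally splits into a classical pointwise identification plus a check of compatibility with the compactification to $\infty$.

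First, I would verify the identification pointwise for each finite $n \in \mathbb{N}$: for the mixed-characteristic local field $K_n$, the de Rham prismatization coincides with the projective limit $\varprojlim_k \mathrm{Cartier}_{W,k}$, where $\mathrm{Cartier}_{W,k}$ parametrizes pairs $(L,f)$ such that the induced map $L^{\otimes k} \hookrightarrow \mathrm{Witt}(\square) \to \square$ vanishes. On the module side this is exactly the statement that the derived $L$-completion of $A[1/t]$ is computed by $\varprojlim_k A[1/t]/L^{k} A[1/t]$, which is what appears on the $\mathrm{odrp}_{\mathbb{N}}$ side of the equivalence already recorded in this subsection. This step is a direct invocation of the cited literature on $p$-adic absolute de Rham prismatization.

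Second, I would reduce the full statement over $\mathbb{N}^\wedge$ to the compactification formalism. Both sides of the proposed equivalence arise from their restriction to $\mathbb{N}$ by the \emph{same} fiber product along $\mathbb{N} \hookrightarrow \mathbb{N}^\wedge$: the left side is by construction defined as such a fiber product (this is how $\mathrm{cdrp}$ was introduced); the right side inherits this structure because the assignment $k \mapsto \mathrm{Cartier}_{W,k}$ is fibered over $\mathbb{N}^\wedge$ and compatible with the Lucas--Henrio-style identification $\mathcal{O}_{K_{n_0}}/t_{n_0}^{a} \cong R/t^{a}$ used in the section to pin down sections at $\infty$ via an auxiliary $n_0 \in \mathbb{N}$. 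Thus once the pointwise identification is established on $\mathbb{N}$, the fiber-product recipe glues it to an identification on $\mathbb{N}^\wedge$.

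The hard part will be controlling the interaction of the $t$-adic and $L$-adic topologies at the function field point $\infty$, where $t_\infty$ is the uniformizer of an equal-characteristic local field rather than a mixed-characteristic one. Specifically, one must check that the derived $L$-completion of $A[1/t]$ commutes uniformly in the prism $(L,A)$ with reduction modulo $t^{a}$, so that the inverse system $\{A[1/t]/L^{k}\}_k$ is Mittag--Leffler in a manner compatible with the fiber product onto $\mathbb{N}^\wedge$. Once this coherence is in place, the two fiber-product descriptions of the compact de Rham prismatization visibly match term by term, and the asserted natural isomorphism of formal stacks follows.
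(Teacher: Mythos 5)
Your proposal takes essentially the same approach as the paper: verify the identification fiberwise over $\mathbb{N}$ (where the mixed-characteristic Bhatt--Lurie picture applies), then pass to $\mathbb{N}^\wedge$ via the compactification/fiber-product construction and the $t^a$-truncation identification of Witt vectors at some auxiliary $n_0$. The paper's own proof is briefer and does not spell out the Mittag--Leffler/coherence check at $\infty$ that you flag, but the logical skeleton is the same.
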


\begin{proof}
This is obviously true away from $\infty$ by taking the corresponding restriction to each finite $n\in \mathbb{N}$. Then we have the result since for some $n_0$ the construction on the Witt vectors mode some power $t^a$ will establish some identification of Witt vectors from $n_0$ to $\infty$ along our construction above. 
\end{proof}

\subsection{de Rham Prismatization for $R$-formal schemes}

\begin{setting}
We now consider a $R$-formal ring which is $t$-adic complete, which is denoted by $H$. We assume that $H$ is fibered over $\mathbb{N}^\wedge$.
\end{setting}

By restricting to $\mathbb{N}$ of our prismatization from $\mathbb{N}^\wedge$ we have the following equivalence on the derived $\infty$-categories of complexes:
\begin{align}
\mathrm{Der}_{\mathcal{F}|_{\mathbb{N}}}\overset{\sim}{\rightarrow} \projlim_{(L,A)/H}\overline{\mathrm{Der}}_{(t,L)}(A)
\end{align}
where $(L,A)/H$ varies in the set of all the prisms in our setting. $\overline{\mathrm{Der}}$ means the completion in the derived sense with respect to $t$ and $L$ over the complexes on the right of this equivalence. Under this equivalence we define the corresponding de Rham prismatization to be the prismatization over $\mathbb{N}$ associated with $\projlim_{(L,A)}\overline{\mathrm{Der}}_{(t,L)}(\widehat{A[1/t]}^{L})$.

\begin{definition}
Now here we restrict to the space $\mathbb{N}$ is because that we have no relationship like this at the $\infty$ since that is a function field. But we can then take the corresponding compactification in the following sense. We call the above open de Rham prismatization $\mathrm{odrp}_\mathbb{N}$:
\begin{align}
\projlim_{(L,A)/H}\overline{\mathrm{Der}}_{(t,L)}(\widehat{A[1/t]}^{L}).
\end{align}
Then we define the following compact de Rham prismaization $\mathrm{cdrp}$:
\begin{align}
\widetilde{\projlim_{(L,A)/H}\overline{\mathrm{Der}}_{(t,L)}(\widehat{A[1/t]}^{L})}.
\end{align}
Here the right hand side is the $\infty$-category which is the fiber product of $\mathbb{N}^\wedge$ and $\projlim_{(L,A)/H}\overline{\mathrm{Der}}_{(t,L)}(\widehat{A[1/t]}^{L})$ along the map to the restriction over $\mathbb{N}$.  
One then glues over $S$ by reducing to \cite{2BL}. This compactification means the process defined in the following. First consider the corresponding functors away from the infinity:
\begin{align}
\mathrm{Der}_{\mathcal{F}|_{\mathbb{N}}}\overset{\sim}{\rightarrow} \projlim_{(L,A)/H}\overline{\mathrm{Der}}_{(t,L)}(A) \overset{}{\rightarrow} \projlim_{(L,A)/H}\overline{\mathrm{Der}}_{(t,L)}(\widehat{A[1/t]}^L)
\end{align}
Then we just take the corresponding fiber product along this map from $\mathrm{Der}_{\mathcal{F}|_{\mathbb{N}}}$ to the full stack over $\mathbb{N}^\wedge$.
\end{definition}

\begin{definition}
When we have the Nygaard filtration we can define the following by setting all objects to carry such filtrations. Now here we restrict to the space $\mathbb{N}$ is because that we have no relationship like this at the $\infty$ since that is a function field. But we can then take the corresponding compactification in the following sense. We call the above open de Rham prismatization $\mathrm{odrp}_\mathbb{N}$:
\begin{align}
\projlim_{(L,A,\mathrm{NFil})/H}\overline{\mathrm{Der}}_{(t,L)}(\widehat{A[1/t]}^{L}).
\end{align}
Then we define the following compact de Rham prismaization $\mathrm{cdrp}$:
\begin{align}
\widetilde{\projlim_{(L,A,\mathrm{NFil})/H}\overline{\mathrm{Der}}_{(t,L)}(\widehat{A[1/t]}^{L})}.
\end{align}
Here the right hand side is the $\infty$-category which is the fiber product of $\mathbb{N}^\wedge$ and $\projlim_{(L,A)/H}\overline{\mathrm{Der}}_{(t,L)}(\widehat{A[1/t]}^{L})$ along the map to the restriction over $\mathbb{N}$. 
One then glues over $S$ by reducing to \cite{2BL}. This compactification means the process defined in the following. First consider the corresponding functors away from the infinity:
\begin{align}
\mathrm{Der}_{\mathcal{F}|_{\mathbb{N}}}\overset{\sim}{\rightarrow} \projlim_{(L,A,\mathrm{NFil})/H}\overline{\mathrm{Der}}_{(t,L)}(A) \overset{}{\rightarrow} \projlim_{(L,A,\mathrm{NFil})/H}\overline{\mathrm{Der}}_{(t,L)}(\widehat{A[1/t]}^L)
\end{align}
Then we just take the corresponding fiber product along this map from $\mathrm{Der}_{\mathcal{F}|_{\mathbb{N}}}$ to the full stack over $\mathbb{N}^\wedge$.
\end{definition}

\begin{remark}
The strategy which is sort of indirect here we use here for constructing the de Rham prismatization over the whole space $\mathbb{N}^\wedge$ is kind of compact method. The idea is sort of infinite induction by taking the compactification of the construction along the compactification:
\begin{align}
\mathbb{N} \hookrightarrow \mathbb{N}^\wedge.
\end{align} 
This is non-trivial in many situation, where the above discussion on the function field prismatization and finding direct relationship with function field prisms is a very typical example, where we can reach some results on the $\infty$ function field at the $\infty$ by taking limit of results over the compactification, especially when the function field objects are hard to study. Even if they are not hard to study this strategy can provide many direct convience for the constructions for function fields by using the results in mixed-characteristic situations.
\end{remark}

\subsection{Absolute Hodge-Tate prismatization in families}

By restricting to $\mathbb{N}$ of our prismatization from $\mathbb{N}^\wedge$ we have the following equivalence on the derived $\infty$-categories of complexes:
\begin{align}
\mathrm{Der}_{\mathcal{F}|_{\mathbb{N}}}\overset{\sim}{\rightarrow} \projlim_{(L,A)}\overline{\mathrm{Der}}_{(t,L)}(A)
\end{align}
where $(L,A)$ varies in the set of all the prisms in our setting. $\overline{\mathrm{Der}}$ means the completion in the derived sense with respect to $t$ and $L$ over the complexes on the right of this equivalence. Under this equivalence we define the corresponding de Rham prismatization to be the prismatization over $\mathbb{N}$ associated with $\projlim_{(L,A)}\overline{\mathrm{Der}}_{(t,L)}(\widehat{A[1/t]}^{L})$. Then we take the corresponding first degree in the graded of the de Rham completion in the definition, which we call them Hodge-Tate prismatization in family over $\mathbb{N}$.

\begin{definition}
Now here we restrict to the space $\mathbb{N}$ is because that we have no relationship like this at the $\infty$ since that is a function field. But we can then take the corresponding compactification in the following sense. We call the above open de Rham prismatization $\mathrm{odrp}_\mathbb{N}$:
\begin{align}
\projlim_{(L,A)}\overline{\mathrm{Der}}_{(t,L)}(\widehat{A[1/t]}^{L}).
\end{align}
Then we define the following compact de Rham prismaization $\mathrm{cdrp}$:
\begin{align}
\widetilde{\projlim_{(L,A)}\overline{\mathrm{Der}}_{(t,L)}(\widehat{A[1/t]}^{L})}.
\end{align}
Here the write hand side is the $\infty$-category which is the fiber product of $\mathbb{N}^\wedge$ and $\projlim_{(L,A)}\overline{\mathrm{Der}}_{(t,L)}(\widehat{A[1/t]}^{L})$ along the map to the restriction over $\mathbb{N}$. Then we take the corresponding first degree in the graded of the de Rham completion in the definition, which we call them Hodge-Tate prismatization in family over $\mathbb{N}^\wedge$. This is equivalent to the following definition: one can define this by first consider the subspace $\mathbb{N}$ where one can take the product throughout the whole $\mathbb{N}$ from the Hodge-Tate prismatization in the usual situation for each $n\in \mathbb{N}$, then take the fiber product along the following functors:
\begin{align}
\mathrm{Der}_{\mathcal{F}|_{\mathbb{N}}}\overset{\sim}{\rightarrow} \projlim_{(L,A)}\overline{\mathrm{Der}}_{(t,L)}(A) \overset{}{\rightarrow} \projlim_{(L,A)}\overline{\mathrm{Der}}_{(t,L)}(\widehat{A[1/t]}^L) \overset{}{\rightarrow} \projlim_{(L,A)}\overline{\mathrm{Der}}_{(t,L)}(\widehat{A[1/t]}/L).
\end{align} 
\end{definition}

\begin{proposition}
One can equivalently define the corresponding following Hodge-Tate prismatization in the following different way, i.e. just consider those substacks of $\mathrm{Cartier}_{W}$ parametrizing those Cartier-Witt ideals which are Hodge-Tate ones over $\mathbb{N}^\wedge$ as in \cite{2BL}. That is to say those ideals in the kernel of the map $\mathrm{Witt}(.)\rightarrow .$ in our current setting over $\mathbb{N}^\wedge$.
\end{proposition}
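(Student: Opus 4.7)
The plan is to reduce, over each finite $n \in \mathbb{N}$, to the classical Hodge--Tate equivalence established by Bhatt--Lurie in \cite{2BL}, then transport this equivalence to $\infty$ through the compactification $\mathbb{N} \hookrightarrow \mathbb{N}^\wedge$ already used throughout this section. First, I would work fiberwise over $\mathbb{N}$. For each finite $n$ the fiber of $\mathrm{Cartier}_W$ is exactly the Cartier--Witt stack of the local field $K_n$, and by \cite{2BL} the Hodge--Tate locus -- the substack cut out by the condition that the Cartier--Witt ideal $(L,f)$ lies in the kernel of $\mathrm{Witt}(X) \to X$ -- is canonically identified with the stack whose quasicoherent complexes are modeled by $\projlim_{(L,A)}\overline{\mathrm{Der}}_{(t,L)}(\widehat{A[1/t]}/L)$, i.e. the first graded piece of the de Rham completion. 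Taking the product over $n \in \mathbb{N}$ and invoking the open prismatization equivalence $\mathrm{ope}_\mathbb{N}$ yields the desired identification over the restriction $\mathbb{N}$.

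Second, I would extend this identification to $\infty$. The key input is the compactification machinery developed above: both the Hodge--Tate substack of $\mathrm{Cartier}_W$ over $\mathbb{N}^\wedge$ and the $\infty$-category defined by the first graded piece of the de Rham completion over $\mathbb{N}^\wedge$ are constructed by fiber-product-along-restriction from their values over $\mathbb{N}$. The earlier proposition in this section -- matching the Cartier--Witt stack at $\infty$ with the compactification from $\mathbb{N}$ via the Witt-vector identification modulo $t^a$ between $\mathcal{O}_{K_{n_0}}$ (for some large $n_0$) and $\mathcal{O}_{K_\infty}$ -- shows that $\mathrm{Cartier}_W$ itself satisfies this fiber-product property. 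Since the Hodge--Tate condition "$(L,f)$ lies in the kernel of $\mathrm{Witt}(X)\to X$" is manifestly preserved by the Witt-vector identification modulo $t^a$, the compactification commutes with passing to the Hodge--Tate substack, and the two fiber-product descriptions coincide.

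The main obstacle will be verifying carefully that the passage to graded pieces of the de Rham completion commutes with the $\mathbb{N} \hookrightarrow \mathbb{N}^\wedge$ compactification -- i.e. that the fiber product defining the compact Hodge--Tate prismatization agrees with the fiber product formed by first imposing the Hodge--Tate condition at every finite $n$ and then compactifying. This comes down to the uniformity in $n$ of the natural comparison
\begin{align}
\widehat{A[1/t]}/L \simeq \mathrm{Gr}^1\,\widehat{A[1/t]}^{L}
\end{align}
applied to prisms $(L,A)$ in families over $\mathbb{N}^\wedge$. In each finite fiber this is part of \cite{2BL}, but the uniformity needed to survive the $\projlim_a$ used in the compactification must be checked by unwinding the ramification-index growth hypothesis in the setting and noting that the relevant quotients depend only on $\mathrm{Witt}(\cdot)$ modulo a fixed power of $t$.

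Once these two compatibilities are in place, the fiber product of the Hodge--Tate substack at $\mathbb{N}$ with the full Cartier--Witt stack at $\mathbb{N}^\wedge$ over the restriction map agrees with the compact Hodge--Tate prismatization as defined via the graded piece of $\widehat{A[1/t]}^L$, and the proposition follows. The construction is then functorial and promotes to the definition over $R$-formal schemes by \cite{2BL}-style descent, as already exploited earlier.
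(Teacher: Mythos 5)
Your proposal is correct and follows essentially the same route as the paper's own (very terse) proof: reduce to finite fibers over $\mathbb{N}$ where the Bhatt--Lurie identification of the Hodge--Tate locus applies, then transport the equivalence to $\mathbb{N}^\wedge$ via the compactification/fiber-product mechanism; you supply more detail than the paper on the commutation of graded pieces with the limiting process, but this is an elaboration rather than a different strategy.
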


\begin{proof}
The reason for this to be true is that this is just the underlying stackification for the corresponding Hodge-Tate quasi-coherent complexes over the Hodge-Tate stacks in our setting. Our definition makes the set of ideals implicit. Since the such stack will be limit of the one from $\mathbb{N}$ where we do have the corresponding equivalence, then we are done for the same limiting process.
\end{proof}

\subsection{Hodge-Tate Prismatization for $R$-formal schemes}

\begin{setting}
We now consider a $R$-formal ring which is $t$-adic complete, which is denoted by $H$. We assume that $H$ is fibered over $\mathbb{N}^\wedge$.
\end{setting}

By restricting to $\mathbb{N}$ of our prismatization from $\mathbb{N}^\wedge$ we have the following equivalence on the derived $\infty$-categories of complexes:
\begin{align}
\mathrm{Der}_{\mathcal{F}|_{\mathbb{N}}}\overset{\sim}{\rightarrow} \projlim_{(L,A)/H}\overline{\mathrm{Der}}_{(t,L)}(A)
\end{align}
where $(L,A)/H$ varies in the set of all the prisms in our setting. $\overline{Der}$ means the completion in the derived sense with respect to $t$ and $L$ over the complexes on the right of this equivalence. Under this equivalence we define the corresponding de Rham prismatization to be the prismatization over $\mathbb{N}$ associated with $\projlim_{(L,A)}\overline{\mathrm{Der}}_{(t,L)}(\widehat{A[1/t]}^{L})$. Then we take the graded pieces and take the first degree one to define Hodge-Tate prismatization fibered over $\mathbb{N}^\wedge$. Note that we have the following functors by base change:
\begin{align}
\mathrm{Der}_{\mathcal{F}|_{\mathbb{N}}}\overset{\sim}{\rightarrow} \projlim_{(L,A)/H}\overline{\mathrm{Der}}_{(t,L)}(A) \overset{}{\rightarrow} \projlim_{(L,A)/H}\overline{\mathrm{Der}}_{(t,L)}(\widehat{A[1/t]}^L) \overset{}{\rightarrow} \projlim_{(L,A)/H}\overline{\mathrm{Der}}_{(t,L)}(\widehat{A[1/t]}/L).
\end{align}

\begin{definition}
Now here we restrict to the space $\mathbb{N}$ is because that we have no relationship like this at the $\infty$ since that is a function field. But we can then take the corresponding compactification in the following sense. We call the above open de Rham prismatization $\mathrm{odrp}_\mathbb{N}$:
\begin{align}
\projlim_{(L,A)/H}\overline{\mathrm{Der}}_{(t,L)}(\widehat{A[1/t]}^{L}).
\end{align}
Then we define the following compact de Rham prismaization $\mathrm{cdrp}$:
\begin{align}
\widetilde{\projlim_{(L,A)/H}\overline{\mathrm{Der}}_{(t,L)}(\widehat{A[1/t]}^{L})}.
\end{align}
Here the right hand side is the $\infty$-category which is the fiber product of $\mathbb{N}^\wedge$ and $\projlim_{(L,A)/H}\overline{\mathrm{Der}}_{(t,L)}(\widehat{A[1/t]}^{L})$ along the map to the restriction over $\mathbb{N}$. 
One then glues over $S$ by reducing to \cite{2BL}. Then we take the graded pieces and take the first degree one to define Hodge-Tate prismatization fibered over $\mathbb{N}^\wedge$. This is equivalent to the following definition: one can define this by first consider the subspace $\mathbb{N}$ where one can take the product throughout the whole $\mathbb{N}$ from the Hodge-Tate prismatization in the usual situation for each $n\in \mathbb{N}$, then take the fiber product along the following functors:
\begin{align}
\mathrm{Der}_{\mathcal{F}|_{\mathbb{N}}}\overset{\sim}{\rightarrow} \projlim_{(L,A)/H}\overline{\mathrm{Der}}_{(t,L)}(A) \overset{}{\rightarrow} \projlim_{(L,A)/H}\overline{\mathrm{Der}}_{(t,L)}(\widehat{A[1/t]}^L) \overset{}{\rightarrow} \projlim_{(L,A)/H}\overline{\mathrm{Der}}_{(t,L)}(\widehat{A[1/t]}/L).
\end{align}

\end{definition}

\subsection{$p$-adic Motives in families and $6$-functor formalism}

\noindent In this section we consider the motivic point of view after \cite{2G} and \cite{2A}. The goal is to first consider the motives in our setting over families and then apply to the $B^+_{\mathrm{dR},\mathbb{N}^\wedge}$-cohomology theory dated back to \cite{2F} and the motivic theories for function fields under the foundation we are considering. Followig \cite{2A} we will consider for each $n\in \mathbb{N}$ a corresponding $p$-adic motivic theory and the corresponding Weil sheaves of any $t$-adic formal schemes $\mathbb{Y}$ over $R = \prod R_i$:
\begin{align}
(\mathrm{MotiveTheory}_{n}, \mathrm{WeilSheaves}_n)_{\mathrm{Ayoub}, R_n,\mathbb{Y}}
\end{align}
corresponding to the following three prismatizations:
\begin{itemize}
\item[1] Prismatization for each $n$, which is a motivic theory in the sense of \cite{2A};
\item[2] de Rham prismatization for each $n$, which is a motivic theory in the sense of \cite{2A};
\item[3] Hodge-Tate prismatization for each $n$, which is a motivic theory in the sense of \cite{2A};
\item[4] Other type prismatizaton such as crystalline ones, Laurent ones and so on...
\subitem[4A]  $\mathrm{Der}_{\mathcal{F}|_{\mathbb{N}}}\overset{\sim}{\rightarrow} \projlim_{(L,A)}\overline{\mathrm{Der}}_{(t,L)}(A) \overset{}{\rightarrow} \projlim_{(L,A)}\overline{\mathrm{Der}}_{(t,L)}(\widehat{A[1/L]}^t)$ through compactification at $\infty$; 
\subitem[4B]  $\mathrm{Der}_{\mathcal{F}|_{\mathbb{N}}}\overset{\sim}{\rightarrow} \projlim_{(L,A)}\overline{\mathrm{Der}}_{(t,L)}(A) \overset{}{\rightarrow} \projlim_{(L,A)}\overline{\mathrm{Der}}_{(t,L)}({A[1/t]})$ through compactification at $\infty$;
\subitem[4C] ...
\item[5] Prismatization carrying filtration such as Nygaard filtrations.
\end{itemize}

\begin{definition} \mbox{\textbf{(Motivic Prismatization in Families)}}
In our setting we consider the following motivic theory in families over $K$ by taking the product:
\begin{align}
\prod_{n\in \mathbb{N}} (\mathrm{MotiveTheory}_{n}, \mathrm{WeilSheaves}_n)_{\mathrm{Ayoub}, R_n,\mathbb{Y}}
\end{align}
Then we consider the corresponding compactifiction along the corresponding compactification of the Cartier-Witt stacks in families to get:
\begin{align}
\widetilde{\prod_{n\in \mathbb{N}} (\mathrm{MotiveTheory}_{n}, \mathrm{WeilSheaves}_n)_{\mathrm{Ayoub}, R_n,\mathbb{Y}}}.
\end{align}
This is a motivic theory over $R$ in the sense of \cite{2A}. We use the notation:
\begin{align}
\mathrm{Hopf}_{\widetilde{\prod_{n\in \mathbb{N}} (\mathrm{MotiveTheory}_{n}, \mathrm{WeilSheaves}_n)_{\mathrm{Ayoub}, R_n,\mathbb{Y}}}}
\end{align}
to denote the corresponding Hopf algebra sheaf over $\mathbb{Y}$ for this motivic theory over $R$ in families. Then we define the motivic Galois group sheaf to be the spectrum of this big Hopf algebra sheaf:
\begin{align}
\mathrm{Gal}_{\widetilde{\prod_{n\in \mathbb{N}} (\mathrm{MotiveTheory}_{n}, \mathrm{WeilSheaves}_n)_{\mathrm{Ayoub}, R_n,\mathbb{Y}}}}:= \mathrm{Spec}(\mathrm{Hopf}_{\widetilde{\prod_{n\in \mathbb{N}} (\mathrm{MotiveTheory}_{n}, \mathrm{WeilSheaves}_n)_{\mathrm{Ayoub}, R_n,\mathbb{Y}}}}).
\end{align}
\end{definition}

\begin{definition}  \mbox{\textbf{(Motivic de Rham Prismatization in Families)}}
In our setting we consider the following motivic theory in families over $K$ by taking the product:
\begin{align}
\prod_{n\in \mathbb{N}} (\mathrm{dRMotiveTheory}_{n}, \mathrm{dRWeilSheaves}_n)_{\mathrm{Ayoub}, R_n,\mathbb{Y}}
\end{align}
Then we consider the corresponding compactifiction along the corresponding compactification of the Cartier-Witt stacks in families to get:
\begin{align}
\widetilde{\prod_{n\in \mathbb{N}} (\mathrm{dRMotiveTheory}_{n}, \mathrm{dRWeilSheaves}_n)_{\mathrm{Ayoub}, R_n,\mathbb{Y}}}.
\end{align}
This is a motivic theory over $R$ in the sense of \cite{2A}. We use the notation:
\begin{align}
\mathrm{Hopf}_{\widetilde{\prod_{n\in \mathbb{N}} (\mathrm{dRMotiveTheory}_{n}, \mathrm{dRWeilSheaves}_n)_{\mathrm{Ayoub}, R_n,\mathbb{Y}}}}
\end{align}
to denote the corresponding Hopf algebra sheaf for this motivic theory over $R$ in families. Then we define the motivic Galois group sheaf to be the spectrum of this big Hopf algebra sheaf:
\begin{align}
\mathrm{Gal}_{\widetilde{\prod_{n\in \mathbb{N}} (\mathrm{dRMotiveTheory}_{n}, \mathrm{dRWeilSheaves}_n)_{\mathrm{Ayoub}, R_n,\mathbb{Y}}}}:= \mathrm{Spec}(\mathrm{Hopf}_{\widetilde{\prod_{n\in \mathbb{N}} (\mathrm{dRMotiveTheory}_{n}, \mathrm{dRWeilSheaves}_n)_{\mathrm{Ayoub}, R_n,\mathbb{Y}}}}).
\end{align}
\end{definition}

\begin{definition}  \mbox{\textbf{(Motivic Hodge-Tate Prismatization in Families)}}
In our setting we consider the following motivic theory in families over $K$ by taking the product:
\begin{align}
\prod_{n\in \mathbb{N}} (\mathrm{HTMotiveTheory}_{n}, \mathrm{HTWeilSheaves}_n)_{\mathrm{Ayoub}, R_n,\mathbb{Y}}
\end{align}
Then we consider the corresponding compactifiction along the corresponding compactification of the Cartier-Witt stacks in families to get:
\begin{align}
\widetilde{\prod_{n\in \mathbb{N}} (\mathrm{HTMotiveTheory}_{n}, \mathrm{HTWeilSheaves}_n)_{\mathrm{Ayoub}, R_n,\mathbb{Y}}}.
\end{align}
This is a motivic theory over $R$ in the sense of \cite{2A}. We use the notation:
\begin{align}
\mathrm{Hopf}_{\widetilde{\prod_{n\in \mathbb{N}} (\mathrm{HTMotiveTheory}_{n}, \mathrm{HTWeilSheaves}_n)_{\mathrm{Ayoub}, R_n,\mathbb{Y}}}}
\end{align}
to denote the corresponding Hopf algebra sheaf for this motivic theory over $R$ in families. Then we define the motivic Galois group sheaf to be the spectrum of this big Hopf algebra sheaf:
\begin{align}
\mathrm{Gal}_{\widetilde{\prod_{n\in \mathbb{N}} (\mathrm{HTMotiveTheory}_{n}, \mathrm{HTWeilSheaves}_n)_{\mathrm{Ayoub}, R_n,\mathbb{Y}}}}:= \mathrm{Spec}(\mathrm{Hopf}_{\widetilde{\prod_{n\in \mathbb{N}} (\mathrm{HTMotiveTheory}_{n}, \mathrm{HTWeilSheaves}_n)_{\mathrm{Ayoub}, R_n,\mathbb{Y}}}}).
\end{align}
\end{definition}

\begin{theorem}
Let $X$ be a $v$-stack over $\mathrm{Spd}R$ in family over $\mathbb{N}^\wedge$. Then the corresponding $\mathbb{B}_{\mathrm{HT},\mathbb{N}^\wedge}^+$-cohomology theory and the corresponding $\mathbb{B}_{\mathrm{dR},\mathbb{N}^\wedge}^+$-cohomology theory are motivic theory in the sense of \cite{2A}. 
\end{theorem}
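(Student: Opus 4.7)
The plan is to reduce the verification that $\mathbb{B}_{\mathrm{HT},\mathbb{N}^\wedge}^+$ and $\mathbb{B}_{\mathrm{dR},\mathbb{N}^\wedge}^+$-cohomology give motivic theories to the five-item checklist of Ayoub's formalism (items A1 through A5 in Situations \ref{situation1} and \ref{situation2}), proceeding axiom by axiom and exploiting the compactification strategy $\mathbb{N} \hookrightarrow \mathbb{N}^\wedge$ that was developed in the previous subsections. Concretely, I would first identify the two cohomology theories with the global section realization of the families $\widetilde{\prod_{n\in \mathbb{N}}(\mathrm{dRMotiveTheory}_n,\mathrm{dRWeilSheaves}_n)_{\mathrm{Ayoub},R_n,\mathbb{Y}}}$ and $\widetilde{\prod_{n\in \mathbb{N}}(\mathrm{HTMotiveTheory}_n,\mathrm{HTWeilSheaves}_n)_{\mathrm{Ayoub},R_n,\mathbb{Y}}}$ that were produced from the compactified de Rham prismatization $\mathrm{cdrp}$ and compactified Hodge-Tate prismatization. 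This identification is compatible with the equivalence $\projlim_{(L,A)/H}\overline{\mathrm{Der}}_{(t,L)}(\widehat{A[1/t]}^L)$ that computes the de Rham period sheaves, and with its Hodge-Tate graded pieces, so over each $v$-stack $X$ above $\mathrm{Spd}R$ we get a well-defined sheaf of anima valued in $R$-algebras, verifying A1 in families.

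Next I would verify A2 through A4 slice-by-slice along $\mathbb{N}$ and then compactify. For each fixed $n \in \mathbb{N}$ we are in mixed characteristic with a $p$-adic local field $K_n$, and the $B^+_{\mathrm{dR}}$ and $B^+_{\mathrm{HT}}$-cohomology theories fit into Ayoub's framework after \cite{3A} and Scholze's generalization in \cite{3S}: the Künneth theorem holds in the derived sense because the completed de Rham period rings are flat over the base in the relevant derived category, the Weil sheaf category satisfies $v$-descent by Scholze's results on solid quasicoherent sheaves, and the six-functor formalism exists in the abstract sense of \cite{3CS1} because all the fibered constructions pull back and push forward along $v$-maps of small $v$-stacks. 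The compactification of all these at $\infty$ is then forced by the fiber product definition $\widetilde{\projlim_{(L,A)}\overline{\mathrm{Der}}_{(t,L)}(\widehat{A[1/t]}^L)}$: each axiom is a limit-stable property, and the fiber product construction over $\mathbb{N}^\wedge$ preserves the axioms because evaluation at $\infty$ is pinned down by some sufficiently large $n_0 \in \mathbb{N}$ through the identification $\mathcal{O}_{K_{n_0}}/t_{n_0}^a \simeq \mathcal{O}_{K_\infty}/t_\infty^a$ at each truncation level.

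The last axiom A5 on Hopf algebraic motivic fundamental groups would be checked by directly invoking the constructions of $\mathrm{Hopf}_{\widetilde{\prod (\mathrm{dRMotiveTheory}_n,\mathrm{dRWeilSheaves}_n)}}$ and $\mathrm{Hopf}_{\widetilde{\prod (\mathrm{HTMotiveTheory}_n,\mathrm{HTWeilSheaves}_n)}}$ already defined, and then taking $\mathrm{Loop}^\infty\mathrm{Cechconerve}(\widetilde{\mathrm{Sh}}_{B^+_{\mathrm{dR},\mathbb{N}^\wedge}})$ and applying $\mathrm{Spec}$ as prescribed in \cite[4.7]{3A} and \cite[4.9]{3A}; the compactification along $\mathbb{N}\hookrightarrow \mathbb{N}^\wedge$ commutes with these operations because Čech conerve, loop space, and spectrum are all limit-stable in the relevant $\infty$-categorical sense.

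The main obstacle I expect is the step where one must genuinely propagate the six-functor formalism across the compactification at $\infty$, since at $\infty$ the base field becomes a function field $\mathbb{F}_q((t_\infty))$ and the usual $p$-adic techniques (e.g.\ the ones of \cite{3Ta}, \cite{3F}, \cite{3S3} used to pass from prismatizations to $B^+_{\mathrm{dR}}$-cohomology) do not apply verbatim. The strategy I would use is the same \emph{indirect compactification method} already employed in the construction: one knows the formalism over $\mathbb{N}$ from the mixed-characteristic fibers, and then one extracts the behavior at $\infty$ as the limit over $a$ of the $t^a$-truncations, relying on the proposition that the Cartier-Witt stack at $\infty$ agrees with the compactification of the one over $\mathbb{N}$. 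Once this limiting identification of stacks is established on the nose, the proper/smooth base change, projection formula, and Künneth for the family at $\infty$ follow from the analogous statements over $\mathbb{N}$ by passage to the appropriate fiber-product of $\infty$-categories.
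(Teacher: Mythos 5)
Your proposal follows essentially the same approach as the paper's proof, which is extremely terse: the paper simply says to apply the earlier constructions over the $v$-site of $X$, observe that they give the result directly away from $\infty$, and then propagate to the whole family $\mathbb{N}^\wedge$ by the compactification. Your version is a considerably expanded elaboration of this same strategy -- verifying each of A1--A5 slice-by-slice over $\mathbb{N}$, identifying the cohomology theories with the global sections of the compactified de Rham/Hodge-Tate prismatization families, and explicitly flagging the limiting argument needed at the function-field fiber $\infty$ -- but it does not introduce a genuinely different route or key lemma.
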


\begin{proof}
Over the $v$-site of $X$, we apply the constructions above, which implies consequence directly away from $\infty$, then our construction will then imply the result through the whole family by taking the compactification.
\end{proof}

\begin{theorem}
Let $X$ be a $v$-stack over $\mathrm{Spd}R$ in family over $\mathbb{N}^\wedge$. Then the corresponding $\mathbb{B}_{\mathrm{HT},\mathbb{N}^\wedge}^+$-cohomology theory and the corresponding $\mathbb{B}_{\mathrm{dR},\mathbb{N}^\wedge}^+$-cohomology theory are motivic theory in the sense of \cite{2A}. We have th $6$-functor formalism in this motivic setting over families. 
\end{theorem}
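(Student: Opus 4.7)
The plan is to bootstrap from the previous theorem (which established that $\mathbb{B}_{\mathrm{HT},\mathbb{N}^\wedge}^+$ and $\mathbb{B}_{\mathrm{dR},\mathbb{N}^\wedge}^+$ are motivic theories in the sense of \cite{2A}) and to assemble the six operations levelwise over $\mathbb{N}$ before extending to $\mathbb{N}^\wedge$ by the compactification method already used for the prismatization and de Rham prismatization constructions. First I would fix the $v$-stack $X$ over $\mathrm{Spd}R$ in families over $\mathbb{N}^\wedge$, restrict to $\mathbb{N}$, and, for each finite $n$, invoke Ayoub's framework \cite{2A} applied to $(\mathrm{MotiveTheory}_n,\mathrm{WeilSheaves}_n)_{\mathrm{Ayoub},R_n,\mathbb{Y}}$ in its de Rham and Hodge-Tate variants: this gives, at each $n$, the six functors $(f^{*},f_{*},f_{!},f^{!},\otimes,\underline{\mathrm{Hom}})$ on the associated derived $\infty$-categories of $\mathbb{B}^+_{\mathrm{dR},n}$- and $\mathbb{B}^+_{\mathrm{HT},n}$-sheaves, together with projection formula, base change and internal Hom, either directly from Ayoub or via the abstract axiomatics of \cite{3CS1}.

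Next I would take the product $\prod_{n\in\mathbb{N}}$ of these six-functor packages along the fiber structure over $\mathbb{N}$. The product is a symmetric monoidal $\infty$-functor on spans, and each of the six operations commutes with products because pullback, proper pushforward and tensor product are computed componentwise in the product category; the adjunctions $(f^{*},f_{*})$ and $(f_{!},f^{!})$, the base change isomorphisms and the projection formulas are inherited factor by factor. This produces a six-functor formalism on the family over $\mathbb{N}$, simultaneously for $\mathbb{B}^+_{\mathrm{dR},\mathbb{N}}$ and $\mathbb{B}^+_{\mathrm{HT},\mathbb{N}}$, compatible with the open (de~Rham) prismatization equivalence $\mathrm{ope}_\mathbb{N}$ via base change along the canonical maps $A\to\widehat{A[1/t]}^{L}$ and $A\to \widehat{A[1/t]}/L$.

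Then I would extend to $\mathbb{N}^\wedge$ by the same compactification $\mathbb{N}\hookrightarrow\mathbb{N}^\wedge$ used in the constructions $\mathrm{cpe}$ and $\mathrm{cdrp}$: the cohomology theory at $\infty$ is defined as the fiber product of $\mathrm{Der}_{\mathcal{F}}$ with the appropriate projective limit along the restriction over $\mathbb{N}$. Since fiber products of six-functor formalisms along a morphism of $\infty$-sites (here the restriction $\mathbb{N}\hookrightarrow\mathbb{N}^\wedge$ on the fibration structure) again yield a six-functor formalism, and since the identification of sections at $\infty$ with sections at some chosen $n_{0}\in\mathbb{N}$ after reduction mod $t^{a}$ (used in the definition of $\widetilde{\mathrm{Prism}_\mathbb{N}}(\infty)$) is compatible with all six operations truncated mod $t^{a}$, passing to the inverse limit over $a$ yields the desired six-functor formalism on the $\mathbb{N}^\wedge$-family. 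The Weil-sheaf descent and the $v$-descent are preserved under product and under compactification because both are limit constructions in stable presentable $\infty$-categories.

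The main obstacle, as in the preceding constructions, is at the boundary point $\infty$: the function-field prismatization is not directly accessible, so one cannot verify proper base change and the $(f_{!},f^{!})$ adjunction by a direct geometric argument at $n=\infty$. The compact method resolves this only provided that for each $a$ the mod-$t^{a}$ truncation of the six-functor package on $R_{n_0}$ really agrees with the mod-$t^{a}$ six-functor package on the function-field fiber under the identification of Witt vectors at $n_{0}$ and at $\infty$; this is exactly the point where we lean on the identifications already established in \cite{2LH} and in the preceding propositions of this paper (the compactification equivalences for $\mathrm{cpe}$ and $\mathrm{cdrp}$). Once this compatibility is granted, the remaining verifications are formal limit manipulations inside the stable $\infty$-categorical framework of \cite{3CS1}, \cite{2A}.
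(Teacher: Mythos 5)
Your proposal is broadly consistent with the compactification half of the argument, but for the six-functor formalism you and the paper take genuinely different routes, and the difference is worth recording. You assemble the six operations fiberwise over $\mathbb{N}$, pass to the product $\prod_{n\in\mathbb{N}}$, and then try to push the whole package through the fiber product defining the $\mathbb{N}^\wedge$-compactification. The paper instead reduces the six-functor question entirely out of the family picture: it restricts the $v$-site to the arc site locally (following \cite{1S5}, \cite{1S6}), descends from $v$-stacks to formal schemes and then to schemes, and at that point simply invokes the established six-functor formalism for ind-coherent sheaves from \cite{GRI}, \cite{GRII}, after which the axiomatics of \cite{2A} apply. What your route buys is that the construction stays intrinsic to the $\mathbb{N}^\wedge$-fibration and makes the role of the $\mathrm{cpe}$/$\mathrm{cdrp}$ compactification explicit for each of the six operations; what the paper's route buys is that it avoids ever having to prove that a fiber product of six-functor formalisms along $\mathbb{N}\hookrightarrow\mathbb{N}^\wedge$ is again a six-functor formalism, by outsourcing the hard adjunctions to a setting where they are already known.

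Two points in your sketch are thinner than they should be. First, the claim that "fiber products of six-functor formalisms along a morphism of $\infty$-sites again yield a six-functor formalism" is not a formal statement you can quote; the $*$-pullbacks and $\otimes$ pass through such a fiber product, but the existence and base-change compatibility of $f_!$ and $f^!$ on the glued category is exactly the content that needs proof (and is why the paper retreats to schemes and cites \cite{GRI}, \cite{GRII}). You flag this honestly at the end, but you should either supply the compatibility of the $!$-adjoint pair with the mod-$t^a$ truncations and the $n_0\leftrightarrow\infty$ identification, or follow the paper in reducing to schemes where \cite{GRI}, \cite{GRII} already do the work. Second, the reduction from $v$-topology to arc-topology (via totally disconnected covers) is a load-bearing step in the paper's argument and is entirely absent from yours; without it you have no way to descend from $v$-stacks in families to formal schemes, which is where any concrete verification of proper base change would have to happen.
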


\begin{proof}
Over the $v$-site of $X$, we apply the constructions above, which implies consequence directly away from $\infty$, then our construction will then imply the result through the whole family by taking the compactification. Then the 6-functor formalism reduces to formal schemes (locally we then reduce the $v$-topology to arc topology after \cite{1S5}, \cite{1S6}), then reduces to schemes. $*$-adjoint pairs are the obvious ones, and $!$-adjoint pairs are those $!$-able morphisms for schemes, for instance one consider the inductive coherent sheaves in \cite{GRI}, \cite{GRII} where a full 6-functor formalism is established. Then foundation from \cite{2A} will apply in this setting.
\end{proof}

\subsection{Application to motivic $p$-adic Local Langlands in families} 

\noindent We now follow \cite{1S5}, \cite{1S6}, \cite{L1}, \cite{1FS}, \cite{2LH} to construct some generalized version of the local Langlands correspondence in \cite{1FS} by using the motivic construction we constructed above in the generalized setting. Recall the corresponding context in \cite{2LH} we have the corresponding $p$/$z$-adic group $G(K)$ for our ring in family $K$, this will provide the corresponding small arc stacks as in \cite{1S5}, \cite{1S6}. Since we have the motivic groups defined above, we can tranform a representation of the motivic group into the corresponding category on the other side. This process will define the following correponding functor.

\begin{theorem}
We have well-defined functor which is well-defined isomorphism:
\begin{align}
\mathrm{Repre}(?) \overset{\sim}{\rightarrow}  ! 
\end{align}
$?$ = $\mathrm{Gal}_{\widetilde{\prod_{n\in \mathbb{N}} (\mathrm{dRMotiveTheory}_{n}, \mathrm{dRWeilSheaves}_n)_{\mathrm{Ayoub}, R_n,\mathbb{Y}}}}$, $!$ = $\widetilde{\prod_{n\in \mathbb{N}} (\mathrm{dRMotiveTheory}_{n}, \mathrm{dRWeilSheaves}_n)_{\mathrm{Ayoub}, R_n,\mathbb{Y}}}$. One also have the gestalten version by promoting to gestalten from the both side of this equivalence, where we use the same notation even in the gestalten context.
\end{theorem}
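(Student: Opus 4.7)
The plan is to prove this by Tannakian $\infty$-duality applied levelwise in $n\in\mathbb{N}$ and then propagated through the compactification $\mathbb{N}\hookrightarrow\mathbb{N}^\wedge$. At each finite $n$, the pair $(\mathrm{dRMotiveTheory}_n, \mathrm{dRWeilSheaves}_n)_{\mathrm{Ayoub},R_n,\mathbb{Y}}$ is a rigid symmetric monoidal stable presentable $\infty$-category equipped with a fiber functor landing in $R_n$-modules, by the K\"unneth theorem and the 6-functor formalism established in \cref{situation1} and \cref{situation2} in the sense of \cite{2A}. The $\infty$-categorical Tannakian reconstruction (which is the content of Ayoub's 4.7--4.9 recalled in the introduction) then identifies the spectrum of the Hopf algebra sheaf $\mathrm{Hopf}_n$ with an affine motivic Galois group sheaf $\mathrm{Gal}_n$, and yields an equivalence $\mathrm{Repre}(\mathrm{Gal}_n) \overset{\sim}{\rightarrow} (\mathrm{dRMotiveTheory}_n, \mathrm{dRWeilSheaves}_n)_{\mathrm{Ayoub},R_n,\mathbb{Y}}$.

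Next I would take the product over $n\in\mathbb{N}$. The functor $\mathrm{Repre}(-)$ converts products of affine group sheaves into products of representation $\infty$-categories, since $\mathrm{Spec}$ sends tensor products of Hopf algebras to fiber products of affine group stacks. Hence the levelwise equivalence lifts to an equivalence
\begin{align}
\mathrm{Repre}\Bigl(\prod_n \mathrm{Gal}_n\Bigr) \overset{\sim}{\rightarrow} \prod_n (\mathrm{dRMotiveTheory}_n, \mathrm{dRWeilSheaves}_n)_{\mathrm{Ayoub},R_n,\mathbb{Y}}.
\end{align}

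The final step is to propagate this through the compactification $\widetilde{(-)}$ along $\mathbb{N}\hookrightarrow\mathbb{N}^\wedge$. The target $!$ is built, in accordance with the construction pattern for $\widetilde{\mathrm{Cartier}_{W,H,\mathrm{NFil}}}$ and $\widetilde{\projlim_{(L,A)}\overline{\mathrm{Der}}_{(t,L)}(\widehat{A[1/t]}^L)}$, as a fiber product in $\infty$-categories along the restriction to the $\mathbb{N}$-family; likewise, the Hopf algebra sheaf governing $?$ arises by applying the loop-space and \v{C}ech conerve constructions to the compactified structure sheaf. Since these functors are limit-preserving and $\mathrm{Spec}$ turns tensor coproducts of Hopf algebras into fiber products of affine stacks, the Galois sheaf $?$ is realized as the fiber product of $\prod_n\mathrm{Gal}_n$ with the $\infty$-stack associated to $n_0$ along the restriction map. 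Applying $\mathrm{Repre}(-)$, which is a right adjoint and hence preserves fiber products, then produces the desired equivalence with $!$.

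The main obstacle will be the compatibility of $\mathrm{Repre}(-)$ with the compactification at the $\infty$-fiber: concretely, one must check that a representation of $?$ is determined by its $\mathbb{N}$-component together with its value at $\infty$ under the Lue-Hellmann-style identification recalled in the construction of $\widetilde{\mathrm{Prism}_\mathbb{N}}$, which sets the mod-$t^a$ datum at $\infty$ to be the mod-$t_{n_0}^a$ datum at some large $n_0$. Verifying that this identification lifts to the Hopf-algebraic and representation-theoretic levels is the key nontrivial point; once established, it follows from Tannakian descent along the restriction map that the fiber product of Galois sheaves indeed corresponds to the fiber product of $\infty$-categories of motives, completing the proof.
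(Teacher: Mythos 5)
Your proposal takes the same fundamental route as the paper: the paper's entire proof is ``By motivic Hopf formalism in \cite{2A},'' i.e.\ an appeal to Ayoub's Tannakian reconstruction for Weil cohomology theories, and you are unpacking exactly that appeal by applying it levelwise, then assembling over $\mathbb{N}$, then passing through the compactification $\mathbb{N}\hookrightarrow\mathbb{N}^\wedge$. Your flagging of the compatibility of $\mathrm{Repre}(-)$ with the compactification at the $\infty$-fiber as the key nontrivial point is a legitimate observation that the paper's one-line proof simply does not address, so your version is a strict refinement.

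One step is misargued, though. You justify the commutation of $\mathrm{Repre}(-)$ with the product $\prod_{n\in\mathbb{N}}$ by saying that $\mathrm{Spec}$ sends tensor products of Hopf algebras to fiber products of affine group stacks. That fact concerns a fiber product of two group schemes over a fixed common base; it has nothing to do with a product of a family of group sheaves indexed by the discrete set $\mathbb{N}$, each over its own $R_n$. Moreover, over a single base, $\mathrm{Repre}(G_1\times G_2)$ is \emph{not} $\mathrm{Repre}(G_1)\times\mathrm{Repre}(G_2)$; the correct statement there involves a tensor (Deligne/Lurie) product of the representation categories, not the Cartesian product. The reason the step you want actually holds in the present setting is different and simpler: because everything is fibered over $\mathbb{N}$ (respectively $\mathbb{N}^\wedge$), a representation of the family group sheaf is, by definition, an $\mathbb{N}$-indexed family of representations, so $\mathrm{Repre}\bigl(\prod_n\mathrm{Gal}_n\bigr)\simeq\prod_n\mathrm{Repre}(\mathrm{Gal}_n)$ as categories fibered over $\mathbb{N}$. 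You should replace your justification with this sectionwise argument; with that repair the remainder of your outline (fiber product along the restriction to $\mathbb{N}$, $\mathrm{Repre}$ preserving that limit as a right adjoint, and the Li-Huerta-style mod-$t^a$ identification at $\infty$) is consistent with the paper's intent.
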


\begin{proof}
By motivic Hopf formalism in \cite{2A}.
\end{proof}

\indent Now we consider the moduli $v$-stacks in \cite{2LH}, we denote it by $Y_{\mathrm{FS},G,R}$ which is the $v$-stack of $G$-bundles for our family version local ring $K$ and $R$. We now consider the following following \cite{1FS}, \cite{1GL}. We actually relying on \cite{1FS}, \cite{1S5}, \cite{1S6}  can derive the Hecke operators:

\begin{definition}
Consider the map from the Hecke stack in \cite{2LH} which we denote that as $Y_{\mathrm{Hecke},G,R,I}$, and consider the map from this to fiber product of the Cartier stack $Y_{\mathrm{Cartier},R}$ with the corresponding stack $Y_{\mathrm{FS},G,R}$, and consider the map from this Hecke stack to the $Y_{\mathrm{FS},G,R}$. Pulling back along the second and push-forward the product with $\square_O$ will define the Hecke operator, where $\square_O$ is defined for some representation of the Langlands full-dual group in the coefficient $R=\prod R_n$. By result in \cite{1S5}, \cite{1S6} we have the construction does not depend on the choice of the primes, so we can in some equivalent way to derive a corresponding $R$-complex over the Hecke stacks with some finite set $I$. For instance after \cite{1FS} we have ${\mathbb{Q}}_\ell$-adic complex with $\ell$ away from $p$. Take any motivic sheaf in \cite{1S5}, \cite{1S6} with $\ell$-adic realization which is isomorphic to this complex, i.e. we can always work with $\mathbb{Z}$-algebra coefficients. Then we consider the $p$-adic realization which provides a corresponding $p$-adic complex. So in such a way we can first find a $\prod_{n\in \mathbb{N}}R_n$-complex over the Hecke stack by considering each fiber over $n\in \mathbb{N}$, then we can base change to $R$ to reach the function field at $\infty$. Then one can take the base change to the corresponding $\varprojlim_\alpha B^+_\mathrm{dR,\mathbb{N}^\wedge,\alpha}$-period ring to achieve finally an object in the category we are considering. This gives us desired $p$-adic complex over the Hecke stack, then one defines the corresponding morphisms from the Hecke stack for each finite set as above to $Y_{\mathrm{FS},G,R}$ and to $Y_{\mathrm{FS},G,R}\times Y'$. Here $Y'$ is defined to be the $v$-stack of all the solid quasicoherent sheaves over the de Rham stackifications in our family motivic setting, over the Cartier stack $Y_{\mathrm{Cartier},R}$ and those products of this Cartier stack\footnote{Here we do not have to consider $Y'$ actually, since considering the Cartier stack in families, the motivic Galois groupoid directly acts on the +-de Rham sheaves. Therefore one can $Y'$ to be just the Cartier stack in families.}. This will produce the desired Hecke operators.
\end{definition}

\begin{theorem}
The Hecke operator sends the complexes to those complexes carrying the action from the products of motivic Galois group (in the de Rham setting) of the Cartier stack for $K$ fibered over $\mathbb{N}^\wedge$.
\end{theorem}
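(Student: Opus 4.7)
The plan is to mirror the proof of the single-local-field analog of this theorem in \cref{subsection7.3} by first carrying out the analysis fiber by fiber over $\mathbb{N}$ and then compactifying along $\mathbb{N}\hookrightarrow\mathbb{N}^\wedge$. First I would fix a finite set $I$ and restrict the Hecke stack $Y_{\mathrm{Hecke},G,R,I}$ to each $n\in\mathbb{N}$, obtaining the Hecke stack attached to the local field $K_n$ with ring of integers $R_n$. Over each such fiber the situation is exactly the single-local-field one already treated, so the earlier Hecke-operator theorem (the one giving the action of the products of the Tannakian group of $Y_\mathrm{Cartier}$) applies verbatim and sends each complex on $Y_{\mathrm{FS},G,R_n}$ to a complex on $Y_{\mathrm{FS},G,R_n}\times Y_n'$ carrying an action of a product of the motivic Galois group of the Cartier stack for $K_n$.

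Next I would follow exactly the compactification recipe used earlier to pass prismatizations from $\mathbb{N}$ to $\mathbb{N}^\wedge$: take the product over $n\in\mathbb{N}$ of these fibered Hecke outputs, and then extend to $\mathbb{N}^\wedge$ by taking the fiber product along $\mathbb{N}\hookrightarrow\mathbb{N}^\wedge$ exactly as in the definition of $\widetilde{\prod_{n\in\mathbb{N}}(\mathrm{dRMotiveTheory}_n,\mathrm{dRWeilSheaves}_n)_{\mathrm{Ayoub},R_n,\mathbb{Y}}}$. The key compatibility making this legal is the one used throughout the paper and in \cite{2LH}: modulo $t^a$ the rings at some $n_0\in\mathbb{N}$ and at $\infty$ are canonically identified, so the action of the motivic Galois group at the finite stage transports through this identification to an action at $\infty$, and the Hopf-sheaf description of the motivic Galois group sheaf as $\mathrm{Spec}\,\mathrm{Loop}^\infty\mathrm{CechConerve}(\widetilde{\mathrm{Sh}}_A)$ is manifestly compatible with the fiber product defining the compactification.

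The hard part will be verifying that the resulting action is genuinely encoded by a product of the single motivic Galois group sheaf $\mathrm{Gal}_{\widetilde{\prod\cdots}}$ of the Cartier stack for $K$ fibered over $\mathbb{N}^\wedge$, rather than merely by a compatible system of actions over the open locus $\mathbb{N}$. For this I would combine the Tannakian formalism proved earlier with the fact that both the suspension and the \v{C}ech conerve commute with the compactification fiber product, so that the motivic Galois group sheaf at the $\infty$-fiber equals the compactification of the motivic Galois group sheaves at the finite fibers. Finally, as in the proof of the single-local-field Hecke theorem, I would pass to the totally disconnected subspaces of \cite{1S5}, \cite{1S6} where each geometric point is the adic spectrum of an algebraically closed field; there the target reduces to perfect complexes of modules over $\varprojlim_\alpha B^+_{\mathrm{dR},\mathbb{N}^\wedge,\alpha}$ equipped with lattices, the motivic Galois action factors through the Weil group of $K$ in families, and Proposition IX.1.1 of \cite{1FS} applies mutatis mutandis to close the argument.
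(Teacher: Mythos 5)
Your proposal is correct and follows essentially the same route as the paper's proof: both reduce via the totally-disconnected-subspace method of \cite{1S5}, \cite{1S6} to perfect complexes over $\varprojlim_\alpha B^+_{\mathrm{dR},\mathbb{N}^\wedge,\alpha}$ with lattices, observe that the motivic de Rham Galois action factors through the Weil group for $K$ at geometric points, and then invoke Proposition IX.1.1 of \cite{1FS} together with \cite{2LH}. The only organizational difference is that you make the ``apply the single-local-field theorem fiberwise over $\mathbb{N}$, then compactify along $\mathbb{N}\hookrightarrow\mathbb{N}^\wedge$'' step explicit — and flag the compatibility of suspension and \v{C}ech conerve with the compactification fiber product as the point requiring care — whereas the paper runs the geometric-points argument directly in the families notation and leaves that compatibility implicit.
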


\begin{proof}
By our definition we have that the corresponding image complexes are those complexes over the corresponding fiber product of $Y_{\mathrm{FS},G,R}$ with the corresponding classifying stack of the product of the motivic Galois group as in the statement of this theorem. For instance one can check this following the idea in \cite{1S5}, \cite{1S6} where we consider each totally disconneted subspace taking the form of the adic spectrum of some algebraically closed field. Over these algebraically closed geometric points we can see that we end up with purely perfect complexes of modules over:
\begin{align}
\varprojlim_\alpha B^+_\mathrm{dR,\mathbb{N}^\wedge,\alpha},
\end{align}
but we do have the corresponding lattices then, which reduces to the correponding $\overline{K}:=\prod_n \overline{K_n}^\wedge$-situation. Here the action of motivic de Rham Galois group for $K$ will then factors through the action of corresponding Weil group for $K$. Then we are in a situation parallel to \cite{2LH} and we only have to consider the action from the products of the Weil groups. Then the same proof as in \cite{1FS}, \cite{2LH} will derive the result stated. In fact there is nothing to prove here once one follows the same ideas in \cite{2LH}, in particular the proposition IX.1.1 in \cite{1FS}. Note that here by \cite{1S5}, \cite{1S6} for each $n\in \mathbb{N}^\wedge$ we can take the base change from $\mathbb{Z}$ to $R_n$ of the corresponding complexes on the smooth representation side.
\end{proof}

\begin{definition}
For quasi-split groups, fixing some $t$-adic character of the a chosen parabolic with the chosen unipotent radical, taking the induction from this radical of the character, one has the $t$-adic Whittaker sheaf $\mathrm{Whittaker}$, which can be regarded as a sheaf over the stack of $L$-parameter over the motivic Galois group in our current setting. Then we have from this theorem, as in \cite{1FS} the \textit{$t$-adic spectral action} from the perfect complexes over the stack of $L$-parameter\footnote{This stack is well-defined as in \cite{1FS}, where construction for any discrete group works.} of our motivic Galois group (discrete algebraic group scheme) to the corresponding derived $\infty$-category of the de Rham prismatized motivic sheaves over $Y_{\mathrm{FS},G,R}$. We denote this as the corresponding $\mathrm{SAction}_{\mathrm{Whittaker}}$. We then use the notation $D\mathrm{SAction}_{\mathrm{Whittaker}}$ to denote the derived $\infty$-categorical action induced as in \cite{HJ}. As in \cite{HJ}, we call the derived $\infty$-category of motivic sheaves in our setting in family:
\begin{align}
\mathrm{Repre}(\mathrm{Gal}_{\widetilde{\prod_{n\in \mathbb{N}} (\mathrm{dRMotiveTheory}_{n}, \mathrm{dRWeilSheaves}_n)_{\mathrm{Ayoub}, R_n,Y_{\mathrm{FS},G,R}}}})
\end{align}
an $\infty$-module over $D\mathrm{SAction}_{\mathrm{Whittaker}}$, in the higher categorical sense.
\end{definition}

\begin{theorem}
The $t$-adic motivic spectral action in our setting is well-defined. As in \cite{HJ}, we call the derived $\infty$-category of motivic sheaves in our setting in family:
\begin{align}
\mathrm{Repre}(\mathrm{Gal}_{\widetilde{\prod_{n\in \mathbb{N}} (\mathrm{dRMotiveTheory}_{n}, \mathrm{dRWeilSheaves}_n)_{\mathrm{Ayoub}, R_n,Y_{\mathrm{FS},G,R}}}})
\end{align}
an $\infty$-module over $D\mathrm{SAction}_{\mathrm{Whittaker}}$, in the higher categorical sense. This is also well-defined.
\end{theorem}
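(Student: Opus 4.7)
The plan is to bootstrap from the finite-$n$ construction in \cite{1FS} via the compactification along $\mathbb{N} \hookrightarrow \mathbb{N}^\wedge$ that underlies our family-motivic setup, and then check that the resulting module structure is preserved by the requisite fiber products of stable symmetrical monoidal $\infty$-categories.

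First I would fix $n \in \mathbb{N}$ and restrict everything to the fiber over $n$. Over $R_n$, the previous theorem shows that the Hecke action of the motivic de Rham Galois group factors through the Weil group of $K_n$ on the geometric stalks of $Y_{\mathrm{FS},G,R_n}$. Thus the construction of \cite{1FS} Section IX (spectral action for quasi-split groups with Whittaker normalization) applies after replacing the $\overline{\mathbb{Q}}_\ell$-coefficients by the $\varprojlim_\alpha B^+_{\mathrm{dR},\mathbb{N}^\wedge,\alpha}$-coefficients; the key input is \cite{1S5}, \cite{1S6}, which guarantees independence of the motivic realization on the choice of auxiliary prime, so that the Satake-Hecke machinery already defined over $\mathbb{Z}$ base-changes cleanly to any $R_n$. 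The output at level $n$ is a monoidal functor from perfect complexes on the stack of $L$-parameters of the motivic Galois group at level $n$ to endofunctors of the de Rham motivic $\infty$-category over $Y_{\mathrm{FS},G,R_n}$, together with the promised $\infty$-module structure on representations, following the recipe recorded in \cite{HJ}.

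Second I would take the product over $n \in \mathbb{N}$ of these monoidal actions and compactify along $\mathbb{N} \hookrightarrow \mathbb{N}^\wedge$, exactly as we compactified $\mathrm{cdrp}$ and the Hecke stacks in the earlier sections of the paper. Because the Whittaker sheaf $\mathrm{Whittaker}$ is obtained by parabolic induction of a $t$-adic character and $t$ is itself the family-level uniformizer evaluating to $t_n$ at each $n$, the sheaf $\mathrm{Whittaker}$ assembles coherently over $\mathbb{N}$ and then extends to $\infty$ by the same fiber-product-along-restriction recipe we used to define $\widetilde{\mathrm{Cartier}_W}$ and the compactified motivic theories. Since spectral actions and their induced $\infty$-module structures are preserved by fiber products of stable symmetrical monoidal $\infty$-categories, the resulting $D\mathrm{SAction}_{\mathrm{Whittaker}}$ and the $\infty$-module structure on $\mathrm{Repre}(\mathrm{Gal}_{\widetilde{\prod_{n\in \mathbb{N}}(\mathrm{dRMotiveTheory}_n,\mathrm{dRWeilSheaves}_n)_{\mathrm{Ayoub},R_n,Y_{\mathrm{FS},G,R}}}})$ are formally well-defined.

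The hard part will be the behavior at the point $\infty$, where $L_\infty = \mathbb{F}_q((t_\infty))$ is a function field and the usual $B^+_{\mathrm{dR}}$-style constructions are not directly available. I would handle this exactly as in the earlier parts of the paper: invoke the identification of Witt vectors modulo $t^a$ between some finite $n_0$ and $\infty$ underpinning \cite{2LH}, so that every stack entering the construction at $\infty$ is computed as the appropriate limit of its restrictions to finite $n$. Combined with the compactification procedure for the $L$-parameter stack (which is discrete in the algebraic sense, hence commutes with the fiber products of coefficient categories that appear) and for $\mathrm{Whittaker}$, this yields the spectral action at $\infty$ and completes the verification of the module structure. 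The final sentence of the theorem is then purely formal from the monoidality of the compactification and from the closure of the stable symmetrical monoidal $\infty$-categories involved under the fiber products used throughout.
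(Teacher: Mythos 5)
The paper states this theorem without an explicit proof; the construction is supplied in the preceding definition (which imports the spectral action and the Whittaker normalization from \cite{1FS} and \cite{HJ} via the $\mathbb{Z}$-linear motivic Hecke operators of \cite{1S5}, \cite{1S6}), and the well-definedness is simply asserted. Your proposal supplies a proof that follows exactly the strategy the paper uses in the two theorems flanking this one in the same subsection: fix a finite fiber $n\in\mathbb{N}$, base-change the Fargues--Scholze spectral action from $\mathbb{Z}$ to $R_n$, take the product over $\mathbb{N}$, and compactify along $\mathbb{N}\hookrightarrow\mathbb{N}^\wedge$ via the Li--Huerta identification of Witt vectors modulo $t^a$ at a suitable finite $n_0$. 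The one point you pass over lightly is the claim that spectral actions and $\infty$-module structures are preserved by the fiber products implementing the compactification: strictly speaking this requires that the monoidal structure on perfect complexes over the $L$-parameter stack, and the module action on the family of motivic $\infty$-categories, commute with the limit diagram defining the compactified categories, which holds only if the transition functors in that diagram are themselves monoidal and module-compatible. That compatibility is plausible and is used tacitly throughout the paper, so your proposal is at the same level of rigor as the surrounding arguments, but it would strengthen the write-up to verify explicitly that the compactification functors are monoidal rather than appealing to ``preservation by fiber products'' as a black box.
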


\begin{theorem}
One direction of the local Langlands in families holds true in this context: from Schur-irreducible objects to the corresponding $L$-parameters from the $?$ in our current context. Here $?$ is the motivic de Rham Galois group in familes in our setting over $\mathbb{N}^\wedge$. Note that all the coefficients on the both sides are $t$-adic. Each $t_n,n\in\mathbb{N}^\wedge-{\infty}$ comes from certain $p$-adic local fields, with $p$-fixed. 
\end{theorem}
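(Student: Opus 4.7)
The plan is to follow closely the strategy used for the earlier analogous theorem in the non-family absolute case, now promoted to the setting in families over $\mathbb{N}^\wedge$ via the compactification technique of \cite{2LH}. The input is a Schur-irreducible object of the derived $\infty$-category of de Rham prismatized motivic sheaves over $Y_{\mathrm{FS},G,R}$, and the output should be an $L$-parameter valued in the motivic de Rham Galois group in families. The essential ingredients already constructed in the previous theorems are the Hecke operators in families and the $t$-adic motivic spectral action $D\mathrm{SAction}_{\mathrm{Whittaker}}$ on the Whittaker sheaf $\mathrm{Whittaker}$.

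First I would assemble the excursion operators in families. For each finite index set $I$, the Hecke stack $Y_{\mathrm{Hecke},G,R,I}$ and the maps to $Y_{\mathrm{FS},G,R}$ and $Y_{\mathrm{FS},G,R}\times Y'$ equip the target $\infty$-category with an equivariant action of the product of $|I|$ copies of the motivic Galois group of the Cartier stack for $K$. Composing these actions with the Whittaker projection produces a commutative subalgebra of the endomorphism algebra of $\mathrm{Whittaker}$, which is the family version of the Bernstein center. Next I would apply the abstract excursion formalism of \cite{1VL} as repackaged in \cite{1FS}: for a Schur-irreducible input the endomorphism algebra is a field extension, so every excursion operator acts by a scalar, and the resulting system of scalars assembles into an $L$-parameter into the motivic de Rham Galois group. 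The cocycle compatibilities between excursion operators at different $I$ are exactly those of \cite[Proposition IX.1.1]{1FS}, and they translate into the relations defining an $L$-parameter.

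The family-compatibility is then handled by the compactification $\mathbb{N} \hookrightarrow \mathbb{N}^\wedge$ exactly as in \cite{2LH}: over each finite $n \in \mathbb{N}$ the construction reduces to the standard $p$-adic Fargues-Scholze parametrization adapted to the de Rham prismatic motivic coefficients and evaluated at $R_n$, and the values at $\infty$ are pinned down by the mod $t^a$ identifications established throughout the preceding sections. The main obstacle will be the fiber at $\infty$, since there the local field is the function field $\mathbb{F}_q((t_\infty))$ and one must check that the excursion-operator machinery is compatible with the motivic de Rham Galois group defined via the compactification of the Cartier-Witt stacks. I would handle this by reducing to totally disconnected geometric points as in \cite{1S5}, \cite{1S6}, so that the motivic Galois action factors through the Weil group of $K_\infty$ on perfect complexes over $\varprojlim_\alpha B^+_{\mathrm{dR},\mathbb{N}^\wedge,\alpha}$; after this reduction, the proof of \cite[Proposition IX.1.1]{1FS} applies essentially verbatim in each fiber, and the family-compactification argument of \cite{2LH} supplies the gluing over $\mathbb{N}^\wedge$.
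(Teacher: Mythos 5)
Your proposal follows essentially the same route as the paper: both invoke the Fargues--Scholze / V.~Lafforgue excursion-operator formalism (via the Hecke functors for finite $I$, the map to the Bernstein center, and the fact that on a Schur-irreducible object the excursion operators act by scalars), and both handle the family aspect by the $\mathbb{N}\hookrightarrow\mathbb{N}^\wedge$ compactification and reduction to geometric points as in \cite{1S5}, \cite{1S6}. The only cosmetic difference is that the paper pins down the excursion-operator algebra in one shot as the base change
$\underline{S_{\mathrm{cocyc},G_\mathrm{dual},G_\mathrm{dual}-\mathrm{inv}}}\otimes_{\mathbb{Z}}\bigl(\prod_{n\in\mathbb{N}^\wedge}R_n\bigr)$
of Scholze's $\mathbb{Z}$-valued space, whereas you describe the same uniformity fiber-by-fiber over $\mathbb{N}^\wedge$.
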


\begin{proof}
By our construction we do have the mapping to the Bernstein centers in this current setting. Then as in VIII.4.1 and IV.4.1 of \cite{1FS} we can build up the corresponding mapping after \cite{1VL}. To be more precise for each finite set $I$ we can build up the corresponding symmetrical monoidal $\infty$-categories and the Hecke functors as in the above in our current setting, and we have the corresponding equivariant actions from the motivic Galois groups on the target symmetrical monoidal $\infty$-categories over the moduli $v$-stack. Then the excursion operators are generated automatically after \cite{1FS} and \cite{1VL}, where all these general abstract formalism will apply in our setting directly. This directly generalize the work \cite{2LH} as well. One follows the proof of VIII 4.1 in \cite{1FS} to derive the desired mapping in our current situation, with coefficient over $R=\prod_{n\in \mathbb{N}^\wedge} R_n$. The excursion operator space will then be the based change from $\mathbb{Z}$ to $R=\prod_{n\in \mathbb{N}^\wedge} R_n$ of the corresponding space over $\mathbb{Z}$ in \cite{1S5}, \cite{1S6}:
\begin{align}
\underline{S_{\mathrm{cocyc},G_\mathrm{dual},G_\mathrm{dual}-\mathrm{inv}}}\otimes_\mathbb{Z} \left(\prod_{n\in \mathbb{N}^\wedge} R_n\right).  
\end{align}
\end{proof}

\begin{remark}
This theorem considers not only the spaces in families (i.e. the corresponding moduli stacks of $G$-bundles in this setting is fibered over $\mathbb{N}^\wedge$) but also considers the corresponding motivic cohomology theory in families. For instance if the underlying ring is just a single local field, then there is no need to enlarge the motivic cohomological $\infty$-categories, since the motivic Galois group for this single motivic cohomological category will have then zero action on the other motivic cohomolgical components. On the other hand we consider also the motivic coefficient $\infty$-category to be a family version fibered over $\mathbb{N}^\wedge$. One may not have to do this but we choose to consider the family version of the motivic coefficient theory since the obvious reason from $p$-adic Hodge theory, i.e. if we consider Galois group $G_K$ of $K=\prod_{n\in \mathbb{N}^\wedge} K_n$, then the period rings which can be used to study the representation of $G_K$ have to be fibered over $\mathbb{N}^\wedge$, which includes the following: Robba rings in families, de Rham rings in families, cristalline rings in families. 
\end{remark}

\newpage
\section{Motivic Cohomology Theory for Prismatizations in Families}\label{section9}

\indent We now consider the corresponding prismatization in families in the three settings in \cite{3BL}, \cite{3D}. The first one is the prismatization in families, the second one is the filtration prismatization and finally we have the corresponding syntomization prismatization. They can be defined all in the corresponding families way as we did before. In the first two situations we recall our constrution before which will also be put into our current general consideration of motivic cohomology theories. As in \cite{3BSI} we have all the parallel definitions of $\delta$-rigns in function field situation which directly give rise to the \textit{prisms for function fields}.

\subsection{Prismatization, filtration prismatization and syntomization prismatization}

\begin{definition}\label{definition39}
We consider the corresponding prismatization in familes. Recall the definition goes in the following way. Eventually the definition is applied to $z$-adic $A$-formal schemes. In the absolute manner we consider the category of all $z$-nilpotent $A$-algebras as the underlying ring categorie $\mathrm{Nil}_{z,A}$, where all such algebras are assumed to be fibered over $\mathbb{N}_\infty$. Then we use the Witt vector functor $W_A$ from \cite{3LH}. Then the prismaization is the stackification over the ring category $\mathrm{Nil}_{z,A}$ where for each such ring we have the groupoid of all the Cartier-Witt ideals fibered over $\mathbb{N}_\infty$, i.e. those maps locally principally generated by distinguished elements in the big Witt vectors (for $w_0$ we require nilpotency and for $w_1$ we require unitality when we have the coordinates $(w_0,w_1,......)$)\footnote{

\begin{theorem}
Following \cite{3BL}, \cite{3D} one can also equivalently define Cartier-Witt ideals in families over $\mathbb{N}_\infty$ as the ideals in the following sense. Recall we have two maps on $W_A$: one is the map $W_A(\square)\rightarrow \square$, and the other one is the $\delta$-map for this generalized Witt vector functor (for instance see \cite{3BSI}), where we do have the $\delta$-ring structure for the Witt vector ring for function fields. Then one can define equivalently the Cartier-Witt ideals in families over $\mathbb{N}_\infty$ by giving requirement on the image ideals of Cartier ideals in families over $\mathbb{N}_\infty$: again for the second map we require the image ideals to have the property of being unit (unitality), and for the first map we require the image ideals to have the property of being nilpotent (nilpotency). Then this will lead to equivalent definitions in \cref{definition39} of all the prismatizations in families over $\mathbb{N}_\infty$, all the filtration prismatizations in families over $\mathbb{N}_\infty$, and all the syntomization prismatizations in families over $\mathbb{N}_\infty$.
\end{theorem}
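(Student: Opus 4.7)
The plan is to reduce the equivalence to a fiber-by-fiber and local statement, then use the $\delta$-ring characterization of distinguished elements. First, since both the Cartier-Witt stack and the alternative version are defined fibrewise over $\mathbb{N}_\infty$, and descend along the compactification $\mathbb{N} \hookrightarrow \mathbb{N}_\infty$, it suffices to verify the equivalence for each $n \in \mathbb{N}_\infty$ separately: the mixed-characteristic fibers are handled by Bhatt--Lurie \cite{3BL} and Drinfeld \cite{3D}, and the function field fiber at $\infty$ is handled by the parallel $\delta$-ring formalism for function fields in \cite{3BSI}. Then, working over a fixed $\square \in \mathrm{Nil}_{z,A}$ and faithfully flatly localizing on $\mathrm{Spec}\, W_A(\square)$, I may assume the Cartier ideal $L \to W_A(\square)$ is principally generated by a single element $d \in W_A(\square)$.

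With this local principal reduction in place, the core point is the following equivalence for the element $d$: under the first map $W_A(\square) \to \square$ (sending a Witt vector to its 0-th ghost/coordinate) the image of $d$ is nilpotent, and under the $\delta$-map the image of $d$ is a unit in $W_A(\square)$, if and only if $d$ is a distinguished element in the sense of the first definition. I would first verify that distinguished elements in both the $p$-typical setting and the function-field setting \cite{3BSI} admit exactly this two-fold characterization in coordinates $(w_0, w_1, \ldots)$: namely, $w_0(d)$ is nilpotent and the Witt coordinate $w_1(d)$ (equivalently $\delta(d)$) is a unit. The nontrivial direction is to show that the Cartier ideal condition on $d$ is preserved under the change of local generator $d \mapsto u \cdot d$ for $u \in W_A(\square)^\times$, which I would check using the Leibniz-type identity satisfied by $\delta$ on the Witt vectors (so that unitality and nilpotency of the two images are both invariant under multiplication by units), matching the well-definedness of distinguishedness as a property of the ideal $(d)$ rather than of $d$ itself.

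Having established the pointwise and local equivalence, I would globalize by showing that both stackifications over $\mathrm{Nil}_{z,A}$ present the same functor of groupoids. Since both conditions are flat-local on $\mathrm{Spec}\, W_A(\square)$ and since the morphisms/equivalences in both groupoids are given by multiplication by units in the line bundle, the two presheaves of groupoids agree locally, hence globally after sheafification. The extension from the bare prismatization to the filtration prismatization $\widetilde{\mathrm{Cartier}_{W,\mathrm{NFil}}}$ and the syntomization prismatization is then purely formal: both are obtained from the basic prismatization by adding the Nygaard filtration datum (a filtered refinement pulled back fibered over $\mathbb{N}$) or by taking the relevant syntomic gluing, and these extra structures do not interact with the choice of definition of the underlying Cartier-Witt ideal; the fiber-product compactification along $\mathbb{N} \hookrightarrow \mathbb{N}_\infty$ then propagates the equivalence to the whole family.

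The main obstacle I anticipate is the uniform treatment of the function-field fiber at $\infty$, because the $\delta$-ring structure on $W_A(\square)$ for function fields is not literally the classical $p$-typical one: I would need to use the formalism of \cite{3BSI} to check that the generalized $\delta$-map in that context does satisfy the identities needed to match unitality of $\delta(d)$ with distinguishedness of $d$, and that this matches with what the compactification produces as a limit from the finite-$n$ mixed-characteristic fibers. Once this is verified --- essentially a computation inside the ring of generalized Witt vectors over an $\mathbb{F}_q((t_\infty))$-algebra --- the rest of the argument goes through uniformly in $n \in \mathbb{N}_\infty$.
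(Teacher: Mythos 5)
Your proposal is correct and takes essentially the same route as the paper: reduce fiber-by-fiber over $\mathbb{N}_\infty$, invoke the Bhatt--Lurie/Drinfeld characterization on each mixed-characteristic fiber, handle the $\infty$-fiber via the function-field $\delta$-ring structure of \cite{3BSI} (replacing $p$ by the uniformizer $z_\infty$), and then observe that the filtration and syntomization variants inherit the equivalence formally. The paper's proof is much terser --- it skips the local principal reduction, the coordinate computation identifying $w_0(d)$ nilpotent and $\delta(d)$ (equivalently $w_1(d)$) a unit with distinguishedness, and the invariance-under-units check that you spell out --- but these are exactly the steps the paper takes for granted, so the two arguments are the same in substance.
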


\begin{proof}
After using general Witt vector $W_A$ we can see that this holds for all the fibers over $\mathbb{N}_\infty$, where in the function field situation we just use the corresponding $\delta$-ring structure in the function field situation (just replace $p$ by the uniformizater $z_\infty$, for instance see \cite{3BSI}). Then the results over these fibers will provide the result in the correspondig family over $\mathbb{N}_\infty$. Then the definition of the primatization in families over $\mathbb{N}_\infty$ can be given as in \cref{definition39}, equivalently.
\end{proof}

Then this will produce the equivalent definitions of all the \textit{prismatizations in families over $\mathbb{N}_\infty$/filtration prismatizations in families over $\mathbb{N}_\infty$/syntomization prismatizations in families over $\mathbb{N}_\infty$} and \textit{analytic prismatizations in families over $\mathbb{N}_\infty$/analytic filtration prismatizations in families over $\mathbb{N}_\infty$/analytic sytomization  prismatizations in families over $\mathbb{N}_\infty$} below in \cref{section9} and \cref{section10}, including those de Rham ones in families over $\mathbb{N}_\infty$, de Rham-Hodge-Tate ones in families over $\mathbb{N}_\infty$.

}. In this paper we use the notation 
\begin{align}
\underline{\mathrm{Prismatization}}_{\mathrm{abs},\mathbb{N}_\infty}
\end{align}
to denote the absolute prismatization in families in our current setting. Then we have the corresponding filtration prismatization:
\begin{align}
\overline{\underline{\mathrm{Prismatization}}}_{\mathrm{abs},\mathrm{Nygaard},\mathbb{N}}
\end{align}
which is defined to be compactification from $\mathbb{N}$ to $\mathbb{N}_\infty$ of the corresponding open version over $\mathbb{N}$:
\begin{align}
{\underline{\mathrm{Prismatization}}}_{\mathrm{abs},\mathrm{Nygaard},\mathbb{N}}
\end{align}
where this open version is defined to be taking the product over the finite fibers over $\mathbb{N}_\infty$ of the usual filtration prismatizations over $z_n$-nilpotent $A_n$-algebras. The compactification process needs to be using the following morphisms to reach our definition as the corresponding fiber product in families through the compactification:
\begin{align}
\prod_{n\in \mathbb{N}} {\underline{\mathrm{Prismatization}}}_{\mathrm{abs},\mathrm{Nygaard},n}\rightarrow  \prod_{n\in \mathbb{N}} {\underline{\mathrm{Prismatization}}}_{\mathrm{abs},n}
\end{align}
by taking the products along all $\mathbb{N}$ of the usual projection from the corresponding filtration prismatization to the prismatization, and:
\begin{align}
{\underline{\mathrm{Prismatization}}}_{\mathrm{abs},\mathbb{N}_\infty}\rightarrow  \prod_{n\in \mathbb{N}} {\underline{\mathrm{Prismatization}}}_{\mathrm{abs},n}.
\end{align}
Recall that the filtration prismatization is actually certain fibration version of the usual prismatization, then we take further fibration over $\mathbb{N}$. Recall how finally the syntomization prismatization is constructed, which is by taking the descent of the Hodge-Tate morphism and de Rham morphism in the filtration prismatization, along the diagonal morphism from the prismatization with itself. These maps are obviously admitting the corresponding compactification versions in our current setting. Therefore we consider the following three morphisms:
\begin{align}
f_\mathrm{dR}: {\underline{\mathrm{Prismatization}}}_{\mathrm{abs},\mathbb{N}_\infty}\rightarrow \overline{\underline{\mathrm{Prismatization}}}_{\mathrm{abs},\mathrm{Nygaard},\mathbb{N}},
\end{align}
\begin{align}
f_\mathrm{HT}: {\underline{\mathrm{Prismatization}}}_{\mathrm{abs},\mathbb{N}_\infty}\rightarrow \overline{\underline{\mathrm{Prismatization}}}_{\mathrm{abs},\mathrm{Nygaard},\mathbb{N}},
\end{align}
\begin{align}
d:  {\underline{\mathrm{Prismatization}}}_{\mathrm{abs},\mathbb{N}_\infty}\times {\underline{\mathrm{Prismatization}}}_{\mathrm{abs},\mathbb{N}_\infty} \rightarrow {\underline{\mathrm{Prismatization}}}_{\mathrm{abs},\mathbb{N}_\infty}.
\end{align}
All these three morphisms can be also constructed by taking the compactification from the morphism over $\mathbb{N}$ to $\mathbb{N}_\infty$. Then we take the product of $f_\mathrm{dR}$ and $f_\mathrm{HT}$ together and take the corresponding descent along $d$ we have the definition of the syntomization prismatization in families in our current setting over $\mathbb{N}_\infty$:
\begin{align}
\overline{\underline{\mathrm{Prismatization}}}_{\mathrm{abs},\mathrm{syntomization},\mathbb{N}}. 
\end{align}
This finishes the definition. 
\end{definition}

\begin{definition} 
We consider the corresponding prismatization in families. Recall the definition goes in the following way. Eventually the definition is applied to $z$-adic $A$-formal schemes. In the absolute manner we consider the category of all $z$-nilpotent $A$-algebras as the underlying ring categorie $\mathrm{Nil}_{z,A}$, where all such algebras are assumed to be fibered over $\mathbb{N}_\infty$. Then we use the Witt vector functor $W_A$ from \cite{3LH}. Then the prismaization is the stackification over the ring category $\mathrm{Nil}_{z,A}$ where for each such ring we have the groupoid of all the Cartier-Witt ideals fibered over $\mathbb{N}_\infty$, i.e. those maps locally principally generated by distinguished elements in the big Witt vectors (for $w_0$ we require nilpotency and for $w_1$ we require unitality). After the discussion above, we can now apply the whole definition for the 3 prismatization to any $z$-adic $A$-formal scheme $M$, by taking the immersion from the closure of the Cartier-Witt ideals into $M$ in the compatible way. In this paper we use the notation 
\begin{align}
\underline{\mathrm{Prismatization}}_{\mathrm{abs},\mathbb{N}_\infty,M}
\end{align}
to denote the absolute prismatization in families in our current setting. Then we have the corresponding filtration prismatization:
\begin{align}
\overline{\underline{\mathrm{Prismatization}}}_{\mathrm{abs},\mathrm{Nygaard},\mathbb{N},M}
\end{align}
which is defined to be compactification from $\mathbb{N}$ to $\mathbb{N}_\infty$ of the corresponding open version over $\mathbb{N}$:
\begin{align}
{\underline{\mathrm{Prismatization}}}_{\mathrm{abs},\mathrm{Nygaard},\mathbb{N},M}
\end{align}
where this open version is defined to be taking the product over the finite fibers over $\mathbb{N}_\infty$ of the usual filtration prismatizations over $z_n$-nilpotent $A_n$-algebras. The compactification process needs to be using the following morphisms to reach our definition as the corresponding fiber product in families through the compactification:
\begin{align}
\prod_{n\in \mathbb{N}} {\underline{\mathrm{Prismatization}}}_{\mathrm{abs},\mathrm{Nygaard},n,M}\rightarrow  \prod_{n\in \mathbb{N}} {\underline{\mathrm{Prismatization}}}_{\mathrm{abs},n,M}
\end{align}
by taking the products along all $\mathbb{N}$ of the usual projection from the corresponding filtration prismatization to the prismatization, and:
\begin{align}
{\underline{\mathrm{Prismatization}}}_{\mathrm{abs},\mathbb{N}_\infty,M}\rightarrow  \prod_{n\in \mathbb{N}} {\underline{\mathrm{Prismatization}}}_{\mathrm{abs},n,M}.
\end{align}
Recall that the filtration prismatization is actually certain fibration version of the usual prismatization, then we take further fibration over $\mathbb{N}$. Recall how finally the syntomization prismatization is constructed, which is by taking the descent of the Hodge-Tate morphism and de Rham morphism in the filtration prismatization, along the diagonal morphism from the prismatization with itself. These maps are obviously admitting the corresponding compactification versions in our current setting. Therefore we consider the following three morphisms:
\begin{align}
f_\mathrm{dR}: {\underline{\mathrm{Prismatization}}}_{\mathrm{abs},\mathbb{N}_\infty,M}\rightarrow \overline{\underline{\mathrm{Prismatization}}}_{\mathrm{abs},\mathrm{Nygaard},\mathbb{N},M},
\end{align}
\begin{align}
f_\mathrm{HT}: {\underline{\mathrm{Prismatization}}}_{\mathrm{abs},\mathbb{N}_\infty,M}\rightarrow \overline{\underline{\mathrm{Prismatization}}}_{\mathrm{abs},\mathrm{Nygaard},\mathbb{N},M},
\end{align}
\begin{align}
d:  {\underline{\mathrm{Prismatization}}}_{\mathrm{abs},\mathbb{N}_\infty,M}\times {\underline{\mathrm{Prismatization}}}_{\mathrm{abs},\mathbb{N}_\infty,M} \rightarrow {\underline{\mathrm{Prismatization}}}_{\mathrm{abs},\mathbb{N}_\infty,M}.
\end{align}
Then we take the product of $f_\mathrm{dR}$ and $f_\mathrm{HT}$ together and take the corresponding descent along $d$ we have the definition of the syntomization prismatization in families in our current setting over $\mathbb{N}_\infty$:
\begin{align}
\overline{\underline{\mathrm{Prismatization}}}_{\mathrm{abs},\mathrm{syntomization},\mathbb{N},M}.
\end{align}
\end{definition}

\begin{theorem}
There is a version of K\"unneth theorem in this context. To be more precise we have a K\"unneth theorem at least by passing to the corresponding quasicoherent sheaves over the corresponding prismatization for derived formal schemes (i.e. the corresponding stackifications). This holds for all three prismatization stackifications in the families over $\mathbb{N}_\infty$. 
\end{theorem}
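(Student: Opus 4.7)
The plan is to establish the K\"unneth isomorphism fiberwise over $\mathbb{N}$ and then propagate it to $\mathbb{N}_\infty$ via the compactification mechanism used throughout \cref{section9}. First, I would fix two derived $z$-adic $A$-formal schemes $M$ and $M'$ fibered over $\mathbb{N}_\infty$, and reduce the theorem to exhibiting, for each of the three stackifications appearing in the statement, a symmetric monoidal equivalence of stable $\infty$-categories of quasicoherent sheaves on the stackification of $M\times M'$ with the Lurie tensor product of the stackifications for $M$ and $M'$ separately. I would pass to the description of quasicoherent sheaves as $\varprojlim_{(L,A)}$ of derived $(t,L)$-complete modules that was established earlier in the paper, so that K\"unneth becomes a statement about compatibility of derived tensor products with the Cartier-Witt limit presentation.

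Second, I would work fiber by fiber over $\mathbb{N}$. For each finite $n$, the K\"unneth isomorphism for the absolute prismatization is essentially the content of the Cartier-Witt-based derived tensor product statements in \cite{3BL}, \cite{3D}, combined with the compatibility of the Witt vector functor $W_A$ with products of rings: giving a pair of Cartier-Witt ideals over $M$ and $M'$ is equivalent, up to derived $(t,L)$-completion, to giving a single Cartier-Witt ideal over $M\times M'$. The filtration variant follows from the same computation carried out in the Nygaard-filtered sense, since the Nygaard filtration is itself a fibered refinement of the prismatization and is functorial in the underlying rings. For the syntomization variant, the morphisms $f_{\mathrm{dR}}$ and $f_{\mathrm{HT}}$ together with the diagonal descent $d$ are all compatible with tensor products over rings, so K\"unneth for the filtration variant descends to K\"unneth for the syntomification. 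Taking the product over $n\in\mathbb{N}$ then produces the K\"unneth isomorphism on the open part.

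Third, I would extend this to $\mathbb{N}_\infty$ using the homotopy fiber-product construction that defines the compactified stackifications from their restriction to $\mathbb{N}$. Since the functor that sends a stack to its $\infty$-category of quasicoherent modules converts homotopy fiber products of stacks into homotopy fiber products of presentable symmetric monoidal $\infty$-categories, and since the Lurie tensor product of such $\infty$-categories commutes with these limits, the K\"unneth isomorphism on the open part propagates automatically to the compactified stackification over $\mathbb{N}_\infty$. The filtration and syntomization cases follow because the compactifications of $f_{\mathrm{dR}}$, $f_{\mathrm{HT}}$ and $d$ are themselves assembled by the same fiber-product recipe.

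The main obstacle will be controlling the derived completion and the syntomification descent simultaneously at the function field point $\infty$: one must ensure that the derived tensor product of derived $(t,L)$-complete modules remains derived complete after base change through the function field prismatization, and that the descent data used to assemble the syntomization is stable under the K\"unneth equivalence. This is where the identification at $\infty$ provided by \cite{3LH}, relating a neighbourhood of $\infty$ to some finite $n_0\in\mathbb{N}$ modulo a power of $t$, becomes essential: it lets one transport the verified finite-level completeness statement to $\infty$. Once these compatibilities are verified fiberwise and shown to respect the compactification fiber product, the theorem follows by assembling the resulting squares.
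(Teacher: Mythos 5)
Your proposal follows the same broad two-step strategy as the paper: establish Künneth fiberwise over $\mathbb{N}$ using the existing literature on prismatic cohomology, then propagate to $\mathbb{N}_\infty$ through the compactification fiber-product, and reduce the filtration and syntomization variants to the absolute prismatization. There is, however, a difference of formulation worth flagging, and one step in your argument that does not hold in the generality you state.

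The paper proves Künneth as a statement about the derived cohomology complex $R\Gamma$: the complex for a product is quasi-isomorphic to the (completed) tensor product of the complexes for the factors, and the filtration case is obtained as a derived base change while the syntomization case is recovered from equalizers relating the filtration and absolute prismatizations. You instead formulate Künneth as a symmetric monoidal equivalence of stable $\infty$-categories of quasicoherent sheaves identified with a Lurie tensor product. This is a genuinely stronger (and more categorical) assertion than what the paper proves, and it carries a cost: you invoke that passing to quasicoherent sheaves converts homotopy fiber products of stacks into homotopy fiber products of presentable symmetric monoidal $\infty$-categories, and that the Lurie tensor product commutes with those limits. Neither assertion is true unconditionally. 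In $\mathrm{Pr}^L$ the tensor product preserves colimits in each variable but not limits, so the fiber-product compactification does not automatically inherit a Künneth equivalence from its restriction to $\mathbb{N}$; and quasicoherent sheaves on a fiber product of stacks recover a tensor/pushout of categories only under further hypotheses of the kind appearing in work on perfect stacks, which you do not verify here. The paper sidesteps this issue because its compactified derived $\infty$-category is defined directly as a fiber product of categories and the Künneth statement it checks is at the level of a single object (the structure-sheaf cohomology complex), for which the compatibility with the categorical fiber product amounts to checking it on the two legs and their common restriction. The paper also offers a second, more direct route via the $z$-adic prismatic sites in families over $\mathbb{N}_\infty$ (replacing $p$ by $z$ throughout), which bypasses the compactification entirely; you do not consider this. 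To make your version airtight, you would either need to downgrade the statement to the $R\Gamma$ form and redo the compactification step as a compatibility check across a categorical fiber-product square, or else supply the missing justification that the relevant Lurie tensor products do commute with the specific limit defining the compactification.
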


\begin{proof}
We need to consider the compactification in this theorem. Any prismatization  stackification for a particular formal scheme $M$ is basically a stackification over $M$ and ringed we use the notation $\mathcal{P}$ to denote the structure sheaf. As in the usual situation we have that the derived $\infty$-category of all the $\mathcal{P}$-modules is equivalent to the prismatic site for $M$. However the derived prismatic cohomology theory in this setting does satisfy the derived K\"unneth theorem on the product of the corresponding $p$-adic formal ring locally (which can be glue over formal schemes like $M$). Then the result can be generalized to our setting over $\mathbb{N}$ away from $\infty$. Then we can take the corresponding compactification along:
\begin{align}
{\underline{\mathrm{Prismatization}}}_{\mathrm{abs},\mathbb{N}_\infty,M}\rightarrow  \prod_{n\in \mathbb{N}} {\underline{\mathrm{Prismatization}}}_{\mathrm{abs},n,M}.
\end{align}
The filtration prismatization and the syntomization prismatization have the same derived version of the K\"unneth theorems, which can be induced from the corresponding result for the prismatization. This finishes the proof. One can prove this directly by using the prisms in families over $\mathbb{N}_\infty$ to form the corresponding $z$-adic prismatic sites, where the $\mathrm{R}\Gamma$ will be quasi-isomorphic to the derived cohomology complex for $z$-adic prismatizations in families in our current consideration, where one just replaces $p$ by $z$. $R\Gamma$ for filtration prismatization in families is just derived base change of the prismatization in families over $\mathbb{N}_\infty$, then one considers the equilizers to reduce the proof for the syntomization prismatization to the the proof for the prismatization and the filtration prismatization.
 \end{proof}

\begin{theorem}
All three prismatization, filtration prismatization and syntomization prismatization over $\mathbb{N}_\infty$ satisfy our \cref{situation1} conditions: (A1), (A2), (A3), (A4), (A5).
\end{theorem}

\begin{proof}
We check this one by one. First for the first condition it is true after we consider the corresponding structure sheaves. The second condition is proved above. For the third condition we consider the corresponding derived $\infty$-categories of quasicoherent sheaves. A4 holds for pullbacks and pushforward. This is true over $\mathbb{N}$ then we consider the fiber product through:
\begin{align}
{\underline{\mathrm{Prismatization}}}_{\mathrm{abs},\mathbb{N}_\infty,M}\rightarrow  \prod_{n\in \mathbb{N}} {\underline{\mathrm{Prismatization}}}_{\mathrm{abs},n,M}
\end{align}
to reach the corresponding compactification. Finally A5 holds see \cite[Chapter 4, in particular 4.7, 4.8, 4.9, 4.10]{3A}. A5 in this setting is a stacky version where the base formal scheme will carry the stackifications over itself whose structure sheaf will provide the desired $\infty$-sheaf of ring after taking the $\infty$-level suspension, where we regard this base formal scheme as a small arc stack. One then finishes the proof as in \cref{theorem36} for the gestalten in the current setting.
\end{proof}

\begin{definition}
We in this context use the notations:
\begin{align}
&\mathrm{Galois}(\underline{\mathrm{Prismatization}}_{\mathrm{abs},\mathbb{N}_\infty,M})\\
&\mathrm{Galois}(\overline{\underline{\mathrm{Prismatization}}}_{\mathrm{abs},\mathrm{Nygaard},\mathbb{N}_\infty,M})\\
&\mathrm{Galois}(\overline{\underline{\mathrm{Prismatization}}}_{\mathrm{abs},\mathbb{N}_\infty,\mathrm{syntomization},M})
\end{align}
to denote the corresponding Hopf algebraic motivic Galois fundamental groups, which is the defined to be the spectra of the associated Hopf algebra.
\end{definition}

\begin{theorem}
There are morphisms from these motivic Galois fundamental groups to the corresponding motivic Galois groups of $A$, and moreover $L$: 
\begin{align}
&\mathrm{Galois}(\underline{\mathrm{Prismatization}}_{\mathrm{abs},\mathbb{N}_\infty,M})\rightarrow \mathrm{Gal}(\overline{A}/A),\\
&\mathrm{Galois}(\overline{\underline{\mathrm{Prismatization}}}_{\mathrm{abs},\mathrm{Nygaard},\mathbb{N}_\infty,M})\rightarrow \mathrm{Gal}(\overline{A}/A),\\
&\mathrm{Galois}(\overline{\underline{\mathrm{Prismatization}}}_{\mathrm{abs},\mathbb{N}_\infty,\mathrm{syntomization},M})\rightarrow \mathrm{Gal}(\overline{A}/A).
\end{align}
\end{theorem}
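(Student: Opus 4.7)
The plan is to construct the required morphism of group schemes by producing, on the opposite side, a symmetric monoidal $\infty$-functor from the $\infty$-category of continuous representations of $\mathrm{Gal}(\overline{A}/A)$ into the motivic $\infty$-category attached to each of the three prismatizations, and then to invoke the Hopf algebra/Tannakian formalism of \cite{2A} to dualize. Since each motivic Galois group is defined as $\mathrm{Spec}$ of the Hopf algebra obtained from the $\infty$-loop space of the \v{C}ech conerve of the Tate suspension of the structure sheaf (see \cite[4.7--4.10]{3A}), any exact symmetric monoidal functor respecting the Tate object induces a morphism of the associated Hopf algebras in the opposite direction, hence a morphism of group schemes in the stated direction.

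First I would work over $\mathbb{N}$, where for each $n$ the classical realization of \'etale $A_n$-local systems as (filtered/syntomic) prismatic quasicoherent sheaves following \cite{3BL}, \cite{3D} supplies the desired fiber-by-fiber realization functor. In the filtration and syntomization cases, the de Rham and Hodge--Tate specializations attached to a Galois representation agree canonically, so such sheaves descend along the equalizer defining $\overline{\underline{\mathrm{Prismatization}}}_{\mathrm{abs},\mathrm{syntomization},\mathbb{N},M}$. Taking the product over $n\in\mathbb{N}$ assembles these into a single symmetric monoidal functor on the open piece, which I would then extend to $\mathbb{N}_\infty$ via the same compactification mechanism used throughout \cref{section9}, namely by taking the fiber product along the canonical map
\begin{align}
\underline{\mathrm{Prismatization}}_{\mathrm{abs},\mathbb{N}_\infty,M}\longrightarrow \prod_{n\in \mathbb{N}} \underline{\mathrm{Prismatization}}_{\mathrm{abs},n,M},
\end{align}
and similarly for the filtered and syntomic versions. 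Because $\mathrm{Gal}(\overline{A}/A)$ restricts compatibly to each $\mathrm{Gal}(\overline{A_n}/A_n)$ under the identification $\overline{\mathbb{N}}=\mathbb{N}^\wedge$ used in \cite{3LH}, the functor extends uniquely across the compactification. Applying $\mathrm{Loop}^\infty\mathrm{Cechconerve}(\widetilde{\mathrm{Sh}}_A)$ and $\mathrm{Spec}$ then produces the three morphisms claimed; inverting $z$ gives the analogous maps to $\mathrm{Gal}(\overline{L}/L)$.

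The hard part will be the syntomization case. One must check that the realization functor intertwines the de Rham and Hodge--Tate comparison morphisms $f_{\mathrm{dR}}$ and $f_{\mathrm{HT}}$ that enter the definition of $\overline{\underline{\mathrm{Prismatization}}}_{\mathrm{abs},\mathbb{N}_\infty,\mathrm{syntomization},M}$, so that the Galois-theoretic sheaves actually descend along the diagonal morphism $d$. Over a single fiber $n\in\mathbb{N}$ this is part of the Bhatt--Lurie/Drinfeld package, but the subtlety here is uniformity: one needs the comparison to agree on the nose \emph{across the compactification to} $\infty$, which is precisely what the families formalism of \cref{section9} is designed to guarantee. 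A secondary point I would have to verify is that the induced map lands in the full $\mathrm{Gal}(\overline{A}/A)$ rather than in a mere pro-finite quotient, which is ensured by tracking the Tate twist carefully through the \v{C}ech conerve construction.
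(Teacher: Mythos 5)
Your route is genuinely different from the paper's, and it is also doing considerably more work than the paper's one-line argument — unfortunately, that extra work hits a real obstruction.

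The paper's proof is purely formal: it applies the construction of the motivic Hopf algebra to the point ($\mathrm{Spec}A$ or $\mathrm{Spec}L$, regarded as a small arc-stack) and then invokes functoriality of the Hopf algebra/\v{C}ech-conerve construction along the structure morphism $M\to\mathrm{Spec}A$. Pullback of motivic categories dualizes to the stated morphism of group schemes, full stop. No realization functor into the prismatic category is ever constructed, so no comparison of $f_{\mathrm{dR}}$ and $f_{\mathrm{HT}}$ needs to be verified.

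Your proposal instead asks for a symmetric monoidal $\infty$-functor $\mathrm{Rep}\bigl(\mathrm{Gal}(\overline{A}/A)\bigr)\to\mathrm{Motivic}_M$ and appeals to a ``classical realization of \'etale $A_n$-local systems as (filtered/syntomic) prismatic quasicoherent sheaves.'' This is where the gap lies: the standard realization functor in the Bhatt--Lurie/Drinfeld framework goes in the \emph{opposite} direction, from quasicoherent complexes on the syntomification (F-gauges) to lisse pro-\'etale $\mathbb{Z}_p$-sheaves, and dualizing that under Tannakian formalism would produce a morphism $\mathrm{Gal}(\overline{A}/A)\to\mathrm{Galois}(\underline{\mathrm{Prismatization}}_{\dots,M})$, i.e.\ the wrong direction. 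A functor from \emph{all} continuous Galois representations into the (filtered/syntomic) prismatic category does not exist in this generality; what does exist is an equivalence from Laurent F-gauges to lisse sheaves, or from prismatic F-crystals to lattices in \emph{crystalline} representations. Either option forces you to restrict to a proper subcategory or to choose an extension functor from the Laurent locus to the full syntomification ($j_!$ vs.\ $j_*$), a non-canonical step your uniformity argument across the $\mathbb{N}\hookrightarrow\mathbb{N}_\infty$ compactification would have to address in addition to the descent along $d$. You would be better served by simply applying the Hopf-algebra construction to the point and using functoriality, as the paper does; the entire realization-functor machinery is unnecessary for the statement being proved.
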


\begin{proof}
This is formal since we just apply the construction and definition to the point situation (regard the scheme $\mathrm{Spec}A$ or $\mathrm{Spec}L$ as the corresponding small arc-stacks), then the theorem follows by the functoriality of the construction of Hopf algebras.
\end{proof}

\subsection{Three prismatizations in the de Rham setting}

\begin{definition}
We consider the corresponding de Rham prismatization in families. Recall the definition goes in the following way. Eventually the definition is applied to $z$-adic $A$-formal schemes. In the absolute manner we consider the category of all $z$-nilpotent $A$-algebras as the underlying ring categorie $\mathrm{Nil}_{z,A}$, where all such algebras are assumed to be fibered over $\mathbb{N}_\infty$. Then we use the Witt vector functor $W_A$ from \cite{3LH}. Then the prismaization is the stackification over the ring category $\mathrm{Nil}_{z,A}$ where for each such ring we have the groupoid of all the Cartier-Witt ideals fibered over $\mathbb{N}_\infty$, i.e. those maps locally principally generated by distinguished elements in the big Witt vectors (for $w_0$ we require nilpotency and for $w_1$ we require unitality). In this paper we use the notation 
\begin{align}
\underline{\mathrm{Prismatization}}_{\mathrm{abs},\mathrm{dR},\mathbb{N}_\infty}
\end{align}
to denote the absolute de Rham prismatization in families in our current setting. The definition for this is through the corresponding compactification from the stackification over $\mathbb{N}$, since then we have the morphism:
\begin{align}
\mathcal{P}_{\underline{\mathrm{Prismatization}}_{\mathrm{abs},\mathbb{N}_\infty}(\mathbb{N})} \overset{\sim}{\rightarrow} \varprojlim_{(Q_P,P)} D\mathrm{Cat}(P)^\mathrm{comp}\rightarrow \varprojlim_{(Q_P,P)} D\mathrm{Cat}(P[1/z]_{Q_P})^\mathrm{comp}
\end{align}
where $\mathrm{comp}$ is the completion with respect to the natural toplogy induced from the prisms involved along the inverse limit. Then we take the compactification to reach the de Rham prismatization in families $\mathbb{N}_\infty$:
\begin{align}
\mathcal{P}_{\underline{\mathrm{Prismatization}}_{\mathrm{abs},\mathbb{N}_\infty}} \overset{\sim}{\rightarrow} \overline{\varprojlim_{(Q_P,P)} D\mathrm{Cat}(P)^\mathrm{comp}}\rightarrow \overline{\varprojlim_{(Q_P,P)} D\mathrm{Cat}(P[1/z]_{Q_P})^\mathrm{comp}}.
\end{align}
Then we have the corresponding filtration de Rham  prismatization:
\begin{align}
\overline{\underline{\mathrm{Prismatization}}}_{\mathrm{abs},\mathrm{Nygaard},\mathrm{dR},\mathbb{N}}
\end{align}
which is defined to be compactification from $\mathbb{N}$ to $\mathbb{N}_\infty$ of the corresponding open version over $\mathbb{N}$:
\begin{align}
{\underline{\mathrm{Prismatization}}}_{\mathrm{abs},\mathrm{Nygaard},\mathrm{dR},\mathbb{N}}
\end{align}
where this open version is defined to be taking the product over the finite fibers over $\mathbb{N}_\infty$ of the usual filtration de Rham  prismatizations over $z_n$-nilpotent $A_n$-algebras. The compactification process needs to be using the following morphisms to reach our definition as the corresponding fiber product in families through the compactification:
\begin{align}
\prod_{n\in \mathbb{N}} {\underline{\mathrm{Prismatization}}}_{\mathrm{abs},\mathrm{Nygaard},\mathrm{dR},n}\rightarrow  \prod_{n\in \mathbb{N}} {\underline{\mathrm{Prismatization}}}_{\mathrm{abs},\mathrm{dR},n}
\end{align}
by taking the products along all $\mathbb{N}$ of the usual projection from the corresponding filtration de Rham prismatization to the de Rham prismatization, and:
\begin{align}
{\underline{\mathrm{Prismatization}}}_{\mathrm{abs},\mathrm{dR},\mathbb{N}_\infty}\rightarrow  \prod_{n\in \mathbb{N}} {\underline{\mathrm{Prismatization}}}_{\mathrm{abs},\mathrm{dR},n}.
\end{align}
Here the morphism
\begin{align}
\prod_{n\in \mathbb{N}} {\underline{\mathrm{Prismatization}}}_{\mathrm{abs},\mathrm{Nygaard},\mathrm{dR},n}\rightarrow  \prod_{n\in \mathbb{N}} {\underline{\mathrm{Prismatization}}}_{\mathrm{abs},\mathrm{dR},n}
\end{align}
is the base change of the morphism:
\begin{align}
\prod_{n\in \mathbb{N}} {\underline{\mathrm{Prismatization}}}_{\mathrm{abs},\mathrm{Nygaard},n}\rightarrow  \prod_{n\in \mathbb{N}} {\underline{\mathrm{Prismatization}}}_{\mathrm{abs},n}
\end{align}
along the following morphism defining the de Rham prismatizatio in families:
\begin{align}
\prod_{n\in \mathbb{N}} {\underline{\mathrm{Prismatization}}}_{\mathrm{abs},\mathrm{dR},n}\rightarrow  \prod_{n\in \mathbb{N}} {\underline{\mathrm{Prismatization}}}_{\mathrm{abs},n}.
\end{align}
Recall from \cite{3D} that the filtration de Rham prismatization is actually certain fibration version of the usual prismatization, then we take further fibration over $\mathbb{N}$. Recall how finally the syntomization de Rham prismatization is constructed, which is by taking the descent of the Hodge-Tate morphism and de Rham morphism in the filtration prismatization, along the diagonal morphism from the de Rham prismatization with itself. These maps are obviously admitting the corresponding compactification versions in our current setting. Therefore we consider the following three morphisms:
\begin{align}
f_\mathrm{dR}: {\underline{\mathrm{Prismatization}}}_{\mathrm{abs},\mathrm{dR},\mathbb{N}_\infty}\rightarrow \overline{\underline{\mathrm{Prismatization}}}_{\mathrm{abs},\mathrm{Nygaard},\mathrm{dR},\mathbb{N}_\infty},
\end{align}
\begin{align}
f_\mathrm{HT}: {\underline{\mathrm{Prismatization}}}_{\mathrm{abs},\mathrm{dR},\mathbb{N}_\infty}\rightarrow \overline{\underline{\mathrm{Prismatization}}}_{\mathrm{abs},\mathrm{Nygaard},\mathrm{dR},\mathbb{N}_\infty},
\end{align}
\begin{align}
d:  {\underline{\mathrm{Prismatization}}}_{\mathrm{abs},\mathrm{dR},\mathbb{N}_\infty}\times {\underline{\mathrm{Prismatization}}}_{\mathrm{abs},\mathrm{dR},\mathbb{N}_\infty} \rightarrow {\underline{\mathrm{Prismatization}}}_{\mathrm{abs},\mathrm{dR},\mathbb{N}_\infty}.
\end{align}
Then we take the product of $f_\mathrm{dR}$ and $f_\mathrm{HT}$ together and take the corresponding descent along $d$ we have the definition of the syntomization prismatization in families in our current setting over $\mathbb{N}_\infty$:
\begin{align}
\overline{\underline{\mathrm{Prismatization}}}_{\mathrm{abs},\mathrm{dR},\mathrm{syntomization},\mathbb{N}}. 
\end{align}
This finishes the definition. $\square$
\end{definition}

\begin{remark}
The de Rham prismatization, de Rham filtration prismatization, and de Rham syntomization prismatization can all be defined by taking limit along certain substacks of Cartier-Witt stacks, in families over $\mathbb{N}_\infty$, which is parametrized by integer $k$. 
\end{remark}

\begin{definition}
We consider the corresponding de Rham prismatization in families. Recall the definition goes in the following way. Eventually the definition is applied to $z$-adic $A$-formal schemes. In the absolute manner we consider the category of all $z$-nilpotent $A$-algebras as the underlying ring categorie $\mathrm{Nil}_{z,A}$, where all such algebras are assumed to be fibered over $\mathbb{N}_\infty$. Then we use the Witt vector functor $W_A$ from \cite{3LH}. Then the prismaization is the stackification over the ring category $\mathrm{Nil}_{z,A}$ where for each such ring we have the groupoid of all the Cartier-Witt ideals fibered over $\mathbb{N}_\infty$, i.e. those maps locally principally generated by distinguished elements in the big Witt vectors (for $w_0$ we require nilpotency and for $w_1$ we require unitality). Now map all the construction and definitions in the de Rham setting to $z$-adic $A$-formal scheme $M$. In this paper we use the notation 
\begin{align}
\underline{\mathrm{Prismatization}}_{\mathrm{abs},\mathrm{dR},\mathbb{N}_\infty,M}
\end{align}
to denote the absolute de Rham prismatization in families in our current setting. The definition for this is through the corresponding compactification from the stackification over $\mathbb{N}$, since then we have the morphism:
\begin{align}
\mathcal{P}_{\underline{\mathrm{Prismatization}}_{\mathrm{abs},\mathbb{N}_\infty,M}(\mathbb{N})} \overset{\sim}{\rightarrow} \varprojlim_{(Q_P,P)/M} D\mathrm{Cat}(P)^\mathrm{comp}\rightarrow \varprojlim_{(Q_P,P)/M} D\mathrm{Cat}(P[1/z]_{Q_P})^\mathrm{comp}
\end{align}
where $\mathrm{comp}$ is the completion with respect to the natural toplogy induced from the prisms involved along the inverse limit. Then we take the compactification to reach the de Rham prismatization in families $\mathbb{N}_\infty$:
\begin{align}
\mathcal{P}_{\underline{\mathrm{Prismatization}}_{\mathrm{abs},\mathbb{N}_\infty},M} \overset{\sim}{\rightarrow} \overline{\varprojlim_{(Q_P,P)/M} D\mathrm{Cat}(P)^\mathrm{comp}}\rightarrow \overline{\varprojlim_{(Q_P,P)/M} D\mathrm{Cat}(P[1/z]_{Q_P})^\mathrm{comp}}.
\end{align}
Then we have the corresponding filtration de Rham prismatization:
\begin{align}
\overline{\underline{\mathrm{Prismatization}}}_{\mathrm{abs},\mathrm{Nygaard},\mathrm{dR},\mathbb{N},M}
\end{align}
which is defined to be compactification from $\mathbb{N}$ to $\mathbb{N}_\infty$ of the corresponding open version over $\mathbb{N}$:
\begin{align}
{\underline{\mathrm{Prismatization}}}_{\mathrm{abs},\mathrm{Nygaard},\mathrm{dR},\mathbb{N},M}
\end{align}
where this open version is defined to be taking the product over the finite fibers over $\mathbb{N}_\infty$ of the usual filtration de Rham prismatizations over $z_n$-nilpotent $A_n$-algebras. The compactification process needs to be using the following morphisms to reach our definition as the corresponding fiber product in families through the compactification:
\begin{align}
\prod_{n\in \mathbb{N}} {\underline{\mathrm{Prismatization}}}_{\mathrm{abs},\mathrm{Nygaard},\mathrm{dR},n,M}\rightarrow  \prod_{n\in \mathbb{N}} {\underline{\mathrm{Prismatization}}}_{\mathrm{abs},\mathrm{dR},n,M}
\end{align}
by taking the products along all $\mathbb{N}$ of the usual projection from the corresponding filtration de Rham prismatization to the de Rham prismatization, and:
\begin{align}
{\underline{\mathrm{Prismatization}}}_{\mathrm{abs},\mathrm{dR},\mathbb{N}_\infty,M}\rightarrow  \prod_{n\in \mathbb{N}} {\underline{\mathrm{Prismatization}}}_{\mathrm{abs},\mathrm{dR},n,M}.
\end{align}
Recall that the filtration de Rham prismatization is actually certain fibration version of the usual de Rham prismatization, then we take further fibration over $\mathbb{N}$. Recall how finally the syntomization prismatization is constructed, which is by taking the descent of the Hodge-Tate morphism and de Rham morphism in the filtration prismatization, along the diagonal morphism from the prismatization with itself. These maps are obviously admitting the corresponding compactification versions in our current setting. Therefore we consider the following three morphisms:
\begin{align}
f_\mathrm{dR}: {\underline{\mathrm{Prismatization}}}_{\mathrm{abs},\mathrm{dR},\mathbb{N}_\infty,M}\rightarrow \overline{\underline{\mathrm{Prismatization}}}_{\mathrm{abs},\mathrm{Nygaard},\mathrm{dR},\mathbb{N}_\infty,M},
\end{align}
\begin{align}
f_\mathrm{HT}: {\underline{\mathrm{Prismatization}}}_{\mathrm{abs},\mathrm{dR},\mathbb{N}_\infty,M}\rightarrow \overline{\underline{\mathrm{Prismatization}}}_{\mathrm{abs},\mathrm{Nygaard},\mathrm{dR},\mathbb{N}_\infty,M},
\end{align}
\begin{align}
d:  {\underline{\mathrm{Prismatization}}}_{\mathrm{abs},\mathrm{dR},\mathbb{N}_\infty,M}\times {\underline{\mathrm{Prismatization}}}_{\mathrm{abs},\mathrm{dR},\mathbb{N}_\infty,M} \rightarrow {\underline{\mathrm{Prismatization}}}_{\mathrm{abs},\mathrm{dR},\mathbb{N}_\infty,M}.
\end{align}
Then we take the product of $f_\mathrm{dR}$ and $f_\mathrm{HT}$ together and take the corresponding descent along $d$ we have the definition of the syntomization de Rham prismatization in families in our current setting over $\mathbb{N}_\infty$:
\begin{align}
\overline{\underline{\mathrm{Prismatization}}}_{\mathrm{abs},\mathrm{dR},\mathrm{syntomization},\mathbb{N},M}. 
\end{align}
This finishes the definition. $\square$
\end{definition}

\begin{theorem}
There is a version of K\"unneth theorem in this context. To be more precise we have a K\"unneth theorem at least by passing to the corresponding quasicoherent sheaves over the corresponding prismatization for derived formal schemes (i.e. the corresponding stackifications). This holds for all three stackifications in the families over $\mathbb{N}_\infty$. 
\end{theorem}

\begin{proof}
We need to consider the compactification in this theorem. Any prismatization   stackification for a particular formal scheme $M$ is basically a stackification over $M$ and ringed we use the notation $\mathcal{P}$ to denote the structure sheaf. As in the usual situation we have that the derived $\infty$-category of all the $\mathcal{P}$-modules is equivalent to the prismatic site for $M$. However the derived prismatic cohomology theory in this setting does satisfy the K\"unneth theorem in the derived version on the product of the corresponding $p$-adic formal ring locally (which can be glue over formal schemes like $M$). Then the result can be generalized to our setting over $\mathbb{N}$ away from $\infty$. Then we can take the corresponding compactification along:
\begin{align}
{\underline{\mathrm{Prismatization}}}_{\mathrm{abs},\mathbb{N}_\infty,\mathrm{dR},M}\rightarrow  \prod_{n\in \mathbb{N}} {\underline{\mathrm{Prismatization}}}_{\mathrm{abs},n,\mathrm{dR},M}.
\end{align}
The filtration prismatization and the syntomization prismatization have the same derived version of the K\"unneth theorems, which can be induced from the corresponding result for the prismatization. In the current de Rham situation, we just consider the restriction to the de Rham prismatization. This finishes the proof. This amounts to saying that as in \cite{2BL2} one takes the base change to some prism $P$, which then reduces the proof for derived cohomology complexes for the three de Rham prismatizations to the proof for the derived base change of the derived cohomology complexes for three prismatizations to some de Rham prism from this prism $P$, i.e. $P[1/z]_{I_P}$. Then we just have the result from the result for the three prismatizations. Note that the three prismatizations are integral versions, but the three de Rham prismatizations can be derived from them through inverting $z$, or one can extract the integral versions of the de Rham prismatization directly first (that is to say without inverting $z$), then derive the result for the three integral de Rham prismatizations from the three integral prismatizations, then invert $z$.
\end{proof}

\begin{theorem}\label{theorem36}
All three prismatization, filtration prismatization and syntomization prismatization over $\mathbb{N}_\infty$ satisfy our \cref{situation1} conditions: (A1), (A2), (A3), the formalism pullback and pushforward in the category of all the formal schemes in (A4), (A5). In particular, promoting these three prismatization in the current setting to gestalten, we have that they satisfy all conditions (A1), (A2), (A3), (A4), (A5) for situation 1 in our consideration, and moreover for (A2) the K\"unneth holds for each element in the gestalten sequences when the degree $n$ is sufficiently large.
\end{theorem}

\begin{proof}
We check this one by one. First for the first condition it is true after we consider the corresponding structure sheaves. The second condition is proved above. For the third condition we consider the corresponding derived $\infty$-categories of quasicoherent sheaves. A4 holds for pullbacks and pushforward. This is true over $\mathbb{N}$ then we consider the fiber product through:
\begin{align}
{\underline{\mathrm{Prismatization}}}_{\mathrm{abs},\mathbb{N}_\infty,\mathrm{dR},M}\rightarrow  \prod_{n\in \mathbb{N}} {\underline{\mathrm{Prismatization}}}_{\mathrm{abs},n,\mathrm{dR},M}
\end{align}
to reach the corresponding compactification. Finally A5 holds see \cite[Chapter 4, in particular 4.7, 4.8, 4.9, 4.10]{3A}. For A5 we consider the big Grothendieck site over the base derived formal stack (regarded as a derived small arc stack) in families, with coverings from derived small arc stacks in families. After the promotion of the prismatizations to gestalten we have the result from \cite{5S5}, \cite{5G}. The corresponding 6-functor formalism is then existing for all those presentable maps as in \cite{5S5} after we use these maps as the corresponding !-able mappings. 
\end{proof}

\subsection{Three prismatization in the de Rham-Hodge-Tate setting}

\begin{definition}
We consider the corresponding de Rham-Hodge-Tate prismatization in families. Recall the definition goes in the following way. Eventually the definition is applied to $z$-adic $A$-formal schemes. In the absolute manner we consider the category of all $z$-nilpotent $A$-algebras as the underlying ring categorie $\mathrm{Nil}_{z,A}$, where all such algebras are assumed to be fibered over $\mathbb{N}_\infty$. Then we use the Witt vector functor $W_A$ from \cite{3LH}. Then the prismaization is the stackification over the ring category $\mathrm{Nil}_{z,A}$ where for each such ring we have the groupoid of all the Cartier-Witt ideals fibered over $\mathbb{N}_\infty$, i.e. those maps locally principally generated by distinguished elements in the big Witt vectors (for $w_0$ we require nilpotency and for $w_1$ we require unitality). In this paper we use the notation 
\begin{align}
\underline{\mathrm{Prismatization}}_{\mathrm{abs},\mathrm{dRHT},\mathbb{N}_\infty}
\end{align}
to denote the absolute de Rham-Hodge-Tate prismatization in families in our current setting. The definition for this is through the corresponding compactification from the stackification over $\mathbb{N}$, since then we have the morphism:
\begin{align}
\mathcal{P}_{\underline{\mathrm{Prismatization}}_{\mathrm{abs},\mathbb{N}_\infty}(\mathbb{N})} \overset{\sim}{\rightarrow} \varprojlim_{(Q_P,P)} D\mathrm{Cat}(P)^\mathrm{comp}\rightarrow \varprojlim_{(Q_P,P)} D\mathrm{Cat}(P[1/z]_{Q_P})^\mathrm{comp}\rightarrow \varprojlim_{(Q_P,P)} D\mathrm{Cat}(P[1/z]/{Q_P})^\mathrm{comp}
\end{align}
where $\mathrm{comp}$ is the completion with respect to the natural toplogy induced from the prisms involved along the inverse limit. Then we take the compactification to reach the de Rham-Hodge-Tate prismatization in families $\mathbb{N}_\infty$:
\begin{align}
\mathcal{P}_{\underline{\mathrm{Prismatization}}_{\mathrm{abs},\mathbb{N}_\infty}} \overset{\sim}{\rightarrow} \overline{\varprojlim_{(Q_P,P)} D\mathrm{Cat}(P)^\mathrm{comp}}\rightarrow \overline{\varprojlim_{(Q_P,P)} D\mathrm{Cat}(P[1/z]_{Q_P})^\mathrm{comp}}\rightarrow \overline{\varprojlim_{(Q_P,P)} D\mathrm{Cat}(P[1/z]/{Q_P})^\mathrm{comp}}.
\end{align}
Then we have the corresponding filtration de Rham-Hodge-Tate prismatization:
\begin{align}
\overline{\underline{\mathrm{Prismatization}}}_{\mathrm{abs},\mathrm{Nygaard},\mathrm{dRHT},\mathbb{N}}
\end{align}
which is defined to be compactification from $\mathbb{N}$ to $\mathbb{N}_\infty$ of the corresponding open version over $\mathbb{N}$:
\begin{align}
{\underline{\mathrm{Prismatization}}}_{\mathrm{abs},\mathrm{Nygaard},\mathrm{dRHT},\mathbb{N}}
\end{align}
where this open version is defined to be taking the product over the finite fibers over $\mathbb{N}_\infty$ of the usual filtration prismatizations over $z_n$-nilpotent $A_n$-algebras. The compactification process needs to be using the following morphisms to reach our definition as the corresponding fiber product in families through the compactification:
\begin{align}
\prod_{n\in \mathbb{N}} {\underline{\mathrm{Prismatization}}}_{\mathrm{abs},\mathrm{Nygaard},\mathrm{dRHT},n}\rightarrow  \prod_{n\in \mathbb{N}} {\underline{\mathrm{Prismatization}}}_{\mathrm{abs},\mathrm{dRHT},n}
\end{align}
by taking the products along all $\mathbb{N}$ of the usual projection from the corresponding filtration prismatization to the prismatization in the de Rham-Hodge-Tate situation, and:
\begin{align}
{\underline{\mathrm{Prismatization}}}_{\mathrm{abs},\mathrm{dRHT},\mathbb{N}_\infty}\rightarrow  \prod_{n\in \mathbb{N}} {\underline{\mathrm{Prismatization}}}_{\mathrm{abs},\mathrm{dRHT},n}.
\end{align}
Recall that the filtration prismatization is actually certain fibration version of the usual prismatization, then we take further fibration over $\mathbb{N}$, in the de Rham-Hodge-Tate situation. Recall how finally the syntomization prismatization is constructed, which is by taking the descent of the Hodge-Tate morphism and de Rham morphism in the filtration prismatization, along the diagonal morphism from the prismatization with itself. These maps are obviously admitting the corresponding compactification versions in our current setting. Therefore we consider the following three morphisms:
\begin{align}
f_\mathrm{dR}: {\underline{\mathrm{Prismatization}}}_{\mathrm{abs},\mathrm{dRHT},\mathbb{N}_\infty}\rightarrow \overline{\underline{\mathrm{Prismatization}}}_{\mathrm{abs},\mathrm{Nygaard},\mathrm{dRHT},\mathbb{N}_\infty},
\end{align}
\begin{align}
f_\mathrm{HT}: {\underline{\mathrm{Prismatization}}}_{\mathrm{abs},\mathrm{dRHT},\mathbb{N}_\infty}\rightarrow \overline{\underline{\mathrm{Prismatization}}}_{\mathrm{abs},\mathrm{Nygaard},\mathrm{dRHT},\mathbb{N}_\infty},
\end{align}
\begin{align}
d:  {\underline{\mathrm{Prismatization}}}_{\mathrm{abs},\mathrm{dRHT},\mathbb{N}_\infty}\times {\underline{\mathrm{Prismatization}}}_{\mathrm{abs},\mathrm{dRHT},\mathbb{N}_\infty} \rightarrow {\underline{\mathrm{Prismatization}}}_{\mathrm{abs},\mathrm{dRHT},\mathbb{N}_\infty}.
\end{align}
Then we take the product of $f_\mathrm{dR}$ and $f_\mathrm{HT}$ together and take the corresponding descent along $d$ we have the definition of the syntomization prismatization in families in our current setting over $\mathbb{N}_\infty$:
\begin{align}
\overline{\underline{\mathrm{Prismatization}}}_{\mathrm{abs},\mathrm{dRHT},\mathrm{syntomization},\mathbb{N}}. 
\end{align}
This finishes the definition. $\square$
\end{definition}

\begin{definition}
We consider the corresponding de Rham-Hodge-Tate prismatization in families. Recall the definition goes in the following way. Eventually the definition is applied to $z$-adic $A$-formal schemes. In the absolute manner we consider the category of all $z$-nilpotent $A$-algebras as the underlying ring categorie $\mathrm{Nil}_{z,A}$, where all such algebras are assumed to be fibered over $\mathbb{N}_\infty$. Then we use the Witt vector functor $W_A$ from \cite{3LH}. Then the prismaization is the stackification over the ring category $\mathrm{Nil}_{z,A}$ where for each such ring we have the groupoid of all the Cartier-Witt ideals fibered over $\mathbb{N}_\infty$, i.e. those maps locally principally generated by distinguished elements in the big Witt vectors (for $w_0$ we require nilpotency and for $w_1$ we require unitality). Now map all the construction and definitions in the de Rham-Hodge-Tate setting to $z$-adic $A$-formal scheme $M$. In this paper we use the notation 
\begin{align}
\underline{\mathrm{Prismatization}}_{\mathrm{abs},\mathrm{dRHT},\mathbb{N}_\infty,M}
\end{align}
to denote the absolute prismatization in families in our current setting. The definition for this is through the corresponding compactification from the stackification over $\mathbb{N}$, since then we have the morphism:
\begin{align}
&\mathcal{P}_{\underline{\mathrm{Prismatization}}_{\mathrm{abs},\mathbb{N}_\infty,M}(\mathbb{N})} \overset{\sim}{\rightarrow} \varprojlim_{(Q_P,P)/M} D\mathrm{Cat}(P)^\mathrm{comp}\\
&\rightarrow \varprojlim_{(Q_P,P)/M} D\mathrm{Cat}(P[1/z]/{Q_P})^\mathrm{comp}\rightarrow \varprojlim_{(Q_P,P)/M} D\mathrm{Cat}(P[1/z]/{Q_P})^\mathrm{comp}
\end{align}
where $\mathrm{comp}$ is the completion with respect to the natural toplogy induced from the prisms involved along the inverse limit. Then we take the compactification to reach the de Rham-Hodge-Tate prismatization in families $\mathbb{N}_\infty$:
\begin{align}
\mathcal{P}_{\underline{\mathrm{Prismatization}}_{\mathrm{abs},\mathbb{N}_\infty},M} \overset{\sim}{\rightarrow} \overline{\varprojlim_{(Q_P,P)/M} D\mathrm{Cat}(P)^\mathrm{comp}}
\rightarrow \overline{\varprojlim_{(Q_P,P)/M} D\mathrm{Cat}(P[1/z]/{Q_P})^\mathrm{comp}}\rightarrow \overline{\varprojlim_{(Q_P,P)/M} D\mathrm{Cat}(P[1/z]/{Q_P})^\mathrm{comp}}.
\end{align}
Then we have the corresponding filtration prismatization:
\begin{align}
\overline{\underline{\mathrm{Prismatization}}}_{\mathrm{abs},\mathrm{Nygaard},\mathrm{dRHT},\mathbb{N},M}
\end{align}
which is defined to be compactification from $\mathbb{N}$ to $\mathbb{N}_\infty$ of the corresponding open version over $\mathbb{N}$:
\begin{align}
{\underline{\mathrm{Prismatization}}}_{\mathrm{abs},\mathrm{Nygaard},\mathrm{dRHT},\mathbb{N},M}
\end{align}
where this open version is defined to be taking the product over the finite fibers over $\mathbb{N}_\infty$ of the usual filtration prismatizations over $z_n$-nilpotent $A_n$-algebras. The compactification process needs to be using the following morphisms to reach our definition as the corresponding fiber product in families through the compactification:
\begin{align}
\prod_{n\in \mathbb{N}} {\underline{\mathrm{Prismatization}}}_{\mathrm{abs},\mathrm{Nygaard},\mathrm{dRHT},n,M}\rightarrow  \prod_{n\in \mathbb{N}} {\underline{\mathrm{Prismatization}}}_{\mathrm{abs},\mathrm{dRHT},n,M}
\end{align}
by taking the products along all $\mathbb{N}$ of the usual projection from the corresponding filtration prismatization to the prismatization, and:
\begin{align}
{\underline{\mathrm{Prismatization}}}_{\mathrm{abs},\mathrm{dRHT},\mathbb{N}_\infty,M}\rightarrow  \prod_{n\in \mathbb{N}} {\underline{\mathrm{Prismatization}}}_{\mathrm{abs},\mathrm{dRHT},n,M}.
\end{align}
Recall that the filtration prismatization is actually certain fibration version of the usual prismatization in the de Rham-Hodge-Tate situation, then we take further fibration over $\mathbb{N}$. Recall how finally the syntomization prismatization is constructed, which is by taking the descent of the Hodge-Tate morphism and de Rham morphism in the filtration prismatization, along the diagonal morphism from the prismatization with itself. These maps are obviously admitting the corresponding compactification versions in our current setting. Therefore we consider the following three morphisms:
\begin{align}
f_\mathrm{dR}: {\underline{\mathrm{Prismatization}}}_{\mathrm{abs},\mathrm{dRHT},\mathbb{N}_\infty,M}\rightarrow \overline{\underline{\mathrm{Prismatization}}}_{\mathrm{abs},\mathrm{Nygaard},\mathrm{dRHT},\mathbb{N}_\infty,M},
\end{align}
\begin{align}
f_\mathrm{HT}: {\underline{\mathrm{Prismatization}}}_{\mathrm{abs},\mathrm{dRHT},\mathbb{N}_\infty,M}\rightarrow \overline{\underline{\mathrm{Prismatization}}}_{\mathrm{abs},\mathrm{Nygaard},\mathrm{dRHT},\mathbb{N}_\infty,M},
\end{align}
\begin{align}
d:  {\underline{\mathrm{Prismatization}}}_{\mathrm{abs},\mathrm{dRHT},\mathbb{N}_\infty,M}\times {\underline{\mathrm{Prismatization}}}_{\mathrm{abs},\mathrm{dRHT},\mathbb{N}_\infty,M} \rightarrow {\underline{\mathrm{Prismatization}}}_{\mathrm{abs},\mathrm{dRHT},\mathbb{N}_\infty,M}.
\end{align}
Then we take the product of $f_\mathrm{dR}$ and $f_\mathrm{HT}$ together and take the corresponding descent along $d$ we have the definition of the syntomization prismatization in families in our current setting over $\mathbb{N}_\infty$:
\begin{align}
\overline{\underline{\mathrm{Prismatization}}}_{\mathrm{abs},\mathrm{dRHT},\mathrm{syntomization},\mathbb{N},M}. 
\end{align}
This finishes the definition. $\square$
\end{definition}

\begin{theorem}
There is a version of K\"unneth theorem in this context. To be more precise we have a K\"unneth theorem at least by passing to the corresponding quasicoherent sheaves over the corresponding prismatization for derived formal schemes (i.e. the corresponding stackifications). This holds for all three stackifications in the families fibered over $\mathbb{N}_\infty$. 
\end{theorem}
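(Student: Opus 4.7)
The plan is to mirror the two previous K\"unneth results in this section (for the three integral prismatizations and for the three de Rham prismatizations) and then reduce the de Rham-Hodge-Tate case to the de Rham case via the additional base change $P[1/z]_{Q_P} \twoheadrightarrow P[1/z]/Q_P$ that appears in the very definition of $\underline{\mathrm{Prismatization}}_{\mathrm{abs},\mathrm{dRHT},\mathbb{N}_\infty,M}$. First I would work fiberwise over $\mathbb{N}$: for each $n \in \mathbb{N}$, the de Rham-Hodge-Tate cohomology of a $z_n$-adic $A_n$-formal scheme is the first graded piece (equivalently the $Q_P$-quotient) of the de Rham prismatic cohomology, so the usual derived K\"unneth formula for prismatic cohomology over $p$-adic formal schemes (Bhatt-Scholze, Bhatt-Lurie) carries through after derived base change along $P \to P[1/z]/Q_P$, using that derived tensor products commute with taking graded pieces of completed filtrations of perfect-enough complexes.

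Next, I would assemble these fiberwise K\"unneth isomorphisms into a statement on the product $\prod_{n \in \mathbb{N}} \underline{\mathrm{Prismatization}}_{\mathrm{abs},\mathrm{dRHT},n,M}$, which is automatic since $\prod_n$ of equivalences of stable symmetric monoidal $\infty$-categories is again an equivalence. Then I would transport this along the compactification map
\begin{align}
{\underline{\mathrm{Prismatization}}}_{\mathrm{abs},\mathrm{dRHT},\mathbb{N}_\infty,M}\rightarrow  \prod_{n\in \mathbb{N}} {\underline{\mathrm{Prismatization}}}_{\mathrm{abs},\mathrm{dRHT},n,M},
\end{align}
exactly as in the proof of the prismatic and de Rham K\"unneth theorems above. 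Since the stackification over $\mathbb{N}_\infty$ is defined as the fiber product along this very map together with the local evaluation at $\infty$ (which agrees with some $n_0$-fiber mod $t^a$ by the identification of \cite{3LH}), the K\"unneth isomorphism is preserved by the compactification.

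For the filtration version I would base-change the above along ${\underline{\mathrm{Prismatization}}}_{\mathrm{abs},\mathrm{Nygaard},\mathrm{dRHT},\mathbb{N}_\infty,M} \to {\underline{\mathrm{Prismatization}}}_{\mathrm{abs},\mathrm{dRHT},\mathbb{N}_\infty,M}$, using that the Nygaard filtration is defined fiberwise as a fibration of the de Rham-Hodge-Tate prismatization and is therefore compatible with products both fiberwise over $\mathbb{N}$ and after compactification. For the syntomization version I would express the structure sheaf as the equalizer of the Hodge-Tate and de Rham morphisms $f_{\mathrm{HT}}, f_{\mathrm{dR}}$ along the diagonal $d$ (as in the definition), and then use that finite limits of K\"unneth-compatible theories remain K\"unneth-compatible when the constituent maps and the tensor product are exact in the relevant directions; this is the standard descent argument from \cite{3BL}, \cite{3D} in the absolute case.

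The main obstacle I expect is verifying that the compactification step $\mathbb{N} \hookrightarrow \mathbb{N}_\infty$ genuinely commutes with the derived tensor product that appears in the K\"unneth formula, i.e.\ that the fiber product defining the $\infty_\infty$-category does not introduce extra $\mathrm{Tor}$-terms at the function-field fiber. The delicate point is that the evaluation at $\infty$ is defined via the choice of some $n_0$ and a reduction mod $t^a$, so one must check that the derived tensor of two such evaluations agrees with the evaluation of the derived tensor; this amounts to checking that the identification of \cite{3LH} between $\mathrm{Prism}_{\mathbb{N},\mathcal{O}_{K_{n_0}}/t_{n_0}^a}$ and $\widetilde{\mathrm{Prism}_\mathbb{N}}(\infty)$ is symmetric monoidal, which should follow by passing to the limit over $a$ once one has established compatibility on each finite truncation. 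Once this is settled, the syntomization case is a formal consequence via the equalizer presentation.
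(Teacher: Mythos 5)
Your proposal is correct and follows essentially the same route as the paper's proof: reduce to fiberwise prismatic Künneth over $\mathbb{N}$, assemble the product, and transport along the compactification map; the paper derives the de Rham-Hodge-Tate case from the de Rham case, which you make explicit by identifying de Rham-Hodge-Tate cohomology as the graded piece / $Q_P$-quotient and invoking base change along $P[1/z]_{Q_P}\twoheadrightarrow P[1/z]/Q_P$.

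Where you add genuine value: the paper compresses the filtration and syntomization cases into a single sentence, whereas you spell out the filtration case via base change along the projection and the syntomization case via the equalizer presentation of the structure sheaf through $f_{\mathrm{HT}}$, $f_{\mathrm{dR}}$, and the diagonal $d$ — this is the right mechanism and mirrors the descent argument of \cite{3BL}, \cite{3D}. You also flag a real gap that the paper's proof glosses over: it is not immediate that the compactification fiber product commutes with the derived tensor products appearing in the Künneth formula at the $\infty$-fiber, since the evaluation there is mediated by a choice of $n_0$ and a mod-$t^a$ identification from \cite{3LH}. Your observation that this reduces to checking that the \cite{3LH} identification is symmetric monoidal (level by level in $a$, then passing to the limit) is the correct way to close it, and should probably appear as a lemma rather than be left implicit. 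Both approaches buy the same theorem; yours is more auditable at the compactification and syntomization steps.
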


\begin{proof}
We need to consider the compactification in this theorem. Any prismatization  stackification for a particular formal scheme $M$ is basically a stackification over $M$ and ringed we use the notation $\mathcal{P}$ to denote the structure sheaf. As in the usual situation we have that the derived $\infty$-category of all the $\mathcal{P}$-modules is equivalent to the prismatic site for $M$. However the derived prismatic cohomology theory in this setting does satisfy the K\"unneth theorem in the derived version on the product of the corresponding $p$-adic formal ring locally (which can be glue over formal schemes like $M$). Then the result can be generalized to our setting over $\mathbb{N}$ away from $\infty$. Then we can take the corresponding compactification along:
\begin{align}
{\underline{\mathrm{Prismatization}}}_{\mathrm{abs},\mathbb{N}_\infty,\mathrm{dRHT},M}\rightarrow  \prod_{n\in \mathbb{N}} {\underline{\mathrm{Prismatization}}}_{\mathrm{abs},n,\mathrm{dRHT},M}.
\end{align}
The derived version of the K\"unneth theorem then holds for the filtration  prismatization and syntomization prismatization. Then the derived K\"unneth holds in the 3 de Rham prismatizations situation which implies the result in the current setting. This finishes the proof.
\end{proof}

\begin{theorem}
All three prismatization, filtration prismatization and syntomization prismatization over $\mathbb{N}_\infty$ satisfy our \cref{situation1} conditions: (A1), (A2), (A3), (A4), (A5). 
\end{theorem}

\begin{proof}
We check this one by one. First for the first condition it is true after we consider the corresponding structure sheaves. The second condition is proved above. For the third condition we consider the corresponding derived $\infty$-categories of quasicoherent sheaves. A4 holds for pullbacks and pushforward. This is true over $\mathbb{N}$ then we consider the fiber product through:
\begin{align}
{\underline{\mathrm{Prismatization}}}_{\mathrm{abs},\mathbb{N}_\infty,\mathrm{dRHT},M}\rightarrow  \prod_{n\in \mathbb{N}} {\underline{\mathrm{Prismatization}}}_{\mathrm{abs},n,\mathrm{dRHT},M}
\end{align}
to reach the corresponding compactification. Finally A5 holds see \cite[Chapter 4, in particular 4.7, 4.8, 4.9, 4.10]{3A}. For A5 we consider the derived small arc stacks over the base formal scheme (regarded as a small arc stack) to form the corresponding Grothendieck site in families. Then one finishes the proof as in \cref{theorem36}.
\end{proof}

\subsection{Three prismatizations in the Laurent setting}

\begin{definition}
We consider the corresponding Laurent prismatization in families. Recall the definition goes in the following way. Eventually the definition is applied to $z$-adic $A$-formal schemes. In the absolute manner we consider the category of all $z$-nilpotent $A$-algebras as the underlying ring categorie $\mathrm{Nil}_{z,A}$, where all such algebras are assumed to be fibered over $\mathbb{N}_\infty$. Then we use the Witt vector functor $W_A$ from \cite{3LH}. Then the prismaization is the stackification over the ring category $\mathrm{Nil}_{z,A}$ where for each such ring we have the groupoid of all the Cartier-Witt ideals fibered over $\mathbb{N}_\infty$, i.e. those maps locally principally generated by distinguished elements in the big Witt vectors (for $w_0$ we require nilpotency and for $w_1$ we require unitality). In this paper we use the notation 
\begin{align}
\underline{\mathrm{Prismatization}}_{\mathrm{abs},\mathrm{Laurent},\mathbb{N}_\infty}
\end{align}
to denote the absolute prismatization in families in our current setting. The definition for this is through the corresponding compactification from the stackification over $\mathbb{N}$, since then we have the morphism:
\begin{align}
\mathcal{P}_{\underline{\mathrm{Prismatization}}_{\mathrm{abs},\mathbb{N}_\infty}(\mathbb{N})} \overset{\sim}{\rightarrow} \varprojlim_{(Q_P,P)} D\mathrm{Cat}(P)^\mathrm{comp}\rightarrow \varprojlim_{(Q_P,P)} D\mathrm{Cat}(P[1/Q_P]_z)^\mathrm{comp}
\end{align}
where $\mathrm{comp}$ is the completion with respect to the natural toplogy induced from the prisms involved along the inverse limit. Then we take the compactification to reach the Laurent prismatization in families $\mathbb{N}_\infty$:
\begin{align}
\mathcal{P}_{\underline{\mathrm{Prismatization}}_{\mathrm{abs},\mathbb{N}_\infty}} \overset{\sim}{\rightarrow} \overline{\varprojlim_{(Q_P,P)} D\mathrm{Cat}(P)^\mathrm{comp}}\rightarrow \overline{\varprojlim_{(Q_P,P)} D\mathrm{Cat}(P[1/Q_P]_z)^\mathrm{comp}}.
\end{align}
Then we have the corresponding filtration prismatization:
\begin{align}
\overline{\underline{\mathrm{Prismatization}}}_{\mathrm{abs},\mathrm{Nygaard},\mathrm{Laurent},\mathbb{N}}
\end{align}
which is defined to be compactification from $\mathbb{N}$ to $\mathbb{N}_\infty$ of the corresponding open version over $\mathbb{N}$:
\begin{align}
{\underline{\mathrm{Prismatization}}}_{\mathrm{abs},\mathrm{Nygaard},\mathrm{Laurent},\mathbb{N}}
\end{align}
where this open version is defined to be taking the product over the finite fibers over $\mathbb{N}_\infty$ of the usual filtration prismatizations over $z_n$-nilpotent $A_n$-algebras. The compactification process needs to be using the following morphisms to reach our definition as the corresponding fiber product in families through the compactification:
\begin{align}
\prod_{n\in \mathbb{N}} {\underline{\mathrm{Prismatization}}}_{\mathrm{abs},\mathrm{Nygaard},\mathrm{Laurent},n}\rightarrow  \prod_{n\in \mathbb{N}} {\underline{\mathrm{Prismatization}}}_{\mathrm{abs},\mathrm{Laurent},n}
\end{align}
by taking the products along all $\mathbb{N}$ of the usual projection from the corresponding filtration prismatization to the prismatization, and:
\begin{align}
{\underline{\mathrm{Prismatization}}}_{\mathrm{abs},\mathrm{Laurent},\mathbb{N}_\infty}\rightarrow  \prod_{n\in \mathbb{N}} {\underline{\mathrm{Prismatization}}}_{\mathrm{abs},\mathrm{Laurent},n}.
\end{align}
Recall that the filtration prismatization is actually certain fibration version of the usual prismatization in the current setting, then we take further fibration over $\mathbb{N}$. Recall how finally the syntomization prismatization is constructed, which is by taking the descent of the Hodge-Tate morphism and de Rham morphism in the filtration prismatization, along the diagonal morphism from the prismatization with itself. These maps are obviously admitting the corresponding compactification versions in our current setting. Therefore we consider the following three morphisms:
\begin{align}
f_\mathrm{dR}: {\underline{\mathrm{Prismatization}}}_{\mathrm{abs},\mathrm{Laurent},\mathbb{N}_\infty}\rightarrow \overline{\underline{\mathrm{Prismatization}}}_{\mathrm{abs},\mathrm{Nygaard},\mathrm{Laurent},\mathbb{N}_\infty},
\end{align}
\begin{align}
f_\mathrm{HT}: {\underline{\mathrm{Prismatization}}}_{\mathrm{abs},\mathrm{Laurent},\mathbb{N}_\infty}\rightarrow \overline{\underline{\mathrm{Prismatization}}}_{\mathrm{abs},\mathrm{Nygaard},\mathrm{Laurent},\mathbb{N}_\infty},
\end{align}
\begin{align}
d:  {\underline{\mathrm{Prismatization}}}_{\mathrm{abs},\mathrm{Laurent},\mathbb{N}_\infty}\times {\underline{\mathrm{Prismatization}}}_{\mathrm{abs},\mathrm{Laurent},\mathbb{N}_\infty} \rightarrow {\underline{\mathrm{Prismatization}}}_{\mathrm{abs},\mathrm{Laurent},\mathbb{N}_\infty}.
\end{align}
Then we take the product of $f_\mathrm{dR}$ and $f_\mathrm{HT}$ together and take the corresponding descent along $d$ we have the definition of the syntomization prismatization in families in our current setting over $\mathbb{N}_\infty$:
\begin{align}
\overline{\underline{\mathrm{Prismatization}}}_{\mathrm{abs},\mathrm{Laurent},\mathrm{syntomization},\mathbb{N}}. 
\end{align}
This finishes the definition. $\square$
\end{definition}

\begin{definition}
We consider the corresponding Laurent prismatization in families. Recall the definition goes in the following way. Eventually the definition is applied to $z$-adic $A$-formal schemes. In the absolute manner we consider the category of all $z$-nilpotent $A$-algebras as the underlying ring categorie $\mathrm{Nil}_{z,A}$, where all such algebras are assumed to be fibered over $\mathbb{N}_\infty$. Then we use the Witt vector functor $W_A$ from \cite{3LH}. Then the prismaization is the stackification over the ring category $\mathrm{Nil}_{z,A}$ where for each such ring we have the groupoid of all the Cartier-Witt ideals fibered over $\mathbb{N}_\infty$, i.e. those maps locally principally generated by distinguished elements in the big Witt vectors (for $w_0$ we require nilpotency and for $w_1$ we require unitality). Now map all the construction and definitions in the de Rham setting to $z$-adic $A$-formal scheme $M$. In this paper we use the notation 
\begin{align}
\underline{\mathrm{Prismatization}}_{\mathrm{abs},\mathrm{Laurent},\mathbb{N}_\infty,M}
\end{align}
to denote the absolute Laurent prismatization in families in our current setting. The definition for this is through the corresponding compactification from the stackification over $\mathbb{N}$, since then we have the morphism:
\begin{align}
\mathcal{P}_{\underline{\mathrm{Prismatization}}_{\mathrm{abs},\mathbb{N}_\infty,M}(\mathbb{N})} \overset{\sim}{\rightarrow} \varprojlim_{(Q_P,P)/M} D\mathrm{Cat}(P)^\mathrm{comp}\rightarrow \varprojlim_{(Q_P,P)/M} D\mathrm{Cat}(P[1/Q_P]_z)^\mathrm{comp}
\end{align}
where $\mathrm{comp}$ is the completion with respect to the natural toplogy induced from the prisms involved along the inverse limit. Then we take the compactification to reach the Laurent prismatization in families $\mathbb{N}_\infty$:
\begin{align}
\mathcal{P}_{\underline{\mathrm{Prismatization}}_{\mathrm{abs},\mathbb{N}_\infty},M} \overset{\sim}{\rightarrow} \overline{\varprojlim_{(Q_P,P)/M} D\mathrm{Cat}(P)^\mathrm{comp}}\rightarrow \overline{\varprojlim_{(Q_P,P)/M} D\mathrm{Cat}(P[1/Q_P]_z)^\mathrm{comp}}.
\end{align}
Then we have the corresponding filtration prismatization:
\begin{align}
\overline{\underline{\mathrm{Prismatization}}}_{\mathrm{abs},\mathrm{Nygaard},\mathrm{Laurent},\mathbb{N},M}
\end{align}
which is defined to be compactification from $\mathbb{N}$ to $\mathbb{N}_\infty$ of the corresponding open version over $\mathbb{N}$:
\begin{align}
{\underline{\mathrm{Prismatization}}}_{\mathrm{abs},\mathrm{Nygaard},\mathrm{Laurent},\mathbb{N},M}
\end{align}
where this open version is defined to be taking the product over the finite fibers over $\mathbb{N}_\infty$ of the usual filtration prismatizations over $z_n$-nilpotent $A_n$-algebras. The compactification process needs to be using the following morphisms to reach our definition as the corresponding fiber product in families through the compactification:
\begin{align}
\prod_{n\in \mathbb{N}} {\underline{\mathrm{Prismatization}}}_{\mathrm{abs},\mathrm{Nygaard},\mathrm{Laurent},n,M}\rightarrow  \prod_{n\in \mathbb{N}} {\underline{\mathrm{Prismatization}}}_{\mathrm{abs},\mathrm{Laurent},n,M}
\end{align}
by taking the products along all $\mathbb{N}$ of the usual projection from the corresponding filtration prismatization to the prismatization, and:
\begin{align}
{\underline{\mathrm{Prismatization}}}_{\mathrm{abs},\mathrm{Laurent},\mathbb{N}_\infty,M}\rightarrow  \prod_{n\in \mathbb{N}} {\underline{\mathrm{Prismatization}}}_{\mathrm{abs},\mathrm{Laurent},n,M}.
\end{align}
Recall that the filtration prismatization is actually certain fibration version of the usual prismatization, then we take further fibration over $\mathbb{N}$ in the current setting. Recall how finally the syntomization prismatization is constructed, which is by taking the descent of the Hodge-Tate morphism and de Rham morphism in the filtration prismatization, along the diagonal morphism from the prismatization with itself. These maps are obviously admitting the corresponding compactification versions in our current setting. Therefore we consider the following three morphisms:
\begin{align}
f_\mathrm{dR}: {\underline{\mathrm{Prismatization}}}_{\mathrm{abs},\mathrm{Laurent},\mathbb{N}_\infty,M}\rightarrow \overline{\underline{\mathrm{Prismatization}}}_{\mathrm{abs},\mathrm{Nygaard},\mathrm{Laurent},\mathbb{N}_\infty,M},
\end{align}
\begin{align}
f_\mathrm{HT}: {\underline{\mathrm{Prismatization}}}_{\mathrm{abs},\mathrm{Laurent},\mathbb{N}_\infty,M}\rightarrow \overline{\underline{\mathrm{Prismatization}}}_{\mathrm{abs},\mathrm{Nygaard},\mathrm{Laurent},\mathbb{N}_\infty,M},
\end{align}
\begin{align}
d:  {\underline{\mathrm{Prismatization}}}_{\mathrm{abs},\mathrm{Laurent},\mathbb{N}_\infty,M}\times {\underline{\mathrm{Prismatization}}}_{\mathrm{abs},\mathrm{Laurent},\mathbb{N}_\infty,M} \rightarrow {\underline{\mathrm{Prismatization}}}_{\mathrm{abs},\mathrm{Laurent},\mathbb{N}_\infty,M}.
\end{align}
Then we take the product of $f_\mathrm{dR}$ and $f_\mathrm{HT}$ together and take the corresponding descent along $d$ we have the definition of the syntomization prismatization in families in our current setting over $\mathbb{N}_\infty$:
\begin{align}
\overline{\underline{\mathrm{Prismatization}}}_{\mathrm{abs},\mathrm{Laurent},\mathrm{syntomization},\mathbb{N},M}. 
\end{align}
This finishes the definition. $\square$
\end{definition}

\begin{theorem}
There is a version of K\"unneth theorem in this context. To be more precise we have a K\"unneth theorem at least by passing to the corresponding quasicoherent sheaves over the corresponding prismatization for derived formal schemes (i.e. the corresponding stackifications). This holds for all three stackifications in the families over $\mathbb{N}_\infty$. 
\end{theorem}

\begin{proof}
We need to consider the compactification in this theorem. Any prismatization  stackification for a particular formal scheme $M$ is basically a stackification over $M$ and ringed we use the notation $\mathcal{P}$ to denote the structure sheaf. As in the usual situation we have that the derived $\infty$-category of all the $\mathcal{P}$-modules is equivalent to the prismatic site for $M$. However the derived prismatic cohomology theory in this setting does satisfy the K\"unneth theorem in the derived version on the product of the corresponding $p$-adic formal ring locally (which can be glue over formal schemes like $M$). Then the result can be generalized to our setting over $\mathbb{N}$ away from $\infty$. Then we can take the corresponding compactification along:
\begin{align}
{\underline{\mathrm{Prismatization}}}_{\mathrm{abs},\mathbb{N}_\infty,\mathrm{Laurent},M}\rightarrow  \prod_{n\in \mathbb{N}} {\underline{\mathrm{Prismatization}}}_{\mathrm{abs},n,\mathrm{Laurent},M}.
\end{align}
Then this implies derived version of K\"unneth theorem holds for filtration prismatization and syntomization prismatization. One then have the K\"unneth theorem in a derived version holds for the Laurent prismatizations in the three situations. This finishes the proof.
\end{proof}

\begin{theorem}
All three prismatization, filtration prismatization and syntomization prismatization over $\mathbb{N}_\infty$ satisfy our \cref{situation1} conditions: (A1), (A2), (A3), (A4), (A5).
\end{theorem}

\begin{proof}
We check this one by one. First for the first condition it is true after we consider the corresponding structure sheaves. The second condition is proved above. For the third condition we consider the corresponding derived $\infty$-categories of quasicoherent sheaves. A4 holds for pullbacks and pushforward. This is true over $\mathbb{N}$ then we consider the fiber product through:
\begin{align}
{\underline{\mathrm{Prismatization}}}_{\mathrm{abs},\mathbb{N}_\infty,\mathrm{Laurent},M}\rightarrow  \prod_{n\in \mathbb{N}} {\underline{\mathrm{Prismatization}}}_{\mathrm{abs},n,\mathrm{Laurent},M}
\end{align}
to reach the corresponding compactification. Finally A5 holds see \cite[Chapter 4, in particular 4.7, 4.8, 4.9, 4.10]{3A}. Then one finishes the proof as in \cref{theorem36}.
\end{proof}

\newpage
\section{Motivic Cohomology Theory for Analytic Prismatizations in Families}\label{section10}

\indent We now consider the corresponding prismatization in families in the three settings in \cite{3BL}, \cite{3D}, with further analytification as in \cite{3ALBRCS} and \cite{3S1}. The idea is to construct there analytic versions of the prismatization in families $\mathbb{N}_\infty$, then apply our previous results above to derive the corresponding motivic cohomology theories. The first one is the prismatization in families, the second one is the filtration prismatization and finally we have the corresponding syntomization prismatization. They can be defined all in the corresponding families way as we did before. In the first two situations we recall our construtions before which will also be put into the general consideration we are current considering on motivic cohomology theories.

\subsection{Prismatization, filtration prismatization and syntomization prismatization}

\begin{definition}
We consider the corresponding prismatization in families. Recall the definition goes in the following way. Eventually the definition is applied to $z$-adic $A$-formal schemes. In the absolute manner we consider the category of all $z$-nilpotent $A$-algebras as the underlying ring categorie $\mathrm{Nil}_{z,A}$, where all such algebras are assumed to be fibered over $\mathbb{N}_\infty$. Then we use the Witt vector functor $W_A$ from \cite{3LH}. Then the prismaization is the stackification over the ring category $\mathrm{Nil}_{z,A}$ where for each such ring we have the groupoid of all the Cartier-Witt ideals fibered over $\mathbb{N}_\infty$, i.e. those maps locally principally generated by distinguished elements in the big Witt vectors (for $w_0$ we require nilpotency and for $w_1$ we require unitality). In this paper we use the notation 
\begin{align}
\underline{\mathrm{Prismatization}}^L_{\mathrm{abs},\mathbb{N}_\infty}
\end{align}
to denote the absolute analytic prismatization in families in our current setting. We follow \cite{3S1}, \cite{3ALBRCS} to the following three step for any $L$-rigid affioid with formal local model:
\begin{itemize}
\item[S1] Over $\mathbb{N}$ we have the analytic prismatization from local formal model from \cite{3S1}, \cite{3ALBRCS}, which provide the analytic prismatization over the generic fiber over $L$;
\item[S2] We take the corresponding compactification of this to $\mathbb{N}_\infty$, by using the morphism to take the fiber product:
\begin{align}
\underline{\mathrm{Prismatization}}_{\mathrm{abs},\mathbb{N}_\infty}\rightarrow \underline{\mathrm{Prismatization}}_{\mathrm{abs},\mathbb{N}_\infty}(\mathbb{N}).
\end{align}
\item[3S3] Take condensed analytification from \cite{3CS},\cite{3CS1},\cite{3CS2}.
\end{itemize}
Then we have the corresponding filtration analytic  prismatization:
\begin{align}
\overline{\underline{\mathrm{Prismatization}}}^L_{\mathrm{abs},\mathrm{Nygaard},\mathbb{N}}
\end{align}
which is defined to be compactification from $\mathbb{N}$ to $\mathbb{N}_\infty$ of the corresponding open version over $\mathbb{N}$:
\begin{align}
{\underline{\mathrm{Prismatization}}}^L_{\mathrm{abs},\mathrm{Nygaard},\mathbb{N}}
\end{align}
where this open version is defined to be taking the product over the finite fibers over $\mathbb{N}_\infty$ of the usual filtration prismatizations over $z_n$-nilpotent $A_n$-algebras. The compactification process needs to be using the following morphisms to reach our definition as the corresponding fiber product in families through the compactification:
\begin{align}
\prod_{n\in \mathbb{N}} {\underline{\mathrm{Prismatization}}}^L_{\mathrm{abs},\mathrm{Nygaard},n}\rightarrow  \prod_{n\in \mathbb{N}} {\underline{\mathrm{Prismatization}}}^L_{\mathrm{abs},n}
\end{align}
by taking the products along all $\mathbb{N}$ of the usual projection from the corresponding filtration analytic prismatization to the prismatization, and:
\begin{align}
{\underline{\mathrm{Prismatization}}}^L_{\mathrm{abs},\mathbb{N}_\infty}\rightarrow  \prod_{n\in \mathbb{N}} {\underline{\mathrm{Prismatization}}}^L_{\mathrm{abs},n}.
\end{align}
Recall that the filtration prismatization is actually certain fibration version of the usual prismatization, then we take further fibration over $\mathbb{N}$. Recall how finally the syntomization prismatization is constructed, which is by taking the descent of the Hodge-Tate morphism and de Rham morphism in the filtration prismatization, along the diagonal morphism from the prismatization with itself. These maps are obviously admitting the corresponding compactification versions in our current setting. Therefore we consider the following three morphisms:
\begin{align}
f_\mathrm{dR}: {\underline{\mathrm{Prismatization}}}^L_{\mathrm{abs},\mathbb{N}_\infty}\rightarrow \overline{\underline{\mathrm{Prismatization}}}^L_{\mathrm{abs},\mathrm{Nygaard},\mathbb{N}_\infty},
\end{align}
\begin{align}
f_\mathrm{HT}: {\underline{\mathrm{Prismatization}}}^L_{\mathrm{abs},\mathbb{N}_\infty}\rightarrow \overline{\underline{\mathrm{Prismatization}}}^L_{\mathrm{abs},\mathrm{Nygaard},\mathbb{N}_\infty},
\end{align}
\begin{align}
d:  {\underline{\mathrm{Prismatization}}}^L_{\mathrm{abs},\mathbb{N}_\infty}\times {\underline{\mathrm{Prismatization}}}^L_{\mathrm{abs},\mathbb{N}_\infty} \rightarrow {\underline{\mathrm{Prismatization}}}^L_{\mathrm{abs},\mathbb{N}_\infty}.
\end{align}
Then we take the product of $f_\mathrm{dR}$ and $f_\mathrm{HT}$ together and take the corresponding descent along $d$ we have the definition of the syntomization analytic prismatization in families in our current setting over $\mathbb{N}_\infty$:
\begin{align}
\overline{\underline{\mathrm{Prismatization}}}^L_{\mathrm{abs},\mathrm{syntomization},\mathbb{N}}. 
\end{align}
This finishes the definition. $\square$
\end{definition}

\begin{definition} 
We consider the corresponding prismatization in families. Recall the definition goes in the following way. Eventually the definition is applied to $z$-adic $A$-formal schemes. In the absolute manner we consider the category of all $z$-nilpotent $A$-algebras as the underlying ring categorie $\mathrm{Nil}_{z,A}$, where all such algebras are assumed to be fibered over $\mathbb{N}_\infty$. Then we use the Witt vector functor $W_A$ from \cite{3LH}. Then the prismaization is the stackification over the ring category $\mathrm{Nil}_{z,A}$ where for each such ring we have the groupoid of all the Cartier-Witt ideals fibered over $\mathbb{N}_\infty$, i.e. those maps locally principally generated by distinguished elements in the big Witt vectors (for $w_0$ we require nilpotency and for $w_1$ we require unitality). After the discussion above, we can now apply the whole definition for the 3 prismatization to any $z$-adic $A$-formal scheme $M$, by taking the immersion from the closure of the Cartier-Witt ideals into $M$ in the compatible way. In this paper we use the notation 
\begin{align}
\underline{\mathrm{Prismatization}}^L_{\mathrm{abs},\mathbb{N}_\infty,M}
\end{align}
to denote the absolute analytic prismatization in families in our current setting. Then we have the corresponding filtration analytic prismatization:
\begin{align}
\overline{\underline{\mathrm{Prismatization}}}^L_{\mathrm{abs},\mathrm{Nygaard},\mathbb{N},M}
\end{align}
which is defined to be compactification from $\mathbb{N}$ to $\mathbb{N}_\infty$ of the corresponding open version over $\mathbb{N}$:
\begin{align}
{\underline{\mathrm{Prismatization}}}^L_{\mathrm{abs},\mathrm{Nygaard},\mathbb{N},M}
\end{align}
where this open version is defined to be taking the product over the finite fibers over $\mathbb{N}_\infty$ of the usual filtration prismatizations over $z_n$-nilpotent $A_n$-algebras. The compactification process needs to be using the following morphisms to reach our definition as the corresponding fiber product in families through the compactification:
\begin{align}
\prod_{n\in \mathbb{N}} {\underline{\mathrm{Prismatization}}}^L_{\mathrm{abs},\mathrm{Nygaard},n,M}\rightarrow  \prod_{n\in \mathbb{N}} {\underline{\mathrm{Prismatization}}}^L_{\mathrm{abs},n,M}
\end{align}
by taking the products along all $\mathbb{N}$ of the usual projection from the corresponding filtration analytic prismatization to the analytic  prismatization, and:
\begin{align}
{\underline{\mathrm{Prismatization}}}^L_{\mathrm{abs},\mathbb{N}_\infty,M}\rightarrow  \prod_{n\in \mathbb{N}} {\underline{\mathrm{Prismatization}}}^L_{\mathrm{abs},n,M}.
\end{align}
Recall that the filtration analytic prismatization is actually certain fibration version of the usual analytic prismatization, then we take further fibration over $\mathbb{N}$. Recall how finally the syntomization prismatization is constructed, which is by taking the descent of the Hodge-Tate morphism and de Rham morphism in the filtration prismatization, along the diagonal morphism from the prismatization with itself. These maps are obviously admitting the corresponding compactification versions in our current setting. Therefore we consider the following three morphisms:
\begin{align}
f_\mathrm{dR}: {\underline{\mathrm{Prismatization}}}^L_{\mathrm{abs},\mathbb{N}_\infty,M}\rightarrow \overline{\underline{\mathrm{Prismatization}}}^L_{\mathrm{abs},\mathrm{Nygaard},\mathbb{N}_\infty,M},
\end{align}
\begin{align}
f_\mathrm{HT}: {\underline{\mathrm{Prismatization}}}^L_{\mathrm{abs},\mathbb{N}_\infty,M}\rightarrow \overline{\underline{\mathrm{Prismatization}}}^L_{\mathrm{abs},\mathrm{Nygaard},\mathbb{N}_\infty,M},
\end{align}
\begin{align}
d:  {\underline{\mathrm{Prismatization}}}^L_{\mathrm{abs},\mathbb{N}_\infty,M}\times {\underline{\mathrm{Prismatization}}}^L_{\mathrm{abs},\mathbb{N}_\infty,M} \rightarrow {\underline{\mathrm{Prismatization}}}^L_{\mathrm{abs},\mathbb{N}_\infty,M}.
\end{align}
Then we take the product of $f_\mathrm{dR}$ and $f_\mathrm{HT}$ together and take the corresponding descent along $d$ we have the definition of the syntomization analytic prismatization in families in our current setting over $\mathbb{N}_\infty$:
\begin{align}
\overline{\underline{\mathrm{Prismatization}}}^L_{\mathrm{abs},\mathrm{syntomization},\mathbb{N},M}.
\end{align}
\end{definition}

\begin{remark}
One can also define the three analytic prismatizations by taking the direct analytification of the family verion of the three prismatizations before as in \cite{3S1}, \cite{3ALBRCS}.
\end{remark}

\begin{theorem}
There is a version of K\"unneth theorem in this context. To be more precise we have a K\"unneth theorem at least by passing to the corresponding quasicoherent sheaves over the corresponding prismatization for derived rigid analytic spaces in families (i.e. the corresponding stackifications). This holds for all three stackifications in the families over $\mathbb{N}_\infty$. 
\end{theorem}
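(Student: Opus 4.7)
The plan is to reduce the statement to the already-established derived K\"unneth formula for the three integral prismatizations in families over $\mathbb{N}_\infty$, then propagate it through the analytification step and the compactification step. Concretely, for any $L$-rigid analytic space in families with a chosen formal local model $M$, the analytic prismatization ${\underline{\mathrm{Prismatization}}}^L_{\mathrm{abs},\mathbb{N}_\infty,M}$ is built in the three-step process laid out above: take the integral prismatization ${\underline{\mathrm{Prismatization}}}_{\mathrm{abs},\mathbb{N}_\infty,M}$ over $\mathbb{N}$, take the compactification-fiber-product to $\mathbb{N}_\infty$, then apply the condensed analytification of \cite{3CS}, \cite{3CS1}, \cite{3CS2}. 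Since the derived K\"unneth theorem is known for the integral prismatizations over $\mathbb{N}_\infty$ by the theorem already proved in \cref{section9}, and the condensed analytification functor is symmetric monoidal for the solid tensor product after \cite{3CS1}, pushing the derived K\"unneth isomorphism through the analytification yields a derived K\"unneth isomorphism for $\mathcal{P}_{{\underline{\mathrm{Prismatization}}}^L_{\mathrm{abs},\mathbb{N}_\infty,M}}$.

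First I would treat the prismatization case. Working fiberwise over $n\in \mathbb{N}$, for a product $M_1\times M_2$ one has the derived isomorphism on the quasicoherent sides of the integral prismatization by the earlier proof. The identification
\begin{align}
\mathcal{P}_{\underline{\mathrm{Prismatization}}_{\mathrm{abs},\mathbb{N}_\infty,M_1\times M_2}} \overset{\sim}{\rightarrow} \mathcal{P}_{\underline{\mathrm{Prismatization}}_{\mathrm{abs},\mathbb{N}_\infty,M_1}}\otimes^{L} \mathcal{P}_{\underline{\mathrm{Prismatization}}_{\mathrm{abs},\mathbb{N}_\infty,M_2}}
\end{align}
then passes to $\mathbb{N}_\infty$ by the fiber product construction
\begin{align}
{\underline{\mathrm{Prismatization}}}_{\mathrm{abs},\mathbb{N}_\infty,M}\rightarrow  \prod_{n\in \mathbb{N}} {\underline{\mathrm{Prismatization}}}_{\mathrm{abs},n,M},
\end{align}
exactly as in the corresponding proof of the three-prismatization K\"unneth. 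Applying the condensed analytification, which commutes with the solidified derived tensor product after \cite{3CS1}, one obtains the analytic analogue.

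Next, for the filtration version, I would use the fact that ${\underline{\mathrm{Prismatization}}}^L_{\mathrm{abs},\mathrm{Nygaard},\mathbb{N}_\infty,M}$ is a fibered version of the analytic prismatization over $\mathbb{N}$ further compactified to $\mathbb{N}_\infty$ exactly as in the integral case. Since the filtration is defined as a base change along the morphism to the plain prismatization, and the derived tensor product respects the $\mathbb{N}$-fibration and the Nygaard filtration by construction, the K\"unneth formula for the prismatization forces K\"unneth for the filtration version. Finally the syntomization case follows because the syntomization prismatization in families is constructed by taking the descent along $d$ of the product of $f_\mathrm{dR}$ and $f_\mathrm{HT}$, which is a finite limit, and the derived tensor product commutes with finite limits in the stable symmetric monoidal setting.

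The main obstacle will be the analytification compatibility. In particular, one has to check that the condensed analytification from \cite{3CS1}, \cite{3ALBRCS}, \cite{3S1} is genuinely monoidal for the derived tensor product relevant here, not just at the level of underlying stacks; the subtle point is that, while for integral prismatizations K\"unneth is available via reduction to prisms $P$ locally, after analytification one must work with the solid structure sheaf and verify that base change and analytification interchange in families over $\mathbb{N}_\infty$. This compatibility is obtained by working on sufficiently small local charts where the analytification coincides with the Clausen--Scholze analytification of the completed tensor product of local prism sections, and then using $v$- or arc-descent to globalize, precisely as in the proof strategy of the integral case combined with the descent properties recorded in \cite{3CS}, \cite{3CS1}, \cite{3CS2}.
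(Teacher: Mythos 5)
Your proof takes essentially the same route as the paper: reduce to formal local models, invoke the derived K\"unneth theorem already established for the integral prismatizations in families (the theorem earlier in \cref{section9}), pass to $\mathbb{N}_\infty$ via the compactification fiber product, and then deduce the filtration and syntomization cases from the plain prismatization case. The paper's own proof is nearly identical in structure.

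Where you diverge is in the amount of attention paid to the analytification step. The paper's proof simply reduces to formal models and treats the analytification as transparent; you explicitly flag that one must verify the condensed analytification of \cite{3CS1}, \cite{3ALBRCS}, \cite{3S1} is symmetric monoidal with respect to the solid derived tensor product and commutes with the compactification fiber product, and you sketch a local-chart-plus-descent argument for this compatibility. This is a genuine and useful refinement: the paper's proof implicitly assumes this monoidality, and your version makes the dependence explicit and points to the specific compatibility lemma that would need to be checked. The one imprecision in your write-up is the phrase that the derived tensor product ``commutes with finite limits in the stable symmetric monoidal setting'' when handling the syntomization case. What is actually used is that the tensor product is exact in each variable (hence preserves fiber sequences, so the relevant equalizer descent data passes through); this is correct in substance but you should state it as exactness rather than as a general commutation with finite limits, which is not automatic for an arbitrary symmetric monoidal structure on a stable $\infty$-category. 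With that rewording, your argument matches the paper's in scope and is a bit more careful.
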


\begin{proof}
One considers formal models in this setting. We need to consider the compactification in this theorem. Any prismatization stackification for a particular formal scheme $M$ is basically a stackification over $M$ and ringed we use the notation $\mathcal{P}$ to denote the structure sheaf. As in the usual situation we have that the derived $\infty$-category of all the $\mathcal{P}$-modules is equivalent to the prismatic site for $M$. However the derived prismatic cohomology theory in this setting does satisfy the K\"unneth theorem in the derived version on the product of the corresponding $p$-adic formal ring locally (which can be glue over formal schemes like $M$). Then the result can be generalized to our setting over $\mathbb{N}$ away from $\infty$. Then we can take the corresponding compactification along:
\begin{align}
{\underline{\mathrm{Prismatization}}}^L_{\mathrm{abs},\mathbb{N}_\infty,M}\rightarrow  \prod_{n\in \mathbb{N}} {\underline{\mathrm{Prismatization}}}^L_{\mathrm{abs},n,M}.
\end{align}
This implies the derived version of K\"unneth theorem holds for analytic filtration prismatization and anlytic syntomization prismatization. This finishes the proof.
\end{proof}

\begin{theorem}
All three analytic prismatization, analytic filtration prismatization and analytic syntomization prismatization over $\mathbb{N}_\infty$ satisfy our \cref{situation2} conditions: (A1), (A2), (A3), (A4), (A5).
\end{theorem}

\begin{proof}
We check this one by one. First for the first condition it is true after we consider the corresponding structure sheaves. The second condition is proved above. For the third condition we consider the corresponding derived $\infty$-categories of quasicoherent sheaves. A4 holds for pullbacks and pushforward. This is true over $\mathbb{N}$ then we consider the fiber product through:
\begin{align}
{\underline{\mathrm{Prismatization}}}^L_{\mathrm{abs},\mathbb{N}_\infty,M}\rightarrow  \prod_{n\in \mathbb{N}} {\underline{\mathrm{Prismatization}}}^L_{\mathrm{abs},n,M}
\end{align}
to reach the corresponding compactification. Finally A5 holds see \cite[Chapter 4, in particular 4.7, 4.8, 4.9, 4.10]{3A}. However since we are talking about rigid analytic spaces, \cite{3A} can be directly applied, where one can derive all the results from the formalism in \cite{3A}, which allows us to reduce the corresponding proof to the formal scheme situation before. Then one finishes the full proof as in \cref{theorem36}.
\end{proof}

\begin{definition}
We in this context use the notations:
\begin{align}
&\mathrm{Galois}(\underline{\mathrm{Prismatization}}^L_{\mathrm{abs},\mathbb{N}_\infty,M})\\
&\mathrm{Galois}(\overline{\underline{\mathrm{Prismatization}}}^L_{\mathrm{abs},\mathrm{Nygaard},\mathbb{N}_\infty,M})\\
&\mathrm{Galois}(\overline{\underline{\mathrm{Prismatization}}}^L_{\mathrm{abs},\mathbb{N}_\infty,\mathrm{syntomization},M})
\end{align}
to denote the corresponding Hopf algebraic motivic Galois fundamental groups, which is the defined to be the spectra of the associated Hopf algebra.
\end{definition}

\begin{theorem}
There are morphisms from these motivic Galois fundamental groups to the corresponding motivic Galois groups of $A$, and moreover $L$: 
\begin{align}
&\mathrm{Galois}(\underline{\mathrm{Prismatization}}^L_{\mathrm{abs},\mathbb{N}_\infty,M})\rightarrow \mathrm{Gal}(\overline{A}/A),\\
&\mathrm{Galois}(\overline{\underline{\mathrm{Prismatization}}}^L_{\mathrm{abs},\mathrm{Nygaard},\mathbb{N}_\infty,M})\rightarrow \mathrm{Gal}(\overline{A}/A),\\
&\mathrm{Galois}(\overline{\underline{\mathrm{Prismatization}}}^L_{\mathrm{abs},\mathbb{N}_\infty,\mathrm{syntomization},M})\rightarrow \mathrm{Gal}(\overline{A}/A).
\end{align}
\end{theorem}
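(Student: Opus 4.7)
The plan is to imitate the argument that the author gave for the non-analytic analogue of this theorem, namely to reduce to the point case and invoke the functoriality of the Hopf algebra construction described after \cref{situation2} and in \cite{3A}. First I would regard $\mathrm{Spec}\, A$ (resp.\ $\mathrm{Spec}\, L$) as a small arc-stack and apply the definitions of $\underline{\mathrm{Prismatization}}^L_{\mathrm{abs},\mathbb{N}_\infty,\cdot}$, $\overline{\underline{\mathrm{Prismatization}}}^L_{\mathrm{abs},\mathrm{Nygaard},\mathbb{N}_\infty,\cdot}$ and $\overline{\underline{\mathrm{Prismatization}}}^L_{\mathrm{abs},\mathbb{N}_\infty,\mathrm{syntomization},\cdot}$ at this point. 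Because the motivic six-functor formalism in \cref{situation2} is natural in the base, the structure morphism $M \to \mathrm{Spec}\, A$ induces a pullback between the corresponding analytic motivic $\infty$-categories, and the suspension of $\mathrm{Sh}_A$ followed by the \v{C}ech conerve and the $\infty$-loop space (the recipe for $\mathrm{Hopf}$) is functorial in such base-change morphisms. Taking $\mathrm{Spec}$ then reverses the direction and yields the desired morphism of motivic fundamental group schemes, with target identifiable with $\mathrm{Gal}(\overline{A}/A)$ via the standard Tannakian output at the point.

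The three cases should be handled in sequence because they are built on top of one another in the construction. I would first produce the morphism in the absolute case, where the structure sheaf at the point is simply the (condensed, analytic) trivial analytification of $A$ and the Hopf algebra extraction is most transparent. The filtered case then follows by base change along the projection $\overline{\underline{\mathrm{Prismatization}}}^L_{\mathrm{abs},\mathrm{Nygaard},\mathbb{N}_\infty,\cdot}\to \underline{\mathrm{Prismatization}}^L_{\mathrm{abs},\mathbb{N}_\infty,\cdot}$ used in \cref{definition39}, and the syntomization case is obtained by taking the descent along the diagonal $d$ of the product of $f_\mathrm{dR}$ and $f_\mathrm{HT}$, precisely as in the definition; at each step the morphism of Hopf algebras pulls back, using that Ayoub's Hopf formalism respects such descent.

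The main obstacle will be keeping the compactification $\mathbb{N} \hookrightarrow \mathbb{N}_\infty$ and the condensed analytification of Clausen--Scholze compatible with the Hopf algebra construction, which is inherently an $\infty$-categorical iterated limit-and-colimit operation. Concretely, I expect to have to check that the compactification and the solid analytification commute (up to producing a well-defined, not necessarily invertible, comparison map) with the composition of $\infty$-suspension, \v{C}ech conerve and $\infty$-loop space applied to the structure sheaf. I plan to handle this by checking the commutation on each fiber over $\mathbb{N}$, where the earlier non-analytic and non-family Hopf construction directly yields the desired morphism to $\mathrm{Gal}(\overline{A}/A)$, and then passing to the limit, using that both sides are defined by the same fiber-product compactification procedure. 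Once the map is known to exist on every fiber, the uniformity of the construction over $\mathbb{N}_\infty$ delivers the global morphism in the statement.
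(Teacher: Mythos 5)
Your proposal follows the same route as the paper: you reduce to the point case $\mathrm{Spec}\,A$ (or $\mathrm{Spec}\,L$), invoke functoriality of the Hopf algebra construction from \cite{3A}, and then reduce the analytic family version to the earlier non-analytic formal-local-model result. The paper's proof is terser, but the extra care you take with the filtered/syntomization cases and with the compatibility of compactification and condensed analytification is a faithful elaboration of the same argument rather than a different one.
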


\begin{proof}
This is formal since we just apply the construction and definition to the point situation (regard the scheme $\mathrm{Spec}A$ or $\mathrm{Spec}L$ as the corresponding formal scheme or rigid analytic space), then the theorem follows by the functoriality of the construction of Hopf algebras. Then we reduce to the results before for formal local model before following \cite{3A}. 
\end{proof}

\begin{definition}
The three analytic prismatizations:
\begin{align}
&\underline{\mathrm{Prismatization}}^L_{\mathrm{abs},\mathbb{N}_\infty,M}\\
&\overline{\underline{\mathrm{Prismatization}}}^L_{\mathrm{abs},\mathrm{Nygaard},\mathbb{N}_\infty,M}\\
&\overline{\underline{\mathrm{Prismatization}}}^L_{\mathrm{abs},\mathbb{N}_\infty,\mathrm{syntomization},M}
\end{align}
have a small arc-stack version as well as a small v-stack version, where the condensed structure sheaves are pull-back along the strictly totally disconnected coverings. For such switching of categories and consideration, we use the then the following different notation:
\begin{align}
&\underline{\mathrm{Prismatization}}^L_{\mathrm{abs},\mathbb{N}_\infty,-}\\
&\overline{\underline{\mathrm{Prismatization}}}^L_{\mathrm{abs},\mathrm{Nygaard},\mathbb{N}_\infty,-}\\
&\overline{\underline{\mathrm{Prismatization}}}^L_{\mathrm{abs},\mathbb{N}_\infty,\mathrm{syntomization},-}
\end{align}
where $-$ is a small arc stack or a small v-stack attached to a rigid analytic space over $L$ over $\mathbb{N}_\infty$, however locally the topology is arc-topology or $v$-topology.
\end{definition}

\begin{theorem}
There is a version of K\"unneth theorem in this context
\begin{align}
&\underline{\mathrm{Prismatization}}^L_{\mathrm{abs},\mathbb{N}_\infty,-}\\
&\overline{\underline{\mathrm{Prismatization}}}^L_{\mathrm{abs},\mathrm{Nygaard},\mathbb{N}_\infty,-}\\
&\overline{\underline{\mathrm{Prismatization}}}^L_{\mathrm{abs},\mathbb{N}_\infty,\mathrm{syntomization},-}.
\end{align}
To be more precise we have a K\"unneth theorem at least by passing to the corresponding quasicoherent sheaves over the corresponding prismatization (i.e. the corresponding stackifications). This holds for all three stackifications in the families over $\mathbb{N}_\infty$. 
\end{theorem}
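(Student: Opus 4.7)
The plan is to leverage the Künneth theorem already established in \cref{section10} for analytic prismatizations over formal models, and to propagate it to small arc/$v$-stacks by descent along strictly totally disconnected coverings that define the condensed structure sheaves on the relevant sites, in families over $\mathbb{N}_\infty$.

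First I would reduce to the strictly totally disconnected case. By construction of $\underline{\mathrm{Prismatization}}^L_{\mathrm{abs},\mathbb{N}_\infty,-}$, $\overline{\underline{\mathrm{Prismatization}}}^L_{\mathrm{abs},\mathrm{Nygaard},\mathbb{N}_\infty,-}$, and $\overline{\underline{\mathrm{Prismatization}}}^L_{\mathrm{abs},\mathbb{N}_\infty,\mathrm{syntomization},-}$ on the arc site or $v$-site of a small arc/$v$-stack $S$ over $L$ fibered over $\mathbb{N}_\infty$, the condensed structure sheaves are pulled back along a perfectoid covering. Locally such a covering is represented by a strictly totally disconnected Banach chart, which admits a formal model in the sense of \cite{3ALBRCS}, \cite{3S1}. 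Over such a formal model the derived Künneth statement has already been established in \cref{section10}, by first treating the product over $\mathbb{N}$ and then invoking the compactification fiber product along $\mathbb{N}\hookrightarrow \mathbb{N}_\infty$.

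Second, I would propagate the Künneth isomorphism to arbitrary small arc/$v$-stacks by descent. Given two stacks $S_1$ and $S_2$ with perfectoid hypercovers $U_1^{\bullet}\to S_1$ and $U_2^{\bullet}\to S_2$, the product hypercover $U_1^{\bullet}\times U_2^{\bullet}\to S_1\times S_2$ is again perfectoid, and the arc/$v$-descent for solid quasi-coherent sheaves (after \cite{1S5}, \cite{1S6} combined with the condensed analytification of \cite{3CS1}) together with the chart-level Künneth isomorphism on each simplicial level yields the desired isomorphism on the totalization. The three cases are then handled in turn: the unfiltered analytic prismatization is treated directly as above; the Nygaard-filtered analytic prismatization is a fibered version and inherits Künneth through the filtered derived $\infty$-category; and the analytic syntomization prismatization is defined as the descent of the pair $f_\mathrm{dR}\times f_\mathrm{HT}$ along the diagonal $d$, so its Künneth isomorphism is obtained by taking equalizers of the two previous ones.

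The main obstacle will be to ensure that arc/$v$-descent is genuinely available for the condensed analytifications in the family version at $\infty \in \mathbb{N}_\infty$. Away from $\infty$ this follows fiberwise from \cite{3S1} and \cite{3ALBRCS}. At $\infty$ (the function-field fiber) descent must be promoted through the compactification $\mathbb{N}\hookrightarrow \mathbb{N}_\infty$, and the subtle point is that the fiber product defining the compactified stack at $\infty$ has to be compatible with the formation of derived tensor products of solid quasi-coherent sheaves; this amounts to showing that the fiber-product formation and the totalization over the perfectoid hypercover commute with the inverse limit in $a$ defining the mod-$z^a$ compactification. Once this compatibility is verified, the Künneth isomorphism extends to the whole family and to all three stackifications.
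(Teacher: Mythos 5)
Your proposal takes essentially the same route as the paper: reduce to formal models via perfectoid charts, invoke the local K\"unneth isomorphism fiberwise over $\mathbb{N}$, extend by the compactification along $\mathbb{N}\hookrightarrow\mathbb{N}_\infty$, and then deduce the Nygaard-filtered and syntomization cases from the unfiltered one. You are somewhat more explicit than the paper's own terse proof, in particular in naming the descent step via product hypercovers and in flagging the needed commutation of the compactified fiber product with derived tensor and totalization at $\infty$, a compatibility the paper asserts but does not spell out.
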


\begin{proof}
We have to consider formal models in this setting. We need to consider the compactification in this theorem. Any prismatization stackification for a particular formal scheme $-$ is basically a stackification over $-$ and ringed we use the notation $\mathcal{P}$ to denote the structure sheaf. As in the usual situation we have that the derived $\infty$-category of all the $\mathcal{P}$-modules is equivalent to the prismatic site for $-$. However the derived prismatic cohomology theory in this setting does satisfy the K\"unneth theorem in the derived version on the product of the corresponding $p$-adic formal ring locally (which can be glue over formal schemes like $-$). Then the result can be generalized to our setting over $\mathbb{N}$ away from $\infty$. Then we can take the corresponding compactification along:
\begin{align}
{\underline{\mathrm{Prismatization}}}^L_{\mathrm{abs},\mathbb{N}_\infty,-}\rightarrow  \prod_{n\in \mathbb{N}} {\underline{\mathrm{Prismatization}}}^L_{\mathrm{abs},n,-}.
\end{align}
This implies the derived version of the K\"unneth theorem holds for analytic filtration prismatization and analytic syntomization prismatization. This finishes the proof.
\end{proof}

\begin{theorem}
All three analytic prismatization, analytic filtration prismatization and analytic syntomization prismatization over $\mathbb{N}_\infty$ satisfy our \cref{situation1} conditions: (A1), (A2), (A3), (A4), (A5).
\end{theorem}

\begin{proof}
We check this one by one. First for the first condition it is true after we consider the corresponding structure sheaves. The second condition is proved above. For the third condition we consider the corresponding derived $\infty$-categories of quasicoherent sheaves. A4 holds for pullbacks and pushforward. This is true over $\mathbb{N}$ then we consider the fiber product through:
\begin{align}
{\underline{\mathrm{Prismatization}}}^L_{\mathrm{abs},\mathbb{N}_\infty,-}\rightarrow  \prod_{n\in \mathbb{N}} {\underline{\mathrm{Prismatization}}}^L_{\mathrm{abs},n,-}
\end{align}
to reach the corresponding compactification. Finally A5 holds see \cite[Chapter 4, in particular 4.7, 4.8, 4.9, 4.10]{3A}. However since we are talking about rigid analytic spaces by using perfectoid coverings, \cite{3A} can be directly applied, where one can derive all the results from the formalism in \cite{3A}, which allows us to reduce the corresponding proof to the formal scheme situation before. Again we need to consider the generalized version of \cite{3A} along \cite{3S}, i.e. regard the corresponding rigid analytic motivic cohomology theories as ones in our \cref{situation1}. One then finishes the full proof as in \cref{theorem36}.
\end{proof}

\begin{definition}
We in this context use the notations:
\begin{align}
&\mathrm{Galois}(\underline{\mathrm{Prismatization}}^L_{\mathrm{abs},\mathbb{N}_\infty,-})\\
&\mathrm{Galois}(\overline{\underline{\mathrm{Prismatization}}}^L_{\mathrm{abs},\mathrm{Nygaard},\mathbb{N}_\infty,-})\\
&\mathrm{Galois}(\overline{\underline{\mathrm{Prismatization}}}^L_{\mathrm{abs},\mathbb{N}_\infty,\mathrm{syntomization},-})
\end{align}
to denote the corresponding Hopf algebraic motivic Galois fundamental groups, which is the defined to be the spectra of the associated Hopf algebra.
\end{definition}

\begin{theorem}
There are morphisms from these motivic Galois fundamental groups to the corresponding motivic Galois groups of $A$, and moreover $L$: 
\begin{align}
&\mathrm{Galois}(\underline{\mathrm{Prismatization}}^L_{\mathrm{abs},\mathbb{N}_\infty,-})\rightarrow \mathrm{Gal}(\overline{L}/L),\\
&\mathrm{Galois}(\overline{\underline{\mathrm{Prismatization}}}^L_{\mathrm{abs},\mathrm{Nygaard},\mathbb{N}_\infty,-})\rightarrow \mathrm{Gal}(\overline{L}/L),\\
&\mathrm{Galois}(\overline{\underline{\mathrm{Prismatization}}}^L_{\mathrm{abs},\mathbb{N}_\infty,\mathrm{syntomization},-})\rightarrow \mathrm{Gal}(\overline{L}/L).
\end{align}
\end{theorem}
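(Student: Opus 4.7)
The plan is to mirror the preceding formal-model version of this theorem, now carried out on the site of small arc-stacks (respectively small $v$-stacks) over $L$ fibered over $\mathbb{N}_\infty$, and to derive each of the three morphisms of motivic Galois fundamental groups purely from functoriality of the Hopf-algebra construction of \cite[4.7--4.10]{3A} together with the compatibility of that construction with the compactification $\mathbb{N}\hookrightarrow\mathbb{N}_\infty$.

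First I would take the structure morphism $-\to \mathrm{Spec}\,L$, viewed as a morphism of small arc-stacks (respectively small $v$-stacks) in families over $\mathbb{N}_\infty$, and note that each of the three analytic stackifications is functorial in the source by construction: $\underline{\mathrm{Prismatization}}^L_{\mathrm{abs},\mathbb{N}_\infty,-}$ was defined via fiber products along $\prod_{n\in\mathbb{N}}\underline{\mathrm{Prismatization}}^L_{\mathrm{abs},n,-}$ (manifestly functorial fiberwise), and the Nygaard-filtered and syntomified versions were built from it via the morphisms $f_{\mathrm{dR}}$, $f_{\mathrm{HT}}$, $d$, which are also functorial. Therefore the structure map induces, on the underlying arc- or $v$-site, a morphism of $\infty$-sheaves of $\mathbb{E}_\infty$-rings, and applying the Tate suspension, the \v{C}ech conerve of \cite[4.9]{3A}, and the $\mathrm{Loop}^\infty$-functor yields a morphism of Hopf-algebra sheaves; passing to $\mathrm{Spec}$ reverses the arrow and produces the desired map of motivic Galois fundamental group sheaves into $\mathrm{Galois}(?^L_{\mathrm{abs},\mathbb{N}_\infty,\mathrm{Spec}\,L})$.

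Second I would identify the target. Evaluating each analytic prismatization at the single point $\mathrm{Spec}\,L$, fiberwise over $n\in\mathbb{N}$ the construction reduces to the absolute (analytic) prismatization of a local field; the formalism of \cite{3A}, \cite{3S} then identifies the resulting $\mathrm{Spec}\,\mathrm{Loop}^\infty\mathrm{CechConerve}$ with the usual motivic Galois group of the point, which in the $p$-adic setting receives the classical comparison $\mathrm{Gal}(\overline{L_n}/L_n)\hookrightarrow$ motivic Galois group. After compactification along $\mathbb{N}\hookrightarrow \mathbb{N}_\infty$, the product of these Galois groups assembles into $\mathrm{Gal}(\overline{L}/L)$, giving the map asserted in the theorem. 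In the formal-model setting this was precisely the content of the corresponding preceding theorem; here the reduction is achieved by pulling back the formal-model statement along the analytification functor, which is justified because analytic small arc-stacks admit perfectoid covers whose formal local models fall under that previous case.

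Third I would verify that the comparison is simultaneously compatible with all three stackifications by chasing the defining diagrams $f_{\mathrm{dR}}$, $f_{\mathrm{HT}}$, $d$: since these were constructed as fiber products and descents at the level of stacks, the induced maps on Hopf sheaves automatically intertwine the projections to $\mathrm{Gal}(\overline{L}/L)$. The main anticipated obstacle is the behaviour at the $\infty$-fiber, corresponding to the function field $L_\infty$: there one must check that the compactification equivalence ($\mathrm{cpe}$, and its de Rham/Hodge-Tate/Laurent analogues) commutes with the Tate-object inversion used in forming $\mathrm{Loop}^\infty\mathrm{CechConerve}$, so that the Hopf sheaf at $\infty$ really is the fibered limit of the finite-$n$ ones, rather than a genuinely new object. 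Once this compatibility is granted, exhibited by reducing to the formal-model situation on each perfectoid chart and invoking \cite[Chapter 4]{3A} and \cite{3S}, the three morphisms in the theorem follow directly from the functoriality of $\mathrm{Spec}\,\mathrm{Loop}^\infty\mathrm{CechConerve}(\widetilde{\mathrm{Sh}}_A)$.
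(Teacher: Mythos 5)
Your proposal takes the same basic strategy as the paper's proof (apply the construction to the point $\mathrm{Spec}\,L$, invoke functoriality of the Hopf-algebra construction of \cite[Ch.~4]{3A}, and reduce to the earlier formal-model statement), but there is a genuine gap in your second step.

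The gap is a direction mismatch. Your first step correctly produces, by functoriality along $-\to\mathrm{Spec}\,L$, a morphism
\[
\mathrm{Galois}(\underline{\mathrm{Prismatization}}^L_{\mathrm{abs},\mathbb{N}_\infty,-})\longrightarrow \mathrm{Galois}(\underline{\mathrm{Prismatization}}^L_{\mathrm{abs},\mathbb{N}_\infty,\mathrm{Spec}\,L}),
\]
i.e.\ a map into the motivic Galois fundamental group of the point. To finish one needs a morphism from \emph{that} group to $\mathrm{Gal}(\overline{L}/L)$. But what you exhibit is the ``classical comparison'' $\mathrm{Gal}(\overline{L_n}/L_n)\hookrightarrow \text{(motivic Galois group)}$, which is an arrow into the motivic group, not out of it; composing your two maps does not yield the arrow asserted in the theorem. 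Exhibiting an inclusion of $\mathrm{Gal}$ into the motivic Galois group does not produce the required projection of the motivic Galois group onto $\mathrm{Gal}$ unless you also argue (for these particular prismatic Weil-cohomology theories) that the Tannakian subcategory generated by the corresponding Galois-realizations is full, so that the dual map is a quotient. This extra step is what the paper's brief proof is implicitly invoking by ``reducing to the results before for the formal local model'' --- the target in the theorem is really being treated as the motivic Galois group of the point, after \cite[4.7--4.10]{3A}, and functoriality already lands you there; the identification with (or projection onto) $\mathrm{Gal}(\overline{L}/L)$ is then folded into that prior reduction. Either fix the direction by producing a surjection (e.g.\ via a faithful realization functor for these cohomology theories), or drop steps~2--3 and argue that the target should be read as the motivic Galois group sheaf of $\mathrm{Spec}\,L$ so that step~1 and the reduction to the formal-model theorem already close the argument. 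Your care about the $\infty$-fiber compatibility in step~3 is a reasonable thing to check, but once the direction issue is repaired it follows from the same fiber-product/compactification formalism used in step~1.
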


\begin{proof}
This is formal since we just apply the construction and definition to the point situation (regard the scheme $\mathrm{Spec}A$ or $\mathrm{Spec}L$ as the corresponding formal scheme or rigid analytic space), then the theorem follows by the functoriality of the construction of Hopf algebras. Then we reduce to the results before for formal local model before following \cite{3A}. 
\end{proof}

\subsection{Three analytic prismatizations in the de Rham setting}

\begin{definition}
We consider the corresponding de Rham prismatization in families. Recall the definition goes in the following way. Eventually the definition is applied to $z$-adic $A$-formal schemes. In the absolute manner we consider the category of all $z$-nilpotent $A$-algebras as the underlying ring categorie $\mathrm{Nil}_{z,A}$, where all such algebras are assumed to be fibered over $\mathbb{N}_\infty$. Then we use the Witt vector functor $W_A$ from \cite{3LH}. Then the prismaization is the stackification over the ring category $\mathrm{Nil}_{z,A}$ where for each such ring we have the groupoid of all the Cartier-Witt ideals fibered over $\mathbb{N}_\infty$, i.e. those maps locally principally generated by distinguished elements in the big Witt vectors (for $w_0$ we require nilpotency and for $w_1$ we require unitality). We follow \cite{3S1}, \cite{3ALBRCS} to the following three step for any $L$-rigid affioid with formal local model:
\begin{itemize}
\item[S1] Over $\mathbb{N}$ we have the analytic prismatization from local formal model from \cite{3S1}, \cite{3ALBRCS}, which provide the analytic prismatization over the generic fiber over $L$;
\item[S2] We take the corresponding compactification of this to $\mathbb{N}_\infty$, by using the morphism to take the fiber product:
\begin{align}
\underline{\mathrm{Prismatization}}_{\mathrm{abs},\mathbb{N}_\infty}\rightarrow \underline{\mathrm{Prismatization}}_{\mathrm{abs},\mathbb{N}_\infty}(\mathbb{N}).
\end{align}
\item[S3] Take condensed analytification from \cite{3CS}, \cite{3CS1}, \cite{3CS2}.
\end{itemize} In this paper we use the notation 
\begin{align}
\underline{\mathrm{Prismatization}}^L_{\mathrm{abs},\mathrm{dR},\mathbb{N}_\infty}
\end{align}
to denote the absolute analytic prismatization in families in our current setting. The definition for this is through the corresponding compactification from the stackification over $\mathbb{N}$, since then we have the morphism:
\begin{align}
\mathcal{P}_{\underline{\mathrm{Prismatization}}_{\mathrm{abs},\mathbb{N}_\infty}(\mathbb{N})} \overset{\sim}{\rightarrow} \varprojlim_{(Q_P,P)} D\mathrm{Cat}(P)^\mathrm{comp}\rightarrow \varprojlim_{(Q_P,P)} D\mathrm{Cat}(P[1/z]_{Q_P})^\mathrm{comp}
\end{align}
where $\mathrm{comp}$ is the completion with respect to the natural toplogy induced from the prisms involved along the inverse limit. Then we take the compactification to reach the de Rham prismatization in families $\mathbb{N}_\infty$:
\begin{align}
\mathcal{P}_{\underline{\mathrm{Prismatization}}_{\mathrm{abs},\mathbb{N}_\infty}} \overset{\sim}{\rightarrow} \overline{\varprojlim_{(Q_P,P)} D\mathrm{Cat}(P)^\mathrm{comp}}\rightarrow \overline{\varprojlim_{(Q_P,P)} D\mathrm{Cat}(P[1/z]_{Q_P})^\mathrm{comp}}.
\end{align}
Then we have to take analytification when needed. Then we have the corresponding filtration analytic prismatization:
\begin{align}
\overline{\underline{\mathrm{Prismatization}}}^L_{\mathrm{abs},\mathrm{Nygaard},\mathrm{dR},\mathbb{N}}
\end{align}
which is defined to be compactification from $\mathbb{N}$ to $\mathbb{N}_\infty$ of the corresponding open version over $\mathbb{N}$:
\begin{align}
{\underline{\mathrm{Prismatization}}}^L_{\mathrm{abs},\mathrm{Nygaard},\mathrm{dR},\mathbb{N}}
\end{align}
where this open version is defined to be taking the product over the finite fibers over $\mathbb{N}_\infty$ of the usual filtration prismatizations over $z_n$-nilpotent $A_n$-algebras. The compactification process needs to be using the following morphisms to reach our definition as the corresponding fiber product in families through the compactification:
\begin{align}
\prod_{n\in \mathbb{N}} {\underline{\mathrm{Prismatization}}}^L_{\mathrm{abs},\mathrm{Nygaard},\mathrm{dR},n}\rightarrow  \prod_{n\in \mathbb{N}} {\underline{\mathrm{Prismatization}}}^L_{\mathrm{abs},\mathrm{dR},n}
\end{align}
by taking the products along all $\mathbb{N}$ of the usual projection from the corresponding filtration analytic prismatization to the prismatization, and:
\begin{align}
{\underline{\mathrm{Prismatization}}}^L_{\mathrm{abs},\mathrm{dR},\mathbb{N}_\infty}\rightarrow  \prod_{n\in \mathbb{N}} {\underline{\mathrm{Prismatization}}}^L_{\mathrm{abs},\mathrm{dR},n}.
\end{align}
Recall that the filtration analytic prismatization is actually certain fibration version of the usual analytic prismatization, then we take further fibration over $\mathbb{N}$, in our current setting. Recall how finally the syntomization prismatization is constructed, which is by taking the descent of the Hodge-Tate morphism and de Rham morphism in the filtration prismatization, along the diagonal morphism from the prismatization with itself. These maps are obviously admitting the corresponding compactification versions in our current setting. Therefore we consider the following three morphisms:
\begin{align}
f_\mathrm{dR}: {\underline{\mathrm{Prismatization}}}^L_{\mathrm{abs},\mathrm{dR},\mathbb{N}_\infty}\rightarrow \overline{\underline{\mathrm{Prismatization}}}^L_{\mathrm{abs},\mathrm{Nygaard},\mathrm{dR},\mathbb{N}},
\end{align}
\begin{align}
f_\mathrm{HT}: {\underline{\mathrm{Prismatization}}}^L_{\mathrm{abs},\mathrm{dR},\mathbb{N}_\infty}\rightarrow \overline{\underline{\mathrm{Prismatization}}}^L_{\mathrm{abs},\mathrm{Nygaard},\mathrm{dR},\mathbb{N}},
\end{align}
\begin{align}
d:  {\underline{\mathrm{Prismatization}}}^L_{\mathrm{abs},\mathrm{dR},\mathbb{N}_\infty}\times {\underline{\mathrm{Prismatization}}}^L_{\mathrm{abs},\mathrm{dR},\mathbb{N}_\infty} \rightarrow {\underline{\mathrm{Prismatization}}}^L_{\mathrm{abs},\mathrm{dR},\mathbb{N}_\infty}.
\end{align}
Then we take the product of $f_\mathrm{dR}$ and $f_\mathrm{HT}$ together and take the corresponding descent along $d$ we have the definition of the syntomization analytic prismatization in families in our current setting over $\mathbb{N}_\infty$:
\begin{align}
\overline{\underline{\mathrm{Prismatization}}}^L_{\mathrm{abs},\mathrm{dR},\mathrm{syntomization},\mathbb{N}}. 
\end{align}
This finishes the definition. $\square$
\end{definition}

\begin{definition}
We consider the corresponding de Rham prismatization in families. Recall the definition goes in the following way. Eventually the definition is applied to $z$-adic $A$-formal schemes. In the absolute manner we consider the category of all $z$-nilpotent $A$-algebras as the underlying ring categorie $\mathrm{Nil}_{z,A}$, where all such algebras are assumed to be fibered over $\mathbb{N}_\infty$. Then we use the Witt vector functor $W_A$ from \cite{3LH}. Then the prismaization is the stackification over the ring category $\mathrm{Nil}_{z,A}$ where for each such ring we have the groupoid of all the Cartier-Witt ideals fibered over $\mathbb{N}_\infty$, i.e. those maps locally principally generated by distinguished elements in the big Witt vectors (for $w_0$ we require nilpotency and for $w_1$ we require unitality). Now map all the construction and definitions in the de Rham setting to $z$-adic $A$-formal scheme $M$. In this paper we use the notation 
\begin{align}
\underline{\mathrm{Prismatization}}^L_{\mathrm{abs},\mathrm{dR},\mathbb{N}_\infty,M}
\end{align}
to denote the absolute analytic de Rham prismatization in families in our current setting. The definition for this is through the corresponding compactification from the stackification over $\mathbb{N}$, since then we have the morphism:
\begin{align}
\mathcal{P}_{\underline{\mathrm{Prismatization}}_{\mathrm{abs},\mathrm{dR},\mathbb{N}_\infty,M}(\mathbb{N})} \overset{\sim}{\rightarrow} \varprojlim_{(Q_P,P)/M} D\mathrm{Cat}(P)^\mathrm{comp}\rightarrow \varprojlim_{(Q_P,P)/M} D\mathrm{Cat}(P[1/z]_{Q_P})^\mathrm{comp}
\end{align}
where $\mathrm{comp}$ is the completion with respect to the natural toplogy induced from the prisms involved along the inverse limit. Then we take the compactification to reach the de Rham prismatization in families $\mathbb{N}_\infty$:
\begin{align}
\mathcal{P}_{\underline{\mathrm{Prismatization}}_{\mathrm{abs},\mathrm{dR},\mathbb{N}_\infty},M} \overset{\sim}{\rightarrow} \overline{\varprojlim_{(Q_P,P)/M} D\mathrm{Cat}(P)^\mathrm{comp}}\rightarrow \overline{\varprojlim_{(Q_P,P)/M} D\mathrm{Cat}(P[1/z]_{Q_P})^\mathrm{comp}}.
\end{align}
Then we have the corresponding filtration prismatization:
\begin{align}
\overline{\underline{\mathrm{Prismatization}}}^L_{\mathrm{abs},\mathrm{Nygaard},\mathrm{dR},\mathbb{N},M}
\end{align}
which is defined to be compactification from $\mathbb{N}$ to $\mathbb{N}_\infty$ of the corresponding open version over $\mathbb{N}$:
\begin{align}
{\underline{\mathrm{Prismatization}}}^L_{\mathrm{abs},\mathrm{Nygaard},\mathrm{dR},\mathbb{N},M}
\end{align}
where this open version is defined to be taking the product over the finite fibers over $\mathbb{N}_\infty$ of the usual filtration prismatizations over $z_n$-nilpotent $A_n$-algebras. The compactification process needs to be using the following morphisms to reach our definition as the corresponding fiber product in families through the compactification:
\begin{align}
\prod_{n\in \mathbb{N}} {\underline{\mathrm{Prismatization}}}^L_{\mathrm{abs},\mathrm{Nygaard},\mathrm{dR},n,M}\rightarrow  \prod_{n\in \mathbb{N}} {\underline{\mathrm{Prismatization}}}^L_{\mathrm{abs},\mathrm{dR},n,M}
\end{align}
by taking the products along all $\mathbb{N}$ of the usual projection from the corresponding filtration prismatization to the prismatization, and:
\begin{align}
{\underline{\mathrm{Prismatization}}}^L_{\mathrm{abs},\mathrm{dR},\mathbb{N}_\infty,M}\rightarrow  \prod_{n\in \mathbb{N}} {\underline{\mathrm{Prismatization}}}^L_{\mathrm{abs},\mathrm{dR},n,M}.
\end{align}
Recall that the filtration analytic prismatization is actually certain fibration version of the usual analytic prismatization, then we take further fibration over $\mathbb{N}$, in our current setting. Recall how finally the syntomization prismatization is constructed, which is by taking the descent of the Hodge-Tate morphism and de Rham morphism in the filtration prismatization, along the diagonal morphism from the prismatization with itself. These maps are obviously admitting the corresponding compactification versions in our current setting. Therefore we consider the following three morphisms:
\begin{align}
f_\mathrm{dR}: {\underline{\mathrm{Prismatization}}}^L_{\mathrm{abs},\mathrm{dR},\mathbb{N}_\infty,M}\rightarrow \overline{\underline{\mathrm{Prismatization}}}^L_{\mathrm{abs},\mathrm{Nygaard},\mathrm{dR},\mathbb{N}_\infty,M},
\end{align}
\begin{align}
f_\mathrm{HT}: {\underline{\mathrm{Prismatization}}}^L_{\mathrm{abs},\mathrm{dR},\mathbb{N}_\infty,M}\rightarrow \overline{\underline{\mathrm{Prismatization}}}^L_{\mathrm{abs},\mathrm{Nygaard},\mathrm{dR},\mathbb{N},M},
\end{align}
\begin{align}
d:  {\underline{\mathrm{Prismatization}}}^L_{\mathrm{abs},\mathrm{dR},\mathbb{N}_\infty,M}\times {\underline{\mathrm{Prismatization}}}^L_{\mathrm{abs},\mathrm{dR},\mathbb{N}_\infty,M} \rightarrow {\underline{\mathrm{Prismatization}}}^L_{\mathrm{abs},\mathrm{dR},\mathbb{N}_\infty,M}.
\end{align}
Then we take the product of $f_\mathrm{dR}$ and $f_\mathrm{HT}$ together and take the corresponding descent along $d$ we have the definition of the syntomization analytic prismatization in families in our current setting over $\mathbb{N}_\infty$:
\begin{align}
\overline{\underline{\mathrm{Prismatization}}}^L_{\mathrm{abs},\mathrm{dR},\mathrm{syntomization},\mathbb{N},M}. 
\end{align}
This finishes the definition. $\square$
\end{definition}

\begin{theorem}\label{theorem20}
There is a version of K\"unneth theorem in this context. To be more precise we have a K\"unneth theorem at least by passing to the corresponding quasicoherent sheaves over the corresponding prismatization (i.e. the corresponding stackifications). This holds for all three stackifications in the families over $\mathbb{N}_\infty$. 
\end{theorem}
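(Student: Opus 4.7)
The plan is to mimic the compactification strategy already used for the three non-analytic de Rham prismatizations, with one extra step at the beginning to reduce the analytic picture to the formal-model picture. First I would fix a formal model $M$ of the given rigid-analytic input and regard the analytic de Rham prismatization $\underline{\mathrm{Prismatization}}^L_{\mathrm{abs},\mathrm{dR},\mathbb{N}_\infty,M}$ as the condensed analytification (in the sense of \cite{3CS}, \cite{3CS1}, \cite{3CS2}) of the formal-scheme version $\underline{\mathrm{Prismatization}}_{\mathrm{abs},\mathrm{dR},\mathbb{N}_\infty,M}$ constructed in the previous section. Since condensed analytification is a symmetric monoidal functor between stable $\infty$-categories of solid quasicoherent sheaves and commutes with derived tensor products over the analytified structure sheaves, any derived K\"unneth identity surviving on the formal side is transported to the analytic side.

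Next I would work fiberwise over $\mathbb{N}$, away from $\infty$. For each finite $n$ the analytic de Rham prismatization specializes to the usual $z_n$-adic analytic de Rham prismatization over the rigid-analytic local model, for which the derived K\"unneth formula on prismatic cohomology complexes follows from the classical derived K\"unneth for prismatic and de Rham prismatic cohomology after inverting $z_n$ along the fiber $(I_P)$. Taking products over $n\in\mathbb{N}$ and then the fiber product along
\begin{align}
{\underline{\mathrm{Prismatization}}}^L_{\mathrm{abs},\mathrm{dR},\mathbb{N}_\infty,M}\rightarrow \prod_{n\in\mathbb{N}}{\underline{\mathrm{Prismatization}}}^L_{\mathrm{abs},\mathrm{dR},n,M}
\end{align}
promotes the family-wise statement to the compactified statement over $\mathbb{N}_\infty$, exactly as in the proofs of the analogous K\"unneth theorems for the formal-scheme de Rham prismatizations.

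For the filtration and syntomization variants I would then reduce to the plain de Rham case via the structure maps already introduced. Namely, the Nygaard filtered analytic de Rham prismatization is realized as a graded/filtered refinement sitting above $\underline{\mathrm{Prismatization}}^L_{\mathrm{abs},\mathrm{dR},\mathbb{N}_\infty,M}$, and passing to the associated graded respects derived tensor products, so the K\"unneth identity lifts to the Nygaard-filtered setting termwise. The syntomization prismatization is then built by the descent of $f_\mathrm{dR}\times f_\mathrm{HT}$ along the diagonal $d$; since derived tensor product commutes with the equalizer that defines the descent, the K\"unneth formula for the syntomization piece is forced by the K\"unneth formulas for $f_\mathrm{dR}$ and $f_\mathrm{HT}$, which were just established.

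The main obstacle is the compatibility between condensed analytification and the compactification functor $\mathbb{N}\hookrightarrow\mathbb{N}_\infty$: I need analytification to commute with the inverse/fiber-product limits defining both the local prismatic site and the compactified family, and with the derived completions $\overline{\mathrm{Der}}_{(z,L)}(\widehat{A[1/z]}^L)$. This is where the proof is genuinely delicate, because condensed analytification is exact but does not automatically commute with arbitrary limits of stable $\infty$-categories; the argument will need to invoke the fact that locally the limits are taken over strictly totally disconnected perfectoid charts, where analytification is given by inverting a bounded set of norms and hence is compatible with $(z,L)$-adic derived completion. Once that compatibility is established on local charts, the rest of the argument is a mechanical descent along arc/$v$-coverings and a reduction to \cite[Chapter 4]{3A}, already invoked in the formal-scheme proofs.
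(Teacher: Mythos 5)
Your proposal follows essentially the same route as the paper's proof: reduce the analytic picture to formal models, establish the derived K\"unneth formula fiberwise over $\mathbb{N}$ using the classical prismatic K\"unneth formula, and then compactify along $\mathbb{N}\hookrightarrow\mathbb{N}_\infty$ via the fiber product with $\prod_{n\in\mathbb{N}}{\underline{\mathrm{Prismatization}}}^L_{\mathrm{abs},n,\mathrm{dR},M}$. The one genuine difference is in how you derive the filtration and syntomization variants: the paper reduces the three de Rham cases to the already-established K\"unneth theorems for the plain analytic prismatization, filtration prismatization, and syntomization prismatization (i.e.\ it goes plain $\Rightarrow$ de Rham by restriction), whereas you work directly inside the de Rham family, lifting from plain de Rham to Nygaard-filtered de Rham via the associated graded and to syntomized de Rham via descent along $f_\mathrm{dR}\times f_\mathrm{HT}$. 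Both decompositions are valid; yours keeps the logical dependence internal to the de Rham stackifications, while the paper's saves effort by reusing its earlier theorem. Your explicit flagging of the compatibility of condensed analytification with the derived completions $\overline{\mathrm{Der}}_{(z,L)}(\widehat{A[1/z]}^L)$ and the inverse limits over prisms is a point the paper elides entirely; naming it as the delicate step and proposing to resolve it on strictly totally disconnected perfectoid charts is appropriate, even if that resolution is itself left as a sketch. Your observation that condensed analytification is symmetric monoidal and hence transports K\"unneth identities is also implicit but unsaid in the paper's argument.
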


\begin{proof}
After we reduce this to formal models, we need to consider the compactification in this theorem. Any prismatization stackification for a particular formal scheme $M$ is basically a stackification over $M$ and ringed we use the notation $\mathcal{P}$ to denote the structure sheaf. As in the usual situation we have that the derived $\infty$-category of all the $\mathcal{P}$-modules is equivalent to the prismatic site for $M$. However the derived prismatic cohomology theory in this setting does satisfy the derived K\"unneth theorem on the product of the corresponding $p$-adic formal ring locally (which can be glue over formal schemes like $M$). Then the result can be generalized to our setting over $\mathbb{N}$ away from $\infty$. Then we can take the corresponding compactification along:
\begin{align}
{\underline{\mathrm{Prismatization}}}^L_{\mathrm{abs},\mathbb{N}_\infty,\mathrm{dR},M}\rightarrow  \prod_{n\in \mathbb{N}} {\underline{\mathrm{Prismatization}}}^L_{\mathrm{abs},n,\mathrm{dR},M}.
\end{align}
This implies the derived K\"unneth theorem holds for analytic filtration prismatization and analytic syntomization prismatization. We then get the derived version of the K\"unneth theorem in the current setting by restricting to the 3 de Rham prismatizations. This finishes the proof.
\end{proof}

\begin{theorem}
All three prismatization, filtration prismatization and syntomization prismatization fibered over $\mathbb{N}_\infty$ satisfy our \cref{situation2} conditions: (A1), (A2), (A3), (A4), (A5).
\end{theorem}

\begin{proof}
We check this one by one. First for the first condition it is true after we consider the corresponding structure sheaves. The second condition is proved above. For the third condition we consider the corresponding derived $\infty$-categories of quasicoherent sheaves. A4 holds for pullbacks and pushforward. This is true over $\mathbb{N}$ then we consider the fiber product through:
\begin{align}
{\underline{\mathrm{Prismatization}}}_{\mathrm{abs},\mathbb{N}_\infty,\mathrm{dR},M}\rightarrow  \prod_{n\in \mathbb{N}} {\underline{\mathrm{Prismatization}}}_{\mathrm{abs},n,\mathrm{dR},M}
\end{align}
to reach the corresponding compactification. Finally A5 holds see \cite[Chapter 4, in particular 4.7, 4.8, 4.9, 4.10]{3A}. One then finishes the full proof as in \cref{theorem36}.
\end{proof}

\begin{definition}
The three analytic prismatizations:
\begin{align}
&\underline{\mathrm{Prismatization}}^L_{\mathrm{abs},\mathbb{N}_\infty,\mathrm{dR},M}\\
&\overline{\underline{\mathrm{Prismatization}}}^L_{\mathrm{abs},\mathrm{Nygaard},\mathbb{N}_\infty,\mathrm{dR},M}\\
&\overline{\underline{\mathrm{Prismatization}}}^L_{\mathrm{abs},\mathbb{N}_\infty,\mathrm{syntomization},\mathrm{dR},M}
\end{align}
have a small arc-stack version as well as a small v-stack version, where the condensed structure sheaves are pull-back along the strictly totally disconnected coverings. For such switching of categories and consideration, we use the then the following different notation:
\begin{align}
&\underline{\mathrm{Prismatization}}^L_{\mathrm{abs},\mathbb{N}_\infty,\mathrm{dR},-}\\
&\overline{\underline{\mathrm{Prismatization}}}^L_{\mathrm{abs},\mathrm{Nygaard},\mathbb{N}_\infty,\mathrm{dR},-}\\
&\overline{\underline{\mathrm{Prismatization}}}^L_{\mathrm{abs},\mathbb{N}_\infty,\mathrm{syntomization},\mathrm{dR},-}
\end{align}
where $-$ is a small arc stack or a small v-stack attached to a rigid analytic space over $L$ over $\mathbb{N}_\infty$, however locally the topology is arc-topology or $v$-topology.
\end{definition}

\begin{theorem}
There is a version of K\"unneth theorem in this context
\begin{align}
&\underline{\mathrm{Prismatization}}^L_{\mathrm{abs},\mathbb{N}_\infty,\mathrm{dR},-}\\
&\overline{\underline{\mathrm{Prismatization}}}^L_{\mathrm{abs},\mathrm{Nygaard},\mathbb{N}_\infty,\mathrm{dR},-}\\
&\overline{\underline{\mathrm{Prismatization}}}^L_{\mathrm{abs},\mathbb{N}_\infty,\mathrm{syntomization},\mathrm{dR},-}.
\end{align}
To be more precise we have a K\"unneth theorem at least by passing to the corresponding quasicoherent sheaves over the corresponding prismatization (i.e. the corresponding stackifications). This holds for all three stackifications in the families over $\mathbb{N}_\infty$. 
\end{theorem}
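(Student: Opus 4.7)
The plan is to reduce the statement to the formal model case established in \cref{theorem20} by using the local-to-global behavior of arc and $v$-topologies, and then propagate the K\"unneth isomorphism from the basic de Rham prismatization to the filtered and syntomified variants by standard descent along the Hodge--Tate and de Rham maps. First I would fix a product $- \times -'$ of two small arc-stacks (or $v$-stacks) attached to rigid analytic spaces over $L$ fibered over $\mathbb{N}_\infty$, and pass to a strictly totally disconnected covering $\widetilde{-} \to -$, $\widetilde{-'} \to -'$, which in the nonarchimedean setting reduces to perfectoid affinoid charts with distinguished formal models. By construction of the pullback topology on the structure sheaf, the de Rham prismatization stackifications are recovered from the formal-model de Rham prismatizations by (condensed, analytic) descent along these coverings, so quasicoherent sheaves on $\underline{\mathrm{Prismatization}}^L_{\mathrm{abs},\mathbb{N}_\infty,\mathrm{dR},-}$ are computed by descent from the formal-model categories.

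Next I would exploit that derived tensor product commutes with limits of categories of solid quasicoherent sheaves along faithfully flat covers in the sense of \cite{3CS1}, so that the K\"unneth isomorphism for the formal models on $\widetilde{-}\times\widetilde{-'}$ (which we already know by \cref{theorem20}) descends to the small arc-stack level. Concretely, for the basic analytic de Rham prismatization, the isomorphism
\begin{align}
\mathrm{R}\Gamma(\underline{\mathrm{Prismatization}}^L_{\mathrm{abs},\mathbb{N}_\infty,\mathrm{dR},-\times -'},\mathcal{F}\boxtimes \mathcal{G}) \overset{\sim}{\longrightarrow} \mathrm{R}\Gamma(\underline{\mathrm{Prismatization}}^L_{\mathrm{abs},\mathbb{N}_\infty,\mathrm{dR},-},\mathcal{F}) \otimes^{\mathrm{L}} \mathrm{R}\Gamma(\underline{\mathrm{Prismatization}}^L_{\mathrm{abs},\mathbb{N}_\infty,\mathrm{dR},-'},\mathcal{G})
\end{align}
is checked locally on perfectoid totally disconnected charts, where both sides reduce to derived tensor products of modules over $\varprojlim_{(Q_P,P)} P[1/z]_{Q_P}^{\wedge}$ for the covering prisms, which is the formal-model statement.

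Then I would handle the family structure over $\mathbb{N}_\infty$ in the same manner as in the formal-model proof: the claim holds fiberwise for each $n\in\mathbb{N}$ by the usual derived K\"unneth for (absolute/Nygaard-filtered/syntomified) analytic prismatic cohomology, hence for the product $\prod_{n\in \mathbb{N}} {\underline{\mathrm{Prismatization}}}^L_{\mathrm{abs},n,\mathrm{dR},-}$, and the compactification map
\begin{align}
{\underline{\mathrm{Prismatization}}}^L_{\mathrm{abs},\mathbb{N}_\infty,\mathrm{dR},-}\rightarrow  \prod_{n\in \mathbb{N}} {\underline{\mathrm{Prismatization}}}^L_{\mathrm{abs},n,\mathrm{dR},-}
\end{align}
is constructed as a fiber product, so the K\"unneth isomorphism propagates through the compactification by the universal property. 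Finally for the Nygaard-filtered and syntomified versions, I would use the definitions: filtered de Rham prismatization is a fibered version over $\mathbb{N}$ built from the basic de Rham prismatization by a derived base change, while the syntomification is obtained as the equalizer of the de Rham and Hodge--Tate maps $f_{\mathrm{dR}}, f_{\mathrm{HT}}$ along the diagonal $d$; both constructions are compatible with Lurie-style derived tensor products and hence preserve the K\"unneth isomorphism.

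The hard part will be justifying that the compactification step genuinely preserves the K\"unneth isomorphism at the fiber $\infty$, since by construction there is no direct access to a prismatic site for a function field other than through the inverse limit, and one must verify that the fiber product presentation commutes with the solid derived tensor product in the condensed framework of \cite{3CS1}. This should follow from the fact that both $\mathcal{F}\boxtimes\mathcal{G}$ and its restriction to $\prod_{n\in \mathbb{N}}$ are compatible families, together with the flatness-like properties of the compactification morphism on the relevant structure sheaves; but verifying this in the analytic condensed setting, rather than just a discrete inverse-limit setting, is the one genuine analytic input beyond what \cref{theorem20} already gives.
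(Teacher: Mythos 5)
Your proof sketch follows essentially the same route as the paper: reduce to formal models via (strictly totally disconnected) local charts, invoke the derived K\"unneth theorem for absolute prismatic cohomology of $z$-adic formal rings fiberwise over $\mathbb{N}$, propagate through the compactification map ${\underline{\mathrm{Prismatization}}}^L_{\mathrm{abs},\mathbb{N}_\infty,\mathrm{dR},-}\rightarrow \prod_{n\in \mathbb{N}} {\underline{\mathrm{Prismatization}}}^L_{\mathrm{abs},n,\mathrm{dR},-}$, and deduce the filtered and syntomified statements from the base case by base change and equalizer compatibility. The one point you flag as ``the hard part'' — whether the fiber-product compactification genuinely preserves the K\"unneth isomorphism at the $\infty$-fiber in the solid/condensed setting — is indeed not verified in the paper's proof either, which treats the compactification step as formal; your raising it is appropriate caution rather than a deviation from the paper's argument.
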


\begin{proof}
We need to consider the compactification in this theorem, after we reduce the consideration to formal models. Any prismatization stackification for a particular formal scheme $-$ is basically a stackification over $-$ and ringed we use the notation $\mathcal{P}$ to denote the structure sheaf. As in the usual situation we have that the derived $\infty$-category of all the $\mathcal{P}$-modules is equivalent to the prismatic site for $-$. However the derived prismatic cohomology theory in this setting does satisfy the K\"unneth theorem in the derived version on the product of the corresponding $p$-adic formal ring locally (which can be glue over formal schemes like $-$). Then the result can be generalized to our setting over $\mathbb{N}$ away from $\infty$. Then we can take the corresponding compactification along:
\begin{align}
{\underline{\mathrm{Prismatization}}}^L_{\mathrm{abs},\mathbb{N}_\infty,\mathrm{dR},-}\rightarrow  \prod_{n\in \mathbb{N}} {\underline{\mathrm{Prismatization}}}^L_{\mathrm{abs},n,\mathrm{dR},-}.
\end{align}
This implies the derived K\"unneth theorem holds for analytic filtration prismatization and analytic syntomization prismatization. We then get the derived version of the K\"unneth theorem in the current setting by restricting to the 3 de Rham prismatizations. This finishes the proof.
\end{proof}

\begin{theorem}
All three analytic prismatization, analytic filtration prismatization and analytic syntomization prismatization over $\mathbb{N}_\infty$ satisfy our \cref{situation1} conditions: (A1), (A2), (A3), (A4), (A5).
\end{theorem}

\begin{proof}
We check this one by one. First for the first condition it is true after we consider the corresponding structure sheaves. The second condition is proved above. For the third condition we consider the corresponding derived $\infty$-categories of quasicoherent sheaves. A4 holds for pullbacks and pushforward. This is true over $\mathbb{N}$ then we consider the fiber product through:
\begin{align}
{\underline{\mathrm{Prismatization}}}^L_{\mathrm{abs},\mathbb{N}_\infty,\mathrm{dR},-}\rightarrow  \prod_{n\in \mathbb{N}} {\underline{\mathrm{Prismatization}}}^L_{\mathrm{abs},n,\mathrm{dR},-}
\end{align}
to reach the corresponding compactification. Finally A5 holds see \cite[Chapter 4, in particular 4.7, 4.8, 4.9, 4.10]{3A}. However since we are talking about rigid analytic spaces by using perfectoid coverings, \cite{3A} can be directly applied, where one can derive all the results from the formalism in \cite{3A}, which allows us to reduce the corresponding proof to the formal scheme situation before. Again we need to consider the generalized version of \cite{3A} along \cite{3S}, i.e. regard the corresponding rigid analytic motivic cohomology theories as ones in our \cref{situation1}. One then finishes the proof as in \cref{theorem36}.
\end{proof}

\begin{definition}
We in this context use the notations:
\begin{align}
&\mathrm{Galois}(\underline{\mathrm{Prismatization}}^L_{\mathrm{abs},\mathbb{N}_\infty,\mathrm{dR},-})\\
&\mathrm{Galois}(\overline{\underline{\mathrm{Prismatization}}}^L_{\mathrm{abs},\mathrm{Nygaard},\mathbb{N}_\infty,\mathrm{dR},-})\\
&\mathrm{Galois}(\overline{\underline{\mathrm{Prismatization}}}^L_{\mathrm{abs},\mathbb{N}_\infty,\mathrm{syntomization},\mathrm{dR},-})
\end{align}
to denote the corresponding Hopf algebraic motivic Galois fundamental groups, which is the defined to be the spectra of the associated Hopf algebra.
\end{definition}

\begin{theorem}
There are morphisms from these motivic Galois fundamental groups to the corresponding motivic Galois groups of $A$, and moreover $L$: 
\begin{align}
&\mathrm{Galois}(\underline{\mathrm{Prismatization}}^L_{\mathrm{abs},\mathbb{N}_\infty,\mathrm{dR},-})\rightarrow \mathrm{Gal}(\overline{L}/L),\\
&\mathrm{Galois}(\overline{\underline{\mathrm{Prismatization}}}^L_{\mathrm{abs},\mathrm{Nygaard},\mathbb{N}_\infty,\mathrm{dR},-})\rightarrow \mathrm{Gal}(\overline{L}/L),\\
&\mathrm{Galois}(\overline{\underline{\mathrm{Prismatization}}}^L_{\mathrm{abs},\mathbb{N}_\infty,\mathrm{syntomization},\mathrm{dR},-})\rightarrow \mathrm{Gal}(\overline{L}/L).
\end{align}
\end{theorem}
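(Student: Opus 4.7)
The plan is to mimic the formal functoriality argument already used in the preceding analogous theorems of Section 9 and Section 10 (for the map $\mathrm{Galois}(\underline{\mathrm{Prismatization}}^L_{\mathrm{abs},\mathbb{N}_\infty,-})\rightarrow \mathrm{Gal}(\overline{L}/L)$) and transport it to the de Rham variants in the analytic small arc-stack and small $v$-stack setting. First I would apply the construction of each of the three analytic de Rham prismatizations to the \emph{point}, namely take $-=\mathrm{Spd}L$ regarded as a small arc-stack (or small $v$-stack) in families over $\mathbb{N}_\infty$. At this point the defining fiber-product over $\mathbb{N}\hookrightarrow \mathbb{N}_\infty$ of analytic prismatizations collapses to the family of de Rham period rings $\varprojlim_\alpha B^+_{\mathrm{dR},\alpha}$ already considered in Section 7, and the associated Hopf algebra sheaf $\mathrm{Loop}^\infty\mathrm{Cechconerve}(\widetilde{\mathrm{Sh}}_A)$ from step (A5) of \cref{situation1} recovers, via the Tannakian identification of its spectrum, the classical absolute Galois group $\mathrm{Gal}(\overline{L}/L)$ as the fundamental group of the point.

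Second, I would invoke the functoriality of the entire pipeline in (A5). For any $-$ equipped with the structure morphism $-\rightarrow \mathrm{Spd}L$ in the arc-site or $v$-site over $L$ fibered over $\mathbb{N}_\infty$, one obtains by pullback a morphism of analytic de Rham prismatization stacks, hence a morphism of structure $\infty$-sheaves, hence a morphism of Tate suspensions, hence a morphism of \v{C}ech conerves, and hence a morphism of $\infty$-loop-space Hopf sheaves; applying $\mathrm{Spec}$ reverses this arrow and yields the desired map of motivic Galois fundamental groups to $\mathrm{Gal}(\overline{L}/L)$. This whole chain commutes with the compactification $\mathbb{N}\hookrightarrow\mathbb{N}_\infty$ by the defining fiber-product structure of all objects introduced in Section 10, and with the passage from formal models to rigid spaces by the formalism of \cite{3A}, and then from rigid spaces to arc / $v$-stacks by the K\"unneth and (A1)--(A5) verifications already performed in Section 10.

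Third, for compatibility across the three variants, I would use the three morphisms $f_{\mathrm{dR}}$, $f_{\mathrm{HT}}$ and the diagonal descent $d$ that were used in Section 10 to build the Nygaard-filtered and syntomization analytic de Rham prismatizations from the absolute analytic de Rham prismatization. Each is functorial in the base $-$ and preserved by the compactification over $\mathbb{N}_\infty$, so the morphism constructed for the absolute version propagates automatically to the filtered and syntomization versions via the induced maps on the associated Hopf algebra sheaves.

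The main obstacle is precisely the identification performed at the point in step one: one must verify that, after compactification to $\mathbb{N}_\infty$, the Tannakian spectrum of the Hopf algebra of solid quasicoherent sheaves over the analytic de Rham prismatization of $\mathrm{Spd}L$ really surjects onto (or at least maps canonically to) the genuine $\mathrm{Gal}(\overline{L}/L)$ and not some strictly larger motivic group. Handling this requires carefully analyzing the $\infty$-fiber, where $L$ becomes a function field; here one has to use the identification $\overline{\mathbb{N}}=\mathbb{N}^\wedge$ of \cite{3LH} to transfer the mixed-characteristic de Rham comparison to the equal-characteristic point, and to check that the coefficient $\infty$-sheaf $\mathrm{Sh}_A$ evaluated at a geometric point factors through $\mathrm{Gal}(\overline{L}/L)$-equivariant modules over the generalized $B^+_{\mathrm{dR}}$-ring. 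Once this identification is established at the point, the rest of the argument is purely formal functoriality.
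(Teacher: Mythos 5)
Your proposal takes essentially the same route as the paper: evaluate the Hopf algebraic construction at the point $\mathrm{Spd}\,L$ (or $\mathrm{Spec}\,A$), invoke functoriality of the Hopf algebra/motivic fundamental group assignment along the structure morphism $-\to\mathrm{Spd}\,L$, and then reduce the analytic small arc/$v$-stack setting to the formal local model case by the formalism of \cite{3A}. The paper states this purely as a formal functoriality argument; you additionally flag (correctly) that the genuine content is the identification of the Tannakian spectrum of the point with $\mathrm{Gal}(\overline{L}/L)$ after compactification to $\mathbb{N}_\infty$, especially at the function-field fiber, which the paper leaves implicit in the phrase ``this is formal.''
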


\begin{proof}
This is formal since we just apply the construction and definition to the point situation (regard the scheme $\mathrm{Spec}A$ or $\mathrm{Spec}L$ as the corresponding formal scheme or rigid analytic space), then the theorem follows by the functoriality of the construction of Hopf algebras. Then we reduce to the results before for formal local model before following \cite{3A}. 
\end{proof}

\subsection{Three analytic prismatizations in the de Rham-Hodge-Tate setting}

\begin{definition}
We consider the corresponding de Rham-Hodge-Tate prismatization in families. Recall the definition goes in the following way. Eventually the definition is applied to $z$-adic $A$-formal schemes. In the absolute manner we consider the category of all $z$-nilpotent $A$-algebras as the underlying ring categorie $\mathrm{Nil}_{z,A}$, where all such algebras are assumed to be fibered over $\mathbb{N}_\infty$. Then we use the Witt vector functor $W_A$ from \cite{3LH}. Then the prismaization is the stackification over the ring category $\mathrm{Nil}_{z,A}$ where for each such ring we have the groupoid of all the Cartier-Witt ideals fibered over $\mathbb{N}_\infty$, i.e. those maps locally principally generated by distinguished elements in the big Witt vectors (for $w_0$ we require nilpotency and for $w_1$ we require unitality). In this paper we use the notation 
\begin{align}
\underline{\mathrm{Prismatization}}^L_{\mathrm{abs},\mathrm{dRHT},\mathbb{N}_\infty}
\end{align}
to denote the absolute analytic de Rham-Hodge-Tate prismatization in families in our current setting. The definition for this is through the corresponding compactification from the stackification over $\mathbb{N}$, since then we have the morphism:
\begin{align}
\mathcal{P}_{\underline{\mathrm{Prismatization}}_{\mathrm{abs},\mathrm{dRHT},\mathbb{N}_\infty}(\mathbb{N})} \overset{\sim}{\rightarrow} \varprojlim_{(Q_P,P)} D\mathrm{Cat}(P)^\mathrm{comp}\rightarrow \varprojlim_{(Q_P,P)} D\mathrm{Cat}(P[1/z]_{Q_P})^\mathrm{comp}\rightarrow \varprojlim_{(Q_P,P)} D\mathrm{Cat}(P[1/z]/{Q_P})^\mathrm{comp}
\end{align}
where $\mathrm{comp}$ is the completion with respect to the natural toplogy induced from the prisms involved along the inverse limit. Then we take the compactification to reach the de Rham-Hodge-Tate prismatization in families $\mathbb{N}_\infty$:
\begin{align}
\mathcal{P}_{\underline{\mathrm{Prismatization}}_{\mathrm{abs},\mathrm{dRHT},\mathbb{N}_\infty}} \overset{\sim}{\rightarrow} \overline{\varprojlim_{(Q_P,P)} D\mathrm{Cat}(P)^\mathrm{comp}}\rightarrow \overline{\varprojlim_{(Q_P,P)} D\mathrm{Cat}(P[1/z]_{Q_P})^\mathrm{comp}}\rightarrow \overline{\varprojlim_{(Q_P,P)} D\mathrm{Cat}(P[1/z]/{Q_P})^\mathrm{comp}}.
\end{align}
Then we have the corresponding filtration prismatization:
\begin{align}
\overline{\underline{\mathrm{Prismatization}}}^L_{\mathrm{abs},\mathrm{Nygaard},\mathrm{dRHT},\mathbb{N}}
\end{align}
which is defined to be compactification from $\mathbb{N}$ to $\mathbb{N}_\infty$ of the corresponding open version over $\mathbb{N}$:
\begin{align}
{\underline{\mathrm{Prismatization}}}^L_{\mathrm{abs},\mathrm{Nygaard},\mathrm{dRHT},\mathbb{N}}
\end{align}
where this open version is defined to be taking the product over the finite fibers over $\mathbb{N}_\infty$ of the usual filtration prismatizations over $z_n$-nilpotent $A_n$-algebras. The compactification process needs to be using the following morphisms to reach our definition as the corresponding fiber product in families through the compactification:
\begin{align}
\prod_{n\in \mathbb{N}} {\underline{\mathrm{Prismatization}}}^L_{\mathrm{abs},\mathrm{Nygaard},\mathrm{dRHT},n}\rightarrow  \prod_{n\in \mathbb{N}} {\underline{\mathrm{Prismatization}}}^L_{\mathrm{abs},\mathrm{dRHT},n}
\end{align}
by taking the products along all $\mathbb{N}$ of the usual projection from the corresponding filtration prismatization to the prismatization, and:
\begin{align}
{\underline{\mathrm{Prismatization}}}^L_{\mathrm{abs},\mathrm{dRHT},\mathbb{N}_\infty}\rightarrow  \prod_{n\in \mathbb{N}} {\underline{\mathrm{Prismatization}}}^L_{\mathrm{abs},\mathrm{dRHT},n}.
\end{align}
Recall that the filtration analytic prismatization is actually certain fibration version of the usual analytic prismatization, then we take further fibration over $\mathbb{N}$, in our current setting. Recall how finally the syntomization prismatization is constructed, which is by taking the descent of the Hodge-Tate morphism and de Rham morphism in the filtration prismatization, along the diagonal morphism from the prismatization with itself. These maps are obviously admitting the corresponding compactification versions in our current setting. Therefore we consider the following three morphisms:
\begin{align}
f_\mathrm{dR}: {\underline{\mathrm{Prismatization}}}^L_{\mathrm{abs},\mathrm{dRHT},\mathbb{N}_\infty}\rightarrow \overline{\underline{\mathrm{Prismatization}}}^L_{\mathrm{abs},\mathrm{Nygaard},\mathrm{dRHT},\mathbb{N}_\infty},
\end{align}
\begin{align}
f_\mathrm{HT}: {\underline{\mathrm{Prismatization}}}^L_{\mathrm{abs},\mathrm{dRHT},\mathbb{N}_\infty}\rightarrow \overline{\underline{\mathrm{Prismatization}}}^L_{\mathrm{abs},\mathrm{Nygaard},\mathrm{dRHT},\mathbb{N}_\infty},
\end{align}
\begin{align}
d:  {\underline{\mathrm{Prismatization}}}^L_{\mathrm{abs},\mathrm{dRHT},\mathbb{N}_\infty}\times {\underline{\mathrm{Prismatization}}}^L_{\mathrm{abs},\mathrm{dRHT},\mathbb{N}_\infty} \rightarrow {\underline{\mathrm{Prismatization}}}^L_{\mathrm{abs},\mathrm{dRHT},\mathbb{N}_\infty}.
\end{align}
Then we take the product of $f_\mathrm{dR}$ and $f_\mathrm{HT}$ together and take the corresponding descent along $d$ we have the definition of the analytic syntomization prismatization in families in our current setting over $\mathbb{N}_\infty$:
\begin{align}
\overline{\underline{\mathrm{Prismatization}}}^L_{\mathrm{abs},\mathrm{dRHT},\mathrm{syntomization},\mathbb{N}}. 
\end{align}
This finishes the definition. $\square$
\end{definition}

\begin{definition}
We consider the corresponding de Rham-Hodge-Tate prismatization in families. Recall the definition goes in the following way. Eventually the definition is applied to $z$-adic $A$-formal schemes. In the absolute manner we consider the category of all $z$-nilpotent $A$-algebras as the underlying ring categorie $\mathrm{Nil}_{z,A}$, where all such algebras are assumed to be fibered over $\mathbb{N}_\infty$. Then we use the Witt vector functor $W_A$ from \cite{3LH}. Then the prismaization is the stackification over the ring category $\mathrm{Nil}_{z,A}$ where for each such ring we have the groupoid of all the Cartier-Witt ideals fibered over $\mathbb{N}_\infty$, i.e. those maps locally principally generated by distinguished elements in the big Witt vectors (for $w_0$ we require nilpotency and for $w_1$ we require unitality). Now map all the construction and definitions in the de Rham-Hodge-Tate setting to $z$-adic $A$-formal scheme $M$. In this paper we use the notation 
\begin{align}
\underline{\mathrm{Prismatization}}^L_{\mathrm{abs},\mathrm{dRHT},\mathbb{N}_\infty,M}
\end{align}
to denote the absolute analytic prismatization in families in our current setting. The definition for this is through the corresponding compactification from the stackification over $\mathbb{N}$, since then we have the morphism:
\begin{align}
&\mathcal{P}_{\underline{\mathrm{Prismatization}}_{\mathrm{abs},\mathrm{dRHT},\mathbb{N}_\infty,M}(\mathbb{N})} \overset{\sim}{\rightarrow} \varprojlim_{(Q_P,P)/M} D\mathrm{Cat}(P)^\mathrm{comp}\\
&\rightarrow \varprojlim_{(Q_P,P)/M} D\mathrm{Cat}(P[1/z]/{Q_P})^\mathrm{comp}\rightarrow \varprojlim_{(Q_P,P)/M} D\mathrm{Cat}(P[1/z]/{Q_P})^\mathrm{comp}
\end{align}
where $\mathrm{comp}$ is the completion with respect to the natural toplogy induced from the prisms involved along the inverse limit. Then we take the compactification to reach the de Rham prismatization in families $\mathbb{N}_\infty$:
\begin{align}
&\mathcal{P}_{\underline{\mathrm{Prismatization}}_{\mathrm{abs},\mathrm{dRHT},\mathbb{N}_\infty},M} \overset{\sim}{\rightarrow} \overline{\varprojlim_{(Q_P,P)/M} D\mathrm{Cat}(P)^\mathrm{comp}}\\
&\rightarrow \overline{\varprojlim_{(Q_P,P)/M} D\mathrm{Cat}(P[1/z]/{Q_P})^\mathrm{comp}}\rightarrow \overline{\varprojlim_{(Q_P,P)/M} D\mathrm{Cat}(P[1/z]/{Q_P})^\mathrm{comp}}.
\end{align}
Then we have the corresponding filtration prismatization:
\begin{align}
\overline{\underline{\mathrm{Prismatization}}}^L_{\mathrm{abs},\mathrm{Nygaard},\mathrm{dRHT},\mathbb{N},M}
\end{align}
which is defined to be compactification from $\mathbb{N}$ to $\mathbb{N}_\infty$ of the corresponding open version over $\mathbb{N}$:
\begin{align}
{\underline{\mathrm{Prismatization}}}^L_{\mathrm{abs},\mathrm{Nygaard},\mathrm{dRHT},\mathbb{N},M}
\end{align}
where this open version is defined to be taking the product over the finite fibers over $\mathbb{N}_\infty$ of the usual filtration prismatizations over $z_n$-nilpotent $A_n$-algebras. The compactification process needs to be using the following morphisms to reach our definition as the corresponding fiber product in families through the compactification:
\begin{align}
\prod_{n\in \mathbb{N}} {\underline{\mathrm{Prismatization}}}^L_{\mathrm{abs},\mathrm{Nygaard},\mathrm{dRHT},n,M}\rightarrow  \prod_{n\in \mathbb{N}} {\underline{\mathrm{Prismatization}}}^L_{\mathrm{abs},\mathrm{dRHT},n,M}
\end{align}
by taking the products along all $\mathbb{N}$ of the usual projection from the corresponding filtration prismatization to the prismatization, and:
\begin{align}
{\underline{\mathrm{Prismatization}}}^L_{\mathrm{abs},\mathrm{dRHT},\mathbb{N}_\infty,M}\rightarrow  \prod_{n\in \mathbb{N}} {\underline{\mathrm{Prismatization}}}^L_{\mathrm{abs},\mathrm{dRHT},n,M}.
\end{align}
Recall that the analytic filtration prismatization is actually certain fibration version of the usual analytic prismatization, then we take further fibration over $\mathbb{N}$. Recall how finally the syntomization prismatization is constructed, which is by taking the descent of the Hodge-Tate morphism and de Rham morphism in the filtration prismatization, along the diagonal morphism from the prismatization with itself. These maps are obviously admitting the corresponding compactification versions in our current setting. Therefore we consider the following three morphisms:
\begin{align}
f_\mathrm{dR}: {\underline{\mathrm{Prismatization}}}^L_{\mathrm{abs},\mathrm{dRHT},\mathbb{N}_\infty,M}\rightarrow \overline{\underline{\mathrm{Prismatization}}}^L_{\mathrm{abs},\mathrm{Nygaard},\mathrm{dRHT},\mathbb{N}_\infty,M},
\end{align}
\begin{align}
f_\mathrm{HT}: {\underline{\mathrm{Prismatization}}}^L_{\mathrm{abs},\mathrm{dRHT},\mathbb{N}_\infty,M}\rightarrow \overline{\underline{\mathrm{Prismatization}}}^L_{\mathrm{abs},\mathrm{Nygaard},\mathrm{dRHT},\mathbb{N}_\infty,M},
\end{align}
\begin{align}
d:  {\underline{\mathrm{Prismatization}}}^L_{\mathrm{abs},\mathrm{dRHT},\mathbb{N}_\infty,M}\times {\underline{\mathrm{Prismatization}}}^L_{\mathrm{abs},\mathrm{dRHT},\mathbb{N}_\infty,M} \rightarrow {\underline{\mathrm{Prismatization}}}^L_{\mathrm{abs},\mathrm{dRHT},\mathbb{N}_\infty,M}.
\end{align}
Then we take the product of $f_\mathrm{dR}$ and $f_\mathrm{HT}$ together and take the corresponding descent along $d$ we have the definition of the analytic syntomization prismatization in families in our current setting over $\mathbb{N}_\infty$:
\begin{align}
\overline{\underline{\mathrm{Prismatization}}}^L_{\mathrm{abs},\mathrm{dRHT},\mathrm{syntomization},\mathbb{N},M}. 
\end{align}
This finishes the definition. $\square$
\end{definition}

\begin{theorem}
There is a version of K\"unneth theorem in this context. To be more precise we have a K\"unneth theorem at least by passing to the corresponding quasicoherent sheaves over the corresponding prismatization (i.e. the corresponding stackifications). This holds for all three stackifications in the families over $\mathbb{N}_\infty$. 
\end{theorem}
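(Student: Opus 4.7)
The plan is to reduce, as in the two K\"unneth theorems already proved in this section, to the formal model situation and then to piece together the analytic, compactified, filtered, and de~Rham--Hodge--Tate aspects in turn. First I would pick a $z$-adic formal model for the rigid analytic input, apply the condensed analytification functor of \cite{3CS1}, and use that analytification commutes with derived tensor products in the solid setting; this reduces the statement for the three analytic de~Rham--Hodge--Tate stackifications to the analogous statement for the underlying formal stackifications, locally on the formal model $M$.

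Next, once on the formal side, I would deduce the K\"unneth theorem for $\underline{\mathrm{Prismatization}}^L_{\mathrm{abs},\mathrm{dRHT},\mathbb{N}_\infty,M}$ from \cref{theorem20}. Concretely, the de~Rham--Hodge--Tate stackification differs from the de~Rham stackification by the further base change along
\begin{align}
\varprojlim_{(Q_P,P)/M} D\mathrm{Cat}(P[1/z]_{Q_P})^{\mathrm{comp}} \longrightarrow \varprojlim_{(Q_P,P)/M} D\mathrm{Cat}(P[1/z]/Q_P)^{\mathrm{comp}},
\end{align}
and such base change is symmetric monoidal for the relevant solid tensor product. Therefore the derived K\"unneth isomorphism for the de~Rham piece propagates to the de~Rham--Hodge--Tate piece by applying the quotient along $Q_P$ on both sides. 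For the Nygaard-filtered variant the argument is identical but carried out in the filtered derived $\infty$-category, since the filtration is obtained by a stacky fibration over the unfiltered de~Rham--Hodge--Tate prismatization. Finally the syntomization variant is recovered as the descent along the diagonal $d$ of the product $f_{\mathrm{dR}}\times f_{\mathrm{HT}}$; since equalizers in stable $\infty$-categories commute with derived tensor products, the K\"unneth isomorphisms for the prismatization and the filtration prismatization assemble into the K\"unneth isomorphism for the syntomization prismatization.

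The remaining point is to propagate the K\"unneth isomorphism across the compactification $\mathbb{N}\hookrightarrow\mathbb{N}_{\infty}$. Over $\mathbb{N}$ the whole object is the product over the finite fibers, where each fiber is the classical mixed-characteristic de~Rham--Hodge--Tate prismatization and the K\"unneth theorem is known. Our definition produces the stackification at $\mathbb{N}_{\infty}$ as the fiber product
\begin{align}
\underline{\mathrm{Prismatization}}^L_{\mathrm{abs},\mathrm{dRHT},\mathbb{N}_\infty,M} \longrightarrow \prod_{n\in\mathbb{N}} \underline{\mathrm{Prismatization}}^L_{\mathrm{abs},\mathrm{dRHT},n,M},
\end{align}
and fiber products of stable $\infty$-categories along symmetric monoidal functors preserve derived tensor products, so the fiberwise K\"unneth isomorphisms glue into one over $\mathbb{N}_{\infty}$.

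The main obstacle I expect is the handling of the $\infty$-fiber, i.e.\ the function field $L_\infty$: although the construction over $\mathbb{N}$ is a straightforward product, extracting a genuine K\"unneth isomorphism at $\infty$ requires knowing that the distinguished element used to define Cartier--Witt divisors in the function field setting (via the $\delta$-ring structure of \cite{3BSI}) is compatible with the ramification-degree growth assumption of \cite{3LH} under the compactification, so that the formation of the derived tensor product commutes with the limit $\varprojlim_{k}\, (-)/t^k$ used to define the function-field fiber. Once this compatibility is checked, all three K\"unneth statements follow by the combination of analytification, base change in $Q_P$, and the equalizer/limit manipulation sketched above.
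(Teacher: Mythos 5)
Your reconstruction is correct and in substance coincides with the paper's own argument. In the text the proof of this theorem is literally the one-line cross-reference ``See the proof of \cref{theorem20}'', i.e.\ the paper relies on the K\"unneth statement already established for the three analytic de~Rham prismatizations, whose proof reduces to formal models, invokes the derived K\"unneth property for prismatic cohomology of products of $p$-adic formal rings, propagates over $\mathbb{N}$, and then transports across the compactification $\mathbb{N}\hookrightarrow\mathbb{N}_\infty$ via the fiber product against $\prod_{n\in\mathbb{N}}$; the filtration and syntomization cases are then asserted to follow. You recover exactly this skeleton, but you make several steps explicit that the paper leaves implicit: (i) that dRHT is obtained from dR by the further base change $\varprojlim D\mathrm{Cat}(P[1/z]_{Q_P})^{\mathrm{comp}}\to \varprojlim D\mathrm{Cat}(P[1/z]/Q_P)^{\mathrm{comp}}$, and that this base change is symmetric monoidal so the derived tensor isomorphism transports; (ii) that the Nygaard case is the same computation in the filtered derived $\infty$-category; (iii) that the syntomization case is an equalizer along $d$ of $f_{\mathrm{dR}}\times f_{\mathrm{HT}}$ and that equalizers in stable symmetric monoidal $\infty$-categories commute with the relevant tensor; and (iv) that the compactification step rests on fiber products of symmetric monoidal $\infty$-categories preserving derived tensor products. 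None of these are stated by the paper, which simply gestures at them. You also isolate the one genuinely delicate point --- whether the distinguished element defining Cartier--Witt divisors at the function-field fiber $\infty$ (via the $\delta$-structure of \cite{3BSI}) is compatible with the ramification-growth assumption under the compactification, so that derived tensor commutes with $\varprojlim_k(-)/t^k$ --- which the paper elides. In short: same route, more granular and more honest about what needs to be checked. The added care at $\infty$ is worth keeping; it is the one place where the argument could actually break.
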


\begin{proof}
See the proof of \cref{theorem20}.
\end{proof}

\begin{theorem}
All three prismatization, filtration prismatization and syntomization prismatization over $\mathbb{N}_\infty$ satisfy our \cref{situation2} conditions: (A1), (A2), (A3), (A4), (A5).
\end{theorem}

\begin{proof}
We check this one by one. First for the first condition it is true after we consider the corresponding structure sheaves. The second condition is proved above. For the third condition we consider the corresponding derived $\infty$-categories of quasicoherent sheaves. A4 holds for pullbacks and pushforward. This is true over $\mathbb{N}$ then we consider the fiber product through:
\begin{align}
{\underline{\mathrm{Prismatization}}}^L_{\mathrm{abs},\mathbb{N}_\infty,\mathrm{dRHT},M}\rightarrow  \prod_{n\in \mathbb{N}} {\underline{\mathrm{Prismatization}}}^L_{\mathrm{abs},n,\mathrm{dRHT},M}
\end{align}
to reach the corresponding compactification. Finally A5 holds see \cite[Chapter 4, in particular 4.7, 4.8, 4.9, 4.10]{3A}. One then finishes the proof as in \cref{theorem36}.
\end{proof}

\newpage
\section{Motivic Riemann-Hilbert Correspondence in Families}

\subsection{Generalized Riemann-Hilbert Correspondence}

\begin{setting}
Throughout this paper, by a $D$-module we mean the module over the wedge algebra from the differential forms. We will then call a differential module to mean the $D$-modules \textit{in the usual sense}. 
\end{setting}

\begin{setting}
We now consider the setting in \cite{5LH}. Throughout we fix a finite field $F$, and choose $p$-adic local fields of characteristic zero with same residue field $F$:
\begin{align}
H_1,H_2,...
\end{align}
where the index is throughout all the natural numbers. There is one function field $F((x_\infty))$ at the infinity. Therefore we can now form a common uniformizer for all of them: $x$ such that $x$ gives rise to $x_n$ at each point in $\mathbb{N}_c$. We use the notation $H$ to denote the product of these local fields, and we use $I_H$ to denote the integral subrings as in \cite{5LH}.
\end{setting}

\begin{setting}
With inteval in $(0,\infty)$ we have families of Robba rings as in \cite{5LH} by taking the sections over the Fargues-Fontaine curves in families. We use then the notation:
\begin{align}
S_{\mathrm{Robba},I,\mathrm{perfectoid},P}
\end{align}
where $P$ is perfectoid Banach ring over $\mathbb{N}_c$. These perfect Robba rings are certain family version of the rings in \cite{5KLA}, \cite{5KLB}. We consider the Frobenius operator $k$ as in \cite{5KLA}, \cite{5KLB}.
\end{setting}

\begin{definition} \label{definition51}
Now we define certain almost Hodge-Tate Galois representation of Galois group of $H$. Let $L$ be such representation which is free of finite rank $r$, we consider the corresponding de Rham period ring:
\begin{align}
S_{\mathrm{dR},H,\mathbb{N}_c} := \prod_{n\in \mathbb{N}} S_{\mathrm{dR},H_n}\times S_{\mathrm{dR},H_\infty}
\end{align}
which is just the product of all the usual de Rham period rings at each $n\in \mathbb{N}_c$. This ring is filtred and gives rise to the Hodge-Tate ring by taking the associated graded ring:
\begin{align}
S_{\mathrm{HT},H,\mathbb{N}_c}:=gr(S_{\mathrm{dR},H,\mathbb{N}_c}).
\end{align}
After \cite{5F2}, \cite{5BLA}, \cite{5BLB} we say $L$ is almost Hodge-Tate in families if the rank preserves after we take the following invariance:
\begin{align}
(L\otimes gr(S_{\mathrm{dR},H,\mathbb{N}_c}[\mathrm{log}(t)]))^{G_H},
\end{align}
here $t$ is the product of all the Fontaine's '2$\pi$i' element for each $n\in \mathbb{N}_c$. 
\end{definition}

\begin{definition}
Now we consider a smooth algebraic variety $V$ over $H$. We consider the rigid generic fiber $V^\mathrm{rig}$ over $H$, over which we consider the pro-\'etale topology as in \cite{5L}, \cite{5S1}. We call any local system $M$ under this toplogy almost Hodge-Tate if each localization of $M_x$ at any point $x\in V^\mathrm{rig}$ is almost Hodge-Tate representation as in \cref{definition51}. Then we generalize the definition to the corresponding locally finite free Frobenius sheaves over Robba sheaves in our current family setting. If $M$ is such a module then after \cite{5KLA}, \cite{5KLB} we say $L$ is almost Hodge-Tate in families if the whole module preserves after we take the following invariance in the sense of taking pro-\'etale descent:
\begin{align}
\mathrm{Descent}_{V^\mathrm{rig}_\text{pro\'etale},V^\mathrm{rig},*}(M_\infty\otimes gr(S_{\mathrm{dR},-,\mathbb{N}_c}[\mathrm{log}(t)])),
\end{align}
here $t$ is the product of all the Fontaine's '2$\pi$i' element for each $n\in \mathbb{N}_c$, after we take further base change to the whole Hodge-Tate period sheaf in families in our current setting. Here $M_\infty$ is the section over the Robba ring $\varprojlim_{r\rightarrow \infty}S_{\mathrm{Robba},r,\mathrm{perfectoid},P}$.
\end{definition}

\begin{theorem}\label{theorem3}
Let $\mathrm{D}_{\mathrm{aHT},\mathrm{lisse},V}(S_{\mathrm{Robba},\mathrm{perfectoid},*})$ be the derived $\infty$-category generated by locally finite free almost Hodge-Tate Frobenius sheaves over Robba rings as in our current family situation. We then have the following differential module:
\begin{align}
\partial=\mathrm{Descent}_{V^\mathrm{rig}_\text{pro\'etale},H,*}(S_{\mathrm{dR},-,\mathbb{N}_c}[\mathrm{log}(t)]).
\end{align}
Then we have a well-defined functor:
\begin{align}
\mathrm{D}_{\mathrm{aHT},\mathrm{lisse},V}(S_{\mathrm{Robba},\mathrm{perfectoid},*})\longrightarrow \mathrm{D}_{\mathrm{inductive},\mathrm{coherent}}(\mathrm{Descent}_{V^\mathrm{rig}_\text{pro\'etale},V^\mathrm{rig},*}(S_{\mathrm{dR},-,\mathbb{N}_c}[\mathrm{log}(t)]))
\end{align}
which is symmetrical monoidal $\infty$-categorical functor into the $D$-modules in families in our current setting. Here the target category is defined to be the derived $\infty$-category of inductive systems of the coherent $D$-modules as in \cite{5GR1}, \cite{5GR2}. This functor is fully faithful.
\end{theorem}

\begin{proof}
As in the usual situation in \cite{5L}, \cite{5BLA}, \cite{5BLB} the functor is defined to be just the derived homomorphism from the structure sheaf of the rigid space $V^\mathrm{rig}$ to:
\begin{align}
\mathrm{Descent}_{V^\mathrm{rig}_\text{pro\'etale},V^\mathrm{rig},*}(M_\infty\otimes gr(S_{\mathrm{dR},-,\mathbb{N}_c}[\mathrm{log}(t)])).
\end{align}
The fully-faithfulness of the functor can be proved after \cite{5BLA} and \cite{5BLB} by first considering the perfectoid subdomains generated by the corresponding algebraic closed field. Note that at $\infty$ the local function field situation is actually reduced to the situation where we have the local fields of mixed-characteristics since in the \textit{perfect setting} the Witt vectors for the local function field can be just the base change of the Witt vectors for local fields of mixed-characteristics. So for those $p$-adic fields by using definition of Frobenius sheaves in this particular situation we have can see that the functor preserves the homomorphism groups as in the usual de Rham situation as in \cite{5KLB}, since by the definition of the almost Hodge-Tate sheaves the mappings on the homomorphism groups can be seen to be surjective and injective simultaneously, which finishes the proof.
\end{proof}

Pulling things to arc-topology directly we have then the following motivic version of this theorem:

\begin{definition}
Let $\mathrm{D}_{\mathrm{aHT},\mathrm{lisse},V}(S_{\mathrm{Robba},\mathrm{arc},*})$ be the derived $\infty$-category generated by locally finite free almost Hodge-Tate Frobenius sheaves over Robba rings as in our current family situation. Here we consider the category 
\begin{align}
\mathrm{Motives}_{S_{\mathrm{Robba},\mathrm{arc},*},\times}\overline{\mathrm{ArcStacks}}_{V,\mathrm{arc}}.
\end{align}
Here $\overline{\mathrm{ArcStacks}}$ denotes the restriction to smooth rigid spaces over $V$ from all the arc stacks over $V$. For each covering arc stack $W$ over $V$ we consider the corresponding almost Hodge-Tate sheaves over the Robba sheaves over $W$:
\begin{align}
\mathrm{D}_{\mathrm{aHT},\mathrm{lisse},W}(S_{\mathrm{Robba},\mathrm{arc},W})
\end{align}
which can be arranged to the motivic $\infty$-category over the big site over $V$:
\begin{align}
\mathrm{Motives}_{S_{\mathrm{Robba},\mathrm{arc},*},\times}\overline{\mathrm{ArcStacks}}_{V,\mathrm{arc}},
\end{align}
take the sub category generated by almost Hodge-Tate sheaves we have:
\begin{align}
\mathrm{Motives}^{\mathrm{aHT},\mathrm{lisse}}_{S_{\mathrm{Robba},\mathrm{arc},*},\times}\overline{\mathrm{ArcStacks}}_{V,\mathrm{arc}}.
\end{align}
We consider the following differential module:
\begin{align}
\partial_W=\mathrm{Descent}_{W^\mathrm{rig}_\mathrm{arc},W,*}(S_{\mathrm{dR},-,\mathbb{N}_c}[\mathrm{log}(t)]).
\end{align}
Varying $W$ we have the sheaf over then the site of all the smooth rigid spaces over $V$:
\begin{align}
\partial=\mathrm{Descent}_{(\cdot/V)^\mathrm{rig}_\mathrm{arc},\cdot/V,*}(S_{\mathrm{dR},-,\mathbb{N}_c}[\mathrm{log}(t)]).
\end{align}
This is from the motivic cohomology theory over the analytic topology for smooth rigid spaces. Here we regard $S_{\mathrm{dR},H,\mathbb{N}_c}[\mathrm{log}(t)]$ also as the motivic cohomology $\infty$-sheaf of ring over the big site of all the arc stacks over $V$. Then we have a well-defined functor:
\begin{align}
\mathrm{Motives}^{\mathrm{aHT},\mathrm{lisse}}_{S_{\mathrm{Robba},\mathrm{arc},*},\times}\overline{\mathrm{ArcStacks}}_{V,\mathrm{arc}}\longrightarrow \mathrm{D}_{\mathrm{inductive},\mathrm{coherent}}(\mathrm{susp}_\Sigma\mathrm{Descent}_{(\cdot/V)^\mathrm{rig}_\mathrm{arc},(\cdot/V)^\mathrm{rig},*}(S_{\mathrm{dR},-,\mathbb{N}_c}[\mathrm{log}(t)]))
\end{align}
which is symmetrical monoidal $\infty$-categorical functor into the $D$-modules in families in our current setting. Here the target category is defined to be the derived $\infty$-category of inductive systems of the coherent $D$-modules as in \cite{5GR1}, \cite{5GR2}. By consider the corresponding filtration de Rham period rings and the corresponding syntomization de Rham period rings as in the following we have two another versions of motivic Riemann Hilbert correspondences.
\end{definition}

\begin{remark}
Because we are considering the almost Hodge-Tate sheaves, the resulting image of this functor should be those coherent $D$-modules.
\end{remark}

We have from \cite{5GR1}, \cite{5GR2} the 6-functor formalism for the target of:
\begin{align}
\mathrm{D}_{\mathrm{aHT},\mathrm{lisse},V}(S_{\mathrm{Robba},\mathrm{perfectoid},*})\longrightarrow \mathrm{D}_{\mathrm{inductive},\mathrm{coherent}}(\mathrm{Descent}_{V^\mathrm{rig}_\text{pro\'etale},V^\mathrm{rig},*}(S_{\mathrm{dR},-,\mathbb{N}_c}[\mathrm{log}(t)])).
\end{align}

\indent We now consider more generalized context as in \cite{5BS1}, \cite{5BLC} and \cite{5D} where we consider the corresponding de Rham prismatization. In our current pro-\'etale setting we have the following generalized definition of de Rham period rings with Nygaard filtration and syntomification.

\begin{definition} \label{definition52}
Now we define certain filtration almost Hodge-Tate Galois representation of Galois group of $H$. Let $L$ be such representation which is free of finite rank $r$, we consider the corresponding de Rham period ring:
\begin{align}
S_{\mathrm{Nygaard},\mathrm{dR},H,\mathbb{N}_c}
\end{align}
where we consider the corresponding evaluation of the filtration Nygaard prismatization over $\prod_{n\in \mathbb{N}_c}\widehat{\overline{H}}_n$. This ring is filtred and gives rise to the Hodge-Tate ring by taking the associated graded ring:
\begin{align}
S_{\mathrm{Nygaard},\mathrm{HT},H,\mathbb{N}_c}:=gr(S_{\mathrm{Nygaard},\mathrm{dR},H,\mathbb{N}_c}).
\end{align}
After \cite{5F2}, \cite{5BLA}, \cite{5BLB} we say $L$ is almost Hodge-Tate in families if the rank preserves after we take the following invariance:
\begin{align}
(L\otimes gr(S_{\mathrm{Nygaard},\mathrm{dR},H,\mathbb{N}_c}[\mathrm{log}(t)]))^{G_H},
\end{align}
here $t$ is the product of all the Fontaine's '2$\pi$i' element for each $n\in \mathbb{N}_c$. One then has the syntomification version of the definition by considering syntomification prismatization.
\end{definition}

\begin{definition}
Now we consider a smooth algebraic variety $V$ over $H$. We consider the rigid generic fiber $V^\mathrm{rig}$ over $H$, over which we consider the pro-\'etale topology as in \cite{5L}, \cite{5S1}. We call any local system $M$ under this topology filtration almost Hodge-Tate if each localization of $M_x$ at any point $x\in V^\mathrm{rig}$ is filtration almost Hodge-Tate representation as in \cref{definition52}. Then we generalize the definition to the corresponding locally finite free Frobenius sheaves over Robba sheaves in our current family setting. If $M$ is such a module then after \cite{5KLA}, \cite{5KLB} we say $L$ is filtration almost Hodge-Tate in families if the whole module preserves after we take the following invariance in the sense of taking pro-\'etale descent:
\begin{align}
\mathrm{Descent}_{V^\mathrm{rig}_\text{pro\'etale},V^\mathrm{rig},*}(M_\infty\otimes gr(S_{\mathrm{Nygaard},\mathrm{dR},-,\mathbb{N}_c}[\mathrm{log}(t)])),
\end{align}
here $t$ is the product of all the Fontaine's '2$\pi$i' element for each $n\in \mathbb{N}_c$, after we take further base change to the whole filtration Hodge-Tate period sheaf in families in our current setting. Here $M_\infty$ is the section over the Robba ring $\varprojlim_{r\rightarrow \infty}S_{\mathrm{Robba},r,\mathrm{perfectoid},P}$. One then has the definition in the syntomification situation by using the syntomification prismatization.
\end{definition}

\begin{theorem}
Let $\mathrm{D}_{\mathrm{aHT},\mathrm{lisse},V}(S_{\mathrm{Robba},\mathrm{perfectoid},*})$ be the derived $\infty$-category generated by locally finite free almost Hodge-Tate Frobenius sheaves over Robba rings as in our current family situation. We then have the following differential module:
\begin{align}
\partial=\mathrm{Descent}_{V^\mathrm{rig}_\text{pro\'etale},V^\mathrm{rig},*}(S_{\sharp,\mathrm{dR},-,\mathbb{N}_c}[\mathrm{log}(t)]).
\end{align}
Then we have a well-defined functor:
\begin{align}
\mathrm{D}_{\mathrm{aHT},\mathrm{lisse},V}(S_{\mathrm{Robba},\mathrm{perfectoid},*})\longrightarrow \mathrm{D}_{\mathrm{inductive},\mathrm{coherent}}(\mathrm{Descent}_{V^\mathrm{rig}_\text{pro\'etale},V^\mathrm{rig},*}(S_{\sharp,\mathrm{dR},-,\mathbb{N}_c}[\mathrm{log}(t)]))
\end{align}
which is symmetrical monoidal $\infty$-categorical functor into the $D$-modules in families in our current setting. Here the target category is defined to be the derived $\infty$-category of inductive systems of the coherent $D$-modules as in \cite{5GR1}, \cite{5GR2}. Here $\sharp=\mathrm{Nygaard},\mathrm{syntomification}$.
\end{theorem}

\begin{proof}
As in the usual situation in \cite{5L}, \cite{5BLA}, \cite{5BLB} the functor is defined to be just the derived homomorphism from the structure sheaf of the rigid space $V^\mathrm{rig}$ to:
\begin{align}
\mathrm{Descent}_{V^\mathrm{rig}_\text{pro\'etale},V^\mathrm{rig},*}(M_\infty\otimes gr(S_{\sharp,\mathrm{dR},-,\mathbb{N}_c}[\mathrm{log}(t)])).
\end{align}
Here $\sharp=\mathrm{Nygaard},\mathrm{syntomification}$.
\end{proof}

We have from \cite{5GR1}, \cite{5GR2} the 6-functor formalism for the target of:
\begin{align}
\mathrm{D}_{\mathrm{aHT},\mathrm{lisse},V}(S_{\mathrm{Robba},\mathrm{perfectoid},*})\longrightarrow \mathrm{D}_{\mathrm{inductive},\mathrm{coherent}}(\mathrm{Descent}_{V^\mathrm{rig}_\text{pro\'etale},V^\mathrm{rig},*}(S_{\sharp,\mathrm{dR},-,\mathbb{N}_c}[\mathrm{log}(t)])).
\end{align}

\begin{remark}
We have the $D$-module theory in positive characteristic. In the situation where $F$ is a finite field for instance, we then have the resulting $D$-modules.
\end{remark}

\subsection{$\infty$-Category over Differential Forms}

Our consideration on the $\infty$-categorical $D$-module is largely inspired by \cite{5GR1} and \cite{5GR2}. We consider essentially the 6-functor formalism in \cite{5GR1} and \cite{5GR2} on the inductive category of the coherent $D$-modules where in our setting are the following three derived $\infty$-categories over the rigid generic fiber $V^\mathrm{rig}$:
\begin{align}
\mathrm{D}_{\mathrm{inductive},\mathrm{coherent}}(\mathrm{Descent}_{V^\mathrm{rig}_\text{pro\'etale},V^\mathrm{rig},*}(S_{\sharp,\mathrm{dR},-,\mathbb{N}_c}[\mathrm{log}(t)]))
\end{align}
where $\sharp=\emptyset,\mathrm{Nygaard},\mathrm{syntomification}$. They carry different $D$-module structures due to the fact that they come from different prismatizations. In fact this can be seen by using the corresponding functor from the corresponding prismatizations. We first recall from the constructions in family of the de Rham primatizations, we have the de Rham prismatization over $\mathbb{N}_c$:
\begin{align}
\underline{\mathfrak{X}}_{\mathrm{prism},\mathrm{dR},\mathbb{N}_c,\sharp}
\end{align}
where $\sharp=\emptyset,\mathrm{Nygaard},\mathrm{syntomification}$. Following \cite{5GR1}, \cite{5GR2} we consider the corresponding inductive coherent sheaves over them in order to gain automatically the 6-functor formalism from \cite{5GR1} and \cite{5GR2}:
\begin{align}
D_{\mathrm{inductive},\mathrm{coherent}}(\underline{\mathfrak{X}}_{\mathrm{prism},\mathrm{dR},\mathbb{N}_c,\sharp}).
\end{align}
We now add $\mathrm{log}(t)$ to them so we just have the following:
\begin{align}
D_{\mathrm{inductive},\mathrm{coherent}}(\underline{\mathfrak{X}}_{\mathrm{prism},\mathrm{dR},\mathbb{N}_c,\sharp}[\mathrm{log}(t)]).
\end{align}
These three de Rham prismatizations over $\mathbb{N}_c$ actually turn out to be (after we adding $\mathrm{log}(t)$) the corresponding de Rham period sheaves over $\mathbb{N}_c$ we are considering over the pro-\'etale site, then after we take the corresponding pushforward along the structure morphism we get the categories of $D$-modules over $\mathbb{N}_c$. They can carry certain filtrations, however in the almost setting the filtration is parametrized by $\mathbb{Q}$. Therefore we now consider $\mathbb{Q}$-filteration such as the Hodge-Pink ones as in \cite{5GL1}, \cite{5GL2}, \cite{5HK}. In our family setting, we combine the consideration in \cite{5GL1}, \cite{5GL2} and \cite{5HK} to define the corresponding $\mathbb{Q}$-filtration on the de Rham primatizations and the derived categories.

\begin{definition}
Over pro-\'etale site, Hodge-Pink $\mathbb{Q}$-filtration $\mathrm{HPink}^\mathbb{Q}$ over $\mathbb{N}_c$ over the $D$-modules in 
\begin{align}
D_{\mathrm{inductive},\mathrm{coherent}}(\underline{\mathfrak{X}}_{\mathrm{prism},\mathrm{dR},\mathbb{N}_c,\sharp}[\mathrm{log}(t)]),
\end{align}
are defined to be the corresponding filtration parametrized by those Hodge-Pink weights in \cite{5GL1}, \cite{5GL2}, \cite{5HK} for each fiber over $\mathbb{N}_c$, where we take the corresponding product over the family. Therefore now the Hodge-Pink filtration in such de Rham setting is a family version, where for each fiber over $\mathbb{N}_c$ we then end up with the corresponding de Rham version in \cite{5GL1}, \cite{5GL2}, \cite{5HK} respectively.
\end{definition} 

\begin{theorem}
Let $\mathrm{D}_{\mathrm{aHT},\mathrm{lisse},V}(S_{\mathrm{Robba},\mathrm{perfectoid},*})$ be the derived $\infty$-category generated by locally finite free almost Hodge-Tate Frobenius sheaves over Robba rings as in our current family situation. We then have the following differential module:
\begin{align}
\partial=\mathrm{Descent}_{V^\mathrm{rig}_\text{pro\'etale},V^\mathrm{rig},*}(S_{\sharp,\mathrm{dR},-,\mathbb{N}_c}[\mathrm{log}(t)]).
\end{align}
Then we have a well-defined functor:
\begin{align}
\mathrm{D}_{\mathrm{aHT},\mathrm{lisse},V}(S_{\mathrm{Robba},\mathrm{perfectoid},*})\longrightarrow \mathrm{D}^{\mathrm{HPink}^\mathbb{Q}}_{\mathrm{inductive},\mathrm{coherent}}(\mathrm{Descent}_{V^\mathrm{rig}_\text{pro\'etale},V^\mathrm{rig},*}(S_{\sharp,\mathrm{dR},-,\mathbb{N}_c}[\mathrm{log}(t)]))
\end{align}
which is symmetrical monoidal $\infty$-categorical functor into the $D$-modules in families in our current setting, carrying the $\mathbb{Q}$-Hodge-Pink filtration. Here the target category is defined to be the derived $\infty$-category of inductive systems of the coherent $D$-modules as in \cite{5GR1}, \cite{5GR2}. Here $\sharp=\mathrm{Nygaard},\mathrm{syntomification}$.
\end{theorem}

\begin{remark}
This theorem generalizes consideration in \cite{5HK} to a family version on the de Rham representations in function field situation. The generalization along crystalline representation situation is also covered in this theorem in the following sense: as in \cite{5BLA}, \cite{5BLB} syntomification prismatization though itself motivic is a consideration completely for crystalline representations. Almost de Rham covers the almost crystalline sheaves in our setting over $\mathbb{N}_c$. 
\end{remark}

\indent We now consider the corresponding generalization of \cite{5HK} in our current setting, i.e. we classify the representations of the Galois group of $H$ in families. As in \cite{5HK} since we are consider function field and $p$-adic local fields together, we need to at least be careful enough for the crystalline representations. But we consider the following category for the cystalline representations as in \cite{5HK} where primatization plays a key role in such generalized setting.

\begin{definition}
\textit{Generalized almost crystalline representations} of the Galois group of $H$ are defined to be Frobenius-coherent sheaves over the prismatization in families
\begin{align}
\underline{\mathfrak{X}}_{\mathrm{prism},\mathrm{spf}I_H}[\log(t)]
\end{align}
over $I_H$ by regarding $\mathrm{spec}I_H$ as a formal scheme. The Frobenius operator over $\mathbb{N}_c$ is the product ones for each fiber as in \cite{5HK} after inverting $x=(x_n)$, note that at $\infty$ in our family setting we need to consider the structure in \cite{5HK} for the function field, where we are going to invert $x_n-y_n$ for some other variable $y_n$. We assume the sheaves are locally finite free.
\end{definition}

\begin{theorem}
The family version of the main theorem in chapter 6 of \cite{5GL2} holds true in our current setting by combining function field and $p$-adic local field situation together. Namely let $\mathrm{BK}_{\mathbb{N}_c}$ be the corresponding Breuil-Kisin prism in family, and use the variable $x$ to form the divided power closure $\overline{\mathrm{BK}_{\mathbb{N}_c}}$, then the categories of Frobenius modules over these two different rings are equivalent. Fix the amplitude below $\sharp-2$ where $\sharp$ is the cardinality of the fixed residue field in our setting i.e. $F$.
\end{theorem}

\begin{proof}
The conjecture is true for each fiber in our setting for Breuil-Kisin modules in families over $\mathbb{N}_c$. Note that in this setting the Hodge-Pink structures in family are defined different from the de Rham setting in \cite{5HK}. Now we have a family version at each fiber the Genestier-Lafforgue correspondence for weakly-admissible and admissible sheaves through Hodge-Pink structural construction as in \cite[Chapitre 6]{5GL2}. Then by the corresponding over $\mathbb{N}_c$ from $\infty$ to some finite $n$ modulo some power of $x$ one can put these Genestier-Lafforgue style correspondence into a profinite family, then the projective limit will produce the corresponding equivalence on the categories.
\end{proof}

\begin{definition}
\textit{Generalized almost de Rham representations} of the Galois group of $H$ are defined to be Frobenius-coherent sheaves over the prismatization in families
\begin{align}
\underline{\mathfrak{X}}_{\mathrm{prism},\mathrm{dR},\mathrm{spf}I_H}[\log(t)]
\end{align}
over $I_H$ by regarding $\mathrm{spec}I_H$ as a formal scheme. The Frobenius operator over $\mathbb{N}_c$ is the product ones for each fiber as in \cite{5HK} after inverting $x=(x_n)$, note that at $\infty$ in our family setting we need to consider the structure in \cite{5HK} for the function field, where we are going to invert $x_n-y_n$ for some other variable $y_n$. We assume the sheaves are locally finite free. Over the pro\'etale site of $\mathrm{Spa}H$, this turns out to be the category of almost de Rham sheaves as in our family setting.
\end{definition}

\subsection{Differential Modules in Families}

To have a genuine Riemann-Hilbert correspondence in families, we need to find the correct category of differential modules to consider, by regarding the differential forms as module over differential operators. However since we are working in a mixed manner on the local fields, function field may make this problem much harder. For instance we consider the differential modules throughout the whole family the one generated by divided powers as in \cite{5A}, which will be some sort of the only choice for us since we have the function field situation in our families. Away from $\infty$ the thing is fine since we just get the categories in the style of \cite{5BLA}, \cite{5BLB}.

\begin{definition}
Taking product throughout the whole families over $H$ of the differential operators with divided powers, for a general smooth rigid analytic space, we define the category of $\infty$-category of differential modules to be the $\infty$-category of coherent modules over the ring of differential operators defined in families. We now use the notation:
\begin{align}
D_{\mathrm{differential},\mathbb{N}_c}(*)
\end{align}
to denote this category for any smooth rigid space $*$ in families over $\mathbb{N}_c$.
\end{definition}

\begin{remark}
At $\infty$ we are not taking a mixed-characteristic rigid analytic covering in the theory of arithmetic $D$-modules. That is to say the resulting sheaf of rings of differential operators is of positive characteristic.
\end{remark}

\begin{definition}
For the derived $\infty$-category of differential modules in our setting:
\begin{align}
D_{\mathrm{differential},\mathbb{N}_c}(V)
\end{align}
we have the Koszul duality in families, i.e. there is a functor into this $\infty$-category from the corresponding $\infty$-category of modules over differential forms as before. To be more precise for each fiber over $\mathbb{N}_c$ we take the Koszul duality as in \cite{5BLA}, \cite{5BLB}. Here we consider the motivic setting. To be more precise for each fiber we have:
\begin{align}
\mathrm{D}_{\mathrm{inductive},\mathrm{coherent}}(\mathrm{Descent}_{(\cdot/V)^\mathrm{rig}_\mathrm{arc},(\cdot/V)^\mathrm{rig},*}(S_{\mathrm{dR},-,\mathbb{N}_c}[\mathrm{log}(t)]))(n)\rightarrow D_{\mathrm{differential},n}(\cdot/V). 
\end{align}
The category on the left of this map is the one over the analytic Grothendieck site of $V$ using all smooth rigid analytic varieties. Then we can derive the version in families:
\begin{align}
D_{\mathrm{differential},\mathbb{N}_c}(\cdot/V)\leftarrow \mathrm{D}_{\mathrm{inductive},\mathrm{coherent}}(\mathrm{Descent}_{(\cdot/V)^\mathrm{rig}_\mathrm{arc},(\cdot/V)^\mathrm{rig},*}(S_{\mathrm{dR},-,\mathbb{N}_c}[\mathrm{log}(t)])). 
\end{align}
\end{definition}

\begin{theorem}\label{theorem59}
Let $\mathrm{D}_{\mathrm{aHT},\mathrm{lisse},V}(S_{\mathrm{Robba},\mathrm{arc},*})$ be the derived $\infty$-category generated by locally finite free almost Hodge-Tate Frobenius sheaves over Robba rings as in our current family situation. Here we consider the category 
\begin{align}
\mathrm{Motives}_{S_{\mathrm{Robba},\mathrm{arc},*},\times}\overline{\mathrm{ArcStacks}}_{V,\mathrm{arc}}.
\end{align}
Where $\overline{\mathrm{ArcStacks}}$ denotes the restriction to smooth rigid spaces over $V$ from all the arc stacks over $V$. Then for each covering arc stack $W$ over $V$ we consider the corresponding almost Hodge-Tate sheaves over the Robba sheaves over $W$:
\begin{align}
\mathrm{D}_{\mathrm{aHT},\mathrm{lisse},W}(S_{\mathrm{Robba},\mathrm{arc},W})
\end{align}
which can be arranged to the motivic $\infty$-category over the big site over $V$:
\begin{align}
\mathrm{Motives}_{S_{\mathrm{Robba},\mathrm{arc},*},\times}\overline{\mathrm{ArcStacks}}_{V,\mathrm{arc}},
\end{align}
take the sub category generated by almost Hodge-Tate sheaves we have:
\begin{align}
\mathrm{Motives}^{\mathrm{aHT},\mathrm{lisse}}_{S_{\mathrm{Robba},\mathrm{arc},*},\times}\overline{\mathrm{ArcStacks}}_{V,\mathrm{arc}}.
\end{align}
We then have the following differential module:
\begin{align}
\partial_W=\mathrm{Descent}_{W^\mathrm{rig}_\mathrm{arc},W,*}(S_{\mathrm{dR},-,\mathbb{N}_c}[\mathrm{log}(t)]).
\end{align}
Varying $W$ we have the sheaf over then the site of all the smooth rigid spaces over $V$:
\begin{align}
\partial=\mathrm{Descent}_{(\cdot/V)^\mathrm{rig}_\mathrm{arc},\cdot/V,*}(S_{\mathrm{dR},-,\mathbb{N}_c}[\mathrm{log}(t)]).
\end{align}
This is from the motivic cohomology theory over the analytic topology for smooth rigid spaces. Here we regard $S_{\mathrm{dR},H,\mathbb{N}_c}[\mathrm{log}(t)]$ also as the motivic cohomology $\infty$-sheaf of ring over the big site of all the arc stacks over $V$. Then we have a well-defined functor:
\begin{align}
\mathrm{Motives}^{\mathrm{aHT},\mathrm{lisse}}_{S_{\mathrm{Robba},\mathrm{arc},*},\times}\overline{\mathrm{ArcStacks}}_{V,\mathrm{arc}}\longrightarrow \mathrm{D}_{\mathrm{inductive},\mathrm{coherent}}(\mathrm{susp}_\Sigma\mathrm{Descent}_{(\cdot/V)^\mathrm{rig}_\mathrm{arc},(\cdot/V)^\mathrm{rig},*}(S_{\mathrm{dR},-,\mathbb{N}_c}[\mathrm{log}(t)]))
\end{align}
which is symmetrical monoidal $\infty$-categorical functor into the $D$-modules in families in our current setting. Here the target category is defined to be the derived $\infty$-category of inductive systems of the coherent $D$-modules as in \cite{5GR1}, \cite{5GR2}. By consider the corresponding filtration de Rham period rings and the corresponding syntomization de Rham period rings as in the following we have two another versions of motivic Riemann Hilbert correspondences. One then have the map into the motivic $\infty$-category of differential modules in families:
\begin{align}
\mathrm{Motives}^{\mathrm{aHT},\mathrm{lisse}}_{S_{\mathrm{Robba},\mathrm{arc},*},\times}\overline{\mathrm{ArcStacks}}_{V,\mathrm{arc}}&\longrightarrow \mathrm{D}_{\mathrm{inductive},\mathrm{coherent}}(\mathrm{susp}_\Sigma\mathrm{Descent}_{(\cdot/V)^\mathrm{rig}_\mathrm{arc},(\cdot/V)^\mathrm{rig},*}(S_{\mathrm{dR},-,\mathbb{N}_c}[\mathrm{log}(t)]))\\
&\longrightarrow D_{\mathrm{differential},\mathbb{N}_c}(\cdot/V).
\end{align}
\end{theorem}

\indent Immediately after \cite{5S5}, \cite{5Ga} we have the following theorems:

\begin{theorem}
Throughout this theorem we assume we are in the family setting in \cref{theorem59}. The symmetrical monoidal $\infty$-categorical motivic Riemann-Hilbert correspondence functor in the previous theorem has a gestalten version by taking the gestalten associated directly on the two categories, which is compatible with the K\"unneth theorem (for degrees higher enough in the gestalten sequences) on the both sides, and is compatible with the $(\infty,\infty)$-categorial gestalten six-functor formalisms (as in \cite{5S5}) on the both sides, when $V$ is varying (we use the result that smooth algebras are dense in commutative algebras after we take the colimits), in the category of all the algebraic stacks over the fpqc site of all the commutative algebra objects in $D(\mathbb{S})$.
\end{theorem}

\begin{proof}
After \cite{5S5} and \cite{5Ga} the K\"unneth theorem is then formal as long as one considers higher enough degrees. Then we taking the limit of the functors over the colimit of smooth varieties to define the functors over stacks. Then when $V$ is varying in the specified categories, the resulting structure sheaves on the both side preserves the countably-presentable maps as those $!$-able maps for gestalten six-functor formalism, since they are just Robba rings and differential forms, or noncommutative differential operators. Then we have the well-defined gestalten $(\infty,\infty)$-categorical six functor formalism as in \cite{5S5}.
\end{proof}

\begin{remark}
The functor is compatible with the Poincar\'e duality in the following sense, if we unveil the definition. On the left it is perfect almost Hodge-Tate Robba sheaves, the duality is essentially the pseudocoherent Poincar\'e duality for Fargues-Fontaine gestalten, namely in current family version we have the Poincar\'e duality for gestalten associated to pseudocoherent complexes over Farges-Fontaine stacks, for different base algebraic stack $X$\footnote{One associates to each derived algebra stack $X$ in the current setting the Fargues-Fontaine curve $\mathrm{FF}_{X^\mathrm{analytification}}$ by taking solid analytification of $X$ and then regarding $X^\mathrm{analytification}$ as a derived arc-stack.}:
\begin{align}
(\mathrm{End}_{\mathrm{PsedoCoh}(\mathrm{FF}_{X^\mathrm{analytification}})}(1),\mathrm{PsedoCoh}(\mathrm{FF}_{X^\mathrm{analytification}}), \mathrm{Mod}_{\mathrm{PsedoCoh}(\mathrm{FF}_{X^\mathrm{analytification}})}(2Pr^L),...).
\end{align}
On the right it is just relative differential form algebra, the Poincar\'e duality is just the duality in that setting. 
\end{remark}

\newpage
\section{Abstract Universal Motivic Gestalten Formalism}

\subsection{Abstract Universal Motivic Gestalten Formalism}

\noindent We now amplify some generalization closely after \cite{5S4}, \cite{5S5}. Since we have a new modern contemporary theory of motives after \cite{5S4}, we can now give the following abstract formalism:

\begin{definition}(\text{Abstract Universal Motivic Gestalten Formalism})
We now systematically working over the sphere spectrum $\mathbb{S}$. For a general $(\infty,1)$-category\footnote{Or one can simply consider symmetrical monoidal ones.} $X_\mathrm{object}$ over the sphere spectrum $\mathbb{S}$ (for instance a big Grothendieck site of the sphere spectrum $\mathbb{S}$). For such $X_\mathrm{object}$ we consider the coefficient category:
\begin{align}
\mathrm{CoeffCat}_{\infty,\infty}
\end{align}
as above discussion which is the $(\infty,1)$-category of all the presentable $(\infty,\infty)$-categories, which can be degenerated to $(\infty,1)$-groupoids. Now we consider the following category of all the functors from $X$ into the corresponding categories:
\begin{align}
\mathrm{CoeffCat}_{\infty,\infty}.
\end{align}
We denote this category as:
\begin{align}
\mathrm{Functor}^U(X_\mathrm{object},\mathrm{CoeffCat}_{\infty,\infty})
\end{align}
since we are working over the sphere spectrum, we have then a promotion lifting to a bigraded motivic spectrum $\mathrm{S}^{1,0}$, in such a way we allow a twice localization (usually the second localization is the loop space functor stabilization with respect to the Tate element)\footnote{We regard this as some $(\infty,2)$-category.}:
\begin{align}
\mathrm{Fun}^U:=\mathrm{Functor}^U(X_\mathrm{object},\mathrm{CoeffCat}_{\infty,\infty})^{\mathrm{localization}, \mathrm{localization}}.
\end{align}
Then for general $X_\mathrm{object}$, we have the following universal motivic gestalten\footnote{Since we have the shifting property of gestalten as in \cite{5S5} we do not have to worry about which degree $\mathrm{Fun}^U$ has since we can always stabilize towards the negative integer direction via $\mathrm{End}_{-}(1)$, until we reach $D(\mathbb{S})$.}:
\begin{align}
(...,\mathrm{End}_{\mathrm{End}_{\mathrm{Fun}^U}(1)}(1),\mathrm{End}_{\mathrm{Fun}^U}(1),\mathrm{Fun}^U,\mathrm{Mod}_{\mathrm{Fun}^U}(3Pr^L),\mathrm{Mod}_{\mathrm{Mod}_{\mathrm{Fun}^U}(3Pr^L)}(4Pr^L),...).
\end{align}
Now after \cite{5S5} we have the corresponding abstract 6-functors formalism across these universal gelstalten, for $!$-able maps generated from countably presented ones. In the current formalism we consider the corresponding functoriality over $X:=\mathrm{X}_\mathrm{object}$, where a general form of such map takes the form of a countable limit:
\begin{align}
\lim_k X'_k \rightarrow X
\end{align}
where $X'_k$ in the opposite category is countably presented over $X$, in the category of all presentable $(\infty,1)$-categories. For this we consider all the relavant such categories $X$ which are assumed to be simply $\infty$-category we denote them as:
\begin{align}
\mathrm{Mon}_\infty.
\end{align}
And we denote the corresponding six functors as:
\begin{align}
f_\flat, f^\sharp, g_\flat, g^\sharp, \boxtimes, \mathrm{Hom}.  
\end{align}
However over the higher category level the difference between $f,g$ is disappearing as in \cite{5S5}. 
\end{definition}

The second formalism goes along some generalization along Lurie's work on ultracategories in \cite{5Lu3}, where we have the following extension of the previous formalism in a parallel manner:

\begin{definition}(\text{Motivicalization Ultracategoricalization Gestalten Formalism and the Integration Theory})
We now systematically working over the sphere spectrum $\mathbb{S}$. For a general $(\infty,1)$-ultracategory $X_\mathrm{object}$ over the sphere spectrum $\mathbb{S}$ (for instance a big Grothendieck site of the sphere spectrum $\mathbb{S}$, a big Grothendieck topos, or just a pretopos). Note that any such ultracategory can be regarded as a $\infty$-sheaf values in $\infty$-categories over the site of all the compact Hausdorff spaces:
\begin{align}
\mathrm{CHSpace}.
\end{align}
For such $X_\mathrm{object}$ we consider the coefficient ultracategory:
\begin{align}
\mathrm{CoeffCat}_{\infty,\infty}
\end{align}
as above discussion which is the $(\infty,1)$-ultracategory of all the presentable $(\infty,\infty)$-ultracategories, which can be degenerated to $(\infty,1)$-ultragroupoids. Now we consider the following ultracategory of all the ultrafunctors from $X$ into the corresponding ultracategories:
\begin{align}
\mathrm{CoeffCat}_{\infty,\infty}.
\end{align}
We denote this ultracategory as:
\begin{align}
\mathrm{UltraFunctor}^U(X_\mathrm{object},\mathrm{CoeffCat}_{\infty,\infty})
\end{align}
since we are working over the sphere spectrum, we have then a promotion lifting to a bigraded motivic spectrum $\mathrm{S}^{1,0}$, in such a way we allow a twice localization (usually the second localization is the loop space functor stabilization with respect to the Tate element)\footnote{We regard this as some $(\infty,2)$-category.}:
\begin{align}
\mathrm{UltraFun}^U:=\mathrm{UltraFunctor}^U(X_\mathrm{object},\mathrm{CoeffCat}_{\infty,\infty})^{\mathrm{localization}, \mathrm{localization}}.
\end{align}
Then for general $X_\mathrm{object}$, we have the following universal motivic gestalten:
\begin{align}
(\mathrm{End}_{\mathrm{End}_{\mathrm{UltraFun}^U}(1)}(1),\mathrm{End}_{\mathrm{UltraFun}^U}(1),\mathrm{UltraFun}^U,\mathrm{Mod}_{\mathrm{UltraFun}^U}(3Pr^L),\mathrm{Mod}_{\mathrm{Mod}_{\mathrm{UltraFun}^U}(3Pr^L)}(4Pr^L),...).
\end{align}
As in \cite{5Lu3} we can consider the ultramodels (i.e. the functors from $X_\mathrm{object}$) for $X_\mathrm{object}$:
\begin{align}
\mathrm{UltraModel}(X_\mathrm{object}).
\end{align}
In such a way we also have the corresponding ultracategory of ultrafunctors from this ultracategory to the coefficient ultracategories via further motivicalization through double localizations:
\begin{align}
\mathrm{UltraFun}_M^U:=\mathrm{UltraFunctor}^U(\mathrm{UltraModel}(X_\mathrm{object}),\mathrm{CoeffCat}_{\infty,\infty})^{\mathrm{localization}, \mathrm{localization}}.
\end{align}
Therefore after this we then have an associated ultragestalt:
\begin{align}
(\mathrm{End}_{\mathrm{End}_{\mathrm{UltraFun}_M^U}(1)}(1),\mathrm{End}_{\mathrm{UltraFun}_M^U}(1),\mathrm{UltraFun}_M^U,\mathrm{Mod}_{\mathrm{UltraFun}_M^U}(3Pr^L),\mathrm{Mod}_{\mathrm{Mod}_{\mathrm{UltraFun}_M^U}(3Pr^L)}(4Pr^L),...).
\end{align}
Now after \cite{5S5} we have the corresponding abstract 6-functors formalism across these universal gelstalten, for $!$-able maps generated from countably presented ones. In the current formalism we consider the corresponding functoriality over $X:=\mathrm{X}_\mathrm{object}$, where a general form of such map takes the form of a countable limit:
\begin{align}
\lim_k X'_k \rightarrow X
\end{align}
where $X'_k$ in the opposite ultracategory is countably presented over $X$, in the ultracategory of all presentable $(\infty,1)$-ultracategories. For this we consider all the relavant such ultracategories $X$ which are assumed to be simply $\infty$-ultracategory we denote them as:
\begin{align}
\mathrm{Mon}_\infty.
\end{align}
And we denote the corresponding six functors as:
\begin{align}
f_\flat, f^\sharp, g_\flat, g^\sharp, \boxtimes, \mathrm{Hom}.  
\end{align}
However over the higher category level the difference between $f,g$ is disappearing as in \cite{5S5}. Recall that in this setting the corresponding ultracategories are defined to be carrying integration theory which satisfies \textit{ultracategorical Fubini natural transformation theorem}:
\begin{align}
&\int_S (\mathrm{End}_{\mathrm{End}_{\mathrm{UltraFun}^U}(1)}(1),\mathrm{End}_{\mathrm{UltraFun}^U}(1),\mathrm{UltraFun}^U,\\
&\mathrm{Mod}_{\mathrm{UltraFun}^U}(3Pr^L),\mathrm{Mod}_{\mathrm{Mod}_{\mathrm{UltraFun}^U}(3Pr^L)}(4Pr^L),...)_s d(\int_T \mu_t) =\\ 
&\int_T\int_S (\mathrm{End}_{\mathrm{End}_{\mathrm{UltraFun}^U}(1)}(1),\mathrm{End}_{\mathrm{UltraFun}^U}(1),\mathrm{UltraFun}^U,\\
&\mathrm{Mod}_{\mathrm{UltraFun}^U}(3Pr^L),\mathrm{Mod}_{\mathrm{Mod}_{\mathrm{UltraFun}^U}(3Pr^L)}(4Pr^L),...)_s d\mu_t,\\
&\int_S(\mathrm{End}_{\mathrm{End}_{\mathrm{UltraFun}_M^U}(1)}(1),\mathrm{End}_{\mathrm{UltraFun}_M^U}(1),\mathrm{UltraFun}_M^U,\\
&\mathrm{Mod}_{\mathrm{UltraFun}_M^U}(3Pr^L),\mathrm{Mod}_{\mathrm{Mod}_{\mathrm{UltraFun}_M^U}(3Pr^L)}(4Pr^L),...)_s d(\int_T \mu_t) =\\ 
&\int_T\int_S (\mathrm{End}_{\mathrm{End}_{\mathrm{UltraFun}_M^U}(1)}(1),\mathrm{End}_{\mathrm{UltraFun}_M^U}(1),\mathrm{UltraFun}_M^U,\\
&\mathrm{Mod}_{\mathrm{UltraFun}_M^U}(3Pr^L),\mathrm{Mod}_{\mathrm{Mod}_{\mathrm{UltraFun}_M^U}(3Pr^L)}(4Pr^L),...)_s d\mu_t.
\end{align}
The measure is $(0,1)$-valued and the corresponding $S,T$ are sets and the $\sigma$-algebras are defined to be the ones generated by power sets. As in the usual abstract measure theory here the product measures are the tensor product measures.
\end{definition}

\begin{remark}
Let us basically explain our notation. We take the ultraproducts over ultracategorical motivic gestalten, which \textit{really} means we consider a huge ultracategory 
\begin{align}
\mathrm{UltraMotGestalten}
\end{align}
of all the ultracategorical motivic gestalten. For any set $S$ we can then do the ultraproduct/ultraintegration:
\begin{align}
\int_S d\mu (-) = \varinjlim_{\mu}\prod_{\mathrm{Ultra},\mu} (-) 
\end{align}
as functors on product of $S$-copies of this huge category:
\begin{align}
\prod_S \mathrm{UltraMotGestalen}.
\end{align}
One then put family of measure on $S$, indexed by another set $S'$, where we have a measure transformation:
\begin{align}
d\mu = d(\int_T \mu_t).
\end{align}
This then satisfies Fubini transformation:
\begin{align}
\int_S - d(\int_T \mu_t) \rightarrow \int_T\int_S - d\mu_t.
\end{align}
Where we will call it Fubini natural equivalence when it is natural equivalence: i.e. the categorical Fubini theorem.
\end{remark}

\begin{definition}
A ultragestalt is defined to be an $(\infty,\infty)$-sheaf valued in $(\infty,\infty)$-ring gestalten over the category of compact Hausdorff spaces. A noncommutative ultragestalt is defined to be an $(\infty,\infty)$-sheaf valued in $(\infty,\infty)$-ring noncommutative gestalten over the category of compact Hausdorff spaces. We have the parallel definition in condensed mathematics: a condensed gestalt is defined to be an $(\infty,\infty)$-sheaf valued in $(\infty,\infty)$-ring gestalten over the category of totally disconnected compact Hausdorff spaces. A noncommutative condensed gestalt is defined to be an $(\infty,\infty)$-sheaf valued in $(\infty,\infty)$-ring noncommutative gestalten over the category of totally disconnected compact Hausdorff spaces. we further endow these $(\infty,\infty)$-stacks with the ultrastructure by using the integration, together with Fubini transformations, and with the requirements as in \cite[Definition 1.3.1, (A), (B) (C)]{5Lu3} namely we have the Fubini equivalence with respect to twice integration over integration set, Fubini equivalence with point measure, Fubini equivalence with respect to double twice integration over the integration set of the integration set. 
\end{definition}

\begin{remark}
What happens in the current setting is that we can consider the situation where $X_\mathrm{object}$ is the category of all the Grothendieck sites. In such a way let the base be $\mathbb{S}$, we have then functors existing between Grothendieck sites, which can be applied in the situation where we consider the Fargues-Fontaine gestalten over different sites: la topologie $v$, la topologie fid\`limente plate et quasicompacte, la topologie fid\`elimente plate et present\'e fini, la topologie arc, la topologie analytique,..., all over $\mathbb{S}$, ie. we have a corresponding six-functor formalism for all Grothendieck sites. For instance now have an algebraic stack $X$ fibered over all the $\mathbb{E}_\infty$-ring objects in $D(\mathbb{S})$, then consider the \textit{category of all the Grothendieck sites and topologies} over $X$ (namely we allow arc topology for instance which can be realized over $X$ through solid analytification), then for two different Grothendieck sites if one has the map which is countably presented between them then we can basically have $f^*,f_*,f_\sharp,f^\sharp,\otimes, \mathrm{Hom}$ and we can take the corresponding K\"unneth style product for higher enough degrees in the gestalten. 
\end{remark}

\begin{remark}
The universal motivic gestalten framework is a very general modern theory of abstract motivicalization (i.e. the process promoting things over $\mathbb{Z}$ to motivic bigraded sphere spectrum), in the context of \cite{5S4}, we consider the fpqc-site of all the schemes over $\mathbb{Z}$, then we consider presheaves over them to form this formalism. For instance if we consider the base to be $\mathbb{S}$, and we consider all other sites of all the derived algebraic stacks with the corresponding simply countably presented topology we have a new site for $\mathbb{S}$, as compared to la fpqc-topologie. The question is do we have a universal site to have a universal category $\mathrm{Fun}^U$, or the question is can we take K\"unneth product of two such categories? However \cite{5S5} solves this in large generality. 
\end{remark}

\subsection{Motivic Higher-Ultracategorical Lurie's Vollst\"andigkeitssatz}

Since the corresponding mathematical framework of universal motivic gestalten carry the corresponding ultracategorical structure, even when we consider the functoriality over the model categories. This is non-trivial, since it fits into the mathematical foundation of Vollst\"andigkeitssatz after \cite{5Lu3}, namely for instance if $X_\mathrm{object}$ is a Grothendieck topos, then we have a directly concrete description via the framework after \cite{5Lu3}. \cite{5Lu3} is a well-defined mathematical foundational framework to work with some general pretopoi via some hyper-general topos theory. Of course this is the deep foundation of modern mathmatics namely many mathematical theories can be built based on topos theory, such as topology, we have many model spaces to consider such as in topology we have \textit{sober} topological spaces, compact Hausdorff spaces, totally disconnected compact Hausdorff spaces. What we are generalizing here after \cite{5Lu3} here have two-folds. The first we consider is ultra mathematics in a way like condensed mathematics, after Clausen-Scholze, where one can also rely on the corresponding framework of \cite{5S5} to study the ultra analytic stacks. Other consideration secondly will be the corresponding motivicalized version of the approach of \cite{5Lu3}, where we do motivicalization to ultracategories over sphere spectrum. 

\begin{question} (Further Ultracategoricalization of Motivic Method)
Suppose we consider some $\infty$-category $C$ of $(\infty,1)$-pretopoi (for instance $\infty$-big Grothendieck topoi) in correspondence to certain topological spaces over the sphere spectrum in algebraic geometry, algebraic topology and point-set topology (for instance \textit{derived totally-disconnected quasi-compact algebraic stacks over $p$-adic Witt vectorial sphere spectrum} through perfectoidization). We promote this to be some motivicalization (with further stabilization) $C_m$ over bigraded motivic sphere spectrum. One asks the motivation questions: do we have a six-functor formalism for motivic pretopoi in $C_m$? Do we have the chance to motivicalize \cite{5Lu3}? Is a motivicalized ultracategoricalization approach after \cite{5Lu3} to algebraic topology (for instance motivic method used in this paper to many problems on general topological spaces and analytic stacks in algebraic geometry and analytic geometry) really helpful?
\end{question}

\begin{remark} \mbox{(Six-Functor Formalism for All Topological Spaces)}
Now suppose we consider the category all the topological spaces over the sphere spectrum, which can be embedded into the category of all the $\infty$-pretopoi. For instance we can consider schemes over $\mathbb{Z}$, adic spaces, $p$-adic Lie groups, rigid analytic spaces. Sometimes we hope to consider phenomenon when we go across these classes, that is to say we hope to have a six-functor formalism for all the topological spaces. Our current framework via universal motivic higher-ultracategorical gestalten will provide a general framework on this, relying on the topos philosophy and aspect from \cite{5Lu3}.
\end{remark}

\begin{remark}
Philosophyically speaking, in the current paper, our main approach overall is motivicalization. We hope to rely on this to tackle problems in algebraic geometry and analytic geometry. Motivic method is always applied with homotopicalization as a lift from the usual sphere spectrum, which is something one can borrows from algebraic topology (i.e. the abstract homotopy theory). However since \cite{5Lu3} has application to algebraic topology via most general Grothendieck topos theory, one may obviously hope vice versa the using of ultracategories and topos theory can enhance the considered motivic method in the current paper, which is a motivation to now relatively systematically consider the framework and mathematical foundation of \cite{5Lu3}.
\end{remark}

\begin{remark}
The questions have already some answer in modern algebraic topology (for instance one studies the motivic Adams spectral sequence in application to homotopy groups of spheres, since this is really a very old and tough question in mathematics) where one uses motivic method to study cohomologies, such as what we did above in this paper. But \cite{5Lu3} obviously may provide potential insight on another motivic method via ultracategoricalization since pretopoi can always be related to ultracategories via the main theorem of \cite{5Lu3}. The main theorem of \cite{5Lu3} gives rise to a certain chance to relate certain topological spaces to the corresponding topoi of sheaves over 'pro-\'etale' categories/sites.
\end{remark}

We first consider Lurie's theorem over $(\infty,\infty)$-ultracategorical level, and then we extend Lurie's theorem to motivic level through twice localizations. The following theorem is just due to Lurie. However we believe the proof is also significant since it uses the 'pro-\'etale topology' as a corresponding modern consideration, which gives a connection to Stone spaces.

\begin{definition}\mbox{(Lurie, \cite{5Lu3})}
We recall some basic constructions from \cite{5Lu3}. The first thing is that we have to lift up the construction over to the sphere spectrum $\mathbb{S}$. This allows us to further lift to motivic bigraded sphere spectrum, in order to do the localization localization to achieve stable categories. Therefore we start from $X_\mathrm{object}$ over $\mathbb{S}$. Now we assume that $X_\mathrm{object}$ is a $(\infty,n)$-pretopos. Then we can define the corresponding coherent $(\infty,n)$-topos over this pretopos:
\begin{align}
\mathrm{Coh}_{\mathrm{Shv}}(X_\mathrm{object})
\end{align}
which can be further motivicalized to the motivic version via the twice localizations from the bigraded motivic sphere spectrum $\mathbb{S}^{1,0}$:
\begin{align}
\mathrm{Coh}_\mathrm{Shv}:=\mathrm{Coh}_{\mathrm{Shv}}(X_\mathrm{object})^\mathrm{localization, localization}.
\end{align}
Then we get the corresponding gestalt via the stablization along two directions\footnote{In such a manner, we do not have to fix a degree and specify what is that degree for this.}:
\begin{align}
(...,\mathrm{End}_{\mathrm{End}_{\mathrm{Coh}_\mathrm{Shv}}(1)}(1),\mathrm{End}_{\mathrm{Coh}_\mathrm{Shv}}(1),\mathrm{Coh}_\mathrm{Shv},\mathrm{Mod}_{\mathrm{Coh}_\mathrm{Shv}}(nPr^L),\mathrm{Mod}_{\mathrm{Mod}_{\mathrm{Coh}_\mathrm{Shv}}(nPr^L)}((n+1)Pr^L),...).
\end{align}
This motivic ultragestalt has the corresponding Fubini theorem:
\begin{align}
&\int_S(...,\mathrm{End}_{\mathrm{End}_{\mathrm{Coh}_\mathrm{Shv}}(1)}(1),\mathrm{End}_{\mathrm{Coh}_\mathrm{Shv}}(1),\mathrm{Coh}_\mathrm{Shv},\mathrm{Mod}_{\mathrm{Coh}_\mathrm{Shv}}(nPr^L),\mathrm{Mod}_{\mathrm{Mod}_{\mathrm{Coh}_\mathrm{Shv}}(nPr^L)}((n+1)Pr^L),...)_{s} d(\int_T d\mu_t) = \\
&\int_T\int_S (...,\mathrm{End}_{\mathrm{End}_{\mathrm{Coh}_\mathrm{Shv}}(1)}(1),\mathrm{End}_{\mathrm{Coh}_\mathrm{Shv}}(1),\mathrm{Coh}_\mathrm{Shv},\mathrm{Mod}_{\mathrm{Coh}_\mathrm{Shv}}(nPr^L),\mathrm{Mod}_{\mathrm{Mod}_{\mathrm{Coh}_\mathrm{Shv}}(nPr^L)}((n+1)Pr^L),...)_{s} d\mu_t.
\end{align}
Going from here we can take the pro-category of $X_\mathrm{object}$, and consider the $(\infty,1)$-category of all the continuous sheaves with values in $(\infty,\infty)$-categories over this site:
\begin{align}
\mathrm{ConShvFunctor}(\mathrm{Pro}(X_\mathrm{object}),\mathrm{CoeffCat}_{\infty,\infty}).
\end{align}
Again we consider the corresponding motivicalization:
\begin{align}
\mathrm{ConShvFun}:=\mathrm{ConShvFunctor}(\mathrm{Pro}(X_\mathrm{object}),\mathrm{CoeffCat}_{\infty,\infty})^\mathrm{localization,localization}
\end{align}
which is then further gives rise to the corresponding gestalt:
\begin{align}
(...,\mathrm{End}_{\mathrm{End}_{\mathrm{ConShvFun}}(1)}(1),\mathrm{End}_{\mathrm{ConShvFun}}(1),\mathrm{ConShvFun},\mathrm{Mod}_{\mathrm{ConShvFun}}(nPr^L),\\
\mathrm{Mod}_{\mathrm{Mod}_{\mathrm{ConShvFun}}(nPr^L)}((n+1)Pr^L),...).
\end{align}
This motivic ultragestalt also has the integration formula:
\begin{align}
&\int_X(...,\mathrm{End}_{\mathrm{End}_{\mathrm{ConShvFun}}(1)}(1),\mathrm{End}_{\mathrm{ConShvFun}}(1),\mathrm{ConShvFun},\mathrm{Mod}_{\mathrm{ConShvFun}}(nPr^L),\\
&\mathrm{Mod}_{\mathrm{Mod}_{\mathrm{ConShvFun}}(nPr^L)}((n+1)Pr^L),...)d(\int_Y \mu_y)=\\
&\int_Y\int_X (...,\mathrm{End}_{\mathrm{End}_{\mathrm{ConShvFun}}(1)}(1),\mathrm{End}_{\mathrm{ConShvFun}}(1),\mathrm{ConShvFun},\mathrm{Mod}_{\mathrm{ConShvFun}}(nPr^L),\\
&\mathrm{Mod}_{\mathrm{Mod}_{\mathrm{ConShvFun}}(nPr^L)}((n+1)Pr^L),...)_{x} d\mu_y.
\end{align}
Then we consider the ringed Stone spaces enriched over the Model category of $X_\mathrm{object}$, which is subcategory of all the ringed compact Housdorff spaces in $\mathrm{CHSpace}$ taking a general form of $(A,L_A)$ where $L_A$ is a functor from $A$ to $\mathrm{UltraModel}(X_\mathrm{object})$. We denote the category of these Stone spaces as:
\begin{align}
\mathrm{RingedStone}^{\mathrm{UltraModel}(X_\mathrm{object})}.
\end{align}  
Then as in \cite{5Lu3} we have the following $(\infty,2)$-ultracategory of all the $(\infty,\infty)$-functors from this category to our chosen coefficient category:
\begin{align}
\mathrm{UltraFunctor}^U(\mathrm{UltraModel}(\mathrm{RingedStone}^{\mathrm{UltraModel}(X_\mathrm{object})},\mathrm{CoeffCat}_{\infty,\infty})^{\mathrm{localization}, \mathrm{localization}}
\end{align}
In such a way we also have the corresponding ultracategory of ultrafunctors from this ultracategory to the coefficient ultracategories via further motivicalization through double localizations:
\begin{align}
\mathrm{UltraFun}_S^U:=\mathrm{UltraFunctor}^U(\mathrm{RingedStone}^{\mathrm{UltraModel}(X_\mathrm{object})},\mathrm{CoeffCat}_{\infty,\infty})^{\mathrm{localization}, \mathrm{localization}}.
\end{align}
Therefore after this we then have an associated ultragestalt:
\begin{align}
(\mathrm{End}_{\mathrm{End}_{\mathrm{UltraFun}_S^U}(1)}(1),\mathrm{End}_{\mathrm{UltraFun}_S^U}(1),\mathrm{UltraFun}_S^U,\mathrm{Mod}_{\mathrm{UltraFun}_S^U}(3Pr^L),\mathrm{Mod}_{\mathrm{Mod}_{\mathrm{UltraFun}_S^U}(3Pr^L)}(4Pr^L),...).
\end{align}
We then have the integration formula:
\begin{align}
&\int_S(\mathrm{End}_{\mathrm{End}_{\mathrm{UltraFun}_S^U}(1)}(1),\mathrm{End}_{\mathrm{UltraFun}_S^U}(1),\mathrm{UltraFun}_S^U,\mathrm{Mod}_{\mathrm{UltraFun}_S^U}(3Pr^L),\\
&\mathrm{Mod}_{\mathrm{Mod}_{\mathrm{UltraFun}_S^U}(3Pr^L)}(4Pr^L),...)_{s}d(\int_T \mu_t)=\\
&\int\int_{T,S}(\mathrm{End}_{\mathrm{End}_{\mathrm{UltraFun}_S^U}(1)}(1),\mathrm{End}_{\mathrm{UltraFun}_S^U}(1),\mathrm{UltraFun}_S^U,\mathrm{Mod}_{\mathrm{UltraFun}_S^U}(3Pr^L),\\
&\mathrm{Mod}_{\mathrm{Mod}_{\mathrm{UltraFun}_S^U}(3Pr^L)}(4Pr^L),...)_{s}d\mu_t.
\end{align}
Then as in \cite{5Lu3} we have the following $(\infty,2)$-ultracategory of all the $(\infty,\infty)$-functors from the model category for $X_\mathrm{object}$ to our chosen coefficient category:
\begin{align}
\mathrm{UltraFunctor}^U(\mathrm{UltraModel}(X_\mathrm{object}),\mathrm{CoeffCat}_{\infty,\infty})^{\mathrm{localization}, \mathrm{localization}}
\end{align}
In such a way we also have the corresponding ultracategory of ultrafunctors from this ultracategory to the coefficient ultracategories via further motivicalization through double localizations:
\begin{align}
\mathrm{UltraFun}_M^U:=\mathrm{UltraFunctor}^U({\mathrm{UltraModel}(X_\mathrm{object})},\mathrm{CoeffCat}_{\infty,\infty})^{\mathrm{localization}, \mathrm{localization}}.
\end{align}
Therefore after this we then have an associated ultragestalt:
\begin{align}
(\mathrm{End}_{\mathrm{End}_{\mathrm{UltraFun}_M^U}(1)}(1),\mathrm{End}_{\mathrm{UltraFun}_M^U}(1),\mathrm{UltraFun}_M^U,\mathrm{Mod}_{\mathrm{UltraFun}_M^U}(3Pr^L),\mathrm{Mod}_{\mathrm{Mod}_{\mathrm{UltraFun}_M^U}(3Pr^L)}(4Pr^L),...).
\end{align}
This satisfies the corresponding Fubini theorem:
\begin{align}
&\int_{S}(\mathrm{End}_{\mathrm{End}_{\mathrm{UltraFun}_M^U}(1)}(1),\mathrm{End}_{\mathrm{UltraFun}_M^U}(1),\mathrm{UltraFun}_M^U,\mathrm{Mod}_{\mathrm{UltraFun}_M^U}(3Pr^L),\\
&\mathrm{Mod}_{\mathrm{Mod}_{\mathrm{UltraFun}_M^U}(3Pr^L)}(4Pr^L),...)_{s}d(\int_T \mu_t)=\\
&\int_T\int_{S}(\mathrm{End}_{\mathrm{End}_{\mathrm{UltraFun}_M^U}(1)}(1),\mathrm{End}_{\mathrm{UltraFun}_M^U}(1),\mathrm{UltraFun}_M^U,\mathrm{Mod}_{\mathrm{UltraFun}_M^U}(3Pr^L),\\
&\mathrm{Mod}_{\mathrm{Mod}_{\mathrm{UltraFun}_M^U}(3Pr^L)}(4Pr^L),...)_{s}d\mu_t.
\end{align}
\end{definition}

\begin{theorem}{\mbox{(Lurie,\cite{5Lu3})}}
We have an equivalence of motivic $(\infty,\infty)$-ultragestalten:
\begin{align}
&(...,\mathrm{End}_{\mathrm{End}_{\mathrm{CohShvFun}}(1)}(1),\mathrm{End}_{\mathrm{ConShvFun}}(1),\mathrm{ConShvFun},\mathrm{Mod}_{\mathrm{ConShvFun}}(nPr^L),\mathrm{Mod}_{\mathrm{Mod}_{\mathrm{ConShvFun}}(nPr^L)}((n+1)Pr^L),...)\\
&\overset{\sim}{\rightarrow}\\
&(\mathrm{End}_{\mathrm{End}_{\mathrm{UltraFun}_M^U}(1)}(1),\mathrm{End}_{\mathrm{UltraFun}_M^U}(1),\mathrm{UltraFun}_M^U,\mathrm{Mod}_{\mathrm{UltraFun}_M^U}(3Pr^L),\mathrm{Mod}_{\mathrm{Mod}_{\mathrm{UltraFun}_M^U}(3Pr^L)}(4Pr^L),...).
\end{align}
Now consider the $!$-able topology (induced from countably presented mappings) over the category of all the $(\infty,1)$-pretopoi, we have that this equivalence is compatible with the abstract gestalten six-functor formalisms on the both sides.
\end{theorem}

\begin{proof}
For the convenience of the readers and for a clear presentation of the main modern essence of Lurie's proof via pro-\'etale topology we present the key steps in Lurie's proof. The original main theorem of \cite{5Lu3} is a theorem on the level of category, but it obviously generalizes to $(\infty,n)$-categories. The twice localization motivicalization is also fine, since we in general take first the homotopy localization with respect to the bigraded sphere spectrum. Then we do second localization via infinite loop space functor $\Omega_\infty$ with respect to this spectrum as well. Therefore we start from $X_\mathrm{object}$ over $\mathbb{S}$. Now we assume that $X_\mathrm{object}$ is a $(\infty,n)$-pretopos. Then we can define the corresponding coherent $(\infty,n)$-topos over this pretopos:
\begin{align}
\mathrm{Coh}_{\mathrm{Shv}}(X_\mathrm{object})
\end{align}
which can be further motivicalized to the motivic version via the twice localizations from the bigraded motivic sphere spectrum $\mathbb{S}^{1,0}$:
\begin{align}
\mathrm{Coh}_\mathrm{Shv}:=\mathrm{Coh}_{\mathrm{Shv}}(X_\mathrm{object})^\mathrm{localization, localization}.
\end{align}
Then we get the corresponding gestalt via the stablization along two directions\footnote{In such a manner, we do not have to fix a degree and specify what is that degree for this.}:
\begin{align}
(...,\mathrm{End}_{\mathrm{End}_{\mathrm{Coh}_\mathrm{Shv}}(1)}(1),\mathrm{End}_{\mathrm{Coh}_\mathrm{Shv}}(1),\mathrm{Coh}_\mathrm{Shv},\mathrm{Mod}_{\mathrm{Coh}_\mathrm{Shv}}(nPr^L),\mathrm{Mod}_{\mathrm{Mod}_{\mathrm{Coh}_\mathrm{Shv}}(nPr^L)}((n+1)Pr^L),...).
\end{align}
Going from here we can take the pro-category of $X_\mathrm{object}$, and consider the $(\infty,1)$-category of all the continuous sheaves with values in $(\infty,\infty)$-categories over this site:
\begin{align}
\mathrm{ConShvFunctor}(\mathrm{Pro}(X_\mathrm{object}),\mathrm{CoeffCat}_{\infty,\infty}).
\end{align}
Again we consider the corresponding motivicalization:
\begin{align}
\mathrm{ConShvFun}:=\mathrm{ConShvFunctor}(\mathrm{Pro}(X_\mathrm{object}),\mathrm{CoeffCat}_{\infty,\infty})^\mathrm{localization,localization}
\end{align}
which is then further gives rise to the corresponding gestalt:
\begin{align}
(...,\mathrm{End}_{\mathrm{End}_{\mathrm{CohShvFun}}(1)}(1),\mathrm{End}_{\mathrm{ConShvFun}}(1),\mathrm{ConShvFun},\mathrm{Mod}_{\mathrm{ConShvFun}}(nPr^L),\\
\mathrm{Mod}_{\mathrm{Mod}_{\mathrm{ConShvFun}}(nPr^L)}((n+1)Pr^L),...).
\end{align}
Then we consider the ringed Stone spaces enriched over the Model category of $X_\mathrm{object}$, which is subcategory of all the ringed compact Housdorff spaces in $\mathrm{CHSpace}$ taking a general form of $(A,L_A)$ where $L_A$ is a functor from $A$ to $\mathrm{UltraModel}(X_\mathrm{object})$. We denote the category of these Stone spaces as:
\begin{align}
\mathrm{RingedStone}^{\mathrm{UltraModel}(X_\mathrm{object})}.
\end{align}  
Then as in \cite{5Lu3} we have the following $(\infty,2)$-ultracategory of all the $(\infty,\infty)$-functors from this category to our chosen coefficient category:
\begin{align}
\mathrm{UltraFunctor}^U(\mathrm{UltraModel}(\mathrm{RingedStone}^{\mathrm{UltraModel}(X_\mathrm{object})},\mathrm{CoeffCat}_{\infty,\infty})^{\mathrm{localization}, \mathrm{localization}}
\end{align}
In such a way we also have the corresponding ultracategory of ultrafunctors from this ultracategory to the coefficient ultracategories via further motivicalization through double localizations:
\begin{align}
\mathrm{UltraFun}_S^U:=\mathrm{UltraFunctor}^U(\mathrm{RingedStone}^{\mathrm{UltraModel}(X_\mathrm{object})},\mathrm{CoeffCat}_{\infty,\infty})^{\mathrm{localization}, \mathrm{localization}}.
\end{align}
Therefore after this we then have an associated ultragestalt:
\begin{align}
(\mathrm{End}_{\mathrm{End}_{\mathrm{UltraFun}_S^U}(1)}(1),\mathrm{End}_{\mathrm{UltraFun}_S^U}(1),\mathrm{UltraFun}_S^U,\mathrm{Mod}_{\mathrm{UltraFun}_S^U}(3Pr^L),\mathrm{Mod}_{\mathrm{Mod}_{\mathrm{UltraFun}_S^U}(3Pr^L)}(4Pr^L),...).
\end{align}
Then as in \cite{5Lu3} we have the following $(\infty,2)$-ultracategory of all the $(\infty,\infty)$-functors from the model category for $X_\mathrm{object}$ to our chosen coefficient category:
\begin{align}
\mathrm{UltraFunctor}^U(\mathrm{UltraModel}(X_\mathrm{object}),\mathrm{CoeffCat}_{\infty,\infty})^{\mathrm{localization}, \mathrm{localization}}
\end{align}
In such a way we also have the corresponding ultracategory of ultrafunctors from this ultracategory to the coefficient ultracategories via further motivicalization through double localizations:
\begin{align}
\mathrm{UltraFun}_M^U:=\mathrm{UltraFunctor}^U({\mathrm{UltraModel}(X_\mathrm{object})},\mathrm{CoeffCat}_{\infty,\infty})^{\mathrm{localization}, \mathrm{localization}}.
\end{align}
Therefore after this we then have an associated ultragestalt:
\begin{align}
(\mathrm{End}_{\mathrm{End}_{\mathrm{UltraFun}_M^U}(1)}(1),\mathrm{End}_{\mathrm{UltraFun}_M^U}(1),\mathrm{UltraFun}_M^U,\mathrm{Mod}_{\mathrm{UltraFun}_M^U}(3Pr^L),\mathrm{Mod}_{\mathrm{Mod}_{\mathrm{UltraFun}_M^U}(3Pr^L)}(4Pr^L),...).
\end{align} 
Then this gives the key steps in Lurie's proof by equivalenting all such four $(\infty,\infty)$-ultragestalten above, which proves the main theorem of \cite{5Lu3} on the level of motivic $(\infty,\infty)$-ultragestalten.
\end{proof}

\begin{remark}
When we put ultrastructure over gestalten, it is obviously an enrichment of the story. This is deeply inspired by condensed mathematics. Since ultracategories are sheaves over compact Hausdorff spaces, while condensed categories are sheaves over totally disconnected compact Hausdorff spaces, which is the so-called profinite sets. Following the philosophy of \cite{5S5}, if one does mathmatics through the foundation of ultracategories and ultragestalten, then the category of ultragestalten embedds nicely into the category of gestalten, just as the condensed rings are embedded nicely into the category of gestalten. What happens is that we then have a well-defined six-functor formalism for ultragestalten induced from gestalten six-functor formalism.

\end{remark}

This means we can do ultra-mathematics in the same fashion as in condensed mathematics after \cite{CS1}. 

\begin{definition} \mbox{(Ultra Mathematics)}
We define a pre-ultra analytic ring $(A,M_A)$ as in \cite{CS1} to be some $E_\infty$-ring object $A$ in the category of ultra-abelian groups (i.e. $\infty$-sheaves with values in $\infty$-abelian groups over compact Hausdorff spaces), with $M_A$ which is some sub-$\infty$-category of the category of all ultramodules $U$ over $A$ stable under limits, colimits and extensions. We then have the notion of ultra analytic rings by assuming $M_A$ have good property which is closed under the derived extension up to any order as in \cite{CS1} (between $M_A$ and objects outside $M_A$ but within $U$). We can then collect the corresponding ultra analytic rings as some $(\infty,1)$-category which is $\mathrm{UltraAnRing}$, where we can embed this category into gestalten:
\begin{align}
(A,M_A)\mapsto (A,D(A),\mathrm{Mod}_{D(A)}(1Pr^L),...),
\end{align}
then we pullback the gestalten $!$-able topology back to this $(\infty,1)$-category. We then use this $!$-able topology to define a $!$-able site for ultra analytic rings to form the Grothendieck site:
\begin{align}
\mathrm{UltraAnRing}_!.
\end{align} 
Now we define the corresponding notion of ultra anlaytic stacks to be the $\infty$-sheaves with values in $\infty$-groupoids over this site. Then the correseponding category of ultra analytic stacks $\mathrm{UltraAnStacks}$ have an abstract six-functor formalism in the category of gestalten via the obvious embeding as we embed ultra analytic stacks. In completely parallel fashion we consider $\mathbb{E}_1$-ring object to define noncommutative ultra analytic stacks.
\end{definition}

\subsection{The Mathematical Theory of Large Fargues-Fontaine Ultramodel Gestalten}

\noindent Now we apply the construction above to Fargues-Fontaine Gestalten, where we expand the definition of Fargues-Fontaine gestalten.

\begin{definition}(\text{General Large Fargues-Fontaine Gestalten})
We now systematically working over the sphere spectrum. Consider an algebraic stack $Z$ in fpqc topology over $\mathrm{CAlg}(D(\mathbb{S}))$ and consider the associated FF-stack $\mathrm{FF}_Z$, in the family setting as in \cref{theorem59}. This means that $Z$ is fibered over $\mathbb{N}_c$, the compactification of the integers, and over the family of local fields $H$. For such general presentable $(\infty,1)$-category $X_\mathrm{object}$ of all such Fargues-Fontaine stacks attached to fppf-algebraic stacks over $\mathrm{CAlg}(D(S))$ over $\mathrm{FF}_Z$ via the corresponding condensed analytification in our family situation as in \cref{theorem59}, we can consider the general framework of universal motivic gestalten. When we fix $Z$, we consider the arc toplogy of the solid analytification to define the Fargues-Fontaine stack for $Z$. For such $X$ we consider the coefficient category:
\begin{align}
\mathrm{CoeffCat}_{\infty,\infty}
\end{align}
as above discussion which is the $(\infty,1)$-category of all the presentable $(\infty,\infty)$-categories, which can be degenerated to $(\infty,1)$-groupoid. Now we consider the following category of all the functors from $X$ into the corresponding category:
\begin{align}
\mathrm{CoeffCat}_{\infty,\infty}.
\end{align}
We denote this category as:
\begin{align}
\mathrm{Functor}^U(X_\mathrm{object},\mathrm{CoeffCat}_{\infty,\infty})
\end{align}
since we are working over the sphere spectrum, we have then a promotion lifting to a bigraded motivic spectrum $\mathrm{S}^{1,0}$, in such a way we allow a twice localization (usually the second localization is the loop space functor stabilization with respect to the Tate element)\footnote{We regard this as some $(\infty,2)$-category.}:
\begin{align}
\mathrm{Fun}^U:=\mathrm{Functor}^U(X_\mathrm{object},\mathrm{CoeffCat}_{\infty,\infty})^{\mathrm{localization}, \mathrm{localization}}.
\end{align}
Then for $X_\mathrm{object}$, we have the following universal motivic gestalten:
\begin{align}
(\mathrm{End}_{\mathrm{End}_{\mathrm{Fun}^U}(1)}(1),\mathrm{End}_{\mathrm{Fun}^U}(1),\mathrm{Fun}^U,\mathrm{Mod}_{\mathrm{Fun}^U}(3Pr^L),\mathrm{Mod}_{\mathrm{Mod}_{\mathrm{Fun}^U}(3Pr^L)}(4Pr^L),...).
\end{align}
Now after \cite{5S5} we have the corresponding abstract 6-functors formalism across these universal gelstalten, for $!$-able maps generated from countably presented ones. In the current formalism we consider the corresponding functoriality over $X:=\mathrm{X}_\mathrm{object}$, where a general form of such map takes the form of a countable limit:
\begin{align}
\lim_k X'_k \rightarrow X
\end{align}
where $X'_k$ in the opposite category is countably presented over $X$, in the category of all presentable $\infty$-categories. For this we consider all the relavant such categories $X$ which are assumed to be simply monoidal and $\infty$-category we denote them as:
\begin{align}
\mathrm{Mon}_\infty.
\end{align}
And we denote the corresponding six functors as:
\begin{align}
f_\flat, f^\sharp, g_\flat, g^\sharp, \boxtimes, \mathrm{Hom}.  
\end{align}
However over the higher category level the difference between $f,g$ is disappearing as in \cite{5S5}. We call when $Z$ is fixed the resulting gestalt
\begin{align}
\mathrm{Fun^U}
\end{align}
a \textit{pre-large Fargues-Fontaine gestalt} for $Z$. Now for different $\mathrm{FF}_Z$ we are going to have different such $X_\mathrm{object}(Z)$, then we have the corresponding different pre-large Fargues-Fontaine gestalten. Then we take the corresponding category of all gestalten generated from such pre-large Fargues-Fontaine gestalten via infinite K\"unneth products above degree 3 in the gestalten, we then call the final category \textit{the category of all the large Fargues-Fontaine gestalten}. 
\end{definition}

\begin{definition}(\text{Large Fargues-Fontaine Ultramodel Gestalten})
We now systematically working over the sphere spectrum $\mathbb{S}$. For a general $(\infty,1)$-ultracategory $X_\mathrm{object}$ over the sphere spectrum $\mathbb{S}$ (for instance a big Grothendieck site of the sphere spectrum $\mathbb{S}$, a big Grothendieck topos, or just a pretopos). Note that any such ultracategory can be regarded as a $\infty$-sheaf values in $\infty$-categories over the site of all the compact Hausdorff spaces:
\begin{align}
\mathrm{CHSpace}.
\end{align}
For such $X_\mathrm{object}$ we consider the coefficient ultracategory:
\begin{align}
\mathrm{CoeffCat}_{\infty,\infty}
\end{align}
as above discussion which is the $(\infty,1)$-ultracategory of all the presentable $(\infty,\infty)$-ultracategories, which can be degenerated to $(\infty,1)$-ultragroupoids. Now we consider the following ultracategory of all the ultrafunctors from $X$ into the corresponding ultracategories:
\begin{align}
\mathrm{CoeffCat}_{\infty,\infty}.
\end{align}
We denote this ultracategory as:
\begin{align}
\mathrm{UltraFunctor}^U(X_\mathrm{object},\mathrm{CoeffCat}_{\infty,\infty})
\end{align}
since we are working over the sphere spectrum, we have then a promotion lifting to a bigraded motivic spectrum $\mathrm{S}^{1,0}$, in such a way we allow a twice localization (usually the second localization is the loop space functor stabilization with respect to the Tate element)\footnote{We regard this as some $(\infty,2)$-category.}:
\begin{align}
\mathrm{UltraFun}^U:=\mathrm{UltraFunctor}^U(X_\mathrm{object},\mathrm{CoeffCat}_{\infty,\infty})^{\mathrm{localization}, \mathrm{localization}}.
\end{align}
Then for general $X_\mathrm{object}$, we have the following universal motivic gestalten:
\begin{align}
(\mathrm{End}_{\mathrm{End}_{\mathrm{UltraFun}^U}(1)}(1),\mathrm{End}_{\mathrm{UltraFun}^U}(1),\mathrm{UltraFun}^U,\mathrm{Mod}_{\mathrm{UltraFun}^U}(3Pr^L),\mathrm{Mod}_{\mathrm{Mod}_{\mathrm{UltraFun}^U}(3Pr^L)}(4Pr^L),...).
\end{align}
As in \cite{5Lu3} we can consider the ultramodels (i.e. the functors from $X_\mathrm{object}$) for $X_\mathrm{object}$:
\begin{align}
\mathrm{UltraModel}(X_\mathrm{object}).
\end{align}
In such a way we also have the corresponding ultracategory of ultrafunctors from this ultracategory to the coefficient ultracategories via further motivicalization through double localizations:
\begin{align}
\mathrm{UltraFun}_M^U:=\mathrm{UltraFunctor}^U(\mathrm{UltraModel}(X_\mathrm{object}),\mathrm{CoeffCat}_{\infty,\infty})^{\mathrm{localization}, \mathrm{localization}}.
\end{align}
Therefore after this we then have an associated ultragestalt:
\begin{align}
(\mathrm{End}_{\mathrm{End}_{\mathrm{UltraFun}_M^U}(1)}(1),\mathrm{End}_{\mathrm{UltraFun}_M^U}(1),\mathrm{UltraFun}_M^U,\mathrm{Mod}_{\mathrm{UltraFun}_M^U}(3Pr^L),\mathrm{Mod}_{\mathrm{Mod}_{\mathrm{UltraFun}_M^U}(3Pr^L)}(4Pr^L),...).
\end{align}
Now after \cite{5S5} we have the corresponding abstract 6-functors formalism across these universal gelstalten, for $!$-able maps generated from countably presented ones. In the current formalism we consider the corresponding functoriality over $X:=\mathrm{X}_\mathrm{object}$, where a general form of such map takes the form of a countable limit:
\begin{align}
\lim_k X'_k \rightarrow X
\end{align}
where $X'_k$ in the opposite ultracategory is countably presented over $X$, in the ultracategory of all presentable $(\infty,1)$-ultracategories. For this we consider all the relavant such ultracategories $X$ which are assumed to be simply $\infty$-ultracategory we denote them as:
\begin{align}
\mathrm{Mon}_\infty.
\end{align}
And we denote the corresponding six functors as:
\begin{align}
f_\flat, f^\sharp, g_\flat, g^\sharp, \boxtimes, \mathrm{Hom}.  
\end{align}
However over the higher category level the difference between $f,g$ is disappearing as in \cite{5S5}. Recall that in this setting the corresponding ultracategories are defined to be carrying integration theory which satisfies Fubini theorem:
\begin{align}
&\int_S (\mathrm{End}_{\mathrm{End}_{\mathrm{UltraFun}^U}(1)}(1),\mathrm{End}_{\mathrm{UltraFun}^U}(1),\mathrm{UltraFun}^U,\\
&\mathrm{Mod}_{\mathrm{UltraFun}^U}(3Pr^L),\mathrm{Mod}_{\mathrm{Mod}_{\mathrm{UltraFun}^U}(3Pr^L)}(4Pr^L),...)(s)d(\int_T d\mu_t) =\\ 
&\int_T\int_S (\mathrm{End}_{\mathrm{End}_{\mathrm{UltraFun}^U}(1)}(1),\mathrm{End}_{\mathrm{UltraFun}^U}(1),\mathrm{UltraFun}^U,\\
&\mathrm{Mod}_{\mathrm{UltraFun}^U}(3Pr^L),\mathrm{Mod}_{\mathrm{Mod}_{\mathrm{UltraFun}^U}(3Pr^L)}(4Pr^L),...)(s)d\mu_t,\\
&\int_S (\mathrm{End}_{\mathrm{End}_{\mathrm{UltraFun}_M^U}(1)}(1),\mathrm{End}_{\mathrm{UltraFun}_M^U}(1),\mathrm{UltraFun}_M^U,\\
&\mathrm{Mod}_{\mathrm{UltraFun}_M^U}(3Pr^L),\mathrm{Mod}_{\mathrm{Mod}_{\mathrm{UltraFun}_M^U}(3Pr^L)}(4Pr^L),...)(s)d(\int_T \mu_t) =\\ 
&\int_T\int_S (\mathrm{End}_{\mathrm{End}_{\mathrm{UltraFun}_M^U}(1)}(1),\mathrm{End}_{\mathrm{UltraFun}_M^U}(1),\mathrm{UltraFun}_M^U,\\
&\mathrm{Mod}_{\mathrm{UltraFun}_M^U}(3Pr^L),\mathrm{Mod}_{\mathrm{Mod}_{\mathrm{UltraFun}_M^U}(3Pr^L)}(4Pr^L),...)(s)d\mu_t.
\end{align}
The measure is $(0,1)$-valued and the corresponding $S,T$ are sets and the $\sigma$-algebras are defined to be the ones generated by power sets. As in the usual abstract measure theory here the product measures are the tensor product measures. We call when $Z$ is fixed the resulting gestalt
\begin{align}
\mathrm{UltraFun^U}
\end{align}
a \textit{pre-large Fargues-Fontaine ultramodel gestalt} for $Z$. Now for different $\mathrm{FF}_Z$ we are going to have different such $X_\mathrm{object}(Z)$, then we have the corresponding different pre-large Fargues-Fontaine ultramodel gestalten. Then we take the corresponding category of all gestalten generated from such pre-large Fargues-Fontaine ultramodel gestalten via infinite K\"unneth products above degree 3 in the gestalten, we then call the final category \textit{the category of all the large Fargues-Fontaine ultramodel gestalten}.
\end{definition}

\begin{corollary}
The mathematical theory of pre-large Fargues-Fontaine gestalten satisfies motivic ultra-categorical Vollst\"andigkeitssatz:
\begin{align}
&(...,\mathrm{End}_{\mathrm{End}_{\mathrm{CohShvFun}}(1)}(1),\mathrm{End}_{\mathrm{ConShvFun}}(1),\mathrm{ConShvFun},\mathrm{Mod}_{\mathrm{ConShvFun}}(nPr^L),\mathrm{Mod}_{\mathrm{Mod}_{\mathrm{ConShvFun}}(nPr^L)}((n+1)Pr^L),...)\\
&\overset{\sim}{\rightarrow}\\
&(\mathrm{End}_{\mathrm{End}_{\mathrm{UltraFun}_M^U}(1)}(1),\mathrm{End}_{\mathrm{UltraFun}_M^U}(1),\mathrm{UltraFun}_M^U,\mathrm{Mod}_{\mathrm{UltraFun}_M^U}(3Pr^L),\mathrm{Mod}_{\mathrm{Mod}_{\mathrm{UltraFun}_M^U}(3Pr^L)}(4Pr^L),...).
\end{align}
When we consider functoriality among pre-large Fargues-Fontaine gestalten, this equivalence respects abstract higher gestalten six-functor formalism on the both sides.
\end{corollary}

\begin{remark}
The reason for this definition is that we hope we can take products in the K\"unneth theorem fashion but still within a same category.
\end{remark}

\subsection{The Mathematical Theory of Large $p$-adic Representation Gestalten}

\noindent Now we apply the construction above to $p$-adic representation gestalten, where we expand the definition of usual $p$-adic representation gestalten. This means we consider the universal motivic gestalten associated with the infinite rank $p$-adic smooth representation sheaves over the quotient stacks for $p$-adic Lie groups.

\begin{definition}(\text{General Large $p$-adic Representation Gestalten})
We now systematically working over the sphere spectrum. Let $G$ be a $p$-adic Lie group and we consider the $\mathbb{S}_W$-smooth (the $p$-adically completed Witt vector sphere spectrum. Here we consider the corresponding lifings of $\mathbb{Z}_p$-representations to this spectrum) representations of $G$ which is then a symmetrical monoical $(\infty,1)$-category. For such general presentable $(\infty,1)$-category $X_\mathrm{object}$ of all the such $p$-adic spectrum valued representations of $G$, we can consider the parallel constrution within the framework of all the motivic universal gestalten. For such $X$ we consider the coefficient category:
\begin{align}
\mathrm{CoeffCat}_{\infty,\infty}
\end{align}
as above discussion which is the $(\infty,1)$-category of all the $(\infty,\infty)$-categories, which can be degenerated to $(\infty,1)$-groupoids. Now we consider the following category of all the functors from $X$ into the corresponding category:
\begin{align}
\mathrm{CoeffCat}_{\infty,\infty}.
\end{align}
We denote this category as:
\begin{align}
\mathrm{Functor}^U(X_\mathrm{object},\mathrm{CoeffCat}_{\infty,\infty})
\end{align}
since we are working over the sphere spectrum, we have then a promotion lifting to a bigraded motivic spectrum $\mathrm{S}^{1,0}$, in such a way we allow a twice localization (usually the second localization is the loop space functor stabilization with respect to the Tate element)\footnote{We regard this as some $(\infty,2)$-category.}:
\begin{align}
\mathrm{Fun}^U:=\mathrm{Functor}^U(X_\mathrm{object},\mathrm{CoeffCat}_{\infty,\infty})^{\mathrm{localization}, \mathrm{localization}}.
\end{align}
Then for $X_\mathrm{object}$, we have the following universal motivic gestalten:
\begin{align}
(\mathrm{End}_{\mathrm{End}_{\mathrm{Fun}^U}(1)}(1),\mathrm{End}_{\mathrm{Fun}^U}(1),\mathrm{Fun}^U,\mathrm{Mod}_{\mathrm{Fun}^U}(3Pr^L),\mathrm{Mod}_{\mathrm{Mod}_{\mathrm{Fun}^U}(3Pr^L)}(4Pr^L),...).
\end{align}
Now after \cite{5S5} we have the corresponding abstract 6-functors formalism across these universal gelstalten, for $!$-able maps generated from countably presented ones. In the current formalism we consider the corresponding functoriality over $X:=\mathrm{X}_\mathrm{object}$, where a general form of such map takes the form of a countable limit:
\begin{align}
\lim_k X'_k \rightarrow X
\end{align}
where $X'_k$ in the opposite category is countably presented over $X$, in the category of all presentable $\infty$-categories. For this we consider all the relavant such categories $X$ which are assumed to be simply monoidal and $\infty$-category we denote them as:
\begin{align}
\mathrm{Mon}_\infty.
\end{align}
And we denote the corresponding six functors as:
\begin{align}
f_\flat, f^\sharp, g_\flat, g^\sharp, \boxtimes, \mathrm{Hom}.  
\end{align}
However over the higher category level the difference between $f,g$ is disappearing as in \cite{5S5}. We call when $G$ is fixed the resulting gestalt
\begin{align}
\mathrm{Fun^U}
\end{align}
a \textit{pre-large $p$-adic representation gestalt} for $Z$. Now for different $G$ we are going to have different such $X_\mathrm{object}(G)$, then we have the corresponding different pre-large $p$-adic representation gestalten. Then we take the corresponding category of all gestalten generated from such pre-large $p$-adic representation gestalten via infinite higher K\"unneth products above degree 3 in the gestalten, we then call the final category \textit{the category of all the large $p$-adic representation gestalten}. This generation process can also happen for the ultracategorical setting, when then call the result generated category \textit{the category of all the large $p$-adic representation ultramodel gestalten}.
\end{definition}

\begin{corollary}
The mathematical theory of pre-large $p$-adic representation gestalten satisfies motivic ultra-categorical Vollst\"andigkeitssatz:
\begin{align}
&(...,\mathrm{End}_{\mathrm{End}_{\mathrm{CohShvFun}}(1)}(1),\mathrm{End}_{\mathrm{ConShvFun}}(1),\mathrm{ConShvFun},\mathrm{Mod}_{\mathrm{ConShvFun}}(nPr^L),\mathrm{Mod}_{\mathrm{Mod}_{\mathrm{ConShvFun}}(nPr^L)}((n+1)Pr^L),...)\\
&\overset{\sim}{\rightarrow}\\
&(\mathrm{End}_{\mathrm{End}_{\mathrm{UltraFun}_M^U}(1)}(1),\mathrm{End}_{\mathrm{UltraFun}_M^U}(1),\mathrm{UltraFun}_M^U,\mathrm{Mod}_{\mathrm{UltraFun}_M^U}(3Pr^L),\mathrm{Mod}_{\mathrm{Mod}_{\mathrm{UltraFun}_M^U}(3Pr^L)}(4Pr^L),...).
\end{align}
When we consider functoriality among pre-large $p$-adic representation gestalten from morphisms on $p$-adic Lie groups, this equivalence respects abstract higher gestalten six-functor formalism on the both sides.
\end{corollary}

\newpage
\subsection*{Acknowledgements}

We thank Professor Kedlaya for discussion on Z\'abr\'adi's work. We then get the chance to write this paper down following some recently established foundation on analytic geometry. We thank Professor Kedlaya for learning the motivic cohomology theory related to Robba sheaves (such as the \textit{K\"unneth theorem} in this setting) from him. We in this paper generalize a previous consideration of Sorensen in the field $\mathbb{F}_p$-situation to the $\mathbb{Z}_p$-ring situation closely following the work of Schneider, Sorensen, Schneider-Sorensen and Heyer-Mann. We follow also closely the work \cite{Sa} on derived $E_1$-rings over any commutative rings and derived Hochschild homological consideration. We thank Professor Sorensen for correspondence during my writing of this paper and for inspiration on generalizing Hodge theory along Breuil-Schneider's key construction and observation. We were trying to follow Fontaine many years ago on almost de Rham representations in order to study them in geometric family, however the current presentation is largely following Bhatt-Lurie. We would like to mention that we received very deep inspiration and understanding on the modern theory of motives from Scholze's 2024 talk notes on ring stacks and his lecture notes in progress now on the course of higher category theory, such as the concept of $(\infty,\infty)$-categorical \textit{gestalten} which certainly is a key conceptualization of many significant $p$-adic ring stacks such as those prismatizations in families we considered in this paper.

\newpage


\begin{thebibliography}{}

\bibitem[BBBK]{BBBK} Bambozzi, Federico, Oren Ben-Bassat, and Kobi Kremnizer. "Analytic geometry over F1 and the Fargues-Fontaine curve." Advances in Mathematics 356 (2019): 106815. 
\bibitem[BBK]{BBK} Ben-Bassat, Oren, and Kobi Kremnizer. "Fr\'echet modules and descent." Theory and Applications of Categories 39, no. 9 (2023): 207-266.
\bibitem[BBKK]{BBKK} Ben-Bassat, Oren, Jack Kelly, and Kobi Kremnizer. "A perspective on the foundations of derived analytic geometry." arXiv preprint arXiv:2405.07936 (2024).
\bibitem[CS1]{CS1} Dustin Clausen and Peter Scholze. "Lectures on condensed mathematics." Https://www.math.uni-bonn.de/people/scholze/Condensed.pdf. 
\bibitem[CS2]{CS2} Dustin Clausen and Peter Scholze. "Lectures on analytic geometry." Https://www.math.uni-bonn.de/people/scholze/Analytic.pdf.  
\bibitem[CS3]{CS3} Dustin Clausen and Peter Scholze. "Analytic stacks." Https://people.mpim-bonn.mpg.de/scholze/AnalyticStacks.html.  
 



\bibitem[G1]{G1} A. Grothendieck. "Letter to Serre (on motives)." 1964.
\bibitem[G2]{G2} A. Grothendieck. "Letter to Serre (on K-theory)." 1964.
\bibitem[V]{V} Voevodsky, Vladimir. "A1-homotopy theory." In Proceedings of the international congress of mathematicians, vol. 1, pp. 579-604. 1998. 
\bibitem[A]{A} Ayoub, Joseph. "Motifs des vari\'et\'es analytiques rigides." Soci\'et\'e Math\'ematique de France. Memoires 140 (2015): 1-386.
\bibitem[S1]{S1} Peter Scholze. "Berkovich motives." arXiv:2412.03882.
\bibitem[S2]{S2} Peter Scholze. "Geometrization of local langlands, motivically."


\bibitem[K1]{K1} Kiehl, Reinhardt. "Der Endlichkeitssatz f\"ur eigentliche Abbildungen in der nichtarchimedischen Funktionentheorie." Inventiones mathematicae 2 (1967): 191-214. 
\bibitem[KL]{KL} Kedlaya, Kiran S., and Ruochuan Liu. "Finiteness of cohomology of local systems on rigid analytic spaces." arXiv preprint arXiv:1611.06930 (2016).
\bibitem[T]{T} Tong, Xin. "Analytic Geometry and Hodge-Frobenius Structure." arXiv preprint arXiv:2011.08358 (2020).
\bibitem[CKZ]{CKZ} Carter, Annie, Kiran S. Kedlaya, and Gergely Z\'abr\'adi. "Drinfeld's lemma for perfectoid spaces and overconvergence of multivariate $(\varphi, \Gamma)$-modules." Doc. Math 26 (2021): 1329-1393. 
\bibitem[PZ]{PZ} Pal, Aprameyo, and Gergely Z\'abr\'adi. "Cohomology and overconvergence for representations of powers of Galois groups." Journal of the Institute of Mathematics of Jussieu 20, no. 2 (2021): 361-421.
\bibitem[C]{C} Colmez, Pierre. "Repr\'esentations de $GL_2(Q_p)$ et $(\varphi, \Gamma)$-modules." Ast\'erisque 330 (2010): 281-509.

\bibitem[KPX]{KPX} Kedlaya, Kiran, Jonathan Pottharst, and Liang Xiao. "Cohomology of arithmetic families of $(\varphi, \Gamma)$-modules." Journal of the American Mathematical Society 27, no. 4 (2014): 1043-1115.
\bibitem[ST]{ST} Schneider, Peter, and Jeremy Teitelbaum. "Algebras of p-adic distributions and admissible representations." Inventiones Mathematicae 153, no. 1 (2003).
\bibitem[Z1]{Z1} Z\'abr\'adi, Gergely. "Generalized Robba rings." Israel Journal of Mathematics 191 (2012): 817-887.


\bibitem[A2]{A2} Andreychev, Grigory. "$ K $-Theorie adischer R\"aume." arXiv preprint arXiv:2311.04394 (2023).
\bibitem[BGT]{BGT} Blumberg, Andrew J., David Gepner, and Gon\c{c}alo Tabuada. "A universal characterization of higher algebraic K-theory." Geometry $\&$ Topology 17, no. 2 (2013): 733-838.


\bibitem[Sc1]{Sc1} Schneider, Peter. "Smooth representations and Hecke modules in characteristic p." Pacific Journal of Mathematics 279, no. 1 (2015): 447-464.
\bibitem[SS1]{SS1} Schneider, Peter, and Claus Sorensen. "Duals and admissibility in natural characteristic." Representation Theory of the American Mathematical Society 27, no. 02 (2023): 30-50.
\bibitem[SS2]{SS2} Schneider, Peter, and Claus Sorensen. "Derived smooth induction with applications." arXiv preprint arXiv:2402.01504 (2024).
\bibitem[So1]{So1} Sorensen, Claus. "Koszul duality for Iwasawa algebras modulo $p$." Representation Theory of the American Mathematical Society 24, no. 5 (2020): 151-177.
\bibitem[Sa]{Sa} Sagave, Steffen. "DG-algebras and derived $A_\infty$-algebras." Journal f\"ur die Reine und Angewandte Mathematik 2010, no. 639 (2010).
\bibitem[HM]{HM} Heyer, Claudius, and Lucas Mann. "6-Functor Formalisms and Smooth Representations." arXiv preprint arXiv:2410.13038 (2024).

\bibitem[D2]{D2} Drinfel'd, Vladimir G. "Elliptic modules." Mathematics of the USSR-Sbornik 23, no. 4 (1974): 561. 

\bibitem[L1]{L1} R. Langlands. "Letter to Weil." 1967.

\bibitem[D]{D} Drinfeld, Vladimir Gershonovich. "Langlands' conjecture for $GL (2)$ over functional fields." In Proceedings of the International Congress of Mathematicians (Helsinki, 1978), vol. 2, pp. 565-574. 1980.

\bibitem[1To1]{1To1} Tong, Xin. "Generalized Motives through Witt Vectors." arXiv preprint arXiv:2407.01417 (2024).
\bibitem[1To2]{1To2} Tong, Xin. "Topologization and Functional Analytification I: Intrinsic Morphisms of Commutative Algebras." arXiv preprint arXiv:2102.10766 (2021).
\bibitem[1To3]{1To3} Tong, Xin. "Topologization and Functional Analytification II: $\infty $-Categorical Motivic Constructions for Homotopical Contexts." arXiv preprint arXiv:2112.12679 (2021).
\bibitem[1To4]{1To4} Tong, Xin. "Topologization and Functional Analytification III." arXiv preprint arXiv:2405.14180 (2024).
\bibitem[1To5]{1To5} Xin Tong. "$\infty$-Categorical Generalized Langlands Program I: Mixed-Parity Modules and Sheaves." arXiv:2311.10019.
\bibitem[1S4]{1S4} Peter Scholze. "Some remarks on prismatic cohomology of rigid spaces." 
\bibitem[1ALBRCS]{1ALBRCS} Johannes Ansch\"utz, Arthur-C\'esar Le Bras, Juan Esteban Rodriguez Camargo and Peter Scholze. "Analytic Prismatization."


\bibitem[1G]{1G} Alexandre Grothendieck. "Letter to Serre." 1964.


\bibitem[1T1]{1T1} Tate, John T. "p-Divisible groups." In Proceedings of a Conference on Local Fields: NUFFIC Summer School held at Driebergen (The Netherlands) in 1966, pp. 158-183. Berlin, Heidelberg: Springer Berlin Heidelberg, 1967.
\bibitem[1F1]{1F1} Fontaine, Jean-Marc. "Sur certains types de repr\'esentations p-adiques du groupe de Galois d'un corps local; construction d'un anneau de Barsotti-Tate." Annals of Mathematics 115, no. 3 (1982): 529-577.
\bibitem[1F2]{1F2} Fontaine, Jean-Marc. "Arithm\'etique des repr\'esentations galoisiennes p-adiques." Ast\'erisque 295 (2004): 1-115.
\bibitem[1S1]{1S1} Scholze, Peter. "\'Etale cohomology of diamonds." arXiv preprint arXiv:1709.07343 (2017).
\bibitem[1KL]{1KL} Kiran Kedlaya and Ruochuan Liu. "Relative $p$-adic Hodge theory I: foundations." Ast\'erisque 371, 2015.
\bibitem[1KL1]{1KL1} Kedlaya, Kiran S., and Ruochuan Liu. "Relative p-adic Hodge theory, II: Imperfect period rings." arXiv preprint arXiv:1602.06899 (2016).
\bibitem[1S2]{1S2} Scholze, Peter. "P-adic Hodge theory for rigid-analytic varieties." In Forum of Mathematics, Pi, vol. 1, p. e1. Cambridge University Press, 2013. 
\bibitem[1S3]{1S3} Scholze, Peter. "Perfectoid spaces." Publications math\'ematiques de l'IH\'ES 116, no. 1 (2012): 245-313.
\bibitem[1BS1]{1BS1} Breuil, Christophe and Schneider, Peter. "First steps towards p-adic Langlands functoriality" Journal f\"ur die reine und angewandte Mathematik 2007, no. 610 (2007): 149-180. https://doi.org/10.1515/CRELLE.2007.070.

\bibitem[1CSA]{1CSA} Dustin Clausen and Peter Scholze. Lectures on Condensed Mathematics. Https://www.math.uni-bonn.de/people/scholze/Condensed.pdf.
\bibitem[1CSB]{1CSB} Dustin Clausen and Peter Scholze. Lectures on Analytic Geometry. Https://www.math.uni-bonn.de/people/scholze/Analytic.pdf.
\bibitem[1CS]{1CS} Dustin Clausen and Peter Scholze. Analytic Stacks. Https://people.mpim-bonn.mpg.de/scholze/AnalyticStacks.html.

\bibitem[1BS]{1BS} Bhatt, Bhargav, and Peter Scholze. "Prisms and prismatic cohomology." Annals of Mathematics 196, no. 3 (2022): 1135-1275. 
\bibitem[1D]{1D} Drinfeld, Vladimir. "Prismatization." Selecta Mathematica 30, no. 3 (2024): 49.
\bibitem[1BL]{1BL} Bhatt, Bhargav, and Jacob Lurie. "Absolute prismatic cohomology." arXiv preprint arXiv:2201.06120 (2022).
\bibitem[1BL1]{1BL1} Bhatt, Bhargav and Jacob Lurie. "A $p$-adic Riemann-Hilbert functor I: torsion coefficients."
\bibitem[1BL2]{1BL2} Bhatt, Bhargav and Jacob Lurie. "A $p$-adic Riemann-Hilbert functor II: $\mathbb{Q}_p$-coefficients."

\bibitem[1K]{1K} Kedlaya, Kiran S. "A p-adic local monodromy theorem." Annals of mathematics (2004): 93-184.
\bibitem[1A]{1A} Andr\'e, Yves. "Filtrations de type Hasse-Arf et monodromie p-adique." Inventiones Mathematicae 148 (2002): 285-317.
\bibitem[1M]{1M} Mebkhout, Zoghman. "Analogue p-adique du th\'eoreme de Turrittin et le th\'eoreme de la monodromie p-adique." Inventiones mathematicae 148 (2002): 319-351.

\bibitem[1BA]{1BA} Berger, Laurent. "Repr\'esentations p-adiques et \'equations diff\'erentielles." Inventiones mathematicae 148 (2002): 219-284.

\bibitem[1AB]{1AB} Andreatta, Fabrizio, and Olivier Brinon. "$\mathrm {B} _ {\mathrm {dR}} $-repr\'esentations dans le cas relatif." In Annales scientifiques de l'Ecole normale sup\'erieure, vol. 43, no. 2, pp. 279-339. 2010.

\bibitem[1S5]{1S5} Peter Scholze. "Berkovich Motives." arXiv: 2412.03882.

\bibitem[1S6]{1S6} Peter Scholze. "Geometrization of local Langlands, motivically." In preparation.

\bibitem[1FS]{1FS} Fargues, Laurent, and Peter Scholze. "Geometrization of the local Langlands correspondence." arXiv preprint arXiv:2102.13459 (2021).

\bibitem[1GL]{1GL} Genestier, Alain, and Vincent Lafforgue. "Chtoucas restreints pour les groupes r\'eductifs et param\'etrisation de Langlands locale." arXiv preprint arXiv:1709.00978 (2017).

\bibitem[1VL]{1VL} Lafforgue, Vincent. "Chtoucas pour les groupes r\'eductifs et param\'etrisation de Langlands globale." Journal of the American Mathematical Society 31, no. 3 (2018): 719-891.

\bibitem[1Z]{1Z} Z\'abr\'adi, Gergely. "Multivariable $(\varphi,\Gamma) $-modules and products of Galois groups." Mathematical Research Letters 25, no. 2 (2018): 687-721.

\bibitem[1LL]{1LL} Lafforgue, Laurent. "Chtoucas de Drinfeld et correspondance de Langlands." Inventiones mathematicae 147 (2002): 1-241.

\bibitem[1T]{1T}  Tong, Xin. "Generalized Motives through Witt Vectors." arXiv preprint arXiv:2407.01417 (2024).

\bibitem[1RS]{1RS} Richarz, Timo, and Jakob Scholbach. "The motivic Satake equivalence." Mathematische Annalen 380, no. 3 (2021): 1595-1653.

\bibitem[EGH1]{EGH1} Emerton, Matthew, Toby Gee, and Eugen Hellmann. "An introduction to the categorical p-adic Langlands program." arXiv preprint arXiv:2210.01404 (2022).

\bibitem[2G]{2G} Alexander Grothendieck. "Letter to Serre." 1964.
\bibitem[2LH]{2LH} Li-Huerta, Siyan Daniel. "On close fields and the local Langlands correspondence." arXiv preprint arXiv:2407.07063 (2024).
\bibitem[2BS]{2BS} Bhatt, Bhargav, and Peter Scholze. "Prisms and prismatic cohomology." Annals of Mathematics 196, no. 3 (2022): 1135-1275.
\bibitem[2BL]{2BL} Bhatt, Bhargav, and Jacob Lurie. "Absolute prismatic cohomology." arXiv preprint arXiv:2201.06120 (2022).
\bibitem[2D]{2D} Drinfeld, Vladimir. "Prismatization." Selecta Mathematica 30, no. 3 (2024): 49.
\bibitem[2T1]{2T1} Tong, Xin. "Generalized Motives through Witt Vectors." arXiv preprint arXiv:2407.01417 (2024).
\bibitem[2BL2]{2BL2} Bhatt, Bhargav, and Jacob Lurie. "The prismatization of $ p $-adic formal schemes." arXiv preprint arXiv:2201.06124 (2022).
\bibitem[2A]{2A} Ayoub, Joseph. "Weil cohomology theories and their motivic Hopf algebroids." Indagationes Mathematicae (2024).
\bibitem[2F]{2F} Fontaine, Jean-Marc. "Sur certains types de repr\'esentations p-adiques du groupe de Galois d'un corps local; construction d'un anneau de Barsotti-Tate." Annals of Mathematics 115, no. 3 (1982): 529-577.

\bibitem[HJ]{HJ} Hansen, David, and Christian Johansson. "A note on the cohomology of moduli spaces of local shtukas." arXiv preprint arXiv:2404.04083 (2024).

\bibitem[3G]{3G} A. Grothendieck. "Letter to Serre." 1964. On motives.
\bibitem[3A]{3A} Ayoub, Joseph. "Weil cohomology theories and their motivic Hopf algebroids." Indagationes Mathematicae (2024).
\bibitem[31A]{31A} Ayoub, Joseph. "Motifs des vari\'et\'es analytiques rigides." Soci\'et\'e Math\'ematique de France. Memoires 140 (2015): 1-386.
\bibitem[3S]{3S} Scholze, Peter. "Berkovich motives." arXiv preprint arXiv:2412.03382 (2024).
\bibitem[3KL1]{3KL1} Kedlaya, K. S., and R. Liu. "Relative p-adic hodge theory: Foundations." Ast\'erisque 2015, no. 371 (2015): 1-245.
\bibitem[3KL2]{3KL2} Kedlaya, Kiran S., and Ruochuan Liu. "Relative p-adic Hodge theory, II: Imperfect period rings." arXiv preprint arXiv:1602.06899 (2016).
\bibitem[3BS]{3BS} Bhatt, Bhargav, and Peter Scholze. "Prisms and prismatic cohomology." Annals of Mathematics 196, no. 3 (2022): 1135-1275.
\bibitem[3BL]{3BL} Bhatt, Bhargav, and Jacob Lurie. "Absolute prismatic cohomology." arXiv preprint arXiv:2201.06120 (2022).
\bibitem[3D]{3D} Drinfeld, Vladimir. "Prismatization." Selecta Mathematica 30, no. 3 (2024): 49.
\bibitem[3LH]{3LH} Li-Huerta, Siyan Daniel. "On close fields and the local Langlands correspondence." arXiv preprint arXiv:2407.07063 (2024).
\bibitem[3CS]{3CS} Dustin Clausen and Peter Scholze. "Lectures on Condensed Mathematics." https://www.math.uni-bonn.de/people/scholze/Condensed.pdf.
\bibitem[3CS1]{3CS1} Dustin Clausen and Peter Scholze. "Analytic Stacks." https://people.mpim-bonn.mpg.de/scholze/AnalyticStacks.html.
\bibitem[3CS2]{3CS2} Dustin Clausen and Peter Scholze. "Lecture on Analytic Geometry." https://www.math.uni-bonn.de/people/scholze/Analytic.pdf.
\bibitem[3A1]{3A1} Abe, Tomoyuki. "Langlands correspondence for isocrystals and the existence of crystalline companions for curves." Journal of the American Mathematical Society 31, no. 4 (2018): 921-1057.

\bibitem[3ALBRCS]{3ALBRCS} Johannes Ansch\"utz, Arthur-C\'esar Le Bras, Juan Esteban Rodriguez Camargo and Peter Scholze. "Analytic prismatization." 
\bibitem[3F]{3F} Fontaine, Jean-Marc. "Sur certains types de repr\'esentations p-adiques du groupe de Galois d'un corps local; construction d'un anneau de Barsotti-Tate." Annals of Mathematics 115, no. 3 (1982): 529-577.
\bibitem[3S2]{3S2} Scholze, Peter. "\'Etale cohomology of diamonds." arXiv preprint arXiv:1709.07343 (2017).
\bibitem[3S1]{3S1} Peter Scholze. "Some remarks on prismatic cohomology of rigid spaces."
\bibitem[3Ta]{3Ta} Tate, John T. "p-Divisible groups." In Proceedings of a Conference on Local Fields: NUFFIC Summer School held at Driebergen (The Netherlands) in 1966, pp. 158-183. Berlin, Heidelberg: Springer Berlin Heidelberg, 1967.
\bibitem[3S3]{3S3} Scholze, Peter. "p-adic Hodge theory for rigid-analytic varieties." In Forum of Mathematics, Pi, vol. 1, p. e1. Cambridge University Press, 2013.
\bibitem[3SI]{3SI} Sagave, Steffen. "DG-algebras and derived $A_\infty$-algebras." Journal f\"ur die Reine und Angewandte Mathematik 2010, no. 639 (2010).

\bibitem[3V]{3V} Voevodsky, Vladimir. "A1-homotopy theory." In Proceedings of the international congress of mathematicians, vol. 1, pp. 579-604. 1998.

\bibitem[GRI]{GRI} Gaitsgory, Dennis, and Nick Rozenblyum. A study in derived algebraic geometry: Volume I: correspondences and duality. Vol. 221. American Mathematical Society, 2019.
\bibitem[GRII]{GRII} Gaitsgory, Dennis, and Nick Rozenblyum. "A study in derived algebraic geometry: Volume ii: Deformations, lie theory and formal geometry." American Mathematical Society (2017).
\bibitem[3BSI]{3BSI} Borger, James, and Arnab Saha. "Differential characters of Drinfeld modules and de Rham cohomology." Algebra \& Number Theory 13, no. 4 (2019): 797-837.


\bibitem[4T]{5T} Tate, John T. "p-Divisible groups." In Proceedings of a Conference on Local Fields: NUFFIC Summer School held at Driebergen (The Netherlands) in 1966, pp. 158-183. Berlin, Heidelberg: Springer Berlin Heidelberg, 1967.
\bibitem[4F1]{5F1} Fontaine, Jean-Marc. "Sur certains types de repr\'esentations p-adiques du groupe de Galois d'un corps local; construction d'un anneau de Barsotti-Tate." Annals of Mathematics 115, no. 3 (1982): 529-577.
\bibitem[4F2]{5F2} Fontaine, Jean-Marc. "Arithm\'etique des repr\'esentations galoisiennes p-adiques." Ast\'erisque 295 (2004): 1-115.
\bibitem[4L]{5L} Jacob Lurie. "The Riemann-Hilbert correspondence in nonarchimedean geometry." talk.
\bibitem[4BLA]{5BLA} Bhargav Bhatt and Jacob Lurie. "A $p$-adic Riemann-Hilbert functor: torsion coefficients."
\bibitem[4BLB]{5BLB} Bhargav Bhatt and Jacob Lurie. "A $p$-adic Riemann-Hilbert functor: $\mathbb{Q}_p$-efficients"
\bibitem[4BLC]{5BLC} Bhatt, Bhargav, and Jacob Lurie. "Absolute prismatic cohomology." arXiv preprint arXiv:2201.06120 (2022).
\bibitem[4D]{5D} Drinfeld, Vladimir. "Prismatization." Selecta Mathematica 30, no. 3 (2024): 49.
\bibitem[4KLA]{5KLA} Kedlaya, K. S., and R. Liu. "Relative p-ADIC hodge theory: Foundations." Asterisque 2015, no. 371 (2015): 1-245.
\bibitem[4KLB]{5KLB} Kedlaya, Kiran S., and Ruochuan Liu. "Relative p-adic Hodge theory, II: Imperfect period rings." arXiv preprint arXiv:1602.06899 (2016).
\bibitem[4LH]{5LH} Li-Huerta, Siyan Daniel. "On close fields and the local Langlands correspondence." arXiv preprint arXiv:2407.07063 (2024).
\bibitem[4S1]{5S1} Scholze, Peter. "p-adic Hodge theory for rigid-analytic varieties." In Forum of Mathematics, Pi, vol. 1, p. e1. Cambridge University Press, 2013.
\bibitem[4GR1]{5GR1} Gaitsgory, Dennis, and Nick Rozenblyum. A study in derived algebraic geometry: Volume I: correspondences and duality. Vol. 221. American Mathematical Society, 2019.
\bibitem[4GR2]{5GR2} Gaitsgory, Dennis, and Nick Rozenblyum. "A study in derived algebraic geometry: Volume ii: Deformations, lie theory and formal geometry." American Mathematical Society (2017).

\bibitem[4BHS]{5BHS} Breuil, Christophe, Eugen Hellmann, and Benjamin Schraen. "A local model for the trianguline variety and applications." Publications math\'ematiques de l'IH\'ES 130 (2019): 299-412.

\bibitem[4GL1]{5GL1} Genestier, Alain, and Vincent Lafforgue. "Th\'eorie de Fontaine en \'egales caract\'eristiques." In Annales scientifiques de l'Ecole normale sup\'erieure, vol. 44, no. 2, pp. 263-360. 2011.
\bibitem[4GL2]{5GL2} Genestier, Alain, and Vincent Lafforgue. "Structures de Hodge-Pink pour les $\varphi/\mathfrak{S}$-modules de Breuil et Kisin." Compositio Mathematica 148, no. 3 (2012): 751-789.
\bibitem[4HK]{5HK} Hartl, Urs, and Wansu Kim. "Local shtukas, Hodge-Pink structures and Galois representations." $ t $-Motives: Hodge Structures, Transcendence and Other Motivic Aspects (2020): 183-259.

\bibitem[4A]{5A} Abe, Tomoyuki. "Langlands correspondence for isocrystals and the existence of crystalline companions for curves." Journal of the American Mathematical Society 31, no. 4 (2018): 921-1057.


\bibitem[4G]{5G} A. Grothendieck. "Letter to Serre (sur motifs)." 1964.

\bibitem[4V]{5V} Voevodsky, Vladimir. "A1-homotopy theory." In Proceedings of the international congress of mathematicians, vol. 1, pp. 579-604. 1998.

\bibitem[4AI]{5AI} Ayoub, Joseph. Motifs des vari\'et\'es analytiques rigides. Soci\'et\'e math\'ematique de France, 2015.

\bibitem[4AII]{5AII} Ayoub, Joseph. "Weil cohomology theories and their motivic Hopf algebroids." Indagationes Mathematicae (2024).

\bibitem[4S2]{5S2} Scholze, Peter. "Berkovich motives." arXiv preprint arXiv:2412.03382 (2024).

\bibitem[4S3]{5S3} Scholze, Peter. "\'Etale cohomology of diamonds." arXiv preprint arXiv:1709.07343 (2017).

\bibitem[4S4]{5S4} Peter Scholze. "Motives and ring stacks." Talk notes in 2024.

\bibitem[4H]{5H} Hilbert, David. "Mathematische probleme." Nachrichten von der Koniglichen Gesellschaft der Wissenschaften zu G\"ottingen (1900).

\bibitem[4BS1]{5BS1} Bhatt, Bhargav, and Peter Scholze. "Prisms and prismatic cohomology." Annals of Mathematics 196, no. 3 (2022): 1135-1275.

\bibitem[4Lu1]{5Lu1} Jacob Lurie. "Higher Algebra."

\bibitem[4Lu2]{5Lu2} Jacob Lurie. "Prismatic Motivic Homotopy Theory." Talk notes.

\bibitem[4S5]{5S5} Peter Scholze. "Geometry and Higher Category Theory." Lecture notes, in progress.

\bibitem[4Ga]{5Ga} Gaitsgory, Dennis. "Sheaves of categories and the notion of 1-affineness." Stacks and categories in geometry, topology, and algebra 643 (2015): 127-225. 

\bibitem[4M]{5M} Mikami, Yutaro. "Finiteness and duality of cohomology of $(\varphi,\Gamma) $-modules and the 6-functor formalism of locally analytic representations." arXiv preprint arXiv:2504.01780 (2025). 

\bibitem[5Lu3]{5Lu3} Jacob Lurie. "Ultracategories."







\end{thebibliography}
\end{document}